\documentclass[a4paper,10pt]{article}
\title{Random sections of line bundles over real Riemann surfaces}
\author{Michele Ancona \thanks{Institut Camille Jordan, Umr Cnrs 5208, Universit\'{e} Claude Bernard Lyon 1. ancona@math.univ-lyon1.fr} }
\date{}
\usepackage[a4paper,bindingoffset=0.2in,%
            left=1in,right=1in,top=1in,bottom=1in,%
            footskip=.25in]{geometry}
\usepackage[latin1]{inputenc}
\usepackage[T1]{fontenc}
\usepackage[english]{babel}
\usepackage{amsmath,amssymb,amsthm,amscd}
\usepackage{indentfirst}
\usepackage{fancyhdr}

\usepackage{young}
\usepackage{graphicx}

\usepackage[vcentermath]{youngtab}

\RequirePackage[colorlinks,linkcolor=blue,citecolor=blue,urlcolor=blue]{hyperref}
\usepackage{enumerate}
\usepackage{color}
\usepackage{pst-fill,pst-grad,pst-plot,pst-eucl,pstricks-add,pst-node}

\usepackage{dsfont}

\theoremstyle{plain}
\newtheorem{thm}{Theorem}[section]

\newtheorem{prop}[thm]{Proposition}
\newtheorem{lem}[thm]{Lemma}
\newtheorem{cor}[thm]{Corollary}
\newtheorem{oss}[thm]{Remark}

\theoremstyle{definition}
\newtheorem{defn}[thm]{Definition}
\newtheorem{conv}[thm]{Notation}

\newcommand{\R}{\mathbb{R}}
\newcommand{\C}{\mathbb{C}}
\newcommand{\Jac}{\textrm{Jac}}

\newcommand{\Vol}{\textrm{Vol}}

\newcommand{\dV}{\textrm{dV}}
\newcommand{\Fix}{\textrm{Fix}}
\newcommand{\norm}[1]{\left\lVert #1 \right\rVert}

\allowdisplaybreaks

\begin{document}
\maketitle
\begin{abstract}
Let   $\mathcal{L}$ be a positive line bundle over a Riemann surface $\Sigma$ defined over $\mathbb{R}$. We prove that sections $s$ of $\mathcal{L}^d$, $d\gg 0$, whose number of real zeros $\#Z_s$ deviates from the expected one are rare. We also provide  asymptotics 
 of the form $\mathbb{E}[(\#Z_s-\mathbb{E}[\# Z_s])^k]=O(\sqrt{d}^{k-1-\alpha})$ and ${\mathbb{E}[\#Z^k_s]=a_k\sqrt{d}^{k}+b_k\sqrt{d}^{k-1}+O(\sqrt{d}^{k-1-\alpha})}$
 for all the (central) moments of the number of real zeros.
Here, $\alpha$ is  any number in $(0,1)$, and $a_k$ and $b_k$ are some explicit and positive constants.
Finally, we obtain similar asymptotics for the distribution of complex zeros of random sections.
Our proof involves Bergman kernel estimates as well as Olver multispaces.
\end{abstract}

\tableofcontents

\maketitle


\section{Introduction}\label{sec:intro}

Let $p\in\R_d[X]$ be a degree $d$ polynomial with real coefficients.
$$\emph{How many real roots does $p$ have, choosing it at random?}$$
A first answer  was given by Kac in the 40's. He proved that the expected number of real roots  $\mathbb{E}[\#Z_p]$ is equivalent to $\frac{2}{\pi}\log d$ as $d$ grows to $+\infty$, see \cite{kac}. 
By definition, 
$$\mathbb{E}[\#Z_p]=\int_{p\in\R_d[X]}(\#Z_p)d\mu(p)$$
where $\#Z_p$ denotes the cardinality of $Z_p=\{x\in\R\mid p(x)=0\}.$
The probability measure on $\R_d[X]$  Kac has considered was the Gaussian measure associated with a scalar product that makes $\{X^k\}_{0\leqslant k\leqslant d}$ an orthonormal basis.

In the 90's, another answer was given by  Kostlan  and by  Shub and  Smale. 
They proved that $\mathbb{E}[\#Z_p]=\sqrt{d}$ for any $d\in\mathbb{N}$, see \cite{ko,ss}.
There, the probability measure  on $\R_d[X]$ was the Gaussian measure  associated with a scalar product that turns $\{\sqrt{\binom{d}{k}}X^k\}_{0\leqslant k\leqslant d}$ into an orthonormal basis. This scalar product makes $\R_d[X]$ isometric to the space $\R H^0(\C P^1;\mathcal{O}(d))$ of real global sections of the line bundle $\mathcal{O}(d)$ over $\C P^1$, where the scalar product  on $\R H^0(\C P^1;\mathcal{O}(d))$ is the $L^2$-scalar product induced by the Fubini-Study metric on $\mathcal{O}(d)$, see Section \ref{generality1}.
This point of view brings us to consider a general Riemann surface $\Sigma$  with any ample line bundle over $\Sigma$, see \cite{gwexp,gw2,sz}.

\medskip

Let $\Sigma$ be a closed real  Riemann surface, that is a closed Riemann surface equipped with an anti-holomorphic involution $c_{\Sigma}$. We denote by
$\mathbb{R}\Sigma=\Fix(c_{\Sigma})$ its real locus.
Let $\mathcal{L}$  be a real ample line bundle over $\Sigma$, that is an ample holomorphic line bundle $p:\mathcal{L}\rightarrow \Sigma$  equipped with an anti-holomorphic involution $c_{\mathcal{L}}$ such that $p\circ c_{\mathcal{L}}=c_{\Sigma}\circ p$ and $c_{\mathcal{L}}$ is complex-antilinear in the fibers. A real Hermitian metric on $\mathcal{L}$ is a Hermitian metric $h$ such that $c_{\mathcal{L}}^*h=\bar{h}$.\\ 
A real Hermitian metric on $\mathcal{L}$ induces a $L^2-$scalar product, and thus a Gaussian measure, on the space of global holomorphic real sections of $\mathcal{L}^d$, see Section \ref{generality1}. We  denote such space by $\R H^0(\Sigma;\mathcal{L}^d)\doteqdot\{s\in H^0(\Sigma;\mathcal{L})\mid c_L\circ s=s\circ c_{\Sigma} \}.$  
 $$\emph{How many real zeros does a  random real section $s\in \R H^0(\Sigma;\mathcal{L}^d)$ have?} $$ 
 We denote by $Z_s=\{s=0\}\cap\R\Sigma$ the set of real zeros of $s\in\R H^0(\Sigma;\mathcal{L}^d)$.
In \cite{gw2} it is shown that $\displaystyle\lim_{d\rightarrow\infty}\frac{1}{\sqrt{d}}\mathbb{E}[\#Z_s]=\frac{\Vol_h(\R\Sigma)}{\sqrt{\pi}}$. We get
\begin{thm}\label{expon} Let $c(d)\in \R^*_+$ be any sequence of positive real numbers, then, for any $k\in\mathbb{N}$ and   any $\alpha\in (0,1)$, the following holds as $d\rightarrow\infty$:
\begin{itemize}
\item $\mu\big\{\big|\# Z_s-\mathbb{E}[\#Z_s]\big|>c(d)\sqrt{d}\big\}= O\big(\frac{1}{c(d)^k\sqrt{d}^{1+\alpha}}\big);$
\item for any measurable subset $A\subset \R\Sigma$ of positive volume: $$\mu\big\{\big|\# (Z_s\cap A)-\mathbb{E}[\#(Z_s\cap A)]\big|>c(d)\sqrt{d}\big\}= O\big(\frac{\log d}{c(d)^k\sqrt{d}}\big)\hspace{1.5mm}.$$
\end{itemize}
\end{thm}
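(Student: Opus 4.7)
The two estimates reduce by Markov's inequality to asymptotic bounds on the $k$-th central moments of $\#Z_s$ and of $\#(Z_s\cap A)$. Granting that for every even $k\in\mathbb{N}$ and every $\alpha\in(0,1)$
$$\mathbb{E}\bigl[(\#Z_s-\mathbb{E}[\#Z_s])^k\bigr]=O(\sqrt{d}^{\,k-1-\alpha})\quad\text{and}\quad \mathbb{E}\bigl[(\#(Z_s\cap A)-\mathbb{E}[\#(Z_s\cap A)])^k\bigr]=O\bigl((\log d)\,\sqrt{d}^{\,k-1}\bigr),$$
a one-line application of Markov's inequality at threshold $t=c(d)\sqrt{d}$ yields each bullet after dividing by $(c(d)\sqrt{d})^k$; for odd $k$ one passes through the next even moment, which is harmless since $k$ is arbitrary.

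\textbf{Moment bounds via Kac--Rice.} To establish these central moment asymptotics, the Kac--Rice formula identifies $\mathbb{E}[\#Z_s^k]$ with the integral over $(\R\Sigma)^k$ of the $k$-point correlation function $\rho_k(x_1,\dots,x_k)$, a Gaussian integral whose covariance data is encoded in the Bergman kernel $K_d$ of $\mathcal{L}^d$ together with its first two covariant derivatives. The off-diagonal decay $K_d(x,y)=O(e^{-c\sqrt{d}\,\mathrm{dist}(x,y)})$ shows that on $k$-tuples whose pairwise distances all exceed $d^{-1/2+\epsilon}$ the correlation function almost factorises, and the disconnected piece reproduces $\mathbb{E}[\#Z_s]^k$, which is killed by the centring. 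Hence the work reduces to bounding the contribution of the near-diagonal region in $(\R\Sigma)^k$.

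\textbf{Main obstacle.} The near-diagonal analysis is the crux. As soon as two of the $x_i$'s collide, the joint Gaussian distribution of $\{s(x_i),\nabla s(x_i)\}$ becomes degenerate and the naive Kac--Rice integrand is singular. This is precisely where Olver multispaces enter: they provide a natural blow-up of $\Sigma^k$ along its big diagonals on which jets at coalescing points are promoted to higher-order jets at a single point, and in the associated rescaled coordinates the Bergman kernel admits a smooth, non-degenerate universal scaling limit. Once the integrand is regularised this way, integration over the $\sqrt{d}^{\,-1}$-wide collision zones produces the $\sqrt{d}^{\,k-1}$ factor. The extra $\sqrt{d}^{\,-\alpha}$ gain on the whole $\R\Sigma$ should come from carrying the Bergman kernel expansion one term further and exploiting a cancellation available only when there is no boundary to integrate against; on a subset $A$ with boundary this cancellation is lost, leaving the weaker $\log d$ correction.
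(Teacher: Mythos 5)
Your reduction is exactly the paper's: Markov's inequality at threshold $c(d)\sqrt{d}$ applied to the $k$-th central moments, which the paper supplies as Theorem \ref{centermom} (itself deduced from Theorem \ref{equimom} via Kac--Rice, off-diagonal Bergman kernel decay, and the uniform near-diagonal bound of Theorem \ref{boundabove} proved with Olver multispaces), so the proposal is correct and follows essentially the same approach. Two small remarks: the weaker $\log d$ bound for $A$ comes from the discontinuity of $\mathds{1}_A$ (only the uniform bound on $\mathcal{R}^k_d$ times the $O(\log d/\sqrt{d})$ volume of the diagonal neighbourhood is available, whereas for continuous $f$ the second-order term is computed and cancels by a binomial identity), not from a boundary cancellation per se; and your passage to the next even moment for odd $k$ turns $c(d)^{-k}$ into $c(d)^{-k-1}$, which is harmless only when $c(d)$ is bounded below (for odd $k\ge 3$ one can instead interpolate $\mathbb{E}\bigl[|X|^k\bigr]\le\bigl(\mathbb{E}[X^{k-1}]\,\mathbb{E}[X^{k+1}]\bigr)^{1/2}$ to keep the stated exponent), a point the paper itself glosses over.
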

The measure of the set of real sections whose number of  real zeros deviates from the expected one goes to zero faster than all polynomials. Moreover, this property is valid for any open subset of $\R\Sigma$.\\
\begin{oss} For  a real analytic metric $h$, it seems possible to improve the polynomial upper estimate of Theorem \ref{expon} by an exponential one. This is a work in progress. Recall that real analytic metrics are dense in the space of smooth  metrics (see \cite{bouche,tian}) and that the Fubini-Study metric on $\C P^1$ is real analytic. Remark that an exponential rarefaction for real sections with "many" real zeros is proved in \cite[Theorem 2]{gwexp}
\end{oss}

\paragraph*{Higher moments} The main ingredient of the proof of Theorem \ref{expon}  is an estimate of   all the  moments $\mathbb{E}[\#Z^k_s]$, $k\in\mathbb{N}$, which has its own interest.  By definition, the $k$-th moment  of $\# Z_s$ equals 
$$\displaystyle\mathbb{E}[\#Z^k_s]=\int_{s\in\R H^0(\Sigma;\mathcal{L}^d)}(\#Z_s)^kd\mu(s).$$
The computation  of $\mathbb{E}[\#Z^k_s]$ is actually the core of the paper.
\begin{thm}\label{asymom} There exists a universal positive constant $M$ such that, for any  positive real Hermitian line bundle $(\mathcal{L},h)$  over any real Riemann surface $\Sigma$, any integer $k$ and any $\alpha\in(0,1)$,  the following asymptotics hold:
 $$\frac{1}{\sqrt{d}^k}\mathbb{E}[\#Z_s^k]=\frac{\Vol_h(\R \Sigma)^k}{\sqrt{\pi}^k}+\frac{Mk(k-1)\Vol_h(\R\Sigma)^{k-1}}{2\sqrt{\pi}^{k-2}\sqrt{d}}+O\big(\frac{1}{\sqrt{d}^{1+\alpha}}\big).$$ 
   \end{thm}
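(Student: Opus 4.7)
The plan is to compute the moments via the Kac-Rice formula and to study the resulting correlation integrals using Bergman kernel asymptotics, resolving the diagonals via Olver multispaces. Expanding $(\#Z_s)^k$ into multi-indices grouped by the set-partition of coinciding entries, one writes
\[
\mathbb{E}[\#Z_s^k] \,=\, \sum_{j=1}^{k} S(k,j)\, \mathbb{E}\bigl[\#Z_s(\#Z_s-1)\cdots(\#Z_s-j+1)\bigr],
\]
and the $j$-th factorial moment is given by the Kac-Rice integral
\[
\mathbb{E}\bigl[\#Z_s(\#Z_s-1)\cdots(\#Z_s-j+1)\bigr] \,=\, \int_{(\R\Sigma)^j\setminus\Delta} D^d_j(x_1,\ldots,x_j)\,dx_1\cdots dx_j,
\]
where the $j$-point density $D^d_j$ is built from the Bergman kernel $K_d$ of $(\mathcal{L}^d,h^d)$ and its first derivatives evaluated at $(x_1,\ldots,x_j)$. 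I would then split each such integral into a \emph{far-from-diagonal} region where all distances $|x_i-x_l|$ exceed $d^{-1/2+\beta}$ and a \emph{near-diagonal} region, for a suitably chosen small $\beta>0$.

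On the far-from-diagonal region, the exponential off-diagonal decay of $K_d$ for a positive line bundle makes the values of $s$ at the $x_i$ asymptotically independent, so $D^d_j \approx \prod_i D^d_1(x_i)$; combined with $\int D^d_1 = \sqrt{d}\,\Vol_h(\R\Sigma)/\sqrt{\pi} + O(d^{-\infty})$, this yields the main $(\sqrt{d}\,\Vol_h(\R\Sigma)/\sqrt{\pi})^k$ term after reassembling the Stirling sum. The subleading $\sqrt{d}^{k-1}$ correction is then concentrated on the \emph{pair-diagonal} stratum, where exactly one pair $(x_i,x_l)$ lies within distance $O(d^{-1/2})$ and all other points are well separated: rescaling the two close points by $\sqrt{d}$ and invoking the near-diagonal (Bargmann-Fock) scaling limit of $K_d$ produces a universal pair-density whose integral over the real line gives the universal constant $M$, multiplied by $\binom{k}{2}=\tfrac{k(k-1)}{2}$ (choice of the colliding pair) and by $(\sqrt{d}\,\Vol_h(\R\Sigma)/\sqrt{\pi})^{k-2}$ (one-point factors from the remaining singletons); this matches the coefficient in the claimed expansion.

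The main obstacle is controlling $D^d_j$ on the higher-order diagonals, where it becomes singular in product coordinates because the conditional Gaussian at colliding points degenerates. The remedy, and the reason Olver multispaces enter the picture, is to lift the entire computation to the multispace of $\Sigma$, in which a cluster of coinciding points is replaced by a single point together with the jet of a curve through it: on this resolution the covariance kernel, and hence $D^d_j$, extend smoothly, and $K_d$ admits a uniform scaling-limit expansion. The rate of convergence to this scaling limit is only polynomial in $1/\sqrt{d}$ (not exponential), which is precisely what restricts the error exponent to any $\alpha\in(0,1)$. Strata where three or more points collide have codimension $\ge 2$ in the configuration space and, by the same scaling analysis on the multispace, contribute at most $O(\sqrt{d}^{k-2})$ to the $k$-th moment; balancing these contributions against the cut-off $\beta$ and the off-diagonal decay of $K_d$ absorbs everything into the $O(\sqrt{d}^{k-1-\alpha})$ remainder.
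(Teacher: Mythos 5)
Your route is essentially the paper's: Kac--Rice densities for the factorial moments (the paper's ``modified moments''), factorization away from the diagonal via the exponential off-diagonal decay of the Bergman kernel, the Bargmann--Fock scaling limit on the stratum where exactly one pair of points nearly collides, Olver multispaces with divided differences to extend and uniformly bound the density at deeper collisions, and partition/Stirling bookkeeping to pass back to the ordinary moments, so the proposal is correct in outline. One bookkeeping point to make explicit in a full write-up: the constant produced by the near-collision pair stratum of the $j=k$ factorial moment is only $M'=M-\tfrac{1}{\sqrt{\pi}}$ (and the ``integral over the real line'' must be taken of the connected pair correlation, i.e.\ the difference with the product of one-point densities, for it to converge), while the missing $\tfrac{1}{\sqrt{\pi}}\binom{k}{2}\Vol_h(\R\Sigma)^{k-1}\sqrt{d}^{\,k-1}$ comes from the $S(k,k-1)=\binom{k}{2}$ term of your Stirling sum, which enters at the same order $\sqrt{d}^{\,k-1}$; note also that the positivity of $M$, which is part of the statement, is obtained in the paper by importing Puchol's variance asymptotics rather than from the scaling-limit computation.
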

By universal constant we mean that it is a constant which neither depends on $\Sigma$ nor on $k\in\mathbb{N}$.
This formula was  known for $k=1$ and $k=2$ (see resp. \cite[Theorem 1.1]{gw2} or \cite[Theorem 1.3]{let} and \cite[Theorem 1.6]{Puchol}) and  unknown in this general case.
Theorem \ref{asymom} is a consequence of a more precise equidistribution result, namely Theorem \ref{equimom} below. Before stating this, we need some definitions.
\begin{itemize}
\item For all $s\in \R H^0(\Sigma;\mathcal{L}^d)$, consider the empirical measure $$\nu_s=\sum_{x\in Z_s}\delta_x,$$ where $\delta_x$ is the Dirac measure at $x$. It induces a measure $\nu_s^k$ on $\R\Sigma^k$ defined, for every continuous function $f\in C^0(\R \Sigma^k)$, by $$\nu_s^k(f)=\displaystyle\sum_{(x_1,\dots,x_k)\in Z_s^k}f(x_1,\dots,x_k).$$
\item  The curvature form $\omega$ of $(\mathcal{L},h)$ gives rise to the K\"ahler metric on $\Sigma$ which induces  a Riemannian metric on the real locus $\R\Sigma$. 
We denote by $|\dV_h|^k$ the induced Riemannian volume form on $\R\Sigma^k$. 
\item For any $1\leq a<b\leq k$, we denote by $j_{ab}:\R\Sigma^{k-1}\hookrightarrow \R \Sigma^k$  the inclusion $$(x_1,\dots,x_a,\dots,\hat{x}_b,\dots,x_k)\mapsto (x_1,\dots,x_a,\dots,x_a,\dots,x_k),$$ so that the image $j_{ab}(\R\Sigma^{k-1})$ is equal to $\{(x_1,\dots.,x_k)\in \R \Sigma^k\mid x_a=x_b\}.$
\item Finally, for any $f\in C^0(\R\Sigma^k)$, we denote by $\omega_f$ its modulus of continuity, that is $\omega_f:\epsilon\in\R_+^*\mapsto\sup\{|f(x)-f(y)|, \mathbf{d}_h(x,y)\leq\epsilon \}\in\R_+$, where $\mathbf{d}_h$ is the Riemannian distance on $\R\Sigma$ induced by the real Hermitian metric $h$.
\end{itemize}
\begin{thm}\label{equimom} 
There exists a universal positive constant $M$ such that, for any  positive real Hermitian line bundle $(\mathcal{L},h)$  over any real Riemann surface $\Sigma$,  any $k\in\mathbb{N}$ and  any $f:\R\Sigma^k\rightarrow\R$ bounded  function, the following asymptotics hold:
$$\displaystyle\frac{1}{\sqrt{d}^k}\mathbb{E}[\nu_s^k](f)=\frac{1}{\sqrt{\pi}^k}\int_{\R \Sigma^k}f|\dV_h|^k+O(\frac{\log d}{\sqrt{d}}).$$
Moreover, if $f$ is continuous, we have:
$$\displaystyle\frac{1}{\sqrt{d}^k}\mathbb{E}[\nu_s^k](f)=\frac{1}{\sqrt{\pi}^k}\int_{\R \Sigma^k}f|\dV_h|^k+\frac{M}{\sqrt{\pi}^{k-2}\sqrt{d}}\sum_{1\leq a< b\leq k}\int_{\R\Sigma^{k-1}}j_{ab}^*f|\dV_h|^{k-1}+o(\frac{1}{\sqrt{d}}).$$ 
The error term $o(\frac{1}{\sqrt{d}})$ is bounded from above by $$\norm{f}_{\infty}\big(O\big(\frac{1}{\sqrt{d}^{1+\alpha}}\big)+\omega_f\big(\frac{1}{\sqrt{d}^{\alpha}}\big)O(\frac{1}{\sqrt{d}})\big)$$
for any $\alpha\in(0,1)$, where $\omega_f(\cdot)$ is the modulus of continuity of $f$. 
Moreover, the errors $O\big(\frac{1}{\sqrt{d}^{1+\alpha}}\big)$, $O(\frac{1}{\sqrt{d}})$ do not depend on $f$. 
\end{thm}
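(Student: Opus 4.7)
The plan is to combine the Kac--Rice formula with Bergman kernel asymptotics, handling the diagonals of $\R\Sigma^k$ by means of Olver multispaces. First I would apply Kac--Rice to write
$$\mathbb{E}[\nu_s^k](f)=\int_{\R\Sigma^k}f(x_1,\dots,x_k)\,\mathcal{D}_k^d(x_1,\dots,x_k)\,|\dV_h|^k,$$
where the $k$-point density $\mathcal{D}_k^d$ is determined by the joint Gaussian law of $(s(x_i),\nabla s(x_i))_{i=1}^k$, which in turn is entirely encoded by the Bergman kernel $B_d$ of $\R H^0(\Sigma;\mathcal{L}^d)$ together with its first derivatives along the real locus.

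Next I would introduce a diagonal scale $d^{-\alpha/2}$ and split $\R\Sigma^k$ into the well-separated region $U_\alpha=\{\min_{i<j}\mathbf{d}_h(x_i,x_j)>d^{-\alpha/2}\}$ and its complement. On $U_\alpha$, the off-diagonal decay of the rescaled Bergman kernel (in the spirit of the real adaptation used in \cite{gw2}) shows that $(s(x_1),\dots,s(x_k))$ is asymptotically a product of $k$ independent univariate Gaussian variables, so $\mathcal{D}_k^d$ factorizes at leading order as $(\sqrt{d}/\sqrt{\pi})^k$. Integrating against $f$ yields the main term $\frac{1}{\sqrt{\pi}^k}\int f|\dV_h|^k$. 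The sub-leading term comes from pairwise second-order interactions between $x_a$ and $x_b$ in the Bergman kernel: each such interaction produces a correction of order $1/\sqrt{d}$ in $\mathcal{D}_k^d$ and, after rescaling the coordinate $x_b$ by $\sqrt{d}$ around $x_a$ and integrating out the rescaled variable, contributes $\frac{M}{\sqrt{\pi}^{k-2}\sqrt{d}}\,j_{ab}^*f$, with a universal constant $M$ given by integrating the limit model two-point Kac--Rice density over $\R$.

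The main obstacle is the complement of $U_\alpha$, where pairs of points collide and $\mathcal{D}_k^d$ becomes singular in the usual Kac--Rice formulation. This is precisely where Olver multispaces enter: they give a smooth model of the configuration space in which coalescing zeros are replaced by higher-order jets, so that the singular density lifts to a bounded smooth density, uniformly in $d$ after the $\sqrt{d}$-rescaling. Using this desingularization together with Bergman kernel expansions in multi-jet coordinates, I would bound the near-diagonal contribution by
$$\norm{f}_\infty\big(O(d^{-(1+\alpha)/2})+\omega_f(d^{-\alpha/2})\,O(d^{-1/2})\big),$$
which matches the claimed error, with the $f$-uniformity built in because the Olver construction does not see $f$. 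The delicate point is that one must stratify by the pattern of coincidences and prove inductively on the number of collapsing clusters that triple and higher coincidences contribute only $o(1/\sqrt{d})$, so that only pairwise collisions survive in the sub-leading coefficient. The bounded (non-continuous) case then follows from the same argument by approximating $f$ in $L^1$ and absorbing the approximation error into the $O(\log d/\sqrt{d})$ remainder.
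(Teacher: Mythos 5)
Your overall architecture (Kac--Rice density, separation of a near-diagonal region at a mesoscopic scale, factorization of the density via off-diagonal Bergman decay, a two-point limit model for the $1/\sqrt d$ term, and Olver multispaces to desingularize and uniformly bound the density near the diagonal) is the same as the paper's. But there is a genuine gap at the very first step: you write $\mathbb{E}[\nu_s^k](f)=\int_{\R\Sigma^k}f\,\mathcal{D}_k^d\,|\dV_h|^k$, which is false, because $\nu_s^k$ is \emph{not} absolutely continuous on $\R\Sigma^k$: it charges the diagonal through tuples $(x_1,\dots,x_k)\in Z_s^k$ with repeated entries. What Kac--Rice produces is the density of the \emph{factorial} moment, i.e.\ of the modified measure $\tilde\nu_s^k$ counting only pairwise distinct $k$-tuples (in the paper, Proposition \ref{denf} for $\tilde\nu_s^k$). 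The difference $\nu_s^k-\tilde\nu_s^k$ is a sum over partitions of $\{1,\dots,k\}$ of lower-order modified moments (Proposition \ref{difference}); the partitions identifying exactly one pair $(a,b)$ contribute $\mathbb{E}[\tilde\nu_s^{k-1}](j_{ab}^*f)\sim \frac{\sqrt d^{\,k-1}}{\sqrt\pi^{\,k-1}}\int_{\R\Sigma^{k-1}}j_{ab}^*f\,|\dV_h|^{k-1}$, i.e.\ exactly at the order $1/\sqrt d$ of the sub-leading term you are trying to compute. In the paper this is why the final constant is $M=M'+\frac{1}{\sqrt\pi}$, where $M'$ is the pair-correlation constant coming from the two-point (variance) asymptotics. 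Your argument, which only tracks pairwise Bergman-kernel interactions in the density, would at best produce $M'$ and therefore the wrong coefficient in front of $\sum_{a<b}\int j_{ab}^*f$; the combinatorial ``coincident-index'' contribution is invisible to any Kac--Rice density and must be added by hand.

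Two secondary points. First, if you do derive the pair constant by ``integrating the limit model two-point Kac--Rice density over $\R$'', you must integrate the difference between the two-point density and the product of one-point densities (otherwise the integral diverges); the paper instead imports this constant from Puchol's variance theorem (Theorem \ref{var} and Lemma \ref{var2}), which also guarantees positivity of $M$. Second, your treatment of merely bounded $f$ by $L^1$-approximation does not obviously yield the stated uniform rate $\norm{f}_\infty O(\log d/\sqrt d)$, since the approximation error is not quantified in $d$; the direct route is simpler: outside a $\frac{\log d}{\sqrt d}$-neighborhood of the diagonal the density equals $\pi^{-k/2}+O(1/d)$, and inside it the uniform bound $\frac{1}{\sqrt d^{\,k}}\mathcal{R}^k_d\le C$ (the Olver-multispace boundedness, Theorem \ref{boundabove}) times the volume $O(\log d/\sqrt d)$ of the neighborhood gives the error, again up to the partition terms relating $\nu_s^k$ to $\tilde\nu_s^k$.
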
 
Choosing $f=1$ in Theorem \ref{equimom}, we obtain Theorem \ref{asymom}.\\

We also investigate  the asymptotic behaviour of the central moments.
 We  define the $k$-th central moment of the random variable $\nu_s$ as $\mathbb{E}[(\nu_s-\mathbb{E}[\nu_s])^k]$, where, for all $f\in C^0(\R\Sigma)$,
$$\mathbb{E}[(\nu_s-\mathbb{E}[\nu_s])^k](f)=\int_{s\in\R H^0(\Sigma;\mathcal{L}^d)}\big(\nu_s(f)-\mathbb{E}[\nu_s](f)\big)^k d\mu(s).$$

\begin{thm}\label{centermom}
Under the hypothesis of Theorem \ref{asymom}, for all $f:\R\Sigma\rightarrow\R$ bounded  function and any $k>2$, the following holds as $d\rightarrow\infty$: 
$$\frac{1}{\sqrt{d}^{k}}\mathbb{E}[(\nu_s-\mathbb{E}[\nu_s])^k](f)=O(\frac{\log d}{\sqrt{d}}).$$
Moreover, if $f$ is continuous, we have 
$$\frac{1}{\sqrt{d}^{k-1}}\mathbb{E}[(\nu_s-\mathbb{E}[\nu_s])^k](f)=o(1).$$
The term $o(1)$ is bounded from above by $$\norm{f}_{\infty}\big(O\big(\frac{1}{\sqrt{d}^{\alpha}}\big)+\omega_f\big(\frac{1}{\sqrt{d}^{\alpha}}\big)O(1)\big)$$
for any $\alpha\in(0,1)$, where $\omega_f(\cdot)$ is the modulus of continuity of $f$. 
Moreover, the errors $O\big(\frac{1}{\sqrt{d}^{\alpha}}\big)$ and $O(1)$ do not depend on $f$.   
\end{thm}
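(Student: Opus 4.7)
The plan is to derive Theorem \ref{centermom} from Theorem \ref{equimom} by a binomial expansion of the central moment. Setting $X = \nu_s(f)$ (viewed as a real-valued random variable) and $\mu_d = \mathbb{E}[X]$, one has
$$\mathbb{E}[(X-\mu_d)^k] = \sum_{j=0}^k \binom{k}{j}(-1)^{k-j}\,\mathbb{E}[X^j]\,\mu_d^{k-j}.$$
The bridge with Theorem \ref{equimom} is the identity $\mathbb{E}[X^j] = \mathbb{E}[\nu_s^j(f^{\otimes j})]$, where $f^{\otimes j}(x_1,\dots,x_j) := f(x_1)\cdots f(x_j)$ is a bounded function on $\R\Sigma^j$. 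Moreover, setting $I_p := \int_{\R\Sigma} f^p\,|\dV_h|$, one computes $j_{ab}^*(f^{\otimes j})(x_1,\dots,x_{j-1}) = f^2(x_a)\prod_{i\neq a,b} f(x_i)$, so the diagonal contribution in Theorem \ref{equimom} collapses neatly into $\binom{j}{2}\, I_2\, I_1^{j-2}$.

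For continuous $f$, applying Theorem \ref{equimom} to $f^{\otimes j}$ gives
$$\mathbb{E}[X^j] = \frac{\sqrt{d}^j\, I_1^j}{\sqrt{\pi}^j} + \frac{M\binom{j}{2}\,\sqrt{d}^{j-1}\, I_2\, I_1^{j-2}}{\sqrt{\pi}^{j-2}} + o(\sqrt{d}^{j-1}),$$
and for $j=1$ the diagonal sum is empty, so $\mu_d = \frac{\sqrt{d}}{\sqrt{\pi}}I_1 + o(1)$, whence $\mu_d^{k-j} = \frac{\sqrt{d}^{k-j}\, I_1^{k-j}}{\sqrt{\pi}^{k-j}} + o(\sqrt{d}^{k-j-1})$. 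Substituting back, the contributions at order $\sqrt{d}^k$ and $\sqrt{d}^{k-1}$ reduce to
$$\frac{I_1^k}{\sqrt{\pi}^k}\sqrt{d}^k\sum_{j=0}^k \binom{k}{j}(-1)^{k-j} \;+\; \frac{M\, I_2\, I_1^{k-2}}{\sqrt{\pi}^{k-2}}\sqrt{d}^{k-1}\sum_{j=0}^k \binom{k}{j}(-1)^{k-j}\binom{j}{2}.$$
The first sum vanishes by the standard alternating binomial identity; the second vanishes for $k > 2$ after rewriting $\binom{k}{j}\binom{j}{2} = \binom{k}{2}\binom{k-2}{j-2}$ and reducing to $(1-1)^{k-2} = 0$ (this is precisely where the hypothesis $k > 2$ is used, since for $k = 2$ the sum equals $1$, recovering the variance asymptotic of \cite{Puchol}). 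Only the error terms survive, giving $\mathbb{E}[(X-\mu_d)^k] = o(\sqrt{d}^{k-1})$.

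For merely bounded $f$, the identical scheme applies with the coarser error $O(\log d/\sqrt{d})$ from the first half of Theorem \ref{equimom}, producing $\mathbb{E}[(X-\mu_d)^k] = O(\sqrt{d}^{k-1}\log d)$ after the leading-order cancellation. I do not expect any serious obstacle: the only bookkeeping is to propagate the explicit error $\|f\|_\infty\bigl(O(1/\sqrt{d}^{1+\alpha}) + \omega_f(1/\sqrt{d}^\alpha)\,O(1/\sqrt{d})\bigr)$ of Theorem \ref{equimom} through the binomial expansion, using $\omega_{f^{\otimes j}}(\epsilon) \leq j\|f\|_\infty^{j-1}\omega_f(\epsilon)$ to pull the modulus of continuity back from $f^{\otimes j}$ to $f$. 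Since the $O$-constants in Theorem \ref{equimom} are uniform in the test function, and only finitely many values of $j \leq k$ are involved, this yields the claimed quantitative bound after collecting terms.
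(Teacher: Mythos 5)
Your proposal is correct and follows essentially the same route as the paper: a binomial expansion of the central moment, substitution of the asymptotics of Theorem \ref{equimom} applied to the product test function $f^{\otimes j}$, and cancellation of the $\sqrt{d}^{k}$ and $\sqrt{d}^{k-1}$ orders via the alternating binomial identities (the paper phrases the second one as $\sum_l(-1)^{k-l}\binom{k}{l}l(l-1)=\frac{d^2}{dx^2}\big|_{x=1}(x-1)^k=0$ for $k>2$, which is your $\binom{k}{j}\binom{j}{2}=\binom{k}{2}\binom{k-2}{j-2}$ reduction in different clothing). Your evaluation of the diagonal term as $\binom{j}{2}I_2I_1^{j-2}$ is in fact a more careful version of the paper's corresponding step, and, as you note, the precise value is immaterial since the coefficient of $l(l-1)$ is independent of $l$ and the sum vanishes.
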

In particular, for $f=1$, we get $\mathbb{E}[(\#Z_s-\mathbb{E}[\# Z_s])^k]=O(\sqrt{d}^{k-1-\alpha})$.\\
  In Theorem \ref{centermom} we put the hypothesis $k>2$ in order to have   a more concise formula. In fact,  the moment $ \mathbb{E}[(\#Z_s-\mathbb{E}[\# Z_s])^k]$ has a different behavior in the case $k=1$ or $2$. For $k=1$ it is trivially $0$. 
For $k=2$,  which corresponds to the variance $\textrm{Var}(\#Z_s)$, it has been proved  that it grows as $\sqrt{d}$,  see \cite{Puchol}.

\paragraph*{Case of random polynomials}
Let $p\in\R_d[X]$ be a degree $d$ real polynomial  and $Z_p=\{x\in\R\mid p(x)=0\}$ be the real zeros of $p$. We equip $\R_d[X]$ with the Gaussian measure $\mu$ associated with the scalar product that makes $\{\sqrt{\binom{d}{k}}X^k\}_{0\leqslant k\leqslant d}$ an orthonormal basis. The probability space $(\R_d[X],\mu)$ is  called the Kostlan-Shub-Smale model.  

\begin{cor} Let $c(d)\in \R^*_+$ be any sequence of real numbers, then for any  $k>2$ and any $\alpha\in(0,1)$,  the following holds as $d\rightarrow\infty$:
\begin{itemize}
\item$\mu\big\{\big|\# Z_p-\sqrt{d}\big|>c(d)\sqrt{d}\big\}= O\big(\frac{1}{c(d)^k\sqrt{d}^{1+\alpha}}\big);$

\item for any measurable subset $A\subset \R P^1$ of positive volume: $$\mu\big\{\big|\# (Z_p\cap A)-\mathbb{E}[\#(Z_p\cap A)]\big|>c(d)\sqrt{d}\big\}= O\big(\frac{\log d}{c(d)^k\sqrt{d}}\big).$$
\end{itemize}
\end{cor}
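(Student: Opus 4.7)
The plan is to reduce the corollary to Theorem \ref{expon} applied to the special case $\Sigma = \C P^1$, equipped with the standard real structure $z\mapsto \bar z$, and $\mathcal{L} = \mathcal{O}(1)$ endowed with the Fubini--Study Hermitian metric. As recalled in the introduction, the associated $L^2$-scalar product on $\R H^0(\C P^1; \mathcal{O}(d))$ admits $\{\sqrt{\binom{d}{k}} X^k\}_{0\leq k\leq d}$ as an orthonormal basis; thus the Kostlan--Shub--Smale space $(\R_d[X], \mu)$ is identified, as a Gaussian space, with $(\R H^0(\C P^1; \mathcal{O}(d)), \mu_{FS})$, and one only has to translate the statements of Theorem \ref{expon} through this isometry.

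Under this identification, a real polynomial $p$ corresponds to a real section $s_p$ whose real zero locus in $\R P^1 = \R\cup\{\infty\}$ satisfies $Z_p = Z_{s_p}\cap \R$, and hence $|\#Z_p - \#Z_{s_p}|\leq 1$. The classical Kostlan--Shub--Smale identity gives $\mathbb{E}[\#Z_{s_p}] = \sqrt{d}$ for every $d$, so in particular $|\mathbb{E}[\#Z_p] - \sqrt{d}|\leq 1$. For the first bullet, either $c(d)\sqrt{d}<2$ and the estimate is trivial (any probability is at most $1$), or $c(d)\sqrt{d}\geq 2$, in which case
\[
 \{|\#Z_p - \sqrt{d}|>c(d)\sqrt{d}\} \subset \{|\#Z_{s_p}-\mathbb{E}[\#Z_{s_p}]|>c(d)\sqrt{d}/2\},
\]
and the first bullet of Theorem \ref{expon} applied to $(\C P^1,\mathcal{O}(d))$ yields the claimed $O\bigl(1/(c(d)^k \sqrt{d}^{1+\alpha})\bigr)$ bound.

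For the second bullet, $\#(Z_p\cap A)$ and $\#(Z_{s_p}\cap A)$ still differ by at most $1$, accounting for a possible zero of $s_p$ at infinity if $\infty\in A$; hence the same inclusion argument reduces the statement to the second bullet of Theorem \ref{expon} applied to $A\subset \R P^1$, which delivers the $O(\log d /(c(d)^k \sqrt{d}))$ bound. The proof is therefore a direct specialization of Theorem \ref{expon} and requires no new analytic input. The only point meriting a line of care is the bounded discrepancy between $\#Z_p$ and $\#Z_{s_p}$ coming from the point at infinity, which is absorbed in the $O(\cdot)$ notation and in the passage from $c(d)\sqrt{d}$ to $c(d)\sqrt{d}/2$ above.
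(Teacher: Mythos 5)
Your proof is correct and is essentially the paper's own argument: the paper likewise treats a Kostlan--Shub--Smale polynomial as a real random section of $\mathcal{O}_{\C P^1}(d)$ with the Fubini--Study metric and deduces the corollary directly from Theorem \ref{expon} (using $\mathbb{E}[\#Z_s]=\sqrt{d}$ and $\Vol(\R P^1)=\sqrt{\pi}$). Your extra care about the point at infinity and the factor $c(d)/2$ is harmless bookkeeping that the paper leaves implicit.
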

 \begin{cor}\label{asypol} Let $p\in(\R_d[X],\mu)$ be a random Kostlan-Shub-Smale polynomial. There exists a positive constant $C$ such that  for any $k\in\mathbb{N}$  and any $\alpha\in(0,1)$ the following asymptotics hold 
 $$\mathbb{E}[\#Z_p^k]=\sqrt{d}^k+Ck(k-1)\sqrt{d}^{k-1}+O\big(\sqrt{d}^{k-1-\alpha}\big)$$
$$\mathbb{E}[|\#Z_p-\sqrt{d}|^k]=O\big(\sqrt{d}^{k-1-\alpha}\big).$$
\end{cor}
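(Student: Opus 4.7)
My plan is to derive Corollary \ref{asypol} by specializing Theorems \ref{asymom} and \ref{centermom} to the case $\Sigma = \C P^1$ with complex conjugation as its real structure, $\mathcal{L} = \mathcal{O}(1)$, and $h$ the standard Fubini-Study Hermitian metric. The first step is to invoke the isometric identification recalled in the introduction: the map $X^k \mapsto \sqrt{\binom{d}{k}}X^k$ turns $(\R_d[X], \mu)$ into $\R H^0(\C P^1; \mathcal{O}(d))$ equipped with the $L^2$-scalar product induced by the Fubini-Study metric on $\mathcal{O}(d)$, under which real roots of polynomials correspond bijectively to real zeros of the associated sections. All random quantities thus transfer directly between the two models.

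The second step is to fix $\Vol_h(\R P^1) = \sqrt{\pi}$. This value is forced by consistency between Kostlan's exact identity $\mathbb{E}[\#Z_p] = \sqrt{d}$ and the $k=1$ limit $\mathbb{E}[\#Z_p]/\sqrt{d} \to \Vol_h(\R P^1)/\sqrt{\pi}$ from Theorem \ref{asymom}. Substituting this into Theorem \ref{asymom} collapses the leading factor $\Vol_h(\R\Sigma)^k/\sqrt{\pi}^k$ to $1$ and the subleading coefficient $Mk(k-1)\Vol_h(\R\Sigma)^{k-1}/(2\sqrt{\pi}^{k-2})$ to $M\sqrt{\pi}\, k(k-1)/2$, yielding the first formula of the corollary with the universal constant $C = M\sqrt{\pi}/2$.

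For the second formula I apply Theorem \ref{centermom} to the constant function $f \equiv 1$: since $\omega_f \equiv 0$ and $\norm{f}_\infty = 1$, its error reduces to $O(\sqrt{d}^{-\alpha})$, giving $\mathbb{E}[(\#Z_p - \mathbb{E}[\#Z_p])^k] = O(\sqrt{d}^{k-1-\alpha})$ for every $k > 2$ and every $\alpha \in (0,1)$. Because $\mathbb{E}[\#Z_p] = \sqrt{d}$ exactly in the Kostlan-Shub-Smale model, this rewrites as $\mathbb{E}[(\#Z_p - \sqrt{d})^k] = O(\sqrt{d}^{k-1-\alpha})$, which coincides with $\mathbb{E}[|\#Z_p - \sqrt{d}|^k]$ when $k$ is even. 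For $k$ odd, set $X := \#Z_p - \sqrt{d}$; since $k-1$ is even, $X^{k-1} \geq 0$, and Cauchy-Schwarz combined with the previous bound applied to the even exponents $k \pm 1$ yields
$$\mathbb{E}[|X|^k] = \mathbb{E}\bigl[X^{k-1}\cdot|X|\bigr] \leq \bigl(\mathbb{E}[X^{k-1}]\,\mathbb{E}[X^{k+1}]\bigr)^{1/2} = O\bigl(\sqrt{d}^{k-1-\alpha}\bigr).$$
The whole argument is essentially bookkeeping once the isometry and the length $\Vol_h(\R P^1) = \sqrt{\pi}$ are in place; the only mildly delicate point is the Cauchy-Schwarz reduction for odd $k$, which preserves the $\alpha$-error without loss.
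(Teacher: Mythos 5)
Your route is the same as the paper's for everything except the odd-$k$ absolute moments: you identify $(\R_d[X],\mu)$ with $\R H^0(\C P^1;\mathcal{O}(d))$ endowed with the Fubini--Study $L^2$-product, plug $\Vol_h(\R P^1)=\sqrt{\pi}$ into Theorems \ref{asymom} and \ref{centermom}, and read off $C=\frac{M\sqrt{\pi}}{2}$; the paper does exactly this (it computes the Fubini--Study length of $\R P^1$ directly, whereas you recover $\sqrt{\pi}$ from Kostlan's identity $\mathbb{E}[\#Z_p]=\sqrt{d}$ together with the $k=1$ case of Theorem \ref{asymom} --- a legitimate, if slightly indirect, variant). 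The specialization of Theorem \ref{centermom} to $f\equiv 1$, where $\omega_f\equiv 0$ kills the modulus-of-continuity term, is also exactly the paper's observation, and for even $k>2$ this settles the second formula.

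The genuine gap is at $k=3$. Your interpolation $\mathbb{E}[|X|^k]\leq\bigl(\mathbb{E}[X^{k-1}]\,\mathbb{E}[X^{k+1}]\bigr)^{1/2}$ needs the central-moment bound at the exponents $k\pm 1$, and for $k=3$ the lower exponent is $2$: Theorem \ref{centermom} is stated only for $k>2$, and the second central moment genuinely does not satisfy the $O(\sqrt{d}^{\,k-1-\alpha})$ law, since $\mathbb{E}[X^2]=\textrm{Var}(\#Z_p)$ grows like $\sqrt{d}$ (Puchol, and Dalmao in the Kostlan model, both cited in the paper). Feeding the correct order $\sqrt{d}$ into your Cauchy--Schwarz step yields only $\mathbb{E}[|X|^3]=O\bigl(\sqrt{d}^{\,2-\alpha/2}\bigr)$, i.e.\ the claimed rate $O(\sqrt{d}^{\,2-\alpha})$ only for $\alpha$ in $(0,\tfrac12)$, not for every $\alpha\in(0,1)$. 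In fact no interpolation of this kind can do better: with $\#Z_p-\sqrt{d}$ of typical size $d^{1/4}$ (Dalmao's CLT), one expects $\mathbb{E}[|X|^3]$ of order $\sqrt{d}^{\,3/2}$, so the absolute-value bound at $k=3$ for $\alpha>\tfrac12$ cannot be extracted from Theorem \ref{centermom} alone, and the corollary's statement should be taken with the same caveat as its restriction to $k>2$ (which you adopted silently, while the corollary is phrased for all $k\in\mathbb{N}$; for $k=1,2$ the bound is incompatible with the known variance growth, so an explicit restriction is needed). For odd $k\geq 5$ your argument is fine, since both exponents $k\pm1$ are even and larger than $2$, and the product of the two bounds gives exactly $O(\sqrt{d}^{\,2(k-1)-2\alpha})$ before taking the square root.
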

A Kostlan-Shub-Smale polynomial $p\in(\R_d[X],\mu)$  is in fact a real random section of  $\mathcal{O}_{\C P^1}(d)\rightarrow \C P^1$, where we equip $\mathcal{O}_{\C P^1}(1)$ and hence $\mathcal{O}_{\C P^1}(d)$ with the Fubini Study metric. The corollaries follow from Theorems \ref{expon}, \ref{asymom} and \ref{centermom}  and from the fact that the Fubini-Study volume of $\R P^1$ is equal to $\sqrt{\pi}$. 
The constant $C$ appearing  in  Corollary \ref{asypol} is then equal to $\frac{M\sqrt{\pi}}{2}$, where $M$ is the universal constant of Theorem \ref{equimom}. 
Corollary \ref{asypol} for $k=2$ has already been proved in \cite{dal}, in which a Central Limit Theorem for Kostlan-Shub-Smale polynomial is also shown (see also \cite{azais} for the variance of  the number of solutions of a system of random polynomials). However, also for the case of random polynomials, our proof differs from the one of \cite{dal}. 
\paragraph*{Complex case}
These techniques can be applied also in the complex case.  For all $s\in H^0(\Sigma;\mathcal{L}^d)$, let $C_s$ be the current of integration over $\{s=0\}$, that is the empirical measure $\displaystyle C_s(f)=\sum_{x\in \{s=0\}} f(x)$. 
$$\emph{How do the zeros of a random section  distribute over $\Sigma?$}$$
It is known that $\frac{1}{d}\mathbb{E}[C_s]=\omega+O(\frac{1}{d})$, where $\omega$ is the curvature form of $h$, see  \cite[Proposition 3.2]{sz}. In \cite[Theorem 3.4]{bsz}, the correlations between simultaneous zeros of a random section is proved to be indipendent of the line bundle $\mathcal{L}^d$.
The main result in the complex setting is the following:
\begin{thm}\label{complex} Let $(\mathcal{L},h)$ be a positive Hermitian line bundle over a Riemann surface $\Sigma$ and let $\omega$ be the K\"ahler curvature form of $h$. Then, for all $k\in \mathbb{N}$ and for all $f\in C^0(\Sigma^k)$, the following asymptotics hold:
$$\frac{1}{d^k}\mathbb{E}[C_s^k](f)=\int_{\Sigma^k}f\dV^k_{\omega}+\norm{f}_{\infty}O(\frac{1}{d})$$  where $\dV_{\omega}^k$ is the volume form on $\Sigma^k$ induced by $\omega$. 
\end{thm}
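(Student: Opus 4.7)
The plan is to run the Kac-Rice plus Bergman kernel scheme from the real case in this complex setting, exploiting the fact that complex zeros on a Riemann surface are still points (complex codimension $1$ equals real dimension $0$), so the whole analysis stays one-dimensional, while the off-diagonal Bergman kernel decay and the Tian-Zelditch asymptotics for the diagonal of the Bergman kernel are by now classical and apply in this complex setting even more cleanly than in the real one.

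First, I would write the complex Kac-Rice formula for $\mathbb{E}[C_s^k](f)$ as an integral over $\Sigma^k$ of $f$ against a density $\mathcal{D}_k^d(x_1,\ldots,x_k)$, expressible as a conditional expectation of $\prod_i |\nabla s(x_i)|^2$ given $s(x_1)=\cdots=s(x_k)=0$ multiplied by the joint Gaussian density of $(s(x_1),\ldots,s(x_k))$ at $0$. Both factors are explicit rational expressions in the covariance matrix $A_d:=(B_d(x_i,x_j))_{i,j}$ and its mixed first derivatives, where $B_d$ is the Bergman kernel of $(\mathcal{L}^d,h^d)$.

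Then I would split $\Sigma^k = F_d \cup N_d$ along the scale $\varepsilon_d = d^{-1/2}\log d$, with $F_d := \{\min_{i\neq j} \mathbf{d}(x_i,x_j) \geq \varepsilon_d\}$. On $F_d$, the off-diagonal decay $|B_d(x,y)| = O(d\,e^{-c\sqrt{d}\,\mathbf{d}(x,y)})$ makes $A_d$ a polynomially (in $1/d$) small perturbation of its diagonal, so a Neumann series expansion gives $\mathcal{D}_k^d = \prod_i \mathcal{D}_1^d(x_i)\,(1+O(d^{-N}))$ for every $N\in\mathbb{N}$. Combined with the Tian-Zelditch expansion $d^{-1}\mathcal{D}_1^d = 1 + O(1/d)$, integration on $F_d$ against $f$ produces the leading term $\int_{\Sigma^k} f\, \dV_\omega^k$ with remainder $\|f\|_\infty O(1/d)$.

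The remaining piece comes from the near-diagonal region $N_d$, whose volume is $O(d^{-1}(\log d)^2)$. In a Bochner chart, the rescaling $u_j = \sqrt{d}(x_j - x_0)$ turns $d^{-k}\mathcal{D}_k^d$ into a universal density on $\mathbb{C}^{k-1}$ (the Bleher-Shiffman-Zelditch scaling limit of \cite{bsz}), which has only integrable logarithmic singularities on the generalized diagonal. Together with the volume bound on $N_d$, this absorbs the near-diagonal contribution into the $O(1/d)$ error. The main obstacle is precisely this last step: producing an integrable majorant for the rescaled density which is uniform in $d$ and in the base point $x_0$, so that the near-diagonal contribution can be shown to be genuinely $O(1/d)$ and not merely $O((\log d)^c/d)$. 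Once such a uniform bound is in place, an alternative and cleaner presentation of the whole argument is available through the Poincar\'e-Lelong identity $C_s = d\omega + \frac{1}{\pi}dd^c\log|s|_h$, which reduces $\mathbb{E}[C_s^k](f)$ to a $k$-fold derivative of $\mathbb{E}[\prod_i \log|s(x_i)|]$ and allows direct estimation via the Wick formula and Bergman kernel asymptotics.
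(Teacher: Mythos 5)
Your overall scheme (Kac--Rice/coarea formula, off-diagonal Bergman decay away from the diagonal, rescaling near the diagonal) is the same as the paper's, but the step you yourself flag as ``the main obstacle'' is a genuine gap, and it is exactly the part that carries the content of the theorem. With your decomposition $\Sigma^k=F_d\cup N_d$, where $N_d$ is the set where \emph{some} pair of points is within $\frac{\log d}{\sqrt d}$, a uniform bound on $\frac{1}{d^k}$ times the density only yields a near-diagonal contribution of order $\norm{f}_{\infty}\frac{(\log d)^2}{d}$ (and the same loss already appears when you complete $\int_{F_d}f\,\dV_\omega^k$ to $\int_{\Sigma^k}f\,\dV_\omega^k$), which is weaker than the stated $\norm{f}_{\infty}O(\frac{1}{d})$. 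To do better you invoke a $d$-uniform integrable majorant for the rescaled $k$-point density near \emph{all} diagonal strata (all collision patterns of the $k$ points), but you do not prove it, and this is precisely the hard analytic point; note also that the Bleher--Shiffman--Zelditch scaling limit has no logarithmic singularity to integrate --- complex zeros repel, so the correlations vanish at coincident points --- the real issue is uniformity in $d$, in the base point, and in the collision pattern, which is what the paper's Olver multispace/divided-differences machinery is built to control.

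The paper closes this gap with a different, finer decomposition that avoids needing any integrable majorant for the full $k$-point function: it splits off the region $U_d$ where \emph{at most one} pair of points is close, where the density factorizes into a product of one-point factors times one two-point factor, and the near-diagonal two-point contribution is handled by quoting the known pair-correlation/variance result of Shiffman--Zelditch (\cite[Theorem 1.1]{sz2}, with \cite[Proposition 3.2]{sz} for the one-point term); the remaining region, where at least two pairs collide, has volume $O\big(\frac{(\log d)^4}{d^2}\big)$ and is disposed of by the uniform boundedness of $\frac{1}{d^k}\mathcal{R}^k_d$, proved via the holomorphic analogue of Theorem \ref{boundabove} (Olver multispaces and divided differences, as in Sections \ref{42}--\ref{snume}). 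So to repair your argument you must either establish the uniform majorant you postulate (strictly more than the boundedness the paper proves), or adopt the paper's two-tier splitting and import the two-point asymptotics from \cite{sz2}. A minor additional point: with the cutoff at distance $\frac{\log d}{\sqrt d}$ the exponential decay of the Bergman kernel gives relative off-diagonal errors of a fixed negative power of $d$ (enough for $O(\frac{1}{d})$ after adjusting the constant in the cutoff), not $O(d^{-N})$ for every $N$ as you assert; and your concluding Poincar\'e--Lelong/Wick alternative is only a direction, not an argument.
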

For $k=2$,  a more precise result has been obtained by Shiffman and Zelditch, see  \cite[Theorem 1.1]{sz2}.
\paragraph*{Idea of the proof}
The pattern of the proof is the following
$$\textrm{Theorem \ref{equimom}}\Rightarrow\textrm{Theorem \ref{centermom}}\Rightarrow\textrm{Theorem \ref{expon}}.$$
The first implication is proved by simple algebraic operations, the second one using the classical Markov inequality. Theorem \ref{equimom} is then the core of the paper.
The proof of Theorem \ref{equimom}  is of geometric nature. Using Kac-Rice formula, we write the moment $\mathbb{E}[\nu_s^k](f)$ as an integral over $\R \Sigma^k$ of the form $$\int_{\underline{x}\in\R \Sigma^k}f(\underline{x})\mathcal{R}^k_d(\underline{x}) |\dV_h|^k.$$
Here, $\mathcal{R}^k_d$ is a smooth function  defined on $\R\Sigma^k\setminus\Delta$, where $\Delta$ is the  diagonal of $\R\Sigma^k$. 
By standard techniques, we can write, for any $\underline{x}=(x_1,\dots,x_k)\in \R\Sigma^k\setminus\Delta $, 
$$\mathcal{R}_d(\underline{x})=\frac{\mathcal{N}^k_d(\underline{x})}{\mathcal{D}^k_d(\underline{x})}.$$ The functions $\mathcal{N}^k_d$ and $\mathcal{D}^k_d$ are explicit and given in Proposition \ref{df}.  In particular, using Bergman kernel estimates,  we show that  $\mathcal{R}_d^k$ locally grows  as $\sqrt{d}^k$. The denominator
$\mathcal{D}^k_d(\underline{x})$ is the normal Jacobian of the evaluation map $$ev_{\underline{x}}:s\in \R H^0(\Sigma;\mathcal{L}^d)\mapsto \big(s(x_1),\dots,s(x_k)\big) \in\R\mathcal{L}^d_{x_1}\times\cdots \R\mathcal{L}^d_{x_k}.$$
It equals the square root of the determinant of a symmetric $n\times n$ matrix whose $(i,j)$-entry is $\mathcal{K}_d(x_i,x_j)$, where $\mathcal{K}_d$ is the Bergman kernel of the line bundle $\mathcal{L}^d$. This normal Jacobian vanishes on the diagonal $\Delta$ of $\R\Sigma^k$ and this is the reason why $\mathcal{R}_d^k$ is (a priori) defined only over $\R\Sigma^k\setminus\Delta$. The main steps for studying the function $\mathcal{R}_d^k$ are the following:
\begin{enumerate}
\item Outside a neighborhood $\Delta_d$ of the diagonal $\Delta$ of size around $\frac{\log d}{\sqrt{d}}$ we have
$$\frac{1}{\sqrt{d}^k}\mathcal{R}^k_d(x_1,\dots,x_k)=\prod_{i=1}^k\frac{1}{\sqrt{d}}\mathcal{R}_d^k(x_i)+O(\frac{1}{d})=\frac{1}{\sqrt{\pi}^k}+O(\frac{1}{d})$$
where the error term is uniform in $(x_1,\dots,x_k)\in\R\Sigma^k\setminus \Delta_d$. 
This is essentially due to the fact that we can express $\mathcal{R}^k_d(x_1,\dots,x_k)$ in terms of the Bergman kernel $\mathcal{K}_d$  at points $(x_i,x_j)$, $i,j\in\{1,\dots,k\}$, and that $\frac{1}{d}\mathcal{K}_d(x_i,x_j)$ uniformly goes to zero as $\mathbf{d}_h(x_i,x_j)>\frac{\log d}{\sqrt{d}}$, see Proposition \ref{sim2}. 
\item  Using Olver multispaces we are able  to extend the  function $\mathcal{R}^k_d$ over all the \emph{compact} manifold $\R\Sigma^k$. 
Then, a  careful analysis of $\mathcal{R}^k_d$ in a neighborhood of the diagonal and the compactness of $\R\Sigma^k$ will give us an  uniform boundedness of $\frac{1}{\sqrt{d}^k}\mathcal{R}^k_d$, that is there exists a constant $C$ such that $\frac{1}{\sqrt{d}^k}\mathcal{R}^k_d(\underline{x})<C$ for every $\underline{x}\in\R\Sigma^k$ and every $d$. This is the content of   Theorem \ref{boundabove}.
\item At this level we are able to prove that  $$\frac{1}{\sqrt{d}^k}\int_{\R \Sigma^k}f\mathcal{R}^k_d |\dV_h|^k=\frac{1}{\sqrt{\pi}^k}\int_{\R\Sigma^k}f|\dV_h|^k+\norm{f}_{\infty}O(\frac{\log d}{\sqrt{d}}).$$
The first term of the right hand side is given by integrating over the complement of the neighborhood $\Delta_d$ of the diagonal  and by using point 1. The error term is given by the integral over $\Delta_d$. This integral  is bounded from above by the product of the volume of $\Delta_d$, that is a $O(\frac{\log d}{\sqrt{d}})$, times the infinity norms of  $f$ and of $\frac{1}{\sqrt{d}^k}\mathcal{R}^k_d(\underline{x})$. The latter  is finite thanks to point 2 above.
\item Finally, a more careful analysis of $\mathcal{R}^k_d$ in a neighborhood of the diagonal  will give us also the second term in the asymptotic expansion of Theorem \ref{equimom}. For  this, we introduce some subsets $U_d^{a,b}$ of $\R \Sigma^k$,  $1\leq a<b\leq k$, which, roughly speaking, are the set of $(x_1,\dots,x_k)$ such that the distance between every pair  of points $(x_i,x_j)$, $i\neq j\in\{1,\dots,k\}$, is bigger than $\frac{\log d}{\sqrt{d}}$, except at most the pair $(x_a,x_b)$, see Definition  \ref{open1}. With similar techniques as in the point 1 above, we are able to estimate $\mathcal{R}_d^k$ is these subsets, see Propositions \ref{sim2} and \ref{simple}.
\end{enumerate}
The proof of the complex case, namely Theorem \ref{complex}, follows the same lines.
\paragraph*{Organization of the paper}
 In Section \ref{generality} we introduce our setting and the main  tools such as Bergman kernel estimates in normal coordinates. 
 In Section \ref{ransec} we  write the moment $\mathbb{E}[\nu_s^k](f)$ as an integral $$\int_{\underline{x}\in\R \Sigma^k}f(\underline{x})\mathcal{R}^k_d(\underline{x}) |\dV_h|^k. $$  This is done by introducing an incidence manifold and using the coarea formula.
Far from the diagonal Bergman kernel estimates gives us the asymptotic behaviour of $\mathcal{R}^k_d$, see Proposition \ref{sim2}.
 The goal of  Section \ref{pfoo} is to  prove Theorem \ref{expon}. It is a direct consequence
of the computation of central moments, Theorem \ref{centermom}, which is implied
by Theorem \ref{equimom}.  Theorem \ref{equimom} is proved in this section, admitting   a boundedness result, Theorem \ref{boundabove}, and an asymptotic expansion result, Proposition \ref{simple}. \\
 In Section \ref{olvtec} we prove Theorem \ref{boundabove} and Proposition \ref{simple}. This is the core of the paper. Olver multispaces and divided differences coordinates will  play a crucial role in the proof of this theorem.
In Section \ref{ccase} we discuss the complex case, giving  Theorem \ref{complex}. The proof in the complex case follows the lines of  the real one.

\section{Definitions and main tools}\label{generality}
\subsection{Framework}\label{generality1}
In this section we introduce our  setting, which
  is the same as in \cite{anc,gwexp,gw2}, but  we  restrict ourself to the one dimensional case.\\
\begin{itemize}
 \item  Let $\Sigma$ be a smooth real compact Riemann surface, that is a smooth Riemann surface equipped with an anti-holomorphic involution $c_{\Sigma}$. We denote by
$\mathbb{R}\Sigma=\Fix(c_{\Sigma})$ its real locus. 
 \item Let $\mathcal{L}$  be a real ample line bundle over $\Sigma$, that is an ample holomorphic line bundle $p:\mathcal{L}\rightarrow \Sigma$  equipped with an anti-holomorphic involution $c_L$ such that $p\circ c_{\mathcal{L}}=c_{\Sigma}\circ p$ and $c_L$ is complex-antilinear in the fibers. 
\item A real Hermitian metric on $\mathcal{L}$ is a Hermitian metric $h$ such that $c_{\mathcal{L}}^*h=\bar{h}_L$.
We equip $\mathcal{L}$ with a real Hermitian metric $h$ of positive curvature $\frac{i}{2\pi}\partial\bar{\partial}\phi=\omega\in\Omega^{(1,1)}(\Sigma,\R)$. The \emph{local potential} $\phi$ equals $-\log h(e_{\mathcal{L}},e_{\mathcal{L}})$ where $e_{\mathcal{L}}$ is any local holomorphic trivialization of $\mathcal{L}$.  
  \item The curvature form induces a K\"ahler metric $\omega(\cdot,i\cdot)$ on $\Sigma$ which restricts to a Riemannian metric over $\R \Sigma$. We denote  the Riemannian length form by $|\dV_h|$. We will denote also the Riemannian volume form on $\R\Sigma^k$ by $|\dV_h|^k$.
  \item Let $dx=\frac{\omega}{\int_{\Sigma}\omega}$ be the normalized volume form on $\Sigma$.
  \end{itemize}
\subsubsection{The Gaussian measure on $\R H^0(\Sigma;\mathcal{L}^d)$}  We denote by $\R H^0(\Sigma;\mathcal{L})$ the real vector space of real global sections of $\mathcal{L}$, i.e. sections $s\in H^0(\Sigma;\mathcal{L})$ such that $s \circ c_{\Sigma}=c_{\mathcal{L}}\circ s$. The Hermitian metric $h$ induces a Hermitian metric $h^d$ on $\mathcal{L}^d$ for every integer $d>0$ and also a $L^2$-Hermitian product on the space $H^0(\Sigma;\mathcal{L}^d)$ of global holomorphic sections of $\mathcal{L}^d$ denoted by $\langle\cdot,\cdot \rangle$ and  defined by 
$$\langle\alpha,\beta\rangle=\int_{\Sigma}h^d(\alpha,\beta)dx$$
for any $\alpha,\beta$ in $H^0(\Sigma;\mathcal{L}^d)$.\\The $L^2$-Hermitian product $\langle\cdot,\cdot\rangle$ on $H^0(\Sigma;\mathcal{L}^d)$ restricts to a $L^2$-scalar product on $\R H^0(\Sigma;\mathcal{L}^d)$, also denoted by $\langle\cdot,\cdot\rangle$. Then  we have a natural Gaussian measure on $\R H^0(\Sigma;\mathcal{L}^d)$ defined by

$$\mu(A)=\frac{1}{\sqrt{\pi}^{N_d}}\int_Ae^{-\parallel s\parallel^2}ds$$
for any open subset $A\subset \R H^0(\Sigma;\mathcal{L}^d)$ where $ds$ is the Lebesgue measure associated with $\langle\cdot,\cdot\rangle$ and $N_d=\dim_{\C}H^0(\Sigma;\mathcal{L}^d)=\dim_{\R}\R H^0(\Sigma;\mathcal{L}^d)$.\\

\subsection{Bergman kernel}\label{bochnerco}
In this section we recall some asymptotic estimates of the Bergman kernel related to Hermitian line bundles, see \cite{ber1,ber2,ma1,ma2}.  
Let $(\mathcal{L},h)$ be a real Hermitian line bundle of positive curvature $\omega$ over a real projective manifold $X$.
We denote by $L^2(X;\mathcal{L}^d)$ the space of square integrable global sections of $\mathcal{L}^d$ and by $\R L^2(X;\mathcal{L}^d)$ the space of real square integrable global sections. The orthogonal projection from $\R L^2(X;\mathcal{L}^d)$ onto $\R H^0(X;\mathcal{L}^d)$ admits a Schwartz kernel $\mathcal{K}_d$. It means that there exists a unique section $\mathcal{K}_d$ of the bundle $\mathcal{L}^d\boxtimes (\mathcal{L}^d)^*$ over $X\times X$ such that, for any $s\in \R L^2(X;\mathcal{L}^d)$ the projection of $s$ onto $\R H^0(X;\mathcal{L}^d)$ is given by $$x\mapsto \int_{y\in X}\mathcal{K}_d(x,y)(s(y))dx.$$
This Schwartz kernel $\mathcal{K}_d(x,y)$ is called the \emph{Bergman kernel} of $\mathcal{L}^d$.  
\begin{oss} Let $N_d$ be the dimension of $\R H^0(X;\mathcal{L}^d)$ and $(s_{1},\dots,s_{N_d})$ be any orthonormal basis of $\R H^0(X;\mathcal{L}^d)$. Then
$\mathcal{K}_d(x,y)$ is equal to $$\sum_{i=1}^{N_d}s_{i}(x)\otimes s_{i}(y)^*.$$
\end{oss}
\subsubsection{Exponential decay} We recall the following  theorem of \cite{ma1}. 
\begin{thm}\label{fardiag}
Let $(\mathcal{L},h)\rightarrow X$ be a Hermitian positive line bundle over a complex manifold $X$ of dimension $n$.
There exist $C'>0$ and $d_0\in \mathbb{N}$ such that, for any $m\in \mathbb{N}$, there exists $C_m>0$ such that $\forall d\geqslant d_0$, 
$\forall x,y\in X$

$$\norm{\mathcal{K}_d(x,y)}_{C^m}\leqslant C_md^{n+\frac{m}{2}}\exp(-C'\mathbf{d}_h(x,y)\sqrt{d})$$
where $\mathbf{d}_h$ is the geodesic distance in $X$ induced by $h$ and the norms of the derivates of $K_d$ are also induced by $h$.
\end{thm}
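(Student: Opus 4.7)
The plan is to exploit the spectral gap of the Kodaira--Laplacian $\Box_d = \bar\partial^*\bar\partial$ acting on sections of $\mathcal{L}^d$, combined with the finite propagation speed of the associated half-wave equation, in the spirit of Ma--Marinescu. The starting point is to identify $\mathcal{K}_d$ with the Schwartz kernel of the spectral projector $P_d = \mathbf{1}_{\{0\}}(\Box_d)$, which is legitimate because $H^0(X;\mathcal{L}^d) = \ker \Box_d$. Positivity of the curvature $\omega$, combined with the Bochner--Kodaira--Nakano formula, then yields the crucial spectral gap: there exist $c_0 > 0$ and $d_0 \in \mathbb{N}$ such that, for every $d \geq d_0$, $\mathrm{Spec}(\Box_d) \subset \{0\} \cup [c_0 d, +\infty)$.

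Next I would invoke functional calculus. Pick an even Schwartz function $F$ with $F(0) = 1$ that vanishes on $[\sqrt{c_0 d}, +\infty)$, so that $P_d = F(\sqrt{\Box_d})$. By Fourier inversion,
$$F(\sqrt{\Box_d}) \;=\; \frac{1}{2\pi}\int_{\R}\widehat{F}(s)\,\cos\!\bigl(s\sqrt{\Box_d}\bigr)\,ds.$$
Since $\cos(s\sqrt{\Box_d})$ has unit propagation speed with respect to $\mathbf{d}_h$, its Schwartz kernel at $(x,y)$ vanishes whenever $|s|<\mathbf{d}_h(x,y)$. Choosing $F$ to be a Gaussian rescaled to width $\sim 1/\sqrt{d}$, its Fourier transform $\widehat{F}$ becomes a Gaussian of width $\sim \sqrt{d}$, and the remaining tail integral over $\{|s|\geq \mathbf{d}_h(x,y)\}$ is dominated by $C\exp(-C'\sqrt{d}\,\mathbf{d}_h(x,y))$, which already produces the $L^2$-operator version of the desired off-diagonal estimate.

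To pass from an $L^2$-operator bound to the pointwise $C^m$-estimate on $\mathcal{K}_d$, I would work locally in Bochner-type trivializations of $\mathcal{L}^d$ over geodesic balls of radius $O(1/\sqrt{d})$. After the rescaling $z \mapsto z/\sqrt{d}$, the Kodaira Laplacian $\Box_d$ turns into a uniformly elliptic operator on the unit ball of the model $\C^n$, whose Bergman projector converges to the Bargmann--Fock kernel and obeys standard elliptic/Sobolev estimates. Each $x$- or $y$-derivative then costs a factor $\sqrt{d}$ by the chain rule, while the on-diagonal normalization of the Bergman kernel contributes the factor $d^n$, together producing the prefactor $d^{\,n+m/2}$. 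The scale-invariant weight $\exp(-C'\sqrt{d}\,\mathbf{d}_h(x,y))$ is unchanged by the rescaling.

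The main obstacle, in my view, is uniformity of the constants $C_m$ as $d \to \infty$ when converting the spectral estimate into a pointwise $C^m$-estimate. One has to control simultaneously the off-diagonal decay and the Schauder-type constants for a $d$-dependent family of elliptic operators, and to verify that the exponential weight $e^{C'\sqrt{d}\,\mathbf{d}_h}$ can be absorbed inside the functional calculus without blowing up $C_m$. In the rescaled picture this reduces to classical Schauder estimates on a fixed unit ball, but the careful bookkeeping of the weight under differentiation and of how the remainder terms in the Bergman expansion interact with $\widehat{F}$ is where the real technical work lies.
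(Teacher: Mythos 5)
The paper does not prove this statement at all: Theorem \ref{fardiag} is quoted verbatim from the reference [ma1] (Ma--Marinescu's exponential estimate for Bergman kernels), so there is no internal proof to compare with. Your overall architecture (spectral gap from Bochner--Kodaira--Nakano, functional calculus, then localization/rescaling plus elliptic estimates to convert an $L^2$ bound into a $C^m$ bound with the factor $d^{n+m/2}$) is the right general philosophy, and the last step is fine. But the key quantitative step, as you wrote it, fails. First, a Gaussian $F$ cannot ``vanish on $[\sqrt{c_0 d},+\infty)$''; more seriously, with your fixed width the numbers do not come out. If $F(\lambda)=e^{-d\lambda^{2}}$ (width $\sim 1/\sqrt d$), then $\widehat F(s)=\sqrt{\pi/d}\,e^{-s^{2}/(4d)}$, and the tail $\int_{|s|\ge \mathbf{d}_h(x,y)}|\widehat F(s)|\,ds\approx e^{-\mathbf{d}_h(x,y)^{2}/(4d)}$, which tends to $1$ as $d\to\infty$ because $\mathbf{d}_h(x,y)=O(1)$ on a compact manifold: no off-diagonal decay at all. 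If instead you widen $F$ so that $\widehat F$ is narrow enough to give the decay $e^{-C'\sqrt d\,\mathbf{d}_h(x,y)}$ at the smallest relevant distances $\sim \log d/\sqrt d$, then on the nonzero spectrum $\{\lambda\ge c_0 d\}$ you only get $\sup F(\sqrt\lambda)$ polynomially small, so $F(\sqrt{\Box_d})-P_d$ ruins the exponential bound. And you cannot have it both ways exactly: an $F$ supported in $[-\sqrt{c_0d},\sqrt{c_0d}]$ never has exponentially decaying Fourier transform (Paley--Wiener), so ``$P_d=F(\sqrt{\Box_d})$ plus exponentially small wave tails'' is structurally impossible with a single $d$-dependent cutoff.

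The gap is fixable, but it requires an idea you did not state: let the width depend on the pair $(x,y)$. For instance, write $P_d=P_d\,e^{-t\Box_d}$, bound $\|(I-P_d)e^{-t\Box_d}\|\le e^{-tc_0d}$ by the spectral gap, use the Davies--Gaffney/finite-propagation Gaussian bound $e^{-\mathbf{d}_h(x,y)^{2}/(4t)}$ for the heat semigroup, and optimize $t\sim \mathbf{d}_h(x,y)/\sqrt{c_0d}$; the geometric mean of the two exponents gives exactly $e^{-c\sqrt d\,\mathbf{d}_h(x,y)}$, after which your rescaled elliptic estimates upgrade this to the pointwise $C^m$ statement. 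Alternatively, the proof in [ma1] avoids the wave/heat calculus for this step altogether and runs through Agmon-type weighted $L^2$ estimates, conjugating the Kodaira Laplacian (or the projector) by weights $e^{\epsilon\sqrt d\,\mathbf{d}_h(\cdot,x)}$ and using the spectral gap to keep the conjugated operator uniformly invertible; finite propagation speed is classically used only for the weaker $O(d^{-\infty})$ far-off-diagonal estimates. As written, your proposal's central estimate is not correct, so the exponential rate $e^{-C'\sqrt d\,\mathbf{d}_h(x,y)}$ is not established.
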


\subsubsection{Normal coordinates}\label{normal}   In this section we define our preferred coordinates and trivializations to which we refer to as normal coordinates. For a more detailed introduction of these coordinates, see \cite[Section 3.1]{Puchol}.\\
Let $(\mathcal{L},h)$ be a real Hermitian line bundle of positive curvature $\omega$ over a  projective manifold $X$. Let $x$ be a point in $X$ and $U$ be a small  neighborhood of $x$. The \emph{normal coordinates around $x$} is the exponential coordinates  $\exp_x:z\in V\subset T_x X\mapsto U\subset X$  (which, in general, is not holomorphic) \emph{together with} the local trivialization $\mathcal{L}^d_x\times U\simeq \mathcal{L}^d\mid_{U}$ of $\mathcal{L}^d$ given, at any point  $\exp_x(z)\in U$, by the parallel transport (induced by the Chern connection of $\mathcal{L}^d$) of the fiber $\mathcal{L}^d_x$ along the geodesic $t\mapsto \exp_x(tz)$. For any $d>0$, the \emph{scaled normal coordinates} around $x$ is the composition of the map $T\mapsto z=\frac{T}{\sqrt{d}}$ with the normal coordinate around $x$.\\
In the presence of a real structure $c_X$,  for any $x\in\R X$, the restriction of the exponential map $\exp_x\mid_{T_x\R\Sigma}$ coincides with the exponential map associated with the Riemannian metric  $\R X$ induced by $\omega$. The normal coordinates around a real point $x\in\R X$ is the (real) exponential coordinates $\exp_x:z\in V\subset T_x \R X\mapsto U\subset \R X$  \emph{together with} the local trivialization $\R \mathcal{L}^d_x\times U\simeq \R\mathcal{L}^d\mid_{U}$ of $\R \mathcal{L}^d$  around $x$ given, at any point  $\exp_x(z)\in U$, by the parallel transport  of the fiber $\R \mathcal{L}^d_x$ along the geodesic $t\mapsto \exp_x(tz)$.

\subsubsection{Near diagonal estimate} The following theorem says that in the (scaled) normal coordinates $U_x$ around a point $x\in X$ the geometry of $\mathcal{L}^d_{\mid U_x}\rightarrow U_x$ looks like the geometry of the Bargmann-Fock space (see Section \ref{local2}), at least in a ball of size $B(x;\frac{R \log d}{\sqrt{d}})$ for large $d$ and  any fixed $R>0$. The following theorem  is the main theorem  of \cite{daima} (see also \cite[Theorem 4.18]{ma2},  \cite{ber2,ber1}).

\begin{thm}\label{neardiag} Let $(\mathcal{L},h)\rightarrow X$ be a Hermitian positive line bundle over a complex manifold $X$ of dimension $n$. Fix $m\in \mathbb{N}$ and $R>0$. Then, for any $\alpha\in (0,1)$  any $x\in X$ and  any $z,w\in B(x,R\frac{\log d}{\sqrt{d}})$, one have

$$\norm{ \mathcal{K}_d(z,w)- (\frac{d}{\pi})^ne^{ \frac{-d\norm{z-w}^2}{2}}}_{C^m}=O(d^{n-\alpha})$$
in the  normal coordinates around $x$.
Here, $\parallel\cdot\parallel$ is the norm on $T_xX$ induced by $h$ and $\parallel\cdot\parallel_{C^m}$ is the $C^m$-norm on $(\mathcal{L}^d)_x$ induced by $h$. The error term   does not depend on $x,z,w$ but only on $m$, $R$ and $\alpha$.
\end{thm}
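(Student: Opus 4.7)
My plan is to prove this by the standard rescaling to the Bargmann-Fock model, combined with a spectral gap argument for the Kodaira Laplacian. The key idea is that in the normal coordinates around $x$, after rescaling by $\sqrt{d}$, the local geometry of $(\mathcal{L}^d,h^d)$ converges to the flat geometry of the Bargmann-Fock space, whose Bergman kernel is the explicit Gaussian $(1/\pi)^n e^{T\cdot\bar{W}-\|T\|^2/2-\|W\|^2/2}$.

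First, I would fix Kähler normal coordinates $\exp_x$ around $x$ and the parallel-transport trivialization of $\mathcal{L}^d$ along radial geodesics. In this gauge, the local potential of $h$ admits the expansion $\phi(z)=\tfrac{1}{2}\|z\|^2+O(\|z\|^4)$ (here we use the freedom to choose Kähler normal coordinates to kill the cubic term). Setting $T=\sqrt{d}\,z$, the rescaled weight is $d\phi(T/\sqrt d)=\tfrac12\|T\|^2+O(\|T\|^4/d)$; on the ball of radius $R\log d$ in $T$-coordinates (which corresponds to $B(x,R\log d/\sqrt d)$ in the original coordinates), this perturbation is of order $(\log d)^4/d=O(d^{-1+\varepsilon})$ for any $\varepsilon>0$.

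Next I would transfer the spectral gap of the flat Kodaira Laplacian $\Box_\infty$ on the Bargmann-Fock space (whose first positive eigenvalue is explicit and positive) to the rescaled Kodaira Laplacian $\Box_d$ associated with $(\mathcal{L}^d,h^d)$ in the scaled coordinates. Using Lichnerowicz's formula and the positivity of the curvature, one obtains a uniform spectral gap $\mathrm{spec}(\Box_d)\subset\{0\}\cup[c,\infty)$ for some $c>0$ independent of $d$ and $x$. The Bergman projection is then the spectral projector onto $\ker\Box_d$, and I would construct an approximate Bergman kernel by WKB-ansatz or by perturbation of $\Box_\infty$'s projector. The $O(d^{-1+\varepsilon})$ perturbation of the potential, combined with Hörmander's $L^2$ estimates on $\bar\partial$ (which are available thanks to the spectral gap), gives an $L^2$-error of order $d^{n-1+\varepsilon}$ between $\mathcal{K}_d$ and the Bargmann-Fock kernel transported back; choosing $\varepsilon=1-\alpha$ yields the desired bound.

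Finally, to upgrade from $L^2$ to $C^m$ estimates, both $\mathcal{K}_d(z,\cdot)$ and the model kernel satisfy elliptic equations (they are essentially annihilated by $\bar\partial$ after suitable twisting), so interior elliptic regularity applied in balls of scaled radius slightly larger than $R\log d$ converts $L^2$ bounds into $C^m$ bounds with the same decay, producing an additional $d^{m/2}$ factor absorbed in the statement. The main obstacle will be the second paragraph: establishing the uniform spectral gap of $\Box_d$ and constructing the approximate kernel so that the $O(\|T\|^4/d)$ perturbation in the weight produces an error that is genuinely $O(d^{n-\alpha})$ in $C^m$, uniformly in $x$ and $z,w$ on the ball of radius $R\log d/\sqrt d$; this is precisely the technical heart of \cite{daima}.
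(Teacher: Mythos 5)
The paper does not prove this statement: Theorem \ref{neardiag} is quoted as the main theorem of \cite{daima} (see also \cite[Theorem 4.18]{ma2} and \cite{ber1,ber2}), so there is no in-paper argument to compare against beyond the citation. Judged as a standalone proof, your proposal has a genuine gap. It is an accurate description of the strategy behind those references (rescaling by $\sqrt{d}$ in the parallel-transport gauge, comparison with the Bargmann--Fock model, spectral gap of the Kodaira Laplacian, elliptic bootstrap for the $C^m$ bounds), but every step carrying real analytic weight is asserted rather than carried out. The uniform spectral gap of the rescaled Laplacian is stated, not proved; more importantly, the localization step --- why the \emph{global} Bergman projector is approximated, uniformly in the center $x$ and with error $O(d^{n-\alpha})$, by a \emph{local} model projector after the $O((\log d)^4/d)$ perturbation of the weight --- is compressed into ``construct an approximate Bergman kernel by WKB-ansatz or by perturbation of the model projector'', and you then defer exactly this point to \cite{daima}, conceding it is the technical heart of that paper. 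In \cite{daima} this localization is achieved via the spectral gap combined with finite propagation speed of the wave equation; in the Berman--Berndtsson--Sj\"ostrand approach it is achieved by gluing local approximate kernels with weighted H\"ormander $L^2$ estimates for $\bar\partial$. Whichever route you take, it must actually be executed, with uniformity in $x$, for the proposal to become a proof; as written it is a correct reading plan for the literature rather than an argument.

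Two smaller remarks. Your exponent bookkeeping is the right way to see where $\alpha\in(0,1)$ comes from: on the rescaled ball of radius $R\log d$ the quartic correction to the potential is $O((\log d)^4/d)=O(d^{-1+\varepsilon})$, and $\varepsilon=1-\alpha$ matches the stated loss. But the final $C^m$ step is off as written: interior elliptic estimates performed at the unit scale of the rescaled variable $Z=\sqrt{d}\,z$ give $C^m$ control with no loss, whereas in the unscaled normal coordinates each derivative costs a factor $\sqrt{d}$, and a factor $d^{m/2}$ cannot be ``absorbed in the statement'', since the right-hand side $O(d^{n-\alpha})$ carries no $m$-dependence and $\alpha<1$. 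The $C^m$ claim should be read (and proved) in the rescaled variables, which is exactly how the paper reformulates it in Theorem \ref{berglocal}; your bootstrap should be done at that scale and said so explicitly.
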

If $X$ is a real algebraic variety,  we obtain: 
\begin{thm}\label{neardiagreal}
Let $(\mathcal{L},h)\rightarrow X$ be a real  Hermitian positive line bundle over a real algebraic variety $X$ of dimension $n$.
 Fix $m\in \mathbb{N}$ and $R>0$.  For any $\alpha\in (0,1)$,  any $x\in \R X$ and  any $z,w\in B(x,R\frac{\log d}{\sqrt{d}})$, one have

$$\norm{\mathcal{K}_d(z,w)-(\frac{d}{\pi})^ne^{\frac{-d\norm{z-w}^2}{2}}}_{C^m}=O(d^{n-\alpha})$$
in the  normal coordinates around $x$.
Here,  $\parallel\cdot\parallel_{C^m}$ is the $C^m$-norm on $(\mathcal{L}^d)_x$ induced by $h$. The error term   does not depend on $x,z,w$ but only on $m$, $R$, and $\alpha$.
\end{thm}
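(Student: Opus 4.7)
The plan is to deduce Theorem \ref{neardiagreal} as a direct corollary of the complex statement Theorem \ref{neardiag}, by showing that the Bergman kernel of $\R H^0(X;\mathcal{L}^d)$ coincides with that of $H^0(X;\mathcal{L}^d)$ as a section of $\mathcal{L}^d\boxtimes(\mathcal{L}^d)^*$ over $X\times X$. The point is that the real structure $(c_X,c_{\mathcal{L}})$ enters only through the choice of orthonormal family used to represent the kernel, and not through the kernel itself.

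First I would check that the $L^2$-Hermitian product $\langle\cdot,\cdot\rangle$ on $H^0(X;\mathcal{L}^d)$ restricts to a genuine real scalar product on $\R H^0(X;\mathcal{L}^d)$. Using $c_{\mathcal{L}}^* h=\bar h$, the integrand $h^d(s,t)$ is real on $\R X$ for $s,t\in\R H^0$, while on $X\setminus\R X$ the contribution of a point $y$ is complex-conjugate to that of $c_X(y)$, so the total integral is real. Hence any real orthonormal basis $(s_i)_{i=1,\dots,N_d}$ of $\R H^0(X;\mathcal{L}^d)$ is simultaneously a unitary $\C$-basis of $H^0(X;\mathcal{L}^d)$. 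Writing the reproducing formula $\mathcal{K}_d(z,w)=\sum_{i=1}^{N_d} s_i(z)\otimes s_i(w)^*$ in both the real and the complex setting with this common basis shows that the real and the complex Bergman kernels coincide as sections of $\mathcal{L}^d\boxtimes(\mathcal{L}^d)^*$ over $X\times X$.

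The last step is to verify that the normal coordinates and the parallel-transport trivialization around a real point $x\in\R X$ are the restrictions of the complex ones. Since the K\"ahler metric is $c_X$-invariant, geodesics issuing from $x$ in directions of $T_x\R X\subset T_xX$ remain inside $\R X$, and the Chern connection of $(\mathcal{L}^d,h^d)$ commutes with $c_{\mathcal{L}}$, so parallel transport along such geodesics preserves $\R\mathcal{L}^d$. Consequently the real exponential chart and real trivialization of $\R\mathcal{L}^d$ at $x$ are exactly the restrictions to $T_x\R X$ of their complex counterparts, and applying Theorem \ref{neardiag} to $z,w\in B(x,R\log d/\sqrt{d})\cap\R X$ yields the required estimate. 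I do not anticipate any genuine obstacle: the argument is essentially bookkeeping, with the only delicate point being the identification of the two Bergman kernels, which is an automatic consequence of the compatibility of the real structure with $h$ and with the $L^2$-product.
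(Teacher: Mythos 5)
Your proposal is correct and follows essentially the same route as the paper, which simply deduces Theorem \ref{neardiagreal} from Theorem \ref{neardiag} by restricting everything to the real locus $\R X$. The extra details you supply (a real orthonormal basis of $\R H^0(X;\mathcal{L}^d)$ is a unitary basis of $H^0(X;\mathcal{L}^d)$, so the real and complex Bergman kernels coincide, and the real normal coordinates and parallel-transport trivialization are the restrictions of the complex ones) are exactly the bookkeeping the paper leaves implicit.
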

\begin{proof}
This is  Theorem \ref{neardiag}, restricting everything to the real locus $\R X$ of $X$. 
\end{proof}
\subsubsection{Scaled Bergman kernel} These theorems suggest us the following
\begin{defn}\label{scalbegmkern}  Fix a real point $x\in\R X$.
We define the \emph{scaled Bergman kernel} by $K_d(Z,W)=\frac{1}{d^n}\mathcal{K}_d(\frac{Z}{\sqrt{d}},\frac{W}{\sqrt{d}})$, where $Z,W$ are the scaled (real) normal coordinates around  $x$  (that are $Z=\sqrt{d}z$ and $W=\sqrt{d}w$) and the \emph{local Bergman kernel} by $K_{\C^n}(Z,W)=\frac{1}{\pi^n}e^{\frac{-\norm{Z-W}^2}{2}}$.
\end{defn}
In particular,  Theorem \ref{neardiagreal} can be written as: 

\begin{thm}\label{berglocal}
 Fix $k\in \mathbb{N}$ and $R>0$. For any $\alpha\in (0,1)$,  for any $x\in \R X$ and  any $Z,W\in B(x,R \log d)$ we have
 $$\norm{K_d(Z,W)-K_{\C^n}(Z,W)}_{C^m}=O(\frac{1}{d^{\alpha}})$$
 in the scaled normal coordinates around $x$. Moreover the error term only depends on $m$, $R$ and $\alpha$.
 \end{thm}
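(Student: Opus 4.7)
The plan is that Theorem~\ref{berglocal} is essentially a rescaled restatement of Theorem~\ref{neardiagreal}, so the proof should amount to unwinding the definition of the scaled Bergman kernel and keeping track of how the $C^m$ norm transforms under the substitution $Z=\sqrt{d}\,z$, $W=\sqrt{d}\,w$. No new analytic input should be needed beyond what is already established.

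First, I would unpack the two sides of the claimed estimate. By Definition~\ref{scalbegmkern}, $K_d(Z,W)=\tfrac{1}{d^n}\mathcal{K}_d(z,w)$ with $z=Z/\sqrt d$, $w=W/\sqrt d$, while
$$K_{\C^n}(Z,W)=\frac{1}{\pi^n}e^{-\|Z-W\|^2/2}=\frac{1}{d^n}\Bigl(\frac{d}{\pi}\Bigr)^n e^{-d\|z-w\|^2/2},$$
since $\|Z-W\|^2=d\|z-w\|^2$. Hence
$$K_d(Z,W)-K_{\C^n}(Z,W)=\frac{1}{d^n}\Bigl(\mathcal{K}_d(z,w)-\bigl(\tfrac{d}{\pi}\bigr)^n e^{-d\|z-w\|^2/2}\Bigr).$$

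Second, I would observe that the assumption $Z,W\in B(x,R\log d)$ in the scaled coordinates translates exactly to $z,w\in B(x,R\log d/\sqrt d)$ in the unscaled normal coordinates around $x$. Therefore Theorem~\ref{neardiagreal} applies with the same constant $R$ and the same $\alpha\in(0,1)$, yielding
$$\bigl\|\mathcal{K}_d(z,w)-\bigl(\tfrac{d}{\pi}\bigr)^n e^{-d\|z-w\|^2/2}\bigr\|_{C^m}=O(d^{n-\alpha}),$$
where the $C^m$ norm is computed in the $z,w$ variables and the error is independent of $x,z,w$.

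Finally, I would convert this estimate from $(z,w)$ to $(Z,W)$ coordinates. Since $\partial_Z=\tfrac{1}{\sqrt d}\partial_z$ (and likewise in $W$), each derivative in the scaled variables produces a factor $d^{-1/2}$. Consequently, for every multi-index $\beta$ with $|\beta|\le m$,
$$\bigl|\partial_{Z,W}^{\beta}\bigl(K_d-K_{\C^n}\bigr)(Z,W)\bigr|=\frac{1}{d^n}\,d^{-|\beta|/2}\,\bigl|\partial_{z,w}^{\beta}\bigl(\mathcal{K}_d-\bigl(\tfrac{d}{\pi}\bigr)^n e^{-d\|\cdot\|^2/2}\bigr)(z,w)\bigr|=O(d^{-\alpha-|\beta|/2})=O(d^{-\alpha}).$$
Taking the maximum over $|\beta|\le m$ gives $\|K_d-K_{\C^n}\|_{C^m}=O(d^{-\alpha})$ in the scaled normal coordinates, with a constant depending only on $m$, $R$ and $\alpha$, as required. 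The main thing to be careful about is exactly this bookkeeping of how the $C^m$ norm scales under the substitution $Z=\sqrt d\,z$ (the rescaling only helps us in higher derivatives), and that the uniformity in $x$ carries over from Theorem~\ref{neardiagreal}; there is no genuine analytic obstacle beyond this.
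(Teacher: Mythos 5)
Your proposal is correct and is essentially the paper's own route: Theorem~\ref{berglocal} is stated there as a direct rescaled reformulation of Theorem~\ref{neardiagreal} via Definition~\ref{scalbegmkern}, which is exactly the unwinding you carry out. Your extra bookkeeping of the $C^m$ norm under $Z=\sqrt{d}\,z$ (each derivative contributing a harmless factor $d^{-1/2}$) only makes explicit what the paper leaves implicit.
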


\section{Incidence manifold and density function}\label{ransec}
Throughout  this section, we will denote by $(\mathcal{L},h)$  a real ample Hermitian  line bundle over a real Riemann surface $\Sigma$, see Section \ref{generality1}.\\
In the first susbection  we define some modified measures $\tilde{\nu}_s^k$.
 Then, we introduce an incidence manifold and  use the coarea formula  to write the \emph{modified moments} $\mathbb{E}[\tilde{\nu}_s^k]$ as an integral of a density function $\mathcal{R}_d^k$ over $\R\Sigma^k$. This is done in the second subsection. The estimate of the density function $\mathcal{R}_d^k$ is the most  important step in  Theorem \ref{equimom}.
We  start the study of $\mathcal{R}_d^k$ by writing it  as a fraction (Proposition \ref{df}) and by giving an off-diagonal estimate (Proposition \ref{sim2}).

 \subsection{Zeros of random real sections}\label{modmoment}
Let $s\in\R H^0(\Sigma;\mathcal{L}^d)$ be any real section of $\mathcal{L}^d$, that is any global holomorphic section of $\mathcal{L}^d$ such that $s\circ c_{\Sigma}=c_{\mathcal{L}}\circ s$, where $c_{\mathcal{L}}$ and $c_{\Sigma}$ are the real structures of $\mathcal{L}$ and $\Sigma$, see Section \ref{generality1}.
\begin{defn}\label{modmom} 
\begin{itemize}
\item To any non-zero section $s\in\R H^0(\Sigma;\mathcal{L}^d)$ we  associate the following empirical measure $\nu_s=\sum_{x\in Z_s}\delta_x$
where $Z_s=\R\Sigma\cap \{s=0\}$ is the real vanishing locus of $s$ and $\delta_x$ is the Dirac measure at $x\in Z_s$. It induces an empirical measure $\nu_s^k$ on $\R\Sigma^k$ 
for any $k\in\mathbb{N}^*$,
defined, for any $f\in C^0(\R\Sigma^k)$, by 
$$\nu_s^k(f)=\sum_{(x_1,\dots,x_k)\in Z^k_s}f(x_1,\dots,x_k).$$
\item For any $s\in\R H^0(\Sigma;\mathcal{L}^d)$ and any $k\in\mathbb{N}$ we define $\tilde{\nu}^k_s$ to be the following modified empirical measure: $$\tilde{\nu}^k_s(f)=\sum_{\substack{(x_1,\dots,x_k)\in Z^k_s \\ x_i\neq x_j}}f(x_1,\dots,x_k).$$
\end{itemize}
\end{defn}
 
\subsubsection{Partitions of $k$ elements} Let $\mathcal{P}_k$ be the set of all the partitions of $\{1,\dots,k\}$, that means the set of all $$I=\big\{\{1_1,\dots,1_{k_1}\},\dots,\{m_1,\dots,m_{k_m}\}\big\}$$ such that $\{1_1,\dots,1_{k_1}\}\sqcup\dots\sqcup\{m_1,\dots,m_{k_m}\}=\{1,\dots,k\}$. 
For any $I\in\mathcal{P}_k$, we write $(x_1,\dots,x_k)\in Z^k_{s,I}$ if and only if $(x_1,\dots,x_k)\in Z^k_s$ and $x_{i_l}=x_{i_h}$ for any $l,h\in\{1,\dots,k_i\}$ and $x_{i_l}\neq x_{j_h}$ if $i\neq j\in\{1,...,m\}$.
 We then write 
 $$\nu_s^k(f)=\sum_{I\in\mathcal{P}_k}\sum_{(x_1,\dots,x_k)\in Z^k_{s,I}}f(x_1,\dots,x_k).$$
Now, we denote by $\widetilde{\mathcal{P}}_k=\mathcal{P}_k\setminus \big\{\{1\},\{2\},\dots,\{k\}\big\}$ so that 
 $$\nu_s^k(f)=\sum_{I\in\tilde{\mathcal{P}}_k}\sum_{(x_1,\dots,x_k)\in Z^k_{s,I}}f(x_1,\dots,x_k)+\tilde{\nu}_s^k(f).$$
 
For any partition $I=\big\{\{1_1,\dots,1_{k_1}\},\dots,\{m_1,\dots,m_{k_m}\}\big\}
\in\widetilde{\mathcal{P}}_k$, we denote by $j_I:\R\Sigma^m\rightarrow\R\Sigma^k$
  the inclusion defined by $(x_1,\dots,x_m)\mapsto (\tilde{x}_1,\dots,\tilde{x}_k)$ where, for any $i$, $\tilde{x}_{i_j}=x_i$ for any 
 $j=1,\dots,k_j$.
 The following proposition is a direct consequence of the definitions just given.
 \begin{prop}\label{difference} For every real section $s\in\R H^0(\Sigma;\mathcal{L}^d)$, every integer $k\in\mathbb{N}$ and every  partition $I=\big\{\{1_1,\dots,1_{k_1}\},\dots,\{m_1,\dots,m_{k_m}\}\big\}\in \mathcal{P}_k$, we have
 $$\sum_{(x_1,\dots,x_k)\in Z^k_{s,I}}f(x_1,\dots,x_k)=\tilde{\nu}^m_s(j^*_If)$$so that 
 $$\nu_s^k(f)=\tilde{\nu}^k_s(f)+
 \sum_{I\in\widetilde{\mathcal{P}}_k}\tilde{\nu}^{m_I}_s(j^*_If)$$
 
\end{prop}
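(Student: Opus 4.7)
The proposition is a purely combinatorial unpacking of the definitions just introduced, so the plan is to (a) establish the first identity by exhibiting an explicit bijection between $Z^k_{s,I}$ and the support of $\tilde{\nu}^m_s$, and then (b) deduce the second identity by separating the trivial partition from the remaining ones in $\mathcal{P}_k$.

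For the first identity, I would fix a partition $I = \{\{1_1,\dots,1_{k_1}\},\dots,\{m_1,\dots,m_{k_m}\}\} \in \mathcal{P}_k$ with $m$ blocks and observe that, by the very definition of $Z^k_{s,I}$, every element $(y_1,\dots,y_k) \in Z^k_{s,I}$ is uniquely determined by the $m$-tuple $(x_1,\dots,x_m) \in Z^m_s$ of its pairwise distinct values, where $x_i$ is the common value of $y_{i_1},\dots,y_{i_{k_i}}$. This correspondence is precisely the restriction of $j_I$ to the set of $m$-tuples in $Z^m_s$ with pairwise distinct coordinates (i.e.\ the support of $\tilde{\nu}^m_s$), and it is a bijection onto $Z^k_{s,I}$. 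Since $f(j_I(x_1,\dots,x_m)) = (j_I^*f)(x_1,\dots,x_m)$ by definition of the pullback, summing over this bijection gives
$$\sum_{(y_1,\dots,y_k)\in Z^k_{s,I}} f(y_1,\dots,y_k) \;=\; \sum_{\substack{(x_1,\dots,x_m)\in Z^m_s \\ x_i\neq x_j}} (j_I^*f)(x_1,\dots,x_m) \;=\; \tilde{\nu}^m_s(j_I^*f),$$
which is exactly the first displayed equation.

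For the second identity, I would start from the decomposition
$$\nu_s^k(f) \;=\; \sum_{I\in\mathcal{P}_k}\sum_{(y_1,\dots,y_k)\in Z^k_{s,I}}f(y_1,\dots,y_k)$$
already recorded in the text, and split the outer sum into the trivial partition $I_0 = \{\{1\},\dots,\{k\}\}$ and its complement $\widetilde{\mathcal{P}}_k = \mathcal{P}_k \setminus \{I_0\}$. For $I_0$ one has $m=k$, $j_{I_0} = \mathrm{id}$, and $Z^k_{s,I_0}$ equals the set of $k$-tuples in $Z_s$ with pairwise distinct entries, so its contribution is exactly $\tilde{\nu}^k_s(f)$. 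Each remaining $I \in \widetilde{\mathcal{P}}_k$ then contributes $\tilde{\nu}^{m_I}_s(j_I^*f)$ by the first identity, yielding the claimed formula.

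There is no genuine analytic obstacle; the only care required is notational, namely keeping track of the indexing of blocks inside a partition and verifying that the sums defining $\tilde{\nu}^m_s$ and $Z^k_{s,I}$ are both over \emph{ordered} tuples, so that no combinatorial overcounting factor appears. Once this bookkeeping is settled, the proof reduces to a single change of indexing variables.
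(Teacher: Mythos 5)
Your proof is correct and follows exactly the route the paper intends: the paper states this proposition as a direct consequence of the definitions (using the same decomposition of $\nu_s^k$ over $\mathcal{P}_k$ recorded just above it), and your bijection between $Z^k_{s,I}$ and the pairwise-distinct $m$-tuples in $Z_s^m$ via $j_I$, followed by splitting off the trivial partition, is precisely the bookkeeping being left implicit. Nothing is missing.
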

\subsubsection{Modified moments} To understand $\nu_s^k(f)$, it is enough, in fact equivalent, to  understand  $\tilde{\nu}_s^k(f)$. 
We will study the expectation of this new random variable, namely
$$\mathbb{E}[\tilde{\nu}^k_s](f)=\int_{s\in\R H^0(\Sigma;\mathcal{L}^d)}\sum_{\substack{(x_1,\dots,x_k)\in Z^k_s \\ x_i\neq x_j}}f(x_1,\dots,x_k)d\mu(s)$$
We call $\mathbb{E}[\tilde{\nu}^k_s]$ \emph{the modified $k$-moment} of $\nu_s$.

\subsection{Incidence manifold and density function $\mathcal{R}_d^k$}\label{incsect}
We introduce an incidence manifold $\mathcal{I}$ that comes equipped with two projections respectively to $\R H^0(\Sigma;\mathcal{L}^d)$ and to $\R \Sigma^k$. Following \cite{ss}  (see also \cite{gw2}, \cite{anc}) we will apply the coarea formula (see \cite[Lemma 3.2.3]{fed} or \cite[Theorem 1]{ss}) to these two projections to write the modified moment $\mathbb{E}[\tilde{\nu}_s^k]$ as an integral over $\R \Sigma^k$. The resulting formula is often known as  Kac-Rice formula, see for example \cite[Theorem 3.2]{azais2}. 

\subsubsection{Incidence manifold} Let $\Delta$ be the diagonal of $\R\Sigma^k$ defined by 
$$\Delta=\{(x_1,\dots,x_k)\in\R\Sigma^k | \exists \hspace{0.5mm} i\neq j, \hspace{1mm} x_i=x_j\}$$
and set
$$\mathcal{I}=\{(s,x_1,\dots,x_k)\in\R H^0(\Sigma;\mathcal{L}^d)\times(\R\Sigma^k\setminus\Delta)\mid s(x_i)=0 \hspace{2mm}  i=1,\dots,k\}.$$
We denote by $\pi_{\Sigma}$ (resp. $\pi_H$) the  projection $\mathcal{I}\rightarrow \R\Sigma^k$
(resp. $\mathcal{I}\rightarrow \R H^0(\Sigma;\mathcal{L}^d)$).
\begin{prop}\label{smoothman} Let $\mathcal{L}$ be a positive real line bundle over  a real Riemann surface $\Sigma$. Then, for large $d\in\mathbb{N}$, the set $\mathcal{I}$ is a smooth manifold. We call $\mathcal{I}$ the \emph{incidence manifold}.
\end{prop}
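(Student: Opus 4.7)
The plan is to realize $\mathcal{I}$ as the transverse preimage of a zero section and apply the implicit function theorem. Let $E\to \R\Sigma^k\setminus\Delta$ be the real rank-$k$ vector bundle whose fiber at $\underline{x}=(x_1,\dots,x_k)$ is $\bigoplus_{i=1}^k \R\mathcal{L}^d_{x_i}$, pull it back to the product $\R H^0(\Sigma;\mathcal{L}^d)\times(\R\Sigma^k\setminus\Delta)$, and consider the smooth section
$$\Phi(s,\underline{x})=\big(s(x_1),\dots,s(x_k)\big).$$
Then $\mathcal{I}=\Phi^{-1}(0_E)$, so by the implicit function theorem it suffices to check that $d\Phi$ is surjective onto the fiber of $E$ at every point of $\mathcal{I}$.

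For fixed $\underline{x}$, the restriction of $d\Phi$ to the linear factor $\R H^0(\Sigma;\mathcal{L}^d)\times\{\underline{x}\}$ is the real evaluation map $\mathrm{ev}_{\underline{x}}:\R H^0(\Sigma;\mathcal{L}^d)\to \bigoplus_{i=1}^k \R\mathcal{L}^d_{x_i}$, so transversality at $(s,\underline{x})\in\mathcal{I}$ reduces to surjectivity of $\mathrm{ev}_{\underline{x}}$. The proposition therefore boils down to exhibiting some $d_0=d_0(k)$ such that for every $d\geq d_0$ and every $\underline{x}\in\R\Sigma^k\setminus\Delta$ the map $\mathrm{ev}_{\underline{x}}$ is surjective. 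Two routes are available. The algebraic route invokes the classical fact that, $\mathcal{L}$ being ample, $\mathcal{L}^d$ is $(k-1)$-very ample for $d$ large; this yields surjectivity of the complex evaluation map at any $k$ distinct points, and compatibility with the real structures $c_{\mathcal{L}}$ and $c_{\Sigma}$ descends the result to the real setting. The analytic route observes that $\mathrm{ev}_{\underline{x}}$ is surjective if and only if the Gram matrix $\big(\mathcal{K}_d(x_i,x_j)\big)_{1\leq i,j\leq k}$ is invertible, and one combines the off-diagonal decay of Theorem \ref{fardiag} with the near-diagonal expansion of Theorem \ref{neardiagreal} to see that, for $d$ large, this matrix is a perturbation of $(d/\pi)\Id$ and hence positive definite.

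The delicate point will be uniformity in $\underline{x}$ as some coordinates collide: the naive Bergman kernel estimates deteriorate on approaching $\Delta$, and this is precisely why the paper later replaces $\R\Sigma^k\setminus\Delta$ by an Olver multispace, on which the evaluation map extends smoothly. For the present proposition, however, the $(k-1)$-very ampleness argument already yields a single $d_0(k)$ that works for all $\underline{x}$ at once, so transversality holds globally and $\mathcal{I}$ is a smooth submanifold of $\R H^0(\Sigma;\mathcal{L}^d)\times(\R\Sigma^k\setminus\Delta)$ for $d\geq d_0(k)$.
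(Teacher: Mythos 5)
Your proposal is correct and follows essentially the same route as the paper: the paper also exhibits $\mathcal{I}$ as the preimage of a regular value of $(s,\underline{x})\mapsto(s(x_1),\dots,s(x_k))$, reduces transversality to surjectivity of $ev_{\underline{x}}$ on the linear factor, and obtains that surjectivity from positivity of $\mathcal{L}$ via Riemann--Roch (the same content as your $(k-1)$-very ampleness argument, descended to real sections by the real structures). Your analytic aside via the Gram matrix $\big(\mathcal{K}_d(x_i,x_j)\big)$ is rightly not relied upon, since that estimate is not uniform as $\underline{x}$ approaches $\Delta$.
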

\begin{proof} Consider the map $\R H^0(\Sigma;\mathcal{L}^d)\times(\R\Sigma^k\setminus\Delta)\rightarrow \R \mathcal{L}^d\times\dots\times \R \mathcal{L}^d$ defined by $(s,x_1,\dots,x_k)\mapsto(s(x_1),\dots,s(x_k))$.  We have to prove that $0$ is a regular value. 
The derivative of this map is $$(\dot{s},\dot{x}_1,\dots,\dot{x}_k)\mapsto (\dot{s}(x_1)+\nabla_{\dot{x}_1}s(x_1),\dots,\dot{s}(x_k)+\nabla_{\dot{x}_k}s(x_k)).$$ 
By the positivity  of $\mathcal{L}$ and by Riemann-Roch theorem, there exists $d_0$ such that for any $d\geqslant d_0$ and any $(x_1,\dots,x_k)\in\R\Sigma^k\setminus\Delta$ we can find sections $\dot{s}_1,\dots,\dot{s}_k$ such that $\dot{s}_i(x_i)\neq 0$ and $\dot{s}_i(x_j)=0$ for $i\neq j$. This implies that $0$ is a regular value. 
\end{proof}
\subsubsection{The density function $\mathcal{R}^k_d$} 
\begin{defn}
The \emph{normal jacobian} $\Jac_Nu$ of a submersion $u:M\rightarrow N$ between Riemannian manifolds is the determinant of the differential of the map restricted to the orthogonal of its kernel. Equivalently, if $du_p$ is the differential of $u$ at $p$, then the normal jacobian is equal to $\sqrt{\det(du_pdu_p^*)}$,
where $du_p^*$ is the adjoint of $du_p$ with respect to the scalar products on $T_pM$ and $T_{u(p)}N$.
\end{defn}
Recall that we have defined the modified moment  to be $\mathbb{E}[\tilde{\nu}^k_s]$, where $\tilde{\nu}^k_s$ was defined in Definition \ref{modmom}.
\begin{prop}\label{denf} Let $(\mathcal{L},h)$ be a real Hermitian line bundle over a real Riemann surface $\Sigma$. Then, for large $d\in\mathbb{N}$, we have $$\mathbb{E}[\tilde{\nu}^k_s](f)=\displaystyle\int_{\underline{x}\in\R\Sigma^k\setminus\Delta}f(\underline{x}) \mathcal{R}^k_d(\underline{x})|\dV_h|^k$$ where $\tilde{\nu}^k_s$ was defined in Definition \ref{modmom} and
$$\mathcal{R}^k_d(\underline{x})=\int_{\pi_{\Sigma}^{-1}(\underline{x})}\frac{1}{\mid Jac_N(\pi_{\Sigma})\mid}d\mu_{\mid\pi_{\Sigma}^{-1}(\underline{x})}.$$
\end{prop}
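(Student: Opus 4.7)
The plan is to apply the coarea formula of Federer to both projections from the incidence manifold $\mathcal{I}$, following the Shub--Smale scheme for Kac--Rice-type formulas as in \cite{ss,gw2,anc}. Equip $\mathcal{I}$ with the Riemannian metric inherited from the product of the $L^2$ inner product on $\R H^0(\Sigma;\mathcal{L}^d)$ and the Riemannian metric induced by $\omega$ on $\R\Sigma^k$. By Proposition \ref{smoothman}, $\mathcal{I}$ is then a smooth Riemannian manifold of dimension $N_d = \dim_{\R}\R H^0(\Sigma;\mathcal{L}^d)$, since cutting out $\mathcal{I}$ from $\R H^0(\Sigma;\mathcal{L}^d) \times (\R\Sigma^k\setminus\Delta)$ imposes exactly $k$ independent scalar conditions.

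First, I would rewrite the modified moment as an integral over $\mathcal{I}$ via $\pi_H$. Because $\dim \mathcal{I} = \dim \R H^0(\Sigma;\mathcal{L}^d)$, the fibers of $\pi_H$ are $0$-dimensional; away from the locus where some $x_i$ is a degenerate zero of $s$ (which has $\mu$-measure zero by Sard's theorem), $\pi_H$ is a local diffeomorphism and
$$\pi_H^{-1}(s) = \{(x_1,\dots,x_k) \in Z_s^k : x_i \neq x_j \text{ for } i\neq j\}.$$
Choosing
$$F(s,\underline{x}) = f(\underline{x})\,|\det d\pi_H|\,\frac{e^{-\|s\|^2}}{\sqrt{\pi}^{N_d}}$$
on $\mathcal{I}$ and applying the coarea formula to $\pi_H$ yields
$$\int_{\mathcal{I}} F\, d\mathrm{vol}_{\mathcal{I}} = \int_{\R H^0(\Sigma;\mathcal{L}^d)} \tilde{\nu}_s^k(f)\, d\mu(s) = \mathbb{E}[\tilde{\nu}_s^k](f).$$

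Next, I would apply the coarea formula to $\pi_\Sigma$, whose fiber $\pi_\Sigma^{-1}(\underline{x}) = \{s \in \R H^0(\Sigma;\mathcal{L}^d) : s(x_i)=0,\ i=1,\dots,k\}$ is a codimension-$k$ linear subspace of $\R H^0(\Sigma;\mathcal{L}^d)$. This produces
$$\int_{\mathcal{I}} F\, d\mathrm{vol}_{\mathcal{I}} = \int_{\R\Sigma^k \setminus \Delta} f(\underline{x})\left(\int_{\pi_\Sigma^{-1}(\underline{x})} \frac{|\det d\pi_H|\,e^{-\|s\|^2}/\sqrt{\pi}^{N_d}}{|\mathrm{Jac}_N \pi_\Sigma|}\, d\mathrm{vol}_{\pi_\Sigma^{-1}(\underline{x})}\right) d|\dV_h|^k.$$
Equating the two coarea expressions yields the claimed formula once the inner integrand is identified with $\frac{1}{|\mathrm{Jac}_N \pi_\Sigma|}\,d\mu|_{\pi_\Sigma^{-1}(\underline{x})}$.

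The main technical step --- and the place I expect the computation to concentrate --- is precisely this identification. At a point $(s,\underline{x}) \in \pi_\Sigma^{-1}(\underline{x})$ one decomposes $T_{(s,\underline{x})}\mathcal{I}$ into the vertical part $\{(\dot s,0) : \dot s(x_i) = 0\ \forall i\}$ (on which $d\pi_H$ acts as the identity inclusion) and its orthogonal complement, on which $d\pi_H$ involves the transverse directions $(\dot s, \dot{\underline{x}})$ subject to $\dot s(x_i) + \nabla_{\dot x_i} s(x_i) = 0$. A pointwise linear-algebraic computation then shows that the factor $|\det d\pi_H|\, e^{-\|s\|^2}/\sqrt{\pi}^{N_d}$ combined with the Euclidean volume $d\mathrm{vol}_{\pi_\Sigma^{-1}(\underline{x})}$ reproduces exactly the restricted Gaussian $d\mu|_{\pi_\Sigma^{-1}(\underline{x})}$ (the $\sqrt{\pi}^k$ normalization being absorbed into the codimension-$k$ Gaussian density). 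Once this compatibility is settled, equating the two coarea expressions gives the proposition.
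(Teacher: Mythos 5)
Your overall scheme (the Shub--Smale double application of the area/coarea formula, first to $\pi_H$ and then to $\pi_\Sigma$) is viable, but the key pointwise identification, which you yourself single out as the crux and then only assert, is wrong as stated. With the product metric on $\mathcal{I}$, the factor $|\det d\pi_H|$ cannot be ``absorbed'' into the restricted Gaussian: along a fixed fiber $\pi_\Sigma^{-1}(\underline{x})=\R H^0_{\underline{x}}\times\{\underline{x}\}$ the tangent space $T_{(s,\underline{x})}\mathcal{I}$ varies with $s$ (through the constraints $\dot s(x_i)+\nabla_{\dot x_i}s(x_i)=0$), so $|\det d\pi_H|$ is a non-constant function of $s$ depending on $\nabla s(x_1),\dots,\nabla s(x_k)$; it is not a normalization constant, and the claimed equality $|\det d\pi_H|\,e^{-\norm{s}^2}\sqrt{\pi}^{-N_d}\,d\mathrm{vol}_{\pi_\Sigma^{-1}(\underline{x})}=d\mu_{\mid\pi_\Sigma^{-1}(\underline{x})}$ fails already for $k=1$. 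The parenthetical about a $\sqrt{\pi}^{k}$ correction is also a red herring: in the statement, $d\mu_{\mid\pi_\Sigma^{-1}(\underline{x})}$ is the full Gaussian density $e^{-\norm{s}^2}\sqrt{\pi}^{-N_d}$ against the fiber's Riemannian volume, so no such factor needs to be absorbed.

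What $|\det d\pi_H|$ actually does is convert the normal Jacobian of $\pi_\Sigma$ computed in your product metric into the one the statement refers to. In the paper, $\mathcal{I}$ is endowed with the (singular) pulled-back metric $\pi_H^*\langle\cdot,\cdot\rangle$, so $\pi_H$ is a local isometry, the identity $\mathbb{E}[\tilde{\nu}_s^k](f)=\int_{\mathcal{I}}\pi_\Sigma^*f\,\pi_H^*d\mu$ is immediate, and a single application of the coarea formula to $\pi_\Sigma$ gives the proposition, with $\Jac_N(\pi_\Sigma)$ understood with respect to that metric (this convention is used again in the proof of Proposition \ref{df}). To repair your argument you must prove, at each point of the fiber, the identity $\frac{|\det d\pi_H|}{|\Jac_N^{\mathrm{prod}}\pi_\Sigma|}=\frac{1}{|\Jac_N^{\mathrm{pb}}\pi_\Sigma|}$, where the superscripts indicate which metric on $T_{(s,\underline{x})}\mathcal{I}$ is used. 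This is true and not hard: $\ker d\pi_\Sigma$ consists of vectors $(\dot s,0)$, on which the two metrics coincide, and for any basis of $T_{(s,\underline{x})}\mathcal{I}$ adapted to this kernel the two normal Jacobians differ exactly by the ratio of the two volume elements of $T_{(s,\underline{x})}\mathcal{I}$, which is precisely $|\det d\pi_H|$. With this lemma in place your two coarea computations do match the formula in the statement; without it (or with the incorrect matching you propose) the proof is incomplete.
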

\begin{proof}
We consider $$\mathbb{E}[\tilde{\nu}^k_s](f)=\displaystyle\int_{s\in\R H^0(\Sigma;\mathcal{L}^d)}\sum_{\substack{(x_1,\dots,x_k)\in Z^k_s \\ x_i\neq x_j}}f(x_1,\dots,x_k)d\mu(s).$$
Using $\pi_H$ we pull-back the integral over $\mathcal{I}$. On $\mathcal{I}$ we put the (singular) metric $\pi_H^*\langle\cdot,\cdot\rangle$.
We have $\mathbb{E}[\tilde{\nu}^k_s](f)=\int_{\mathcal{I}}(\pi_{\Sigma}^*f)(s,x_1,\dots,x_k)(\pi_H^*d\mu)(s,x_1,\dots,x_k)$.
The coarea formula  (see \cite[Lemma 3.2.3]{fed} or \cite[Theorem 1]{ss}) applied to the submersion $\pi_{\Sigma}:\mathcal{I}\rightarrow \R\Sigma^k$ gives us the result.
\end{proof}
\begin{defn} We call  \emph{density function} the function $\mathcal{R}^k_d:\R\Sigma^k\setminus\Delta\rightarrow \R$ that appeared in Proposition \ref{denf}.
\end{defn}
\begin{prop}\label{symact}  For any permutations $\sigma\in\mathcal{S}_k$ we have $\mathcal{R}^k_d\circ\sigma=\mathcal{R}^k_d$.
\end{prop}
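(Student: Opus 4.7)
The plan is to exploit the fact that the defining condition for the incidence manifold $\mathcal{I}$, namely $s(x_i)=0$ for every $i=1,\dots,k$, is manifestly symmetric in the points $x_1,\dots,x_k$. Any permutation $\sigma\in\mathcal{S}_k$ therefore lifts to a diffeomorphism
$$\tilde{\sigma}:\mathcal{I}\rightarrow \mathcal{I},\qquad (s,x_1,\dots,x_k)\longmapsto (s,x_{\sigma(1)},\dots,x_{\sigma(k)}),$$
which satisfies both $\pi_H\circ\tilde{\sigma}=\pi_H$ and $\pi_\Sigma\circ\tilde{\sigma}=\sigma\circ\pi_\Sigma$, where by abuse of notation $\sigma$ also denotes the permutation action on $\R\Sigma^k$.

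To conclude it is enough to check that every ingredient in the definition of $\mathcal{R}^k_d$ is compatible with $\tilde{\sigma}$. First, the product Riemannian metric $|\dV_h|^k$ on $\R\Sigma^k$ is $\mathcal{S}_k$-invariant, so $\sigma$ is an isometry of $\R\Sigma^k$. Second, the (singular) Riemannian structure $\pi_H^*\langle\cdot,\cdot\rangle$ put on $\mathcal{I}$ is preserved by $\tilde{\sigma}$ precisely because $\pi_H\circ\tilde{\sigma}=\pi_H$. These two facts together imply
$$|\Jac_N(\pi_{\Sigma})|\circ\tilde{\sigma}=|\Jac_N(\pi_{\Sigma})|,$$
since the normal Jacobian is defined from these two metrics.

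Finally, $\tilde{\sigma}$ maps $\pi_{\Sigma}^{-1}(\underline{x})$ diffeomorphically onto $\pi_{\Sigma}^{-1}(\sigma\cdot\underline{x})$. Both fibers coincide, after projection by $\pi_H$, with the same linear subspace $\{s\in\R H^0(\Sigma;\mathcal{L}^d)\mid s(x_i)=0,\ i=1,\dots,k\}$, and $\tilde{\sigma}$ acts as the identity on the $s$-coordinate. Consequently the Gaussian measure $d\mu_{\mid\pi_{\Sigma}^{-1}(\underline{x})}$ is pushed forward to $d\mu_{\mid\pi_{\Sigma}^{-1}(\sigma\cdot\underline{x})}$. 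Pulling the integral defining $\mathcal{R}^k_d(\sigma\cdot\underline{x})$ back through $\tilde{\sigma}$ gives
$$\mathcal{R}^k_d(\sigma\cdot\underline{x})=\int_{\pi_{\Sigma}^{-1}(\underline{x})}\frac{1}{|\Jac_N(\pi_{\Sigma})|}d\mu_{\mid\pi_{\Sigma}^{-1}(\underline{x})}=\mathcal{R}^k_d(\underline{x}),$$
which is the desired identity. There is no serious obstacle: the proposition is just a formalisation of the symmetry already built into $\mathcal{I}$.
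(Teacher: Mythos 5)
Your argument is correct and is essentially the paper's own proof, which simply observes that $\mathcal{S}_k$ acts by isometries on $\R\Sigma^k$ and on $\mathcal{I}$ compatibly with the two projections; you have just spelled out in detail why this invariance carries through the normal Jacobian, the fibers, and the Gaussian measure in the coarea expression for $\mathcal{R}^k_d$.
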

\begin{proof}
The symmetric group $\mathcal{S}_k$ acts by isometries on $\R \Sigma^k$ and on $\mathcal{I}$, this implies the result.
\end{proof}
\begin{defn}\label{evnaive} For $\underline{x}=(x_1,\dots,x_k)\in\R\Sigma^k\setminus\Delta$ we denote by $\R H^0_{\underline{x}}=\R H^0_{x_1,\dots,x_k}$ the subspace of real  global sections $s\in\R H^0(\Sigma;\mathcal{L}^d)$ such that $s(x_i)=0$ for any $i\in\{1,\dots,k\}$, that is the kernel of the evaluation   map $$ev_{\underline{x}}:\R H^0(\Sigma;\mathcal{L}^d)\rightarrow (\R \mathcal{L}^d)_{x_1}\times\dots\times(\R \mathcal{L}^d)_{x_k}$$
defined by $s\mapsto \big(s(x_1),\dots,s(x_k)\big).$
\end{defn}
\begin{prop}\label{df} Let $\mathcal{R}^k_d$ be the density function defined in Proposition $\ref{denf}$. Then, for any $\underline{x}=(x_1,\dots,x_k)\in\R\Sigma^k\setminus\Delta$, we have  $$\mathcal{R}_d(\underline{x})=\frac{\mathcal{N}^k_d(\underline{x})}{\mathcal{D}^k_d(\underline{x})}$$ where 
$$\mathcal{N}^k_d(\underline{x})=\int_{s\in\R H^0_{\underline{x}}}\norm{\nabla s(x_1)}\cdots\norm{\nabla s(x_k)} d\mu_{\mid \R H^0_{\underline{x}}}(s)$$
and
$$\mathcal{D}^k_d(\underline{x})=|\Jac_N(ev_{\underline{x}})|.$$
Here we have denoted by $\norm{\cdot}$ the norms induced by the Hermitian metric $h$.
\end{prop}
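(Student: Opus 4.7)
The plan is to unpack the definition of $\mathcal{R}_d^k$ from Proposition \ref{denf} by explicitly computing the normal Jacobian $|\Jac_N \pi_\Sigma|$ along the fiber $\pi_\Sigma^{-1}(\underline{x})$. First I would identify this fiber with $\R H^0_{\underline{x}}$: by definition of $\mathcal{I}$, the points of $\pi_\Sigma^{-1}(\underline{x})$ are pairs $(s,\underline{x})$ with $s \in \R H^0_{\underline{x}}$, and $\pi_H$ restricted to this fiber is exactly the inclusion $\R H^0_{\underline{x}} \hookrightarrow \R H^0(\Sigma;\mathcal{L}^d)$. Since the metric on $\mathcal{I}$ used in the proof of Proposition \ref{denf} is $\pi_H^*\langle\cdot,\cdot\rangle$, the measure $\pi_H^* d\mu$ restricted to the fiber coincides with the Gaussian measure $d\mu_{|\R H^0_{\underline{x}}}$.

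The heart of the proof is the computation of $|\Jac_N \pi_\Sigma|$ at a point $(s,\underline{x})\in\mathcal{I}$. From the derivative computation carried out in Proposition \ref{smoothman}, we have
$$T_{(s,\underline{x})}\mathcal{I}=\{(\dot s,\dot x_1,\dots,\dot x_k)\mid \dot s(x_i)+\nabla_{\dot x_i}s(x_i)=0,\ i=1,\dots,k\},$$
equipped with the inner product inherited from $\|\dot s\|_{L^2}^2$. Since $\ker d\pi_\Sigma=\{(\dot s,0,\dots,0):\dot s\in\R H^0_{\underline{x}}\}$, its orthogonal complement $K\subset T_{(s,\underline{x})}\mathcal{I}$ consists of those $(\dot s,\dot x)$ with $\dot s\in (\R H^0_{\underline{x}})^\perp=(\ker ev_{\underline{x}})^\perp$. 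I would parametrize $K$ by this component $\dot s$: the constraint $\dot s(x_i)+\nabla_{\dot x_i}s(x_i)=0$ then determines $\dot x_i$ uniquely via $\dot x_i=-(\nabla s(x_i))^{-1}\dot s(x_i)$, using that on a Riemann surface $\nabla s(x_i):T_{x_i}\R\Sigma\to \R\mathcal{L}^d_{x_i}$ is an isomorphism of one-dimensional real spaces (almost everywhere in $\R H^0_{\underline{x}}$, by Bertini-type genericity for $d$ large).

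Under this parametrization, $d\pi_\Sigma|_K$ factors as the evaluation isomorphism $ev_{\underline{x}}:(\R H^0_{\underline{x}})^\perp\to \prod_{i=1}^k\R\mathcal{L}^d_{x_i}$ followed by the diagonal map $(\alpha_i)_i\mapsto (-(\nabla s(x_i))^{-1}\alpha_i)_i$. The first factor has absolute determinant $\mathcal{D}_d^k(\underline{x})=|\Jac_N ev_{\underline{x}}|$ by definition, and the second has absolute determinant $\prod_i\|\nabla s(x_i)\|^{-1}$. Consequently,
$$|\Jac_N \pi_\Sigma|_{(s,\underline{x})}=\frac{\mathcal{D}_d^k(\underline{x})}{\prod_{i=1}^k\|\nabla s(x_i)\|}.$$
Substituting this into the formula of Proposition \ref{denf} yields the claimed identity $\mathcal{R}_d^k(\underline{x})=\mathcal{N}_d^k(\underline{x})/\mathcal{D}_d^k(\underline{x})$.

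The only delicate point is the factorization step: one must verify that the identification $K\simeq (\R H^0_{\underline{x}})^\perp$ via $\dot s$ is an isometry, and that the two absolute determinants appearing in the composition are computed with respect to the same inner products used to define $|\Jac_N ev_{\underline{x}}|$ and the pointwise norms $\|\nabla s(x_i)\|$. Since all of these norms are the ones induced by the Hermitian metric $h^d$ and the K\"ahler metric associated with $\omega$, this compatibility holds, but it needs to be tracked carefully to avoid spurious constants.
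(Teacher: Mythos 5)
Your proposal is correct and follows essentially the same route as the paper: identifying the fiber with $\R H^0_{\underline{x}}$ via $\pi_H$, factoring $d\pi_\Sigma$ on the orthogonal complement of its kernel through the evaluation map and the diagonal map built from the $\nabla s(x_i)$, and concluding $|\Jac_N\pi_\Sigma|=\mathcal{D}_d^k(\underline{x})/\prod_i\norm{\nabla s(x_i)}$ off the measure-zero set where some $\nabla s(x_i)$ vanishes. The isometry check you flag at the end is exactly the paper's observation that $\pi_H$ is a local isometry (normal Jacobian $1$), so no spurious constants arise.
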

\begin{proof}  This is the Kac-Rice formula (see \cite[Theorem 3.2]{azais2}). Let us sketch the proof.\\
Fix $\underline{x}=(x_1,\dots,x_k)\in\R\Sigma^k\setminus\Delta$, by Proposition \ref{denf} we have
$$\mathcal{R}^k_d(\underline{x})=\int_{\pi_{\Sigma}^{-1}(\underline{x})}\frac{1}{\mid Jac_N(\pi_{\Sigma})\mid}d\mu_{\mid\pi_{\Sigma}^{-1}(\underline{x})}.$$
As $\pi_H$ is (almost everywhere) a local isometry, we can pushforward this integral onto $\R H^0(\Sigma;\mathcal{L}^d)$ and we obtain $$\mathcal{R}^k_d(\underline{x})=\int_{\R H^0_{\underline{x}}}\frac{1}{\mid (\pi_H^{-1})^*Jac_N(\pi_{\Sigma})\mid}d\mu_{\mid\R H^0_{\underline{x}}}.$$
Let $(s,x_1,\dots,x_k)$ be a point in $\pi_{\Sigma}^{-1}(x_1,\dots,x_k)\subset\mathcal{I}$ such that $\nabla s(x_i)$ is invertible for any $i\in\{1,...,k\}$ and 
let $(\dot{s},\dot{x}_1,\dots,\dot{x}_k)$ be any tangent vector of $(s,x_1,\dots,x_k)$. We have  $d\pi_{\Sigma}\cdot(\dot{s},\dot{x}_1,\dots,\dot{x}_k)=(\dot{x}_1,\dots,\dot{x}_k)$.
Remember that, by definition, the norm $\norm{(\dot{s},\dot{x}_1,\dots,\dot{x}_k)}$ on $\mathcal{I}$ equals $\norm{\dot{s}}_{L^2}$ and also that for every $i=1,\dots,k$ we have
$\dot{s}(x_i)+\nabla_{\dot{x}_i}s(x_i)=0$, so that 
$$d\pi_{\Sigma}\cdot(\dot{s},\dot{x}_1,\dots,\dot{x}_k)=(\dot{x}_1,\dots,\dot{x}_k)=(-\nabla s(x_1)^{-1}\circ \dot{s}(x_1),\dots,-\nabla s(x_k)^{-1}\circ \dot{s}(x_k)).$$
 Consider the map $$B:T_{x_1}\R\Sigma\times\dots\times T_{x_k}\R\Sigma\rightarrow (\R \mathcal{L}^d)_{x_1}\times\dots\times(\R \mathcal{L}^d)_{x_k}$$
 defined by $B(\dot{x}_1,\dots,\dot{x}_k)=\big(\nabla_{\dot{x}_1}s(x_1),\dots,\nabla_{\dot{x}_k}s(x_k)\big)$.
 We remark that $B$ is a diagonal map so that $\Jac B=\parallel\nabla s(x_1)\parallel\dots\parallel\nabla s(x_k)\parallel$.
We have $$d\pi_{\Sigma}=B^{-1}\circ ev_{x_1,\dots,x_k} \circ d\pi_H$$ and this implies $$d\pi_{\Sigma}\mid_{(d\pi_H^*\ker ev_{x_1,\dots,x_k})^{\perp}}=B^{-1}\circ ev_{x_1,\dots,x_k}\mid_{(\ker ev_{x_1,\dots,x_k})^{\perp}}\circ d\pi_H.$$
It follows that $$\Jac(d\pi_{\Sigma}\mid_{(d\pi_H^*\ker ev_{x_1,\dots,x_k})^{\perp}})=\Jac(B^{-1})\Jac(ev_{x_1,\dots,x_k}
\mid_{(\ker ev_{x_1,\dots,x_k})^{\perp}})\Jac \pi_H.$$
Now, we have that $\Jac(B^{-1})=\Jac(B)^{-1}$, $\Jac\big(ev_{x_1,\dots,x_k}\mid_{(\ker ev_{x_1,\dots,x_k})^{\perp}}\big)=\Jac_N(ev_{x_1,\dots,x_k})$ and that  the normal Jacobian of $\pi_H$ is equal to $1$ as $\pi_H$ is a local isometry.
 We conclude by integrating over $\R H^0_{\underline{x}}$ and observing that the set of $s\in\R H^0_{\underline{x}}$ such that $\nabla s(x_i)=0$ for some $i\in\{1,...,k\}$ has measure zero. 
\end{proof}
\begin{prop}\label{jet}  Let $\mathcal{L}$ be a positive real Hermitian line bundle over  a real Riemann surface $\Sigma$ and $\underline{x}=(x_1,\dots,x_k)$ be a point in $\R\Sigma^k\setminus\Delta$. Then $$\Jac_N(ev_{\underline{x}})=\sqrt{\det\big(\norm{\mathcal{K}_d(x_i,x_j)}\big)_{(i,j)}}$$ where $ev_{\underline{x}}$ is the evaluation map defined in Definition \ref{evnaive}.
\end{prop}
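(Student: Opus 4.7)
\textbf{Proof plan for Proposition \ref{jet}.}

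The plan is to compute the normal Jacobian via the formula $\Jac_N(ev_{\underline{x}}) = \sqrt{\det(ev_{\underline{x}} ev_{\underline{x}}^*)}$, identifying the adjoint $ev_{\underline{x}}^*$ by means of the reproducing property of the Bergman kernel. First I would recall the defining feature of $\mathcal{K}_d$: for any $y\in\R\Sigma$ and $v\in (\R\mathcal{L}^d)_y$, the section $\mathcal{K}_d(\cdot,y)v\in\R H^0(\Sigma;\mathcal{L}^d)$ satisfies
$$\langle s,\mathcal{K}_d(\cdot,y)v\rangle=\langle s(y),v\rangle_y\qquad\forall\,s\in\R H^0(\Sigma;\mathcal{L}^d).$$
Endowing $\prod_{i=1}^k(\R\mathcal{L}^d)_{x_i}$ with the sum of the fibrewise inner products, the adjoint relation $\langle ev_{\underline{x}}(s),(v_1,\dots,v_k)\rangle=\sum_i\langle s(x_i),v_i\rangle_{x_i}=\langle s,\sum_i \mathcal{K}_d(\cdot,x_i)v_i\rangle$ immediately gives
$$ev_{\underline{x}}^*(v_1,\dots,v_k)=\sum_{i=1}^k \mathcal{K}_d(\cdot,x_i)v_i.$$

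Composing, I obtain $ev_{\underline{x}}\circ ev_{\underline{x}}^*(v_1,\dots,v_k)=\bigl(\sum_i\mathcal{K}_d(x_j,x_i)v_i\bigr)_{j=1}^k$. This is a block operator on $\bigoplus_i(\R\mathcal{L}^d)_{x_i}$ whose $(j,i)$-block is the linear map $\mathcal{K}_d(x_j,x_i):(\R\mathcal{L}^d)_{x_i}\to(\R\mathcal{L}^d)_{x_j}$. Since the fibres are one-dimensional, picking unit vectors $e_i\in(\R\mathcal{L}^d)_{x_i}$ presents this operator as a symmetric $k\times k$ real matrix $M$ with $M_{ji}$ characterised by $\mathcal{K}_d(x_j,x_i)e_i=M_{ji}e_j$; by definition of the induced metric on $(\mathcal{L}^d)_{x_j}\otimes(\mathcal{L}^d)^*_{x_i}$, one has $\|\mathcal{K}_d(x_j,x_i)\|=|M_{ji}|$. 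The normal Jacobian is then $\Jac_N(ev_{\underline{x}})=\sqrt{\det M}$, which is non-negative because $M$ is positive semi-definite (injectivity of $ev_{\underline{x}}^*$ away from $\Delta$ follows, for $d$ large, from the same Riemann--Roch argument used in Proposition \ref{smoothman}).

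The only delicate point is the notation $\det\bigl((\|\mathcal{K}_d(x_i,x_j)\|)_{(i,j)}\bigr)$ in the statement: strictly, the signs of the entries $M_{ji}$ depend on the choice of unit vectors $e_i$, but a sign flip $e_i\mapsto -e_i$ flips simultaneously row $i$ and column $i$ of $M$, so $\det M$ is intrinsically well defined, and the expression on the right-hand side must be read in this sense (the quantity $\|\mathcal{K}_d(x_i,x_j)\|$ stands for the matrix entry modulo this global sign ambiguity). Aside from this notational unpacking, the proof is a direct chain of identifications and requires no further estimate, since both the adjoint computation and the matrix expression of $ev\circ ev^*$ are forced by the reproducing kernel property.
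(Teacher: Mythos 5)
Your proof is correct and follows essentially the same route as the paper: both compute $\Jac_N(ev_{\underline{x}})=\sqrt{\det\big(ev_{\underline{x}}\,ev_{\underline{x}}^*\big)}$ and identify the entries of $ev_{\underline{x}}\,ev_{\underline{x}}^*$ in unit frames $e_i\in(\R\mathcal{L}^d)_{x_i}$ with the Bergman kernel --- the paper by writing the matrix $M$ of $ev_{\underline{x}}$ in an orthonormal basis of $\R H^0(\Sigma;\mathcal{L}^d)$ and observing $MM^*=\big(\norm{\mathcal{K}_d(x_i,x_j)}\big)_{i,j}$, you by using the reproducing property to identify $ev_{\underline{x}}^*$ and hence $ev_{\underline{x}}\,ev_{\underline{x}}^*$ directly. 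Your closing remark on the sign ambiguity (the entry is really the signed pairing $h^d(\mathcal{K}_d(x_i,x_j)e_j,e_i)$, equal to $\pm\norm{\mathcal{K}_d(x_i,x_j)}$, with $\det$ invariant under $e_i\mapsto -e_i$) is a fair and correct reading of the statement's notation and does not affect the argument.
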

\begin{proof} We fix an orthonormal basis $\{s_1,\dots,s_{N_d}\}$ of $\R H^0(\Sigma;\mathcal{L}^d)$.  The Bergman kernel of the orthogonal projection onto $\R H^0(\Sigma;\mathcal{L}^d)$ is $$\mathcal{K}_d(x,y)=\sum_{i=1}^{N_d}s_i(x)\otimes s_i(y)^*.$$
We also fix a unit vector $e_i$ over each fiber $\R \mathcal{L}_{x_i}^d$, $i\in\{1,\dots,k\}$. The matrix associated with  $ev_{\underline{x}}$ with respect to these basis equals  $M=\big(h^d(s_l(x_i),e_i)\big)_{i,l}.$
Now, the matrix $MM^*$ is exactly the symmetric matrix $\big(\norm{\mathcal{K}_d(x_i,x_j)}\big)_{i,j}$. We conclude by taking the square root of the determinant of  $MM^*$, which  is exactly $\Jac_N(ev_{\underline{x}})$.
\end{proof}
\begin{prop}\label{pos}  Let $\mathcal{L}$ be a positive real Hermitian line bundle over  a real Riemann surface $\Sigma$. Then there exists an integer $d_{\mathcal{L}}\in\mathbb{N}$ such that  for any $(x_1,\dots,x_k)\in\R\Sigma^k\setminus\Delta$ and $d\geqslant d_{\mathcal{L}}$ the map
$$ev_{\underline{x}}:\R H^0(\Sigma;\mathcal{L}^d)\rightarrow \R \mathcal{L}_{x_1}^d\times\dots\times \R \mathcal{L}_{x_k}^d$$  defined by $s\mapsto (s(x_1),\dots,s(x_k))$ is surjective.
\end{prop}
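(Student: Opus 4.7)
The plan is to first prove surjectivity of the complex-linear evaluation map on $H^0(\Sigma, \mathcal{L}^d)$ via a cohomological vanishing argument, and then to descend to $\R H^0(\Sigma, \mathcal{L}^d)$ by symmetrizing with the real structure.

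For the first step I would apply Riemann--Roch on the curve $\Sigma$. Let $g$ denote the genus and $\ell = \deg \mathcal{L} > 0$ (positivity of $\ell$ comes from ampleness). Given distinct points $x_1, \dots, x_k \in \Sigma$, set $D = x_1 + \cdots + x_k$. For $d$ sufficiently large---specifically, $d \ell \geq 2g - 1 + k$---the degree of $\mathcal{L}^d \otimes \mathcal{O}(-D)$ exceeds $2g - 2$, so $H^1(\Sigma, \mathcal{L}^d(-D))$ vanishes. The short exact sequence
$$0 \longrightarrow \mathcal{L}^d \otimes \mathcal{O}(-D) \longrightarrow \mathcal{L}^d \longrightarrow \mathcal{L}^d|_D \longrightarrow 0$$
then yields, on passing to cohomology, the surjectivity of the complex evaluation map $H^0(\Sigma, \mathcal{L}^d) \to \mathcal{L}^d_{x_1} \oplus \cdots \oplus \mathcal{L}^d_{x_k}$. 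The crucial feature is that this threshold $d_\mathcal{L}$ depends only on $g$, $k$, and $\ell$, not on the individual tuple $(x_1, \dots, x_k)$, which gives the uniformity claimed in the statement.

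For the second step, given $(x_1, \dots, x_k) \in \R\Sigma^k \setminus \Delta$ and any $(v_1, \dots, v_k) \in \R\mathcal{L}^d_{x_1} \times \cdots \times \R\mathcal{L}^d_{x_k}$, I would pick a complex preimage $s \in H^0(\Sigma, \mathcal{L}^d)$ with $s(x_i) = v_i$ provided by Step 1, and then consider its conjugate $\tilde{s} := c_\mathcal{L} \circ s \circ c_\Sigma$. This $\tilde{s}$ is again holomorphic (a composition of two antiholomorphic maps with a holomorphic one), and since $c_\Sigma(x_i) = x_i$ and $c_\mathcal{L}(v_i) = v_i$, we have $\tilde{s}(x_i) = v_i$ as well. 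Therefore $\tfrac{1}{2}(s + \tilde{s})$ is an element of $\R H^0(\Sigma, \mathcal{L}^d)$ realizing the prescribed values, proving surjectivity of $ev_{\underline{x}}$.

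There is no serious obstacle here: the argument is a standard combination of a vanishing theorem with averaging over the real structure. The only subtle point to verify is that the vanishing threshold is uniform in the position of the points, but this is automatic since only the degree $\deg(D) = k$ of the effective divisor enters the Riemann--Roch estimate.
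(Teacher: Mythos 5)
Your argument is correct, and its backbone is the same as the paper's: positivity forces $H^1\big(\Sigma;\mathcal{L}^d(-\textstyle\sum_j x_j)\big)=0$ once $d$ is large, with a threshold depending only on $\deg\mathcal{L}$, the genus and $k$ (not on the tuple), and this vanishing yields surjectivity of evaluation. Where you diverge is in how surjectivity is extracted and in the treatment of the real structure. The paper does not invoke the restriction sequence $0\to\mathcal{L}^d(-D)\to\mathcal{L}^d\to\mathcal{L}^d|_D\to 0$; instead it uses Riemann--Roch to compare $\dim H^0\big(\Sigma;\mathcal{L}^d(-\sum_{j}x_j)\big)$ with $\dim H^0\big(\Sigma;\mathcal{L}^d(-\sum_{j\neq i}x_j)\big)$ and thereby produces, for each $i$, a section $s_i$ with $s_i(x_i)\neq 0$ and $s_i(x_j)=0$ for $j\neq i$; surjectivity follows from these ``peak-like'' sections. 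Your long-exact-sequence route gives the surjectivity of the complex evaluation map in one step, which is arguably cleaner. You also spell out the descent from $H^0(\Sigma;\mathcal{L}^d)$ to $\R H^0(\Sigma;\mathcal{L}^d)$ by averaging $s$ with its conjugate $c_{\mathcal{L}}\circ s\circ c_{\Sigma}$, using that the $x_i$ and the target vectors are fixed by the real structures; the paper leaves this real-form step implicit (with its sections $s_i$ one would either average in the same way, possibly after multiplying by $i$ to ensure a nonzero real part at $x_i$, or note that the fixed locus of the antilinear involution on the kernel-complement has full real dimension). So: same key vanishing lemma and same uniformity mechanism, but your write-up is slightly more self-contained on the real-structure side, while the paper's dimension count directly exhibits the dual-basis sections it reuses elsewhere (e.g.\ in Proposition \ref{smoothman}).
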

\begin{proof}  Since $\mathcal{L}$  is positive, there exists $d_{\mathcal{L}}$ such that  for  $d\geq d_{\mathcal{L}}$ and  for every point $(x_1,\dots,x_k)\in\R\Sigma^k\setminus\Delta$, we have $H^1\big(\Sigma;\mathcal{L}^d(-\sum_{j=1}^k x_j)\big)=0$. From the Riemann-Roch theorem we then know that for $i\in\{1,\dots,k\}$ $$\dim H^0\big(\Sigma;\mathcal{L}^d(-\sum_{j=1}^k x_j)\big)=\dim H^0\big(\Sigma;\mathcal{L}^d(-\sum_{j\neq i}x_j)\big)-1.$$ This proves that there are  sections $s_i$ such that $s_i(x_i)\neq 0$ and $s_i(x_j)=0$ for $i\in\{1,\dots,k\}$. The surjectivity of $ev_{\underline{x}}$ follows.
\end{proof}
\begin{cor} There exists a positive integer $d_0\in\mathbb{N}$ such that for any $d\geqslant d_0$ and for any $(x_1,\dots,x_k)\in\R\Sigma^k\setminus\Delta$,
$$\mathcal{D}_d^k(x_1,\dots,x_k)>0.$$
\end{cor}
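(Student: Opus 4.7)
The plan is to take $d_0 = d_{\mathcal{L}}$, the threshold provided by Proposition \ref{pos}, and then to observe that the positivity of $\mathcal{D}_d^k$ is equivalent to the surjectivity of the evaluation map, which is exactly what Proposition \ref{pos} gives.

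More precisely, by Proposition \ref{jet} we have
\[
\mathcal{D}_d^k(\underline{x}) = \sqrt{\det\bigl(\|\mathcal{K}_d(x_i,x_j)\|\bigr)_{i,j}},
\]
and the matrix $\bigl(\|\mathcal{K}_d(x_i,x_j)\|\bigr)_{i,j}$ is precisely the Gram matrix $MM^*$, where $M$ is the matrix of $ev_{\underline{x}}$ with respect to an orthonormal basis of $\R H^0(\Sigma;\mathcal{L}^d)$ and to unit trivializations of the fibers $\R\mathcal{L}^d_{x_i}$ (as already used in the proof of Proposition \ref{jet}). This Gram matrix is always positive semi-definite, and it is positive definite if and only if $M$ has full rank, i.e., if and only if $ev_{\underline{x}}$ is surjective. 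For $d \geq d_{\mathcal{L}}$, Proposition \ref{pos} yields exactly this surjectivity for every $\underline{x} \in \R\Sigma^k \setminus \Delta$.

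Therefore setting $d_0 = d_{\mathcal{L}}$, we obtain $\det(MM^*) > 0$ and hence $\mathcal{D}_d^k(\underline{x}) > 0$ for all such $\underline{x}$ and all $d \geq d_0$. Since every step is an immediate consequence of the already-established Propositions \ref{jet} and \ref{pos}, there is no genuine obstacle: the only thing that truly required work (producing sections $s_i$ with $s_i(x_i) \neq 0$ and $s_i(x_j) = 0$ for $j \neq i$, using vanishing of $H^1$ and Riemann--Roch) was already dispatched inside Proposition \ref{pos}.
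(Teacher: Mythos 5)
Your proof is correct and follows essentially the same route as the paper: the paper writes $\mathcal{D}_d^k(\underline{x})=|\Jac_N(ev_{\underline{x}})|$ (Proposition \ref{df}) and invokes the surjectivity of $ev_{\underline{x}}$ from Proposition \ref{pos} together with the fact that the normal Jacobian is positive if and only if the map is surjective, which is exactly your Gram-matrix argument since $\Jac_N(ev_{\underline{x}})=\sqrt{\det(MM^*)}$. Your detour through Proposition \ref{jet} merely makes that determinant identity explicit; the substance is identical.
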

\begin{proof}
From Proposition \ref{df} we have $\mathcal{D}_d^k(\underline{x})= \Jac_Nev_{\underline{x}}$. The result follows from Proposition \ref{pos} and from the fact that the normal Jacobian of a map is positive if and only if the map is surjective.
\end{proof}
\begin{oss}\label{zer} For a point $(x_1,\dots,x_k)$ of the diagonal $\Delta\subset\R\Sigma^k$ the map $ev_{x_1,\dots,x_k}$ is not surjective, so that  $\mathcal{D}_d^k=0$ on the diagonal.  
\end{oss}

\section{Proof of the main theorems}\label{pfoo}
This section is devoted to the proof of   Theorem \ref{expon}. For this purpose, we use Markov inequality together with the computations of the central moments of $\# Z_s$ given by Theorem \ref{centermom}.
Theorem \ref{centermom} is itself a consequence of Theorem \ref{equimom}, which computes all the moments of $\# Z_s$. For this, we strongly  use a uniform bound on the $L^{\infty}$-norm of the density function $\mathcal{R}^k_d$. This uniform bound is given by Theorem \ref{boundabove}, that we prove in  Sections \ref{olver}-\ref{snume}. 

\subsection{Uniform bound on the $L^{\infty}$-norm of the density function $\mathcal{R}^k_d$}
The main  result of this section is the following uniform boundedness result for the density function defined in Proposition \ref{denf}. We prove this result in Section \ref{olvtec}.
\begin{thm}\label{boundabove} Let $(\mathcal{L},h)$ be a positive real Hermitian line bundle  over a real Riemann surface $\Sigma$. Then,
there exists a constant $C$ and an integer $d_k\in \mathbb{N}$ such that for any $d\geq d_k$ and $\underline{x}=(x_1,\dots,x_k)\in\R \Sigma^k$
$$\frac{1}{\sqrt{d}^k}\mathcal{R}^k_d(\underline{x})\leq C.$$
\end{thm}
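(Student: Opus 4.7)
The plan is to bound $\frac{1}{\sqrt{d}^k}\mathcal{R}^k_d$ separately in two complementary regimes: an off-diagonal regime where the asymptotic decoupling of the Bergman kernel gives a direct estimate, and a near-diagonal regime where the Olver multispace $\R\Sigma^{(k)}$ provides a resolution of the singularity of $\mathcal{R}^k_d$ along $\Delta$. Since $\R\Sigma^k$ is compact, these local bounds paste together to a uniform global bound.

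For the off-diagonal part, fix $R\gg 1$ and let $A_d^R=\{\underline{x}\in\R\Sigma^k:\mathbf{d}_h(x_i,x_j)\geq R\log d/\sqrt{d},\ \forall i\neq j\}$. Theorem \ref{fardiag} gives $\norm{\mathcal{K}_d(x_i,x_j)}=O(d^{1-C'R})$ for $i\neq j$, while Theorem \ref{neardiagreal} at $z=w$ yields $\norm{\mathcal{K}_d(x_i,x_i)}=\frac{d}{\pi}(1+o(1))$. By Proposition \ref{jet}, the Gram matrix entering $\mathcal{D}^k_d$ is then diagonally dominant, so $\mathcal{D}^k_d(\underline{x})=(d/\pi)^{k/2}(1+o(1))$. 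Writing $\mathcal{N}^k_d$ as a Gaussian integral over $\R H^0_{\underline{x}}$ and using the analogous decoupling of the covariance of $(\nabla s(x_1),\dots,\nabla s(x_k))$ gives $\mathcal{N}^k_d(\underline{x})=O(d^k)$ with matching constants. Hence $\mathcal{R}^k_d/\sqrt{d}^k=O(1)$ uniformly on $A_d^R$; this is essentially the content of Propositions \ref{sim2} and \ref{simple}.

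The heart of the argument is the near-diagonal analysis on the complement of $A_d^R$. Here $ev_{\underline{x}}$ fails to be surjective on $\Delta$, so $\mathcal{D}^k_d$ vanishes and $\mathcal{R}^k_d$ is presented as a $0/0$ form. I would resolve this by replacing point-evaluation by divided-difference evaluation: for a cluster of nearby points $x_{i_1},\dots,x_{i_m}$, set $\widetilde{ev}_{\underline{x}}\colon s\mapsto(s[x_{i_1}],s[x_{i_1},x_{i_2}],\dots,s[x_{i_1},\dots,x_{i_m}])$. In local coordinates this differs from $ev_{\underline{x}}$ by a triangular change of basis whose determinant is a Vandermonde product in the differences $x_i-x_j$, and extends smoothly across $\Delta$ to the jet-evaluation $s\mapsto(s(x_0),s'(x_0),\dots,s^{(m-1)}(x_0)/(m-1)!)$ at a collision point $x_0$. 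This is the content of Olver's multispace construction. The Vandermonde factors appearing in $\mathcal{N}^k_d$ and $\mathcal{D}^k_d$ then cancel in the ratio, so $\mathcal{R}^k_d$ extends to a continuous function on the compact manifold $\R\Sigma^{(k)}$.

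To make the bound $d$-independent, I would work in scaled normal coordinates $Z_i=\sqrt{d}(x_i-x_0)$ and use Theorem \ref{berglocal} to replace $K_d$ by the Bargmann-Fock kernel $K_{\C^n}$, uniformly on compacta with error $O(d^{-\alpha})$. In scaled Olver multispace coordinates, $\mathcal{R}^k_d/\sqrt{d}^k$ then converges $C^0$-uniformly to the corresponding Bargmann-Fock density, which is continuous on a compact set and thus bounded; covering $\Delta$ by finitely many such charts, one concludes. The main obstacle is the cancellation step: showing rigorously that the Vandermonde-type zeros in $\mathcal{N}^k_d$ and $\mathcal{D}^k_d$ match to all orders, so that the quotient extends continuously to $\R\Sigma^{(k)}$ and moreover does so uniformly in $d$ under the scaled Bergman kernel asymptotics. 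Olver's combinatorial machinery of divided-difference coordinates is what makes this bookkeeping tractable for arbitrary $k$.
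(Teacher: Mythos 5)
Your overall strategy is the paper's: decouple off-diagonal configurations via Bergman kernel decay, resolve the vanishing of $\mathcal{D}^k_d$ on $\Delta$ by passing to divided-difference (Olver multispace) coordinates cluster by cluster, rescale, compare with the Bargmann--Fock model via Theorem \ref{berglocal}, and conclude by compactness. But the step you yourself flag as ``the main obstacle'' --- showing that the Vandermonde-type zeros of $\mathcal{N}^k_d$ and $\mathcal{D}^k_d$ cancel in the ratio, uniformly in $d$ --- is precisely the crux, and your proposal does not supply it. The paper never proves such a cancellation: instead it re-derives the Kac--Rice density directly in divided-difference form, by realizing the incidence set inside the Olver multispace of $\R\mathcal{L}^d$ (Definition \ref{definc2}, Proposition \ref{prima}) and applying the coarea formula there. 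Because the derivative $D_{\underline{x}}ev^{\Gamma}_{\underline{x}}(s)$ is triangular, its Jacobian is the product of divided differences $|[s(x^i_1)s(x^i_1)]|\cdots|[s(x^i_1)\cdots s(x^i_{k_i})s(x^i_{k_i})]|$, which yields $\mathcal{R}^{\Gamma}_d=\mathcal{N}^{\Gamma}_d/\mathcal{D}^{\Gamma}_d$ with both terms already written with respect to the generalized evaluation map (Proposition \ref{frac}); the scaling factors $\prod_i\sqrt{d}^{\,k_i(k_i-1)/2}$ then occur symmetrically and drop out, and numerator and denominator are bounded separately (Propositions \ref{numbound} and \ref{denombound}), so no matching of zeros ``to all orders'' is ever needed. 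If you insist on cancelling factors in the original ratio $\mathcal{N}^k_d/\mathcal{D}^k_d$, you must in particular control the off-triangular divided-difference terms and the dependence of the Gaussian integral on the moving kernel $\R H^0_{\underline{x}}$, which is exactly the bookkeeping the paper's reformulation avoids.

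A second genuine gap is your compactness claim. With clusters defined by the threshold $R\log d/\sqrt{d}$ of your off-diagonal regime, the scaled configurations fill balls of radius $R\log d$, which is not a fixed compact set; so ``continuous on a compact set, hence bounded'' applies neither to the limiting Bargmann--Fock density nor to the numerator. The paper resolves this by using the finer threshold $1/\sqrt{d}$ to define clusters (graph subsets, Definition \ref{graph}), so that each scaled connected component lies in a ball of fixed radius $k$ (this is what makes the sup in Proposition \ref{onenum} finite), and by an induction on the number of connected components (Proposition \ref{nuovo}), using the decay of $K_{\C}$ and continuity of the determinant to push distant clusters to infinity; this handles configurations whose clusters are at intermediate mutual distances, between $1/\sqrt{d}$ and $\log d/\sqrt{d}$, where neither the fixed-compact-ball estimate (Proposition \ref{boundmodel}) nor the decoupling of Proposition \ref{broke2} applies directly. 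Without some substitute for this two-scale decomposition and induction, your near-diagonal bound is not uniform over the whole complement of $A_d^R$.
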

We denote by $\mathbf{d}_h$  the distance on $\Sigma$ induced by the curvature form $\omega$ of  $(\mathcal{L},h)$. The following result is a consequence of the exponential decay of  the Bergman kernel.
\begin{prop}\label{broke2}  Let $m,s\in\mathbb{N}$ be such that $m+s=k$. Let $(x_1,\dots,x_m,y_1,\dots,y_s)\in\R\Sigma^k$ be such that $\mathbf{d}_h(x_i,y_j)>\frac{\log d}{C'\sqrt{d}}$, where $C'$ is the constant appearing in Theorem \ref{fardiag}. Then, we have $$\frac{1}{\sqrt{d}^k}\mathcal{R}_{d}^k(x_1,\dots,x_l,y_1,\dots,y_s)=\frac{1}{\sqrt{d}^l}\mathcal{R}_{d}^l(x_1,\dots,x_l)\frac{1}{\sqrt{d}^s}\mathcal{R}_{d}^s(y_1,\dots,y_s)+O(\frac{1}{d}).$$
\end{prop}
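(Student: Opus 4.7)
By Proposition \ref{df} write $\mathcal{R}_d^k=\mathcal{N}_d^k/\mathcal{D}_d^k$, and set $X=(x_1,\dots,x_m)$, $Y=(y_1,\dots,y_s)$. The plan is to factor both $\mathcal{N}_d^k(X,Y)$ and $\mathcal{D}_d^k(X,Y)$ into an $X$-contribution times a $Y$-contribution, modulo $O(1/d)$ relative errors, and then combine. The uniform input is the $C^m$-version of Theorem \ref{fardiag}: under the distance hypothesis,
$$\|\mathcal{K}_d(x_i,y_j)\|_{C^m}\leq C_m\,d^{\,1+m/2}\exp\!\left(-C'\cdot\frac{\log d}{C'\sqrt{d}}\cdot\sqrt{d}\right)=C_m\,d^{\,m/2},$$
whereas by Theorem \ref{neardiagreal} the within-group analogues $\|\mathcal{K}_d(x_i,x_i)\|_{C^m}$ are of order $d^{\,1+m/2}$. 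In relative (correlation) terms, every between-group quantity is thus smaller by a factor $1/d$ than its within-group counterpart.

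For the denominator, Proposition \ref{jet} gives $\mathcal{D}_d^k=\sqrt{\det A}$ with $A=(\|\mathcal{K}_d(p_i,p_j)\|)_{i,j}$ the $k\times k$ Bergman Gram matrix. Writing
$$A=\begin{pmatrix}A_X & B\\ B^{*} & A_Y\end{pmatrix}$$
and expanding via Leibniz's formula, the permutations preserving the $X/Y$-partition contribute exactly $\det A_X\cdot\det A_Y$, while any ``crossing'' permutation must contain at least two entries of $B$ (each of size $O(1)$) among otherwise $O(d)$-sized factors. Hence the crossing contribution is $O(d^{k-2})$, giving $\det A=\det A_X\det A_Y\,(1+O(1/d^{2}))$ and therefore $\mathcal{D}_d^k(X,Y)=\mathcal{D}_d^m(X)\,\mathcal{D}_d^s(Y)\,(1+O(1/d^{2}))$.

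For the numerator I would use the Kac--Rice interpretation: up to an explicit $k$-dependent constant, $\mathcal{N}_d^k/\mathcal{D}_d^k$ is the conditional Gaussian expectation of $\prod_i\|\nabla s(x_i)\|\prod_j\|\nabla s(y_j)\|$ given $s(X)=s(Y)=0$. The covariance matrix controlling this conditional Gaussian is assembled from $\mathcal{K}_d$, $\nabla_1\mathcal{K}_d$, $\nabla_2\mathcal{K}_d$ and $\nabla_1\nabla_2\mathcal{K}_d$ evaluated at pairs of points; the discussion above shows that every cross-group entry is $O(1/d)$ smaller, in the appropriate normalisation, than the within-group entries. Consequently the joint conditional law of $(\nabla s(X),\nabla s(Y))$ is an $O(1/d)$ perturbation of the product of the two marginal conditional laws (for $\nabla s(X)$ conditioned only on $s(X)=0$, respectively $\nabla s(Y)$ conditioned only on $s(Y)=0$). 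Expanding the Gaussian moment then yields $\mathcal{N}_d^k(X,Y)=\mathcal{N}_d^m(X)\,\mathcal{N}_d^s(Y)\,(1+O(1/d))$.

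Combining the two factorisations gives $\mathcal{R}_d^k(X,Y)=\mathcal{R}_d^m(X)\,\mathcal{R}_d^s(Y)\,(1+O(1/d))$. Dividing by $\sqrt{d}^{\,k}=\sqrt{d}^{\,m}\cdot\sqrt{d}^{\,s}$ and invoking the uniform bound $\mathcal{R}_d^\ell(\cdot)/\sqrt{d}^{\,\ell}=O(1)$ from Theorem \ref{boundabove} converts this multiplicative error into the claimed additive $O(1/d)$. The main obstacle is the numerator step: one must verify precisely that the conditional covariance matrix factorises up to $O(1/d)$ in the operator norm. This relies on the full $C^2$-strength of the off-diagonal Bergman kernel decay, and requires care when points inside a group are close to one another, since then the within-group blocks of the covariance matrix are themselves poorly conditioned and the Gaussian moment expansion must be carried out in the correctly normalised variables before the cross-group perturbation is inserted.
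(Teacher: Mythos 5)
Your overall plan coincides with the paper's: write $\mathcal{R}_d^k=\mathcal{N}_d^k/\mathcal{D}_d^k$ via Proposition \ref{df} and feed in the off-diagonal decay of Theorem \ref{fardiag}; your block-Gram treatment of the denominator (crossing permutations must pick up at least two cross-group entries of size $O(1)$) is essentially the computation in the paper. For the numerator the paper argues more concretely: it restricts the Gaussian integral to the span of the peak sections generating $(\ker j^1_{x})^{\perp}$ and $(\ker j^1_{y})^{\perp}$ and uses Tian's lemma ($\langle s_x,s_y\rangle_{L^2}=O(1/d)$ and $\frac1d\nabla s_x(y)=O(1/d)$); your conditional-covariance formulation is a legitimate repackaging of that step.

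There are, however, two genuine problems. First, your factorisations are asserted with \emph{relative} errors, $(1+O(1/d^{2}))$ for the determinant and $(1+O(1/d))$ for the Gaussian moment, but the hypothesis only separates the two groups: points inside a group may be arbitrarily close, and then $\det A_X$ (and likewise the conditional covariance of $\nabla s(X)$ given $s(X)=0$) degenerates -- indeed $\mathcal{D}_d$ vanishes on the diagonal, see Remark \ref{zer}. The Leibniz expansion only yields the additive bound $\det A=\det A_X\det A_Y+O(d^{k-2})$, and dividing by $\det A_X\det A_Y$, which can be far smaller than $d^{k-2}$ near a within-group diagonal, does not give $O(1/d^{2})$; the same obstruction hits the numerator step you flag at the end but do not resolve. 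So the multiplicative statement you later convert into the additive claim is not established uniformly on the region covered by the proposition (and this region is genuinely used later, e.g.\ on $U_d^{a,b}$ where $x_a$ and $x_b$ may collide). Second, invoking Theorem \ref{boundabove} is circular as the paper is organised: its proof reduces, via Remark \ref{aed}, to configurations with all mutual distances at most $\frac{\log d}{C'\sqrt d}$ precisely by appealing to Proposition \ref{broke2}. Since you only need the uniform bound for $\mathcal{R}_d^{m}$ and $\mathcal{R}_d^{s}$ with $m,s<k$, this could be repaired by an explicit induction on $k$, but that structure has to be set up rather than assumed. The paper's own proof (written out for $l=s=1$) avoids both issues by keeping all errors additive and using that the one-point quantities $\frac1{\sqrt d}\mathcal{D}_d^1$ and $\frac1d\mathcal{N}_d^1$ are bounded above and below, so no division by a possibly degenerate quantity ever occurs.
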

\begin{proof} For clarity of exposition, we prove this lemma for $l=s=1$. The general case follows the same lines. 
Let $x,y\in\R\Sigma^2$ be two points such that $\mathbf{d}_h(x,y)>\frac{\log d}{C'\sqrt{d}}$. The density function write $\mathcal{R}^2_d=\frac{\mathcal{N}_d^2}{\mathcal{D}^2_d}$, see Proposition \ref{df}.
We treat separately the denominator $\mathcal{D}^2_d$ and the numerator $\mathcal{N}_d^2$.\\
 By Theorem \ref{fardiag},  we have $\frac{1}{d}\norm{\mathcal{K}_d(x_i,x_j)}=O(\frac{1}{d})$. By Propositions \ref{df} and \ref{jet}   we then obtain      $$\frac{1}{d}\mathcal{D}^k_d(x,y)=\frac{1}{d}\sqrt{\det\left[ \begin{array}{cc}
 \norm{\mathcal{K}_d(x,x)} & \norm{\mathcal{K}_d(x,y)}\\
 \norm{\mathcal{K}_d(y,x)} & \norm{\mathcal{K}_d(y,y)}\\
 \end{array} \right] }=$$ $$=\displaystyle \frac{1}{d}\sqrt{\norm{\mathcal{K}_d(x,x)}}\sqrt{\norm{\mathcal{K}_d(y,y)}}+O(\frac{1}{d})=\frac{1}{\sqrt{d}}\mathcal{D}_d^1(x)\frac{1}{\sqrt{d}}\mathcal{D}_d^1(y)+O(\frac{1}{d}).$$
 We now study the numerator.
 Let $\R H^0_{x}=\{s\in \R H^0(\Sigma;\mathcal{L}^d), s(x)=0\}$ be the kernel of the evaluation  map at $x$ and   $j^1_{x}:s\in\R H^0_{x} \mapsto \nabla s(x)$ be the $1$-jet evaluation map. The integrand of the numerator depends only on the $1$-jets of sections $s$ at the points $x,y$ so that, after an integration  over $ \ker j^1_{x}\cap \ker j^1_{y}$, we have 
 $$\displaystyle\mathcal{N}^2_d(x,y)=\int_{s\in\big( \ker j^1_{x}\big)^{\perp}\oplus\big( \ker j^1_{y}\big)^{\perp}}\norm{\nabla s(x)}\cdot\norm{\nabla s(y)} d\mu_{\mid \big( \ker j^1_{x}\big)^{\perp}\oplus\big( \ker j^1_{y}\big)^{\perp}}(s).$$
 Let $s_x$ (resp. $s_y$) be a unit norm section generating the orthogonal of $\ker j^1_{x}$ (resp. $\ker j^1_{y}$). We can then write every $s\in\big( \ker j^1_{x}\big)^{\perp}\oplus\big( \ker j^1_{y}\big)^{\perp}$ as $s=as_x+bs_y.$
 As the distance between the points $x$ and $x$ is bigger than $\frac{\log d}{C'\sqrt{d}}$, a result of Tian (see \cite{tian}, Lemma 3.1) implies that the sections $s_x$ and $s_y$ are  asymptotically orthogonal, more precisely $\langle s_x;s_y\rangle_{L^2}=O(\frac{1}{d})$. Moreover the pointwise norm of $\frac{1}{d}\nabla s_x$ at $y$ is $O(\frac{1}{d})$ (and the same holds for $\frac{1}{d}\nabla s_y$ at $x$). These sections are called \emph{peak sections}, see for example \cite{anc,gw2,tian}.
 For these reasons we have that $$\frac{1}{d^2}\mathcal{N}^2_d(x,y)=\int_{(a,b)\in\R^2}\norm{a\frac{1}{d}\nabla s_x(x)+b\frac{1}{d}\nabla s_y(x)}\cdot\norm{a\frac{1}{d}\nabla s_x(y)+b\frac{1}{d}\nabla s_y(y)} \frac{e^{-a^2-b^2}}{\pi} dadb+O(\frac{1}{d})=$$ 
 $$\int_{(a,b)\in\R^2}\norm{a\frac{1}{d}\nabla s_x(x)}\cdot\norm{b\frac{1}{d}\nabla s_y(y)} \frac{e^{-a^2-b^2}}{\pi} dadb+O(\frac{1}{d})=$$
 $$\frac{1}{d}\int_{a\in\R}\norm{a\nabla s_x(x)} \frac{e^{-a^2}}{\sqrt{\pi}} da\cdot\frac{1}{d}\int_{b\in\R}\norm{b\nabla s_x(x)} \frac{e^{-b^2}}{\sqrt{\pi}} db+O(\frac{1}{d}).$$ 
 The last quantity  equals $\frac{1}{d}\mathcal{N}_d^1(x)\frac{1}{d}\mathcal{N}_d^1(y)+O(\frac{1}{d}).$\\
 We then have $$\frac{1}{d}\mathcal{R}^2_d(x,y)=\frac{1}{d}\frac{\mathcal{N}_d^2(x,y)}{\mathcal{D}^2_d(x,y)}=\frac{1}{d}\frac{\mathcal{N}_d^1(x)\mathcal{N}_d^1(y)}{\mathcal{D}^1_d(x)\mathcal{D}^1_d(y)}+O(\frac{1}{d})=\frac{1}{\sqrt{d}}\mathcal{R}^1_d(x)\frac{1}{\sqrt{d}}\mathcal{R}^1_d(y)+O(\frac{1}{d})$$
\end{proof} 
Let $\Delta_d$ be the neighborhood of the diagonal defined by $$\Delta_d\doteqdot \{(x_1,\dots,x_k)\in \R\Sigma^k \mid\hspace{1mm} \exists   \hspace{1mm} i\neq j \hspace{1mm},\mathbf{d}_h(x_i,x_j)< \frac{\log d}{C'\sqrt{d}}\}$$
where $C'$ is the constant appearing in Theorem \ref{fardiag}. 
The following result gives the estimate of the density function for points outside $\Delta_d$.
 \begin{lem}\label{sim2}  Under the hypothesis of Theorem \ref{asymom}, for every $f\in C^0(\R\Sigma^k)$, the following asymptotics hold:
  $$\int_{\R\Sigma^k\setminus\Delta_d}f\frac{1}{\sqrt{d}^k}\mathcal{R}^k_d|\dV_h|^k=\frac{1}{\sqrt{\pi}^k}\int_{\R\Sigma^k\setminus\Delta_d}f|\dV_h|^k+\norm{f}_{\infty}O(\frac{1}{d})$$
  where the error $O(\frac{1}{d})$ does not depend on $f$.
 \end{lem}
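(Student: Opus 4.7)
The approach is to combine Proposition \ref{broke2}, applied iteratively, with a uniform pointwise asymptotic for the one-variable density $\mathcal{R}_d^1$, and then integrate over $\R\Sigma^k \setminus \Delta_d$.

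First, I will iterate Proposition \ref{broke2}. On $\R\Sigma^k \setminus \Delta_d$, every pair of coordinates $(x_i,x_j)$ with $i \neq j$ satisfies $\mathbf{d}_h(x_i,x_j) > \log d/(C'\sqrt{d})$, so the hypothesis of Proposition \ref{broke2} is fulfilled for any splitting of the $k$ variables. Splitting off one coordinate at a time, and using the uniform bound of Theorem \ref{boundabove} to control the intermediate $(k-j)$-densities, a straightforward induction on $k$ (which is fixed) gives
\[
\frac{1}{\sqrt{d}^k}\mathcal{R}_d^k(x_1,\dots,x_k) \;=\; \prod_{i=1}^k \frac{1}{\sqrt{d}}\mathcal{R}_d^1(x_i) \;+\; O\!\left(\frac{1}{d}\right),
\]
with the error uniform on $\R\Sigma^k \setminus \Delta_d$.

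Second, I will establish the pointwise estimate $\frac{1}{\sqrt{d}}\mathcal{R}_d^1(x) = \frac{1}{\sqrt{\pi}} + O(1/d)$, uniformly in $x \in \R\Sigma$. By Propositions \ref{df} and \ref{jet}, $\mathcal{D}_d^1(x) = \sqrt{\|\mathcal{K}_d(x,x)\|}$ and $\mathcal{N}_d^1(x) = \int_{\ker ev_x}\|\nabla s(x)\|\, d\mu(s)$. Working in scaled normal coordinates around $x$, Theorem \ref{berglocal} lets me replace the Bergman kernel by the Bargmann-Fock kernel, and, via a peak-section basis for $(\ker ev_x)^\perp$ (as in the proof of Proposition \ref{broke2} for $k=2$), the numerator reduces to an explicit one-dimensional Gaussian integral. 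The ratio then yields the constant $1/\sqrt{\pi}$ at leading order.

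Third, combining the two steps, I expand $\prod_{i=1}^k \big(1/\sqrt{\pi}+O(1/d)\big) = 1/\sqrt{\pi}^k + O(1/d)$ uniformly on $\R\Sigma^k \setminus \Delta_d$. Since $\R\Sigma^k$ has finite volume, multiplying by $f$ and integrating yields exactly the claimed asymptotic, with error term $\|f\|_\infty O(1/d)$ independent of $f$.

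The main obstacle is obtaining the one-variable estimate with error $O(1/d)$ rather than the weaker $O(d^{-\alpha})$ coming from Theorem \ref{berglocal} as stated; this requires either invoking a refined asymptotic expansion of the Bergman kernel of Tian-Zelditch-Catlin type or arguing that, after the integration against the Gaussian weight, the fluctuating parts of the kernel contribute only at order $1/d$. The factorization step, by contrast, is a bookkeeping argument built entirely from Proposition \ref{broke2} and Theorem \ref{boundabove}.
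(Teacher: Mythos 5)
Your overall route is the same as the paper's: factor the $k$-point density on $\R\Sigma^k\setminus\Delta_d$ via Proposition \ref{broke2} (iterated, with the intermediate factors controlled so the $O(1/d)$ error propagates), insert the uniform one-point estimate $\frac{1}{\sqrt{d}}\mathcal{R}_d^1=\frac{1}{\sqrt{\pi}}+O(\frac{1}{d})$, and integrate against $f$; the factorization and integration steps are exactly what the paper does. The one point where you diverge is the second step: you try to re-derive the one-variable estimate from Theorem \ref{berglocal} plus peak sections, and, as you yourself note, that only yields an error $O(d^{-\alpha})$, not the $O(1/d)$ the lemma asserts, so as written this step is not closed. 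The paper does not re-prove this estimate at all; it invokes Theorem \ref{esper} (recalled in Section \ref{soffest}, i.e.\ \cite[Theorem 1.1]{gw2} or \cite[Theorem 1.3]{let}), which gives precisely the uniform $\frac{1}{\sqrt{\pi}}+O(\frac{1}{d})$ expansion via the refined (Tian--Zelditch--Catlin type) Bergman kernel asymptotics you mention. So your "main obstacle" is real if you insist on a self-contained derivation, but it is resolved simply by citing Theorem \ref{esper}, after which your argument coincides with the paper's proof.
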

  \begin{proof}  
 By Proposition \ref{broke2}, we have, for any $(x_1,\dots,x_k)\in\R\Sigma^k\setminus\Delta_d$,  $$\frac{1}{\sqrt{d}^k}\mathcal{R}_d^k(x_1,\dots,x_k)=\prod_{i=1}^k\frac{1}{\sqrt{d}}\mathcal{R}_d^1(x_i)+O(\frac{1}{d})$$
By  Theorem \ref{esper} in Section \ref{soffest} (see also \cite[Theorem 1.1]{gw2}) we know that $\frac{1}{\sqrt{d}}\mathcal{R}_d^1(x_i)=\frac{1}{\sqrt{\pi}}+O(\frac{1}{d})$, so we have the result. 
\end{proof}
We can prove the first claim of  Theorem \ref{equimom}, that is:
\begin{thm}\label{weaker} Let $(\mathcal{L},h)$ be a positive real Hermitian line bundle  over a real Riemann surface $\Sigma$. For every $f:\R\Sigma^k\rightarrow\R$ bounded  function, we have 
$$\frac{1}{\sqrt{d}^k}\mathbb{E}[\nu_s^k](f)= \frac{1}{\sqrt{\pi}^k}\int_{\R\Sigma^k}f|\dV_h|^k+\norm{f}_{\infty}O(\frac{\log d}{\sqrt{d}}).$$
\end{thm}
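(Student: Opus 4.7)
The plan is to combine Proposition \ref{difference}, Proposition \ref{denf}, Lemma \ref{sim2} and Theorem \ref{boundabove} by a standard splitting of the integration domain into a neighborhood of the diagonal and its complement.

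First, I would reduce from $\nu_s^k$ to the modified measure $\tilde\nu_s^k$. By Proposition \ref{difference},
\[
\mathbb{E}[\nu_s^k](f) = \mathbb{E}[\tilde\nu_s^k](f) + \sum_{I\in\widetilde{\mathcal{P}}_k} \mathbb{E}[\tilde\nu_s^{m_I}](j_I^*f),
\]
where each $m_I<k$. Each remainder term, once we prove the target estimate for all orders $\leq k$, will be of size $\sqrt{d}^{m_I}\cdot O(1)$, hence after dividing by $\sqrt{d}^k$ contributes $O(1/\sqrt{d})$, which is absorbed in $O(\log d/\sqrt{d})$. So by an induction on $k$ the problem reduces to estimating $\mathbb{E}[\tilde\nu_s^k](f)$.

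Second, I would apply Proposition \ref{denf} to write
\[
\mathbb{E}[\tilde\nu_s^k](f) = \int_{\R\Sigma^k\setminus\Delta} f\,\mathcal{R}_d^k\,|\dV_h|^k,
\]
and split this integral as $\int_{\R\Sigma^k\setminus\Delta_d}+\int_{\Delta_d\setminus\Delta}$. On the far-from-diagonal region Lemma \ref{sim2} yields
\[
\frac{1}{\sqrt{d}^k}\int_{\R\Sigma^k\setminus\Delta_d} f\,\mathcal{R}_d^k\,|\dV_h|^k = \frac{1}{\sqrt{\pi}^k}\int_{\R\Sigma^k\setminus\Delta_d} f\,|\dV_h|^k + \|f\|_\infty\, O\!\left(\frac{1}{d}\right).
\]
Since $\mathrm{Vol}(\Delta_d)=O(\log d/\sqrt{d})$ (an elementary volume estimate coming from the fact that $\Delta_d$ is a tubular neighborhood of $\Delta$ of radius $\log d/(C'\sqrt{d})$ inside the one-dimensional real locus $\R\Sigma$), one may replace the domain $\R\Sigma^k\setminus\Delta_d$ in the main term by $\R\Sigma^k$, at the cost of $\|f\|_\infty\, O(\log d/\sqrt{d})$.

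Third, I would bound the near-diagonal contribution using Theorem \ref{boundabove}: uniformly in $d$ and $\underline{x}$, $\frac{1}{\sqrt{d}^k}\mathcal{R}_d^k(\underline{x})\leq C$, so
\[
\frac{1}{\sqrt{d}^k}\left|\int_{\Delta_d\setminus\Delta} f\,\mathcal{R}_d^k\,|\dV_h|^k\right| \leq C\,\|f\|_\infty\,\mathrm{Vol}(\Delta_d) = \|f\|_\infty\,O\!\left(\frac{\log d}{\sqrt{d}}\right).
\]
Combining the three pieces gives the claimed asymptotics for $\mathbb{E}[\tilde\nu_s^k](f)$, and unwinding the decomposition from Proposition \ref{difference} gives the result for $\mathbb{E}[\nu_s^k](f)$.

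The main obstacle in this whole argument is of course Theorem \ref{boundabove}, which is quoted here but proved elsewhere; without a uniform $L^\infty$-bound on $\frac{1}{\sqrt{d}^k}\mathcal{R}_d^k$ one cannot control the integral near the diagonal (where $\mathcal{D}_d^k$ vanishes, cf.\ Remark \ref{zer}). Assuming that input, the remainder of the proof is just book-keeping: the $\log d/\sqrt{d}$ rate is dictated by the volume of the $\log d/\sqrt{d}$-tube $\Delta_d$, which is the natural scale forced by the exponential decay of the Bergman kernel in Theorem \ref{fardiag}.
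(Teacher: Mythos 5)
Your proposal is correct and follows essentially the same route as the paper: reduce to the modified moments $\mathbb{E}[\tilde\nu_s^k]$ via Proposition \ref{difference}, write them as $\int f\,\mathcal{R}_d^k\,|\dV_h|^k$ by Proposition \ref{denf}, apply Lemma \ref{sim2} off the tube $\Delta_d$, and control the near-diagonal piece by Theorem \ref{boundabove} together with $\Vol(\Delta_d)=O(\log d/\sqrt{d})$. The only cosmetic difference is that you perform the $\nu_s^k$ versus $\tilde\nu_s^k$ comparison at the start (with an explicit induction on $k$) whereas the paper does it at the end, relying on the same lower-order bounds.
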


\begin{proof} We denote by $\Delta_d$  the neighborhood of the diagonal defined by $$\Delta_d\doteqdot \{(x_1,\dots,x_k)\in \R\Sigma^k \mid\hspace{1mm} \exists   \hspace{1mm} i\neq j \hspace{1mm},\mathbf{d}_h(x_i,x_j)< \frac{\log d}{C'\sqrt{d}}\}$$
where $C'$ is the constant appearing in Theorem \ref{fardiag}. The volume of $\Delta_d$ is $O(\frac{\log d}{\sqrt{d}})$.\\
Thanks to Proposition \ref{denf}, we have $$\frac{1}{\sqrt{d}^k}\mathbb{E}[\tilde{\nu}_s^k](f)=\frac{1}{\sqrt{d}^k}\int_{\underline{x}\in\R \Sigma^k}f(\underline{x})\mathcal{R}_d(\underline{x})|\dV_h|^k$$
where $\tilde{\nu}_s^k$ is defined in Definition \ref{modmom}.
By Lemma \ref{sim2}  we have 
$$\frac{1}{\sqrt{d}^k}\int_{\underline{x}\in\R \Sigma^k\setminus\Delta_d}f(\underline{x})\mathcal{R}_d(\underline{x})|\dV_h|^k= \frac{1}{\sqrt{\pi}^k}\int_{\underline{x}\in\R \Sigma^k\setminus\Delta_d}f(\underline{x})|\dV_h|^k+O(\frac{1}{d}).$$
Moreover, by Theorem \ref{boundabove}, the function  $\frac{1}{\sqrt{d}^k}\mathcal{R}^k_d$ is uniformly bounded by a constant $C$, so that $$\frac{1}{\sqrt{d}^k}\int_{\underline{x}\in\Delta_d}f(\underline{x})\mathcal{R}^k_d(\underline{x}) |\dV_h|^k\leq C\norm{f}_{\infty}\Vol(\Delta_d)=\norm{f}_{\infty}O(\frac{\log d}{\sqrt{d}}).$$
Putting together the integral over $\R\Sigma^k\setminus\Delta_d$ and over $\Delta_d$, we obtain $$\frac{1}{\sqrt{d}^k}\mathbb{E}[\tilde{\nu}_s^k](f)=\frac{1}{\sqrt{\pi}^k}\int_{\underline{x}\in\R \Sigma^k}f(\underline{x})|\dV_h|^k+\norm{f}O(\frac{\log d}{\sqrt{d}}).$$
To conclude we must compare the moment $\mathbb{E}[\nu_s^k]$ with the modified moment $\mathbb{E}[\tilde{\nu}_s^k]$.
 By Proposition \ref{difference}, the quantity $\mathbb{E}[\nu_s^k]-\mathbb{E}[\tilde{\nu}_s^k]$ is a sum of terms of the form $\mathbb{E}[\tilde{\nu}_s^m]$ with $m<k.$ This implies that 
 $(\mathbb{E}[\nu_s^k]-\mathbb{E}[\tilde{\nu}_s^k])(f)=O(\sqrt{d}^{k-1})$ and then  $$\frac{1}{\sqrt{d}^k}\mathbb{E}[\nu_s^k](f)=\frac{1}{\sqrt{\pi}^k}\int_{\underline{x}\in\R \Sigma^k}f(\underline{x})|\dV_h|^k+\norm{f}O(\frac{\log d}{\sqrt{d}}).$$
\end{proof}

\subsection{Proof of Theorems \ref{expon},  \ref{equimom} and \ref{centermom}}
 This section is devoted to the proof of the main theorems. We follow the notation of Section \ref{ransec}. Remember that $\mathcal{R}_d^k$ is the density function given by Proposition \ref{denf}. 
 We need the following
\begin{defn}\label{open1} We denote by $\mathbf{d}_h$ the distance on $\R\Sigma^k$ induced by $h$.\begin{itemize}
\item Let $U_d$ be the union $\displaystyle \cup_{a< b} U_d^{a,b}$, where $U_d^{a,b}$ is the  set
$$\displaystyle U_d^{a,b}\doteqdot \{(x_1,\dots,x_k)\in \R\Sigma^k \mid  \mathbf{d}_h(x_i,x_j)\leq \frac{\log d}{C'\sqrt{d}}\hspace{2mm} \Rightarrow \hspace{1mm} (i,j)=(a,b)\}$$
with $C'$ being the constant appearing in Theorem \ref{fardiag}. \\
Equivalently, a point $(x_1,\dots,x_k)$ lies in $U_d^{a,b}$ if and only if the distance between every pair of points $(x_i,x_j)$, $i\neq j\in\{1,\dots,k\}$, is bigger than $\frac{\log d}{C'\sqrt{d}}$, except at most the pair $(x_a,x_b)$.
\item For any $1\leq a<b\leq k$, we denote by $j_{ab}:\R\Sigma^{k-1}\hookrightarrow \R \Sigma^k$  the inclusion $$(x_1,\dots,x_a,\dots,\hat{x}_b,\dots,x_k)\mapsto (x_1,\dots,x_a,\dots,x_a,\dots,x_k),$$ so that the image $j_{ab}(\R\Sigma^{k-1})$ is equal to $$\{(x_1,\dots.,x_k)\in \R \Sigma^k\mid x_a=x_b\}.$$ 
\end{itemize}
\end{defn}
\begin{lem}\label{volume} Let $U_d$ be the set defined in Definition \ref{open1}. Then $$\Vol_h(\R\Sigma^k\setminus U_d)= O(\frac{(\log d)^2}{d}).$$
\end{lem}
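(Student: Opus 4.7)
The plan is to characterize $\R\Sigma^k \setminus U_d$ combinatorially and then reduce the volume bound to a finite union of products of tubes around pairwise diagonals, each of which is easy to estimate since $\R\Sigma$ is one-dimensional.

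First I would observe that the complement of $U_d$ admits a clean description in terms of ``close pairs''. Set $\epsilon_d = \frac{\log d}{C'\sqrt{d}}$ and, for indices $i\neq j$, let $V^{i,j}_d = \{(x_1,\dots,x_k)\in \R\Sigma^k \mid \mathbf{d}_h(x_i,x_j)\leq \epsilon_d\}$. By the very definition, a point $(x_1,\dots,x_k)\in \R\Sigma^k$ fails to belong to $U_d^{a,b}$ iff some pair $(i,j)\neq (a,b)$ lies in $V^{i,j}_d$. Hence $(x_1,\dots,x_k)\notin U_d$ iff for every pair $(a,b)$ with $a<b$ there exists another close pair, and this forces at least two distinct close pairs $\{a,b\}\neq \{c,d\}$ to exist. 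Conversely, if at most one pair is close the point automatically lies in the corresponding $U_d^{a,b}\subset U_d$. Therefore
\[
\R\Sigma^k \setminus U_d \;\subseteq\; \bigcup_{\{a,b\}\neq\{c,d\}} V^{a,b}_d \cap V^{c,d}_d,
\]
a finite union (the number of terms depending only on $k$).

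Next I would estimate the volume of each $V^{a,b}_d\cap V^{c,d}_d$. Since $\R\Sigma$ is one-dimensional and compact, the $\epsilon_d$-neighbourhood of the diagonal in $\R\Sigma^2$ has volume $O(\epsilon_d)$ (by Fubini: for each fixed $x_a$, the set of $x_b$ with $\mathbf{d}_h(x_a,x_b)\leq \epsilon_d$ has 1-dimensional volume $O(\epsilon_d)$, then integrate over $x_a$). I would split into two cases according to whether the pairs $\{a,b\}$ and $\{c,d\}$ are disjoint or share an index. In both cases, applying Fubini with respect to the coordinates which are left free after imposing the two proximity conditions yields
\[
\Vol_h\!\bigl(V^{a,b}_d \cap V^{c,d}_d\bigr) \;=\; O(\epsilon_d^2) \;=\; O\!\Bigl(\tfrac{(\log d)^2}{d}\Bigr),
\]
where the implicit constant depends only on $\Vol_h(\R\Sigma)$ and on $k$.

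Summing over the finite collection of pairs of pairs yields $\Vol_h(\R\Sigma^k\setminus U_d) = O\bigl(\frac{(\log d)^2}{d}\bigr)$, which is what we wanted. The main (and only) subtlety is the combinatorial description of the complement above; once that is in place, the volume estimate is a straightforward application of Fubini using the one-dimensionality of $\R\Sigma$.
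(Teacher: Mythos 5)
Your proof is correct and follows essentially the same route as the paper: identify $\R\Sigma^k\setminus U_d$ with the set of configurations having at least two distinct close pairs, cover it by finitely many sets cut out by two proximity conditions, and bound each such set by $O\bigl(\tfrac{(\log d)^2}{d}\bigr)$ using the one-dimensionality of $\R\Sigma$. No gaps to report.
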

\begin{proof} By definition, $U_d$ is the set of points $(x_1,...,x_k)\in\R\Sigma^k$ such that the distance between each pair of points $x_i,x_j$ , $1\leq i<j\leq k$, is bigger than $\frac{\log d}{C' \sqrt{d}}$, except at most one pair.\\
Its complement $\R\Sigma^k\setminus U_d$ is then formed by points $(x_1,...,x_k)\in\R\Sigma^k$ such that there exist four indices $i,t,s,j\in\{1,\dots,k\}$, $i\neq t$, $i\neq j$, $s\neq j$, with the property that $\mathbf{d}_h(x_i,x_t)\leq \frac{\log d}{C' \sqrt{d}}$ and $\mathbf{d}_h(x_s,x_j)\leq \frac{\log d}{C' \sqrt{d}}$. (Note that the indices $t$ and $s$ could be equal).\\
Fix  such four indices $i,t,s,j$,  and denote by $V_d^{i,t,s,j}$ the set of points $(x_1,...,x_k)\in\R\Sigma^k$ such that $\mathbf{d}_h(x_i,x_t)\leq \frac{\log d}{C' \sqrt{d}}$ and $\mathbf{d}_h(x_s,x_j)\leq \frac{\log d}{C' \sqrt{d}}$. The volume of $V_d^{i,t,s,j}$ is then  $O(\frac{(\log d)^2}{d})$. We then obtain $$\Vol(\R\Sigma^k\setminus U_d)\leq\displaystyle\sum_{i,t,s,j}\Vol(V_d^{i,t,s,j})=O(\frac{(\log d)^2}{d}).$$
\end{proof}
We formulate an uniform estimate of the density function $\mathcal{R}_d^k$ over $U_d$. This estimate, together with Theorem \ref{boundabove}, will imply Theorem \ref{equimom}.
 \begin{prop}\label{simple} Under the hypothesis of Theorem \ref{asymom} and using the notations of Definition \ref{open1}, there exists an universal constant $M'$ such that, for every $f\in C^0(\R\Sigma^k)$, the following asymptotics hold:
$$\frac{1}{\sqrt{d}^k}\int_{U_d}f\mathcal{R}^k_d|\dV_h|^k=\frac{1}{\sqrt{\pi}^k}\int_{U_d}f|\dV_h|^k+
\frac{1}{\sqrt{\pi}^{k-2}}\sum_{a< b}\frac{M'}{\sqrt{d}}\int_{\R\Sigma^{k-1}}j_{ab}^*f|\dV_h|^{k-1}+o\big(\frac{1}{\sqrt{d}}\big) 
 $$
The error term $o(\frac{1}{\sqrt{d}})$ is bounded from above by $$\norm{f}_{\infty}\big(O\big(\frac{1}{\sqrt{d}^{1+\alpha}}\big)+\omega_f\big(\frac{1}{\sqrt{d}^{\alpha}}\big)O(\frac{1}{\sqrt{d}})\big)$$
for any $\alpha\in(0,1)$, where $\omega_f(\cdot)$ is the modulus of continuity of $f$. 
The errors $O\big(\frac{1}{\sqrt{d}^{1+\alpha}}\big)$ and $O(\frac{1}{\sqrt{d}})$ do not depend on $f$. 
 Moreover $M\doteqdot M'+\frac{1}{\sqrt{\pi}}$ is positive.
 \end{prop}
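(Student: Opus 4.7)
The plan is to exploit the decomposition $U_d = (\R\Sigma^k \setminus \Delta_d) \sqcup \bigsqcup_{a<b} V_d^{a,b}$, where $V_d^{a,b} := U_d^{a,b} \cap \Delta_d$ is the locus in $U_d^{a,b}$ on which precisely the pair $(x_a, x_b)$ is at distance $\leq \log d /(C'\sqrt{d})$ while every other pair of coordinates is far apart. These slabs are pairwise disjoint and also disjoint from the bulk piece $\R\Sigma^k \setminus \Delta_d$. On the bulk piece, Lemma \ref{sim2} immediately contributes $\frac{1}{\sqrt{\pi}^k} \int_{\R\Sigma^k \setminus \Delta_d} f |\dV_h|^k + \norm{f}_\infty O(1/d)$, so the entire remaining work is concentrated on the $\binom{k}{2}$ slabs $V_d^{a,b}$.

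On each such slab, I would iterate Proposition \ref{broke2} -- first separating $\{x_a, x_b\}$ from the $k-2$ pairwise-distant points, then peeling off those singletons one at a time -- to obtain the factorization
\begin{equation*}
\frac{1}{\sqrt{d}^k} \mathcal{R}_d^k(\underline{x}) = \frac{1}{d} \mathcal{R}_d^2(x_a, x_b) \prod_{i \neq a, b} \frac{1}{\sqrt{d}} \mathcal{R}_d^1(x_i) + O\!\left(\frac{1}{d}\right),
\end{equation*}
uniformly on $V_d^{a,b}$. The one-point asymptotic $\frac{1}{\sqrt{d}} \mathcal{R}_d^1(x_i) = \frac{1}{\sqrt{\pi}} + O(1/d)$ from \cite{gw2} simplifies the right-hand side to $\frac{1}{\sqrt{\pi}^{k-2}} \cdot \frac{1}{d} \mathcal{R}_d^2(x_a, x_b) + O(1/d)$. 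I would then decompose $\frac{1}{d}\mathcal{R}_d^2(x_a, x_b) = \frac{1}{\pi} + \left(\frac{1}{d}\mathcal{R}_d^2(x_a, x_b) - \frac{1}{\pi}\right)$: the leading $\frac{1}{\pi}$-part integrates to $\frac{1}{\sqrt{\pi}^k} \int_{V_d^{a,b}} f |\dV_h|^k$, which combines with the bulk contribution to reconstitute $\frac{1}{\sqrt{\pi}^k} \int_{U_d} f |\dV_h|^k$, leaving a near-diagonal excess that is the source of the $M'/\sqrt{d}$ correction.

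To compute that excess, work in scaled normal coordinates around $x_a$ via $z = \sqrt{d}\,\exp_{x_a}^{-1}(x_b)$, so that $|z| < \log d / C'$ and $|dx_b|_h = d^{-1/2} |dz|$. Propositions \ref{df} and \ref{jet} combined with Theorem \ref{berglocal} show that $\frac{1}{d}\mathcal{R}_d^2(x_a, \exp_{x_a}(z/\sqrt{d}))$ converges, uniformly in $x_a$ and with error $O(d^{-\alpha})$, to a universal Bargmann-Fock density $\widetilde R(z)$; Theorem \ref{fardiag} forces $\widetilde R(z) - \frac{1}{\pi}$ to decay like a Gaussian as $|z| \to \infty$. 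Replacing $f(x_1, \dots, x_b, \dots)$ by $j_{ab}^* f$ (i.e. setting $x_b$ equal to $x_a$) costs a pointwise error $\omega_f(\log d / \sqrt{d}) \leq \omega_f(d^{-\alpha/2})$, and extending the $z$-integration from $\{|z| < \log d / C'\}$ to all of $\R$ costs only exponentially small error. The outcome is
\begin{equation*}
\int_{V_d^{a,b}} \frac{1}{\sqrt{\pi}^{k-2}} \left(\frac{1}{d}\mathcal{R}_d^2 - \frac{1}{\pi}\right) f |\dV_h|^k = \frac{M'}{\sqrt{\pi}^{k-2}\sqrt{d}} \int_{\R\Sigma^{k-1}} j_{ab}^* f |\dV_h|^{k-1} + o\!\left(\frac{1}{\sqrt{d}}\right),
\end{equation*}
with $M' := \int_\R \left(\widetilde R(z) - \frac{1}{\pi}\right) dz$; summing over $(a,b)$ yields the claimed expansion.

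The main obstacle will be the uniform bookkeeping of errors: the iterated factorization error $O(1/d)$ must be truly uniform on $V_d^{a,b}$ so that after multiplication by $\Vol(V_d^{a,b}) = O(\log d / \sqrt{d})$ it contributes only $o(1/\sqrt{d})$; the scaled Bergman kernel approximation and the modulus of continuity substitution must both remain valid as the $z$-domain grows like $\log d$ while the base point $x_a$ varies over $\R\Sigma$; and positivity of $M = M' + 1/\sqrt{\pi}$ can be inferred by matching $M$ against the universal constant governing the variance asymptotic of \cite{Puchol} in the $k=2$ case.
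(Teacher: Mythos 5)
Your decomposition of $U_d$ into the bulk $\R\Sigma^k\setminus\Delta_d$ and the slabs $V_d^{a,b}=U_d^{a,b}\cap\Delta_d$, the iterated use of Proposition \ref{broke2}, and the reduction to a two-point quantity $\frac{1}{d}\mathcal{R}_d^2(x_a,x_b)$ times $\frac{1}{\sqrt{\pi}^{k-2}}$ are exactly the paper's strategy (this is Lemma \ref{sim2} plus the proof of Lemma \ref{sim1}). Where you diverge is in how the two-point correction is obtained: the paper does \emph{not} compute it; it imports the expansion $\frac{1}{d}\int f\mathcal{R}_d^2=\frac{1}{\pi}\int f+\frac{M'}{\sqrt{d}}\int j^*f+o(\frac{1}{\sqrt d})$ from Puchol's variance theorem (Theorem \ref{var}, adjusted to the modified moment in Lemma \ref{var2}, whence $M'=M-\frac{1}{\sqrt\pi}$ and positivity of $M$ comes from \cite[Theorem 1.8]{Puchol}). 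You instead propose to derive it directly from a scaled Bargmann--Fock limit $\widetilde R(z)$ of $\frac1d\mathcal{R}_d^2(x_a,\exp_{x_a}(z/\sqrt d))$ and to set $M'=\int_\R(\widetilde R(z)-\frac{1}{\pi})\,dz$.

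That replacement is where the genuine gap lies. The assertion that Propositions \ref{df}, \ref{jet} and Theorem \ref{berglocal} give convergence of $\frac1d\mathcal{R}_d^2$ to $\widetilde R(z)$ uniformly in $x_a$ and in $|z|\le \log d/C'$, with error $O(d^{-\alpha})$, is unproved precisely in the regime that matters: near $z=0$ both $\mathcal{N}_d^2$ and $\mathcal{D}_d^2$ vanish (Remark \ref{zer}), so $\mathcal{R}_d^2$ is a $0/0$ ratio, and Bergman-kernel asymptotics for the entries of the Gram matrix do not by themselves control the ratio or its limit there; this is exactly the difficulty that the paper handles with the Olver multispace/divided-difference machinery of Section \ref{olvtec} (for the uniform bound) and delegates to \cite{Puchol} (for the precise two-point expansion, cf.\ the role of \cite[Proposition 5.29]{Puchol}). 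Nor can you dodge the issue by invoking the uniform bound of Theorem \ref{boundabove} on a small $z$-neighbourhood of $0$: a region of size $O(1)$ in the scaled variable contributes $O(1)\cdot d^{-1/2}$ after the change of variables $|dx_b|=d^{-1/2}|dz|$, i.e.\ exactly the order of the term you are trying to compute, so a bound without convergence destroys the identification of $M'$. To make your route rigorous you would have to establish uniform near-diagonal convergence of the two-point Kac--Rice density (with a quantitative rate and a $d$-independent integrable majorant), which is substantive additional work not sketched in your proposal; alternatively, quote \cite[Theorem 1.6]{Puchol} as the paper does --- which you in effect end up doing anyway, since your argument for the positivity of $M=M'+\frac{1}{\sqrt\pi}$ is a matching with Puchol's constant at $k=2$. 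The remaining bookkeeping in your proposal (disjointness of the slabs, the $O(1/d)$ factorization error against the slab volume, the $\omega_f(d^{-\alpha/2})$ cost of replacing $f$ by $j_{ab}^*f$, the extension of the $z$-integral to $\R$ using the decay of the limiting kernel) is sound and consistent with the error form stated in the proposition.
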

We will prove this Proposition in Section \ref{soffest}. We now deduce Theorem \ref{equimom}.
\begin{proof}[\textbf{Proof of Theorem \ref{equimom}}]
We have to prove the second claim of the theorem, that is the better asymptotic estimate in the case of $f$ continuous function. The first claim was proved in Theorem \ref{weaker}.
 We start by computing the modified moment $\mathbb{E}[\tilde{\nu}_s^k]$, see Definition \ref{modmom}.
Thanks to Proposition \ref{denf}, we have $$\frac{1}{\sqrt{d}^k}\mathbb{E}[\tilde{\nu}_s^k](f)=\frac{1}{\sqrt{d}^k}\int_{\R \Sigma^k}f(\underline{x})\mathcal{R}^k_d(\underline{x})|\dV_h|^k.$$
We divide the integration domain into two parts, namely the set $U_d$ defined in Definition \ref{open1} and its complement $\R\Sigma^k\setminus U_d$.
\begin{itemize}
\item Proposition \ref{simple} implies that $\frac{1}{\sqrt{d}^k}\int_{U_d}f(\underline{x})\mathcal{R}^k_d(\underline{x})|\dV_h|^k$ is equal to 
 $$\frac{1}{\sqrt{\pi}^k}\int_{U_d}f(\underline{x})|\dV_h|^{k}+\frac{1}{\sqrt{\pi}^{k-2}}\sum_{a< b}\frac{M'}{\sqrt{d}}\int_{\R\Sigma^{k-1}}j_{ab}^*f|\dV_h|^{k-1}+o\big(\frac{1}{\sqrt{d}}\big)$$
where the error term $o\big(\frac{1}{\sqrt{d}}\big)$  and the universal constant $M'$ are as in Proposition \ref{simple}.
\item Theorem \ref{boundabove} and Lemma \ref{volume} imply $$\frac{1}{\sqrt{d}^k}\int_{\R \Sigma^k\setminus U_d}f(\underline{x})\mathcal{R}_d^k(\underline{x}) |\dV_h|^k\leq\Vol(\R \Sigma^k\setminus U_d)\norm{f}_{\infty}O(1)=\norm{f}_{\infty}O(\frac{(\log d)^2}{d})$$
where the error $O(\frac{(\log d)^2}{d})$ does not depend on $f$. 
\end{itemize} 
Remark that the integral over $\R \Sigma^k\setminus U_d$ is then negligible compared to the integral over its complement $U_d$. This implies that the integral over  the cartesian product $\R\Sigma^k$ is equal to the integral over  $U_d$, up to an error term. In other words,
 putting together the integral over $\R \Sigma^k\setminus U_d$ and over $U_d$, we obtain   
 \begin{equation}\label{cal} \frac{1}{\sqrt{d}^k}\mathbb{E}[\tilde{\nu}_s^k](f)=\frac{1}{\sqrt{\pi}^k}\int_{\R \Sigma^k}f(\underline{x})|\dV_h|^k+\frac{1}{\sqrt{\pi}^{k-2}}\sum_{a< b}\frac{M'}{\sqrt{d}}\int_{\R\Sigma^{k-1}}j^*_{ab}f|\dV_h|^{k-1}+o\big(\frac{1}{\sqrt{d}}\big)
 \end{equation}
 where the error $o\big(\frac{1}{\sqrt{d}}\big)$ is as in Proposition \ref{simple}. \\
 To conclude we must compare the moment $\mathbb{E}[\nu_s^k]$ with the modified moment $\mathbb{E}[\tilde{\nu}_s^k]$.  By Proposition \ref{difference} we have
 $$\frac{1}{\sqrt{d}^k}(\nu_s^k(f)-\tilde{\nu}_s^k(f))=\frac{1}{\sqrt{d}^k}\sum_{I\in \widetilde{\mathcal{P}}_k}\tilde{\nu}_s^{m_I}(j^*_If)$$
see Section \ref{modmoment} for definition of $\widetilde{\mathcal{P}}_k$ and of $j_{I}$.
By Eq. (\ref{cal}) applied  to $m_I\leq k-2$, we get
$$\frac{1}{\sqrt{d}^k}\mathbb{E}[\tilde{\nu}_s^{m_I}](j^*_If)=\norm{f}_{\infty}O(\frac{1}{\sqrt{d}^{k-m_I}}).$$
We then obtain  $$\frac{1}{\sqrt{d}^k}(\mathbb{E}[\nu_s^k]-\mathbb{E}[\tilde{\nu}_s^k])(f)=\frac{1}{\sqrt{d}^k}\sum_{1\leq a< b\leq k}\mathbb{E}[\tilde{\nu}_s^{k-1}](j_{ab}^*f)+\norm{f}_{\infty}O(\frac{1}{d}).$$ We can  use Eq. (\ref{cal}),  for the modified moments $\mathbb{E}[\tilde{\nu}_s^{k-1}](j_{ab}^*f)$ and we obtain:
 $$\frac{1}{\sqrt{d}^k}(\mathbb{E}[\nu_s^k]-\mathbb{E}[\tilde{\nu}_s^k])(f)=\frac{1}{\sqrt{\pi}^{k-1}\sqrt{d}}\sum_{1\leq a< b\leq k}\int_{\R\Sigma^{k-1}}j_{ab}^*f|\dV_h|^{k-1}+\norm{f}_{\infty}O(\frac{1}{d}).$$
Putting $\frac{1}{\sqrt{d}^k}\mathbb{E}[\tilde{\nu}_s^k](f)$ on the right hand side and using again Eq. (\ref{cal}) we obtain
 $$\displaystyle\frac{1}{\sqrt{d}^k}\mathbb{E}[\nu_s^k](f)=\frac{1}{\sqrt{\pi}^k}\int_{\R \Sigma}f|\dV_h|^k+\frac{M}{\sqrt{\pi}^{k-2}\sqrt{d}}\sum_{a< b}\int_{j_{ab}(\R\Sigma^{k-1})}f_{\mid \{x_a=x_b\}}|\dV_h|^{k-1}+o\big(\frac{1}{\sqrt{d}}\big)$$ 
where $M=M'+\frac{1}{\sqrt{\pi}}$ and where the error term is as in Proposition \ref{simple}. 
 The constant $M$ is positive thanks to Proposition \ref{simple}.
\end{proof}

\begin{proof}[\textbf{Proof of Theorem \ref{centermom}}] We prove the theorem in the case of $f:\R\Sigma\rightarrow\R$ continuous. The case of $f$ bounded  follows the same lines.
Let $f$ be a continuous function on $\R\Sigma$. It induces a continuous function on $\R\Sigma^l$ for every $l\in\mathbb{N}$ defined by $$(x_1,\dots,x_l)\in\R\Sigma^l\mapsto f(x_1)\cdots f(x_l)\in\R.$$ With a slight abuse of notation we still denote this function by $f$.
For any $l\in\mathbb{N}$, we  define $$\mathcal{E}_l(f)\doteqdot\frac{1}{\sqrt{d}^{l-1}}(\mathbb{E}[\nu_s^l](f)-\mathbb{E}[\nu_s]^l(f)).$$
By Theorem \ref{esper}, we have  $$\frac{1}{\sqrt{d}}\mathbb{E}[\nu_s](f)=\frac{1}{\sqrt{\pi}}\int_{\R\Sigma}f|\dV_h|+O\big(\frac{1}{d}\big)$$ 
so that
\begin{equation}\label{defrast}
\frac{1}{\sqrt{d}^l}\mathbb{E}[\nu_s]^l(f)=\frac{1}{\sqrt{\pi}^l}(\int_{\R\Sigma}f|\dV_h|)^l+O\big(\frac{1}{d}\big)
\end{equation} 
for any $l\in\mathbb{N}$.
Combining this with  Theorem \ref{equimom}, we have that for any $l$,
 \begin{equation}\label{defra}\mathcal{E}_l(f)=\frac{M}{\sqrt{\pi}^{l-2}}\sum_{1\leq a<b\leq l}\int_{\R\Sigma^{l-1} }j^*_{ab}f |\dV_h|^{l-1}+o(1)=\frac{Ml(l-1)}{2\sqrt{\pi}^{l-2}}(\int_{\R\Sigma }f |\dV_h|)^{l-1}+o(1)
\end{equation}
where the error term $o(1)$ is bounded from above by $$\norm{f}_{\infty}\big(O\big(\frac{1}{\sqrt{d}^{\alpha}}\big)+\omega_f\big(\frac{1}{\sqrt{d}^{\alpha}}\big)O(1)\big)$$
for any $\alpha\in(0,1)$, where $\omega_f(\cdot)$ is the modulus of continuity of $f$. 
Moreover the errors $O\big(\frac{1}{\sqrt{d}^{\alpha}}\big)$ and $O(1)$ do not depend on $f$.\\
We then have $$\frac{1}{\sqrt{d}^{k-1}}\mathbb{E}\big[(\nu_s-\mathbb{E}[\nu_s])^k\big](f)=
\mathbb{E}\big[\sum_{l=0}^k(-1)^{k-l}\binom{k}{l}\frac{1}{\sqrt{d}^{l-1}}\nu^l_k(f)\frac{1}{\sqrt{d}^{k-l}}\mathbb{E}\big[\nu_s(f)\big]^{k-l}\big]$$
$$=\sum_{l=0}^k(-1)^{k-l}\binom{k}{l}\frac{1}{\sqrt{d}^{l-1}}\mathbb{E}[\nu_s^l(f)]\frac{1}{\sqrt{d}^{k-l}}\mathbb{E}[\nu_s(f)]^{k-l}$$
$$=\sum_{l=0}^k(-1)^{k-l}\binom{k}{l}\big(\frac{1}{\sqrt{d}^{l-1}}\mathbb{E}[\nu_s(f)]^{l}+\mathcal{E}_l(f)\big)\frac{1}{\sqrt{d}^{k-l}}\mathbb{E}[\nu_s(f)]^{k-l}$$
\begin{equation}\label{2}=\sum_{l=0}^k(-1)^{k-l}\binom{k}{l}\big(\frac{1}{\sqrt{d}^{k-1}}\mathbb{E}[\nu_s(f)]^{k}+\mathcal{E}_l(f)\frac{1}{\sqrt{d}^{k-l}}\mathbb{E}[\nu_s(f)]^{k-l}\big)
\end{equation}
By the formula $\sum_{l=0}^k(-1)^l\binom{k}{l}=0$ we have that Eq. (\ref{2})  equals
\begin{equation}\label{3}\sum_{l=0}^k(-1)^{k-l}\binom{k}{l}\mathcal{E}_l(f)\frac{1}{\sqrt{d}^{k-l}}\mathbb{E}\big[\nu_s(f)\big]^{k-l}.
\end{equation}
Now  we use Eq. (\ref{defra}) for the term $\mathcal{E}_l(f)$ and Eq. (\ref{defrast}) for the term $\frac{1}{\sqrt{d}^{k-l}}\mathbb{E}\big[\nu_s(f)\big]^{k-l}$ and we obtain that Eq. (\ref{3})  is equal to
$$\sum_{l=0}^k(-1)^{k-l}\binom{k}{l}l(l-1)\frac{M}{2\sqrt{\pi}^{k-2}}(\int_{\R\Sigma }f |\dV_h|)^{k-1}+o(1).$$
Now, $\sum_{l=0}^k(-1)^{k-l}\binom{k}{l}l(l-1)$ equals $\frac{d^2}{dx^2}_{\mid x=1}(x-1)^k$ and this vanishes as $k>2$.
\end{proof}
\begin{proof}[\textbf{Proof of Theorem \ref{expon}}]
Let $A$ be a measurable subset of $\R\Sigma$ of positive volume.
We have $$\big|\# (Z_s\cap A)-\mathbb{E}[\#(Z_s\cap A)]\big|>c(d)\sqrt{d} \Leftrightarrow \big|\# (Z_s\cap A)-\mathbb{E}[\#(Z_s\cap A)]\big|^k>c(d)^k\sqrt{d}^k$$
so that by Markov's inequality we obtain
$$\mu\big\{\big|\# (Z_s\cap A)-\mathbb{E}[\#(Z_s\cap A)]\big|>c(d)\sqrt{d}\big\}\leqslant \frac{\mathbb{E}[|\#(Z_s\cap A)-\mathbb{E}[\#(Z_s\cap A)]|^k]}{c(d)^k\sqrt{d}^k}$$
By Theorem \ref{centermom} respectively for $f=\mathds{1}_A$ and  $f=1$, we have the result.
\end{proof}
\section[Olver multispaces]{Olver multispaces and proof of Theorem \ref{boundabove}}\label{olvtec}

This section is devoted to the proof of the boundedness result  used in the proof of Theorem \ref{equimom}, namely Theorem \ref{boundabove}.  Olver multispaces and divided differences coordinates play an important role. We recall these tools in Section \ref{olver}.\\ 
The main difficulty in Theorem \ref{boundabove} is to understand the behaviour of the density function $\mathcal{R}_d^k$  near the diagonal, see Remark \ref{zer}. For this purpose, we will study all the possible ways  a point $(x_1,\dots,x_k)\in\R\Sigma^k\setminus\Delta$ can converge to the diagonal. This leads us to consider labelled graph with $k$ vertices. Each vertex represents a point $x_i\in\R\Sigma$ and we put an edge between two vertex if and only if we allow the  points associated with these vertices to collapse each other, see Definition \ref{graph}.
\subsection{Olver multispaces}\label{olver}
We briefly recall the results of \cite{ol}.
Let $M$ an $n$-dimensional smooth (not necessarly connected or closed) manifold  and $C$ and $C'$ two curves in $M$ passing at $z\in M$.
We introduce local coordinates $(x,u_1,\dots,u_{n-1})=(x,u)$ around $z$ such that, locally, the curve $C$ is a graph $\{u=f(x)\}$ and the curve $C'$ a graph $\{u=g(x)\}$, for  smooth maps $f,g$. 

\begin{defn} Let $z=(x_0,u_0)$ be a point in a  smooth manifold  $M$ and $C=\{u=f(x)\}$, $C'=\{u=g(x)\}$ be two curves passing at $z$. We say that $C$ and $C'$ have the same $k$-jet  at $z$ if and only if $f^{(i)}(x_0)=g^{(i)}(x_0)$ for every $i\in\{0,\dots,k\}$. We then write $j_kC\mid_{z}=j_kC\mid_{z}'$.
\end{defn}
\subsubsection{Pointed manifolds}
 A $(k+1)$-\emph{pointed manifold} is an object $\textbf{M}=(M;z_0,\dots,z_k)$ consisting of a smooth manifold $M$ and $(k+1)$ not necessarily distinct points  $z_0,\dots,z_k\in M$ thereon. 
Given $\textbf{M}$, we let $\#i=\#\{j\mid z_j=z_i\}$ denote the number of points which coincide with the $i$-th one.\\
\subsubsection{Multi-contact} 
Given a manifold $M$, we let $\mathcal{C}^{(k)}=\mathcal{C}^{(k)}(M)$ denote the set of all $(k+1)$-pointed curves contained in $X$. In \cite{ol}, Olver defined an equivalence relation on the space of multi-pointed curves that generalizes the jet equivalence relation  at a single point.
\begin{defn} Two $(k+1)$-pointed curves 
$$\textbf{C}=(C;z_0,\dots,z_k)\qquad \widetilde{\textbf{C}}=(\tilde{C};\tilde{z}_0,\dots,\tilde{z}_k)$$
have $k$-order multi-contact  if and only if 
$$z_i=\tilde{z}_i \quad\textrm{and}\quad j_{\#i-1}C\mid_{z_i}=j_{\#i-1}\widetilde{C}\mid_{z_i}$$
for each $i=0,\dots,k$. The $k$-order multi-space, denoted by $M^{(k)}$, is the set of equivalence classes of $(k+1)$-pointed curves in $M$ under the equivalence relation of $k$-th order multi-contact. The equivalence class of an $(k+1)$-pointed curve \textbf{C} is called its $k$-th order multi-jet, and denoted $j_k\textbf{C}\in M^{(k)}$.
\end{defn}
In \cite{ol} Olver proved the following theorem:
\begin{thm}\label{multov}
If $M$ is a smooth manifold of dimension $n$, then its $k$-th order multispace $M^{(k)}$ is a smooth manifold of dimension $(k+1)n$, which contains the off diagonal part $M^{\diamond(k+1)}\doteqdot\{(z_0,\dots,z_k)\in M^{k+1}\mid z_i\neq z_j \hspace{1mm}\forall\hspace{1mm} i\neq j\}$ of the Cartesian product space as an open, dense subset, and the $k$-order jet space $J^k$ as a smooth submanifold.
\end{thm}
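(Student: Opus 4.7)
The plan is to put a smooth atlas on $M^{(k)}$ whose charts are built from generalized (confluent) divided differences, providing coordinates that smoothly interpolate between the off-diagonal regime and the jet regime. Near a multi-jet $j_k\textbf{C}$ of a $(k+1)$-pointed curve $\textbf{C}=(C;z_0,\dots,z_k)$, I would choose local coordinates $(x,u)\in\R\times\R^{n-1}$ on $M$ so that $C$ is a graph $u=f(x)$ with $z_i=(x_i,f(x_i))$. The chart assigns to an equivalence class the $(k+1)+(k+1)(n-1)=(k+1)n$ real quantities
$$\bigl(x_0,\dots,x_k,\,f[x_0],\,f[x_0,x_1],\,\dots,\,f[x_0,\dots,x_k]\bigr),$$
where $f[\cdots]$ denotes the classical divided difference (vector valued), extended through confluence by the Hermite rule $f[\,y,\dots,y\,]=f^{(r)}(y)/r!$ when $y$ is repeated $r+1$ times.

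The first step is to show that these divided differences depend only on the multi-contact class. This follows from the Newton--Hermite correspondence: the coefficients of the unique polynomial of degree $\leq k$ prescribed by the jet conditions at the (possibly repeated) nodes are precisely the divided differences, and the multi-contact relation encodes exactly the jet conditions $j_{\#i-1}C|_{z_i}$ at each distinct $z_i$. Conversely, the coordinates reconstruct those jet conditions at each coincidence cluster, so the map is a bijection from a neighborhood of $j_k\textbf{C}$ in $M^{(k)}$ onto an open subset of $\R^{(k+1)n}$. Transition functions between overlapping charts are smooth because divided differences transform smoothly under smooth changes of coordinates on $M$, and because confluent divided differences depend smoothly on the nodes. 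The off-diagonal $M^{\diamond(k+1)}$ corresponds in each chart to the open, dense set $\{x_i\neq x_j\}$; there no jet data is imposed and the chart reduces to the identification with $M^{k+1}$. The full-coincidence jet space $J^k$ corresponds to the locus $x_0=\dots=x_k$ in a chart centered at a fully confluent multi-jet, cut out by $k$ independent smooth equations, hence a smooth submanifold of codimension $k$.

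The main obstacle is proving that confluent divided differences fit together smoothly with the generic ones, so that the overlap maps between charts are genuinely $C^\infty$ across the loci where nodes merge. This reduces to the classical but delicate fact that $(x_0,\dots,x_k)\mapsto f[x_0,\dots,x_k]$ extends to a smooth function on all of $\R^{k+1}$ whose restriction to each confluent locus yields the correct Taylor coefficients of $f$; the Hermite--Genocchi integral representation
$$f[x_0,\dots,x_k]=\int_{\Delta^k} f^{(k)}\Bigl(\sum_{i=0}^k t_ix_i\Bigr)\,dt$$
makes this transparent, since the right-hand side is manifestly smooth jointly in all the $x_i$. Once this is in hand, all remaining claims---openness and density of $M^{\diamond(k+1)}$, and the submanifold structure of $J^k$---follow immediately from the explicit chart description.
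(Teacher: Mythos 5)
Your proposal is correct and follows essentially the same route as the proof the paper relies on: the paper does not reprove this statement but cites Olver, whose argument is exactly the atlas you describe, with charts $\bigl(x_0,\dots,x_k,\,f[x_0],\dots,f[x_0,\dots,x_k]\bigr)$ given by confluent divided differences and smoothness across coincidences handled by the Hermite--Genocchi representation. Nothing essentially different to compare.
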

\begin{oss} For a one-dimensional variety, we have $M^{(k)}\simeq M^{k+1}$ where the canonical isomorphism is given by the forgetful map $[C;z_0,\dots.,z_k]\mapsto (z_0,\dots,z_k)$ with the inverse given by $(z_0,\dots,z_k)\mapsto [M;z_0,\dots,z_k]$.
\end{oss}
\subsubsection{Divided differences} 
Olver proved  Theorem \ref{multov} by providing an atlas. This atlas is formed by local charts given by divided differences.
\begin{defn}\label{divided}   Let $(C;z_0,\dots,z_k)$ be a  $(k+1)$-pointed curve in a smooth manifold of dimension $n$. We introduce local coordinates $(x,u_1,\dots,u_{n-1})=(x,u)$  and we suppose that the curve $C$ is the graph $\{u=f(x)\}$ of a smooth function $f$, so that $z_i=(x_i,f(x_i))$.
We define the divided differences of $(C;z_0,\dots,z_k)$ recursively by setting $[z_0]_C=f(x_0)$ and 
$$[z_0\dots z_{i-1}z_i]_C=\lim_{x\rightarrow x_i}\frac{[z_0\dots z_{i-2}z]_C-[z_0\dots z_{i-1}]_C}{x-x_{i-1}}$$
for any $i\in\{1,\dots k\}$. 
For example, $[z_0z_1]_C=\lim_{x\rightarrow x_1}\frac{z-z_0}{x-x_0}$.
When taking the limit, the points $z=(x,f(x))$ must lie on the curve $C$.
\end{defn}

Olver proved that two $(k+1)$-pointed curves $(C;z_0,\dots,z_k)$ and $(C';z_0,\dots,z_k)$ represent the same element in $M^{(k)}$ if and only if they have same divided differences, see \cite[Theorem 3.4]{ol}.\\
The following lemma will be useful for the local computations in Sections \ref{sdenom} and \ref{snume}.
\begin{lem}\label{pratique} Let $\underline{T}=(T_1,\dots,T_k)\in\R^k$ and $f\in C^k(\R)$ be a $C^k$-function and $C=\textrm{graph}(f)$. Then 
$$[f(T_1)\dots f(T_k)]_C=\sum_{i=1}^k\sum_{r=0}^{\#T_i-1}c_{\underline{T},i,r}f^{(r)}(T_i)$$
where $c_{\underline{T},i,r}$ is a rational function in the distances $T_s-T_t$ ,$1\leq s < t \leq k$ of non positive degree $r-k+1$.
Here, $f^{(r)}(T)$ is the $r$-th derivative of $f$ at $T$.
\end{lem}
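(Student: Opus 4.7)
My approach is strong induction on $k$, using the recursive Definition \ref{divided} as the main engine. The base case $k=1$ is immediate: $[f(T_1)]_C = f(T_1)$ fits the claimed form with $c_{\underline{T},1,0} = 1$, a rational function of degree $0 = 0 - 1 + 1$ in the (vacuous) set of distances.

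For the inductive step, assume the lemma holds for every tuple of length less than $k$. Definition \ref{divided} gives
$$[f(T_1)\ldots f(T_k)]_C = \lim_{x\to T_k} \frac{G(x) - a}{x - T_{k-1}}, \qquad G(x) := [f(T_1)\ldots f(T_{k-2}) f(x)]_C, \quad a := [f(T_1)\ldots f(T_{k-1})]_C.$$
Using the symmetry of the divided difference (Theorem 3.4 of \cite{ol}), I may permute the arguments freely, so two situations suffice. If the value $T_k$ is isolated, i.e.\ distinct from every other $T_i$, I arrange $T_{k-1}$ to also be distinct from $T_k$; the quotient is then nondegenerate, the limit reduces to ordinary evaluation at $x = T_k$, and division by the nonzero factor $T_k - T_{k-1}$ of degree one lowers every coefficient's degree by one, from the inductive $r - k + 2$ down to the required $r - k + 1$. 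If instead $T_k$ coincides with some earlier $T_j$, I reorder so that $T_{k-1} = T_k$; the quotient is then $0/0$ and l'H\^opital yields $G'(T_k)$. Differentiation of the inductive expansion of $G(x)$ acts both on the scalar factors $f^{(r)}(x)$ (producing $f^{(r+1)}(T_k)$, admissible because $\#T_k$ has grown by one relative to the $(k-1)$-tuple $(T_1,\ldots, T_{k-2}, x)$) and on the rational coefficients, which depend on $x$ through the distances $x - T_i$ and whose degree therefore drops by one. Either way the coefficient degrees arrive at $r - k + 1$.

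The principal difficulty I foresee is bookkeeping, on two fronts. First, a single pair $(i,r)$ may receive contributions from several sources --- from both $G(T_k)$ and $a$ in Case 1, and from both branches of the product rule in Case 2 --- so one must verify that the collected coefficient is rational of the required degree. Second, and more delicate, when $T_k$ has multiplicity at least $3$ in the full tuple, the generic-$x$ expansion of $G(x)$ supplied by the inductive hypothesis becomes singular as $x \to T_k$, even though $G(x)$ itself is smooth there. One must then either combine the expansions of $G(x)$ and $a$ first and exploit cancellations before passing to the limit, or circumvent the recursion entirely by verifying the closed-form Hermite interpolation formula
$$[f(T_1)\ldots f(T_k)]_C = \sum_s \frac{1}{(m_s-1)!} \left.\frac{d^{m_s-1}}{dx^{m_s-1}}\right|_{x=\tau_s}\! \frac{f(x)}{\prod_{t\neq s}(x-\tau_t)^{m_t}},$$
where $\tau_s$ are the distinct values of $\underline{T}$ with multiplicities $m_s$. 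Leibniz's rule then separates the factors $f^{(r)}(\tau_s)$ from coefficients of the form $\frac{d^{m_s-1-r}}{dx^{m_s-1-r}}\big|_{x=\tau_s}\prod_{t\neq s}(x-\tau_t)^{-m_t}$, which are manifestly rational in $\tau_s - \tau_t$ of degree $-(k-m_s) - (m_s - 1 - r) = r - k + 1$, matching the claim.
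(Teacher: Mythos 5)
Your proof follows essentially the same route as the paper's: induction on $k$ through the recursion of Definition \ref{divided}, with one case handled by evaluating the limit and dividing by the extra factor $T_k-T_{k-1}$ (dropping each coefficient's degree from the inductive $r-k+2$ to $r-k+1$), and the confluent case handled by differentiating the inductive expansion in the last slot (which either raises $r$ by one or lowers the coefficient degree by one). The only real difference is organizational (you use symmetry to reduce to $T_k$ isolated versus $T_{k-1}=T_k$, while the paper splits directly on $T_k\neq T_{k-1}$ versus $T_k=T_{k-1}$) together with the difficulty you flag, which is genuine: when the colliding value also occurs among $T_1,\dots,T_{k-2}$ (total multiplicity at least $3$), the generic-pattern expansion of $x\mapsto[f(T_1)\dots f(T_{k-2})f(x)]$ has coefficients with poles at $x=T_k$, so term-by-term differentiation is not licit as written. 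Be aware that the paper's own proof performs exactly this term-by-term differentiation, writing $\frac{d}{dT_{k-1}}\sum_{i}\sum_{r}c_{\underline{T'},i,r}f^{(r)}(T_i)$ without comment, so your concern points at a step the paper also leaves implicit rather than at a defect specific to your argument; making the cancellation (or a continuity argument in the nodes) explicit is what a complete write-up would add. Your fallback via the confluent Hermite interpolation formula, with Leibniz giving coefficients of homogeneity degree $-(k-m_s)-(m_s-1-r)=r-k+1$, settles the degree count cleanly and bypasses the recursion; to be self-contained it still requires checking that Olver's recursively defined divided differences coincide with the classical confluent ones and are symmetric in the nodes (the symmetry is also what licenses your reordering) --- standard facts for graphs of $C^k$ functions, but they should be stated.
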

In the sequel we will omit $C$ in the notation of divided differences, that is we write $[\cdots]$ instead of $[\cdots]_C$.
\begin{proof} 
By induction on $k$. If $k=1$ we have $[f(T_1)]=f(T_1)$.\\
By definition, we have $$[f(T_1)\dots f(T_{k-1})f(T_k)]=\lim_{T\rightarrow T_k}\frac{[f(T_1)\dots f(T_{k-2})f(T)]-[f(T_1)\dots f(T_{k-1})]}{T-T_{k-1}}$$
and  by induction hypothesis we have $$[f(T_1)\dots f(T_{k-1})]=\sum_{i=1}^{k-1}\sum_{r=0}^{\#T_i-1}c_{\underline{T'},i,r}f^{(r)}(T_i)$$ with $\underline{T'}=(T_1,\dots,T_{k-1})\in\R^{k-1}.$
If $T_k\neq T_{k-1}$, then 
$$[f(T_1)\dots f(T_{k-1})f(T_k)]=\frac{[f(T_1)\dots f(T_{k-2})f(T_k)]}{{T_k-T_{k-1}}}-\frac{[f(T_1)\dots f(T_{k-1})]}{T_k-T_{k-1}}$$
and by induction we have the result.\\
If $T_k=T_{k-1}$, then $$[f(T_1)\dots f(T_{k-1})f(T_{k})]=\frac{d}{dT_{k-1}}[f(T_1)\dots f(T_{k-1})]=\frac{d}{dT_{k-1}}\sum_{i=1}^{k-1}\sum_{r=0}^{\#T_i-1}c_{\underline{T'},i,r}f^{(r)}(T_i)$$
and, again, by induction hypothesis we have the result.
\end{proof}
\subsection{Extension of the density function $\mathcal{R}^k_d$}\label{42}
In this subsection we introduce an incidence manifold that will play a key role in the proof of Theorem \ref{boundabove} and of the following:

\begin{thm}\label{extdens}
For large $d$, the density function $\mathcal{R}_d^k:\R\Sigma^k\setminus\Delta\rightarrow\R$ in Proposition \ref{denf} can be extended to a smooth function $\R\Sigma^k\rightarrow\R$ that we still denote by $\mathcal{R}_d^k$.  Moreover, $\mathcal{R}_d^k$ vanishes on the diagonal $\Delta$. 
\end{thm}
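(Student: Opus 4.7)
The plan is to use Olver's multispace construction to desingularize the evaluation map near the diagonal, exhibit matching Vandermonde factors in numerator and denominator of $\mathcal{R}_d^k$, and deduce both the smooth extension and the vanishing on $\Delta$. Throughout I work in normal coordinates around an arbitrary $\underline{x}_0\in\Delta$ together with a local trivialization of $\mathcal{L}^d$, so that sections $s$ are represented by smooth $\R$-valued functions $f_s$.

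The first step is to replace the naive evaluation map $ev_{\underline{x}}$ of Definition \ref{evnaive} by the \emph{divided-difference evaluation}
\begin{equation*}
\widetilde{ev}_{\underline{x}}(s)\ :=\ \bigl([f_s(x_1)],\ [f_s(x_1)f_s(x_2)],\ \ldots,\ [f_s(x_1)\cdots f_s(x_k)]\bigr).
\end{equation*}
By Lemma \ref{pratique} together with the smoothness of the Olver multispace $\Sigma^{(k-1)}\simeq\Sigma^k$ (Theorem \ref{multov}), the components of $\widetilde{ev}_{\underline{x}}$ depend smoothly on $\underline{x}\in\R\Sigma^k$ across $\Delta$, where they specialize to higher-order jet evaluations at the coalesced points. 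A jet analogue of Proposition \ref{pos}, based on the vanishing of $H^1(\Sigma;\mathcal{L}^d(-D))$ for every effective real divisor $D$ of degree $k$ when $d$ is large, shows that $\widetilde{ev}_{\underline{x}}$ is surjective for every $\underline{x}\in\R\Sigma^k$; hence $\widetilde{\mathcal{D}}_d^k(\underline{x}):=|\Jac_N(\widetilde{ev}_{\underline{x}})|$ is smooth and strictly positive on the whole of $\R\Sigma^k$.

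Next I compare the two evaluation maps. Off the diagonal they differ by the lower-triangular linear change of coordinates on the target sending $(f_s(x_i))_i$ to the list of divided differences, with determinant $\prod_{i<j}(x_j-x_i)^{-1}$. Writing $V(\underline{x}):=\prod_{i<j}(x_j-x_i)$, this yields $\mathcal{D}_d^k(\underline{x})=|V(\underline{x})|\,\widetilde{\mathcal{D}}_d^k(\underline{x})$. A parallel factorization for the numerator comes from the local Newton form $f_s(x)=\prod_i(x-x_i)h_s(x)$ valid for $s\in\R H^0_{\underline{x}}$ at distinct $x_i$: it gives $\prod_i\|\nabla s(x_i)\|=V(\underline{x})^2\prod_i|h_s(x_i)|$ up to smooth positive metric factors. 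Since $h_s$ is essentially the $k$-th divided difference of $f_s$ and therefore extends smoothly across $\Delta$ through the Olver coordinates, integration over the smoothly-varying Gaussian on $\ker\widetilde{ev}_{\underline{x}}$ produces $\mathcal{N}_d^k(\underline{x})=V(\underline{x})^2\,\widetilde{\mathcal{N}}_d^k(\underline{x})$ with $\widetilde{\mathcal{N}}_d^k$ smooth on $\R\Sigma^k$.

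Putting the two factorizations together gives
\begin{equation*}
\mathcal{R}_d^k(\underline{x})=|V(\underline{x})|\,\frac{\widetilde{\mathcal{N}}_d^k(\underline{x})}{\widetilde{\mathcal{D}}_d^k(\underline{x})},
\end{equation*}
which defines the sought extension of $\mathcal{R}_d^k$ to all of $\R\Sigma^k$, vanishing on $\Delta$ because of the Vandermonde factor (smoothness away from the pairwise diagonals is automatic, and the extension across each pairwise diagonal is controlled by the factored form above). The main technical hurdle lies in the numerator factorization: namely, showing that $\prod_i|h_s(x_i)|$ and the induced Gaussian density on $\ker\widetilde{ev}_{\underline{x}}$ both depend smoothly on the Olver multispace coordinates uniformly across $\Delta$. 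This is precisely where the explicit coefficient formulas of Lemma \ref{pratique} and the near-diagonal Bergman kernel estimates of Theorems \ref{neardiagreal} and \ref{berglocal} are used to control the conditional covariance uniformly in $\underline{x}$ and in $d$.
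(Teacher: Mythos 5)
Your strategy is the same in spirit as the paper's: use Olver divided differences to replace the degenerate evaluation map by one that remains surjective across the diagonal, and read off the vanishing of the density from a factor that dies on $\Delta$. The packaging differs. The paper does not factor the original ratio $\mathcal{N}^k_d/\mathcal{D}^k_d$; it re-runs the coarea/Kac--Rice argument on the extended incidence manifold of Definition \ref{definc2}, whose local equations are the \emph{per-cluster} generalized evaluation maps of Definition \ref{eval} (Proposition \ref{prima}), and obtains directly $\mathcal{R}^{\Gamma}_d=\mathcal{N}^{\Gamma}_d/\mathcal{D}^{\Gamma}_d$ (Proposition \ref{frac}) with $\mathcal{D}^{\Gamma}_d$ nowhere vanishing; the vanishing on $\Delta$ is then immediate because the integrand of $\mathcal{N}^{\Gamma}_d$ contains $|[s(x_1)s(x_1)]|$, which is identically zero on $\R H^0_{\underline{x}}$ once two points coincide. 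For a single cluster in one chart your bookkeeping reproduces exactly this: the identities $\mathcal{D}^k_d=|V|\,\widetilde{\mathcal{D}}^k_d$ and $\mathcal{N}^k_d=V^2\,\widetilde{\mathcal{N}}^k_d$ are correct and lead to the same extension $|V|\,\widetilde{\mathcal{N}}^k_d/\widetilde{\mathcal{D}}^k_d$.

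Two points, however, are genuine gaps. First, your single chain $([f_s(x_1)],\dots,[f_s(x_1)\cdots f_s(x_k)])$ presupposes that all $k$ points lie in one chart with one trivialization. A general point of $\Delta$ has only some coordinates coalescing; the others may be far apart, or even on different components of $\R\Sigma$, where that chain (and the full Vandermonde $V$) is not defined, and where the conditional law of high-order divided differences linking distant points is not controlled by the near-diagonal Bergman estimates you invoke. This is precisely why the paper introduces the graph subsets $\Gamma_d$ (Definition \ref{graph}), takes divided differences only inside each connected cluster while keeping ordinary evaluations between clusters, and treats the cluster-to-cluster nondegeneracy by a separate argument (Propositions \ref{nuovo} and \ref{onc}); correspondingly, the factor that should come out is the product of \emph{within-cluster} differences only. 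Second, the smoothness assertion does not follow from your factorization: $|V|$ is not differentiable across $\Delta$ while $\widetilde{\mathcal{N}}^k_d/\widetilde{\mathcal{D}}^k_d$ is generically positive there, so $|V|\,\widetilde{\mathcal{N}}^k_d/\widetilde{\mathcal{D}}^k_d$ is only Lipschitz across the diagonal; the sentence ``the extension across each pairwise diagonal is controlled by the factored form above'' is not an argument for smoothness, and indeed your own formula exhibits the linear repulsion of real zeros (for $k=2$ the density behaves like $c\,|x_1-x_2|$ near $\Delta$). What your argument honestly delivers is a continuous extension vanishing on $\Delta$, which is all that is used later (only the uniform bound of Theorem \ref{boundabove} enters); note that the paper's own proof reduces the regularity claim to the nonvanishing of $\mathcal{D}^{\Gamma}_d$ and is silent about the regularity of $\mathcal{N}^{\Gamma}_d$ across $\Delta$, so on this point your attempt and the paper's argument face the same limitation.
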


\subsubsection{Extension of the incidence manifold} Let $\R \mathcal{L}^{d}$ be the real locus of the total space of the line bundle $\mathcal{L}^d\rightarrow\Sigma$. It is an open manifold of real dimension $2$. For $k\in\mathbb{N}$, we  take its multi-space $(\R \mathcal{L}^{d})^{(k-1)}$, see Section \ref{olver}.\\
Consider the map
$$ev:\R H^0(\Sigma;\mathcal{L}^d)\times \R \Sigma^k\rightarrow (\R \mathcal{L}^{d})^{(k-1)}$$
defined by $$(s,x_1,\dots,x_k)\mapsto\big(\textrm{graph}(s);s(x_1),\dots,s(x_k)\big).$$
Let $\mathbf{Z}$ be the set of points in $(\R \mathcal{L}^{d})^{(k-1)}$ which represent the $k$-pointed curves  $(\R\Sigma;x_1,\dots,x_k)$, where $\R\Sigma$ is viewed as the zero section of 
$\R \mathcal{L}^{d}$ and $x_1,\dots,x_k\in\R\Sigma$.
\begin{defn}\label{definc2} We  denote the set $ev^{-1}(\mathbf{Z})$ by $\mathcal{I}$ and call it  the \emph{incidence manifold}. 
\end{defn}
\begin{oss} For any $(x_1,\dots,x_k)\in\R \Sigma^k\setminus \Delta$ we have $(s,x_1,\dots,x_k)\in ev^{-1}(\mathbf{Z})$ if and only if $s(x_i)=0$ for any $i\in\{1,\dots,k\}$. Hence, the Definition \ref{definc2} extends the definition of the incidence manifold given in Section \ref{incsect}. 
\end{oss}

\subsection{Local equations of the incidence manifold}\label{secgraph}
  We give local equations for the incidence manifold $\mathcal{I}$ defined in Definition \ref{definc2}. These local equations allow us to find a new local expression for the density function $\mathcal{R}^k_d$. We will use the divided differences notation  (see Section \ref{olver}, Definition \ref{divided}). 
\subsubsection{A partition of $\R\Sigma^k$}   
Let $\Theta_k$ be the set of all labelled graphs with exactly $k$ vertices, labelled by $\{1,\dots,k\}$. 
 Let $d$ be any positive integer, we will associate a graph in $\Theta_k$ to any point $(x_1,\dots,x_k)\in\R\Sigma^k$. This graph is contructed as follows:  we put an edge between the $i$-th vertex and $j$-th vertex if and only if the distance $\mathbf{d}_h(x_i,x_j)$ between $x_i$ and $x_j$ is smaller or equal than $\frac{1}{\sqrt{d}}$.
 \begin{defn}\label{graph} \begin{itemize}
\item Given an integer $d\in\mathbb{N}$ and a point $\underline{x}\in\R\Sigma^k$ we say that $\underline{x}\in\Gamma_d$ if and only if its associated graph is $\Gamma$. We will refer to $\Gamma_d$ as a \emph{graph subset}.
\item We call  \emph{origin} of a connected component of such a graph the point of the connected component with the smallest label.
\item If $\Gamma$ has $m$ connected components $\Gamma^1,\dots,\Gamma^m$ and each connected component $\Gamma^i$ has $k_i$ vertices, then we write $\{x^i_1,\dots,x^i_{k_i}\}$ for the corresponding points of the connected component. We then have $\{x^i_p\}^{i=1,\dots,m}_{p=1,\dots,k_i}=\{x_1,\dots,x_k\}$. \\
We say that $\{x^i_1,\dots,x^i_{k_i}\}$ is a \emph{connected component} of $\underline{x}$ and that $x^i_p$ is a \emph{vertex} of $\underline{x}$.
\end{itemize}
\end{defn}
\begin{oss}\begin{itemize} \item For any $d\in \mathbb{N}$, the graph subsets $\{\Gamma_d\}_{\Gamma\in\Theta_k}$ give a cover of $\R \Sigma^k$.
\item The fact that two point $x_i$ and $x_j$ lie in the same connected components implies that the distance between these two points is smaller or equal than $\frac{k}{\sqrt{d}}$.
\end{itemize}
\end{oss} 

\subsubsection{Generalized evaluation maps} Fix a $k$-labelled graph $\Gamma\in\Theta_k$ with $m$ connected components. We write $\{x^i_1,\dots,x^i_{k_i}\}_{1\leq i \leq m}$ for the connected components of a point $\underline{x}$ that lies in $\Gamma_d$.
We take real normal coordinates $U_i$ around  $x_1^i$, see Section \ref{normal}. We identify each point $x_p^i$ with the respective coordinate in the chart $U_i$.  We recall that  with  a real normal chart comes together with a real trivialization of $\R\mathcal{L}^d_{\mid U}$, see Section \ref{normal}. 
\begin{defn}\label{eval}
Given a point $\underline{x}\in\Gamma_d$ in normal coordinates around the origin $x_1^i$, we define the \emph{generalized evaluation map} 
$$ev_{\underline{x}}^{\Gamma}:\R H^0(\Sigma;\mathcal{L}^d)\rightarrow \R^{k_1}\times\cdots\times\R^{k_m}$$
by $$s\mapsto ([s(x_1^1)]_{C},\dots,[s(x_1^1)\cdots s(x_{k_1}^1)]_C,\dots,[s(x_1^m)]_C,\dots,[s(x_1^m)\cdots s(x_{k_m}^m)]_C)$$
where $C=\textrm{graph}(s)$.\\
The kernel of such map is denoted by $\R H^0_{\underline{x}}$. 
\end{defn}
Remark that for points $\underline{x}=(x_1,\dots,x_k)\in\R\Sigma^k\setminus\Delta$ outside the diagonal, the kernel of the generalized evaluation map equals the kernel of the "classical" evaluation map $s\mapsto (s(x_1),\dots,s(x_k))$. 
\begin{conv}\label{con1} In the sequel, we will only consider  $k$-pointed curves of the form 
$$\big(\textrm{graph}(s);s(x_1),\dots,s(x_k)\big)$$ for $s\in\R H^0(\Sigma;\mathcal{L}^d)$ and for $x_1,\dots,x_k\in\R\Sigma$. For the sake of simplicity,  we will  omit "$\textrm{graph}(s)$" in the notation of its divided differences. For example, we will write  $[s(x_1)\cdots s(x_i)]$ instead of $[s(x_1)\cdots s(x_i)]_{\textrm{graph}(s)}$.
\end{conv} 
In the following proposition we describe the incidence variety $\mathcal{I}$ defined in Definition \ref{definc2} in terms of divided differences.
\begin{prop}\label{prima} Let $\Gamma$ be a $k$-labelled graph,  $s\in\R H^0(\Sigma;\mathcal{L}^d)$ be a real section and $\underline{x}=(x_1,\dots,x_k)\in\Gamma_d$ be a point in the graph subset $\Gamma_d$. Denote  by $\{x^i_1,\dots,x^i_{k_i}\}$ the  connected components of $\underline{x}$, $i\in\{1,\dots,m\}$. 
Then $(s,\underline{x})\in\mathcal{I}$ if and only if 
$$[s(x^i_1)]=0,[s(x^i_1)s(x^i_2)]=0,\dots,[s(x^i_1)s(x^i_2)\cdots s(x^i_{k_i})]=0$$
for any $i\in\{1,\dots,m\}$, that is if and only if $ev_{\underline{x}}^{\Gamma}(s)=0$ where $ev_{\underline{x}}^{\Gamma}$ is the generalized evaluation map defined in Definition \ref{eval}.
\end{prop}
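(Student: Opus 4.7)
The plan is to chain three facts: the definition of $\mathcal{I}$ via the Olver multi-space, Olver's characterization of multi-contact by divided differences (Theorem 3.4 of \cite{ol}, recalled just before Lemma \ref{pratique}), and Lemma \ref{pratique} to bridge between per-component divided differences and the full multi-jet condition.

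First, I unpack the definition of $\mathcal{I}$. By Definition \ref{definc2} together with the description of $\mathbf{Z}$, the condition $(s,\underline{x})\in\mathcal{I}$ is equivalent to saying that the $k$-pointed curve $\bigl(\mathrm{graph}(s);s(x_1),\dots,s(x_k)\bigr)$ and the $k$-pointed curve $(\R\Sigma;x_1,\dots,x_k)$, with $\R\Sigma$ viewed as the zero section of $\R\mathcal{L}^d$, represent the same element of $(\R\mathcal{L}^d)^{(k-1)}$. Since every divided difference of the zero section vanishes, Olver's theorem translates this into the vanishing of \emph{all} divided differences of $\mathrm{graph}(s)$ at $(x_1,\dots,x_k)$ (in any chosen ordering of these $k$ points).

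Next, I fix an ordering that respects the partition into connected components, listing the vertices of $\Gamma^1$, then of $\Gamma^2$, and so on. The resulting $k$ divided differences split into two families: the per-component ones $[s(x_1^i)\cdots s(x_j^i)]$ for $1\leq j\leq k_i$ and $1\leq i\leq m$, which are the ones appearing in the statement, and the mixed ones, whose ordered list involves points from strictly more than one connected component. The forward implication is then immediate: if the multi-jet condition holds, then in particular the per-component divided differences vanish.

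For the backward implication I invoke Lemma \ref{pratique}. Assuming the per-component divided differences vanish at each component, an elementary induction on $j$ shows that all derivatives of $s$ appearing in the expansion of Lemma \ref{pratique} vanish at every vertex $x_j^i$ (in the generic case when the points inside a component are distinct this just amounts to $s(x_j^i)=0$; at diagonal points one picks up derivatives of $s$ of the appropriate order). Once these jet conditions are secured at every vertex, Lemma \ref{pratique} applied to any mixed divided difference writes it as a linear combination of exactly those derivatives, so it vanishes as well. Hence every divided difference is zero, which by the first paragraph is equivalent to $(s,\underline{x})\in\mathcal{I}$. The main obstacle is precisely this backward step, where the bookkeeping between per-component jet conditions and mixed divided differences has to be handled uniformly across graph subsets $\Gamma_d$; Lemma \ref{pratique} is what makes the argument essentially formal.
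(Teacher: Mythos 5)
Your overall strategy is viable and genuinely different from the paper's: the paper simply observes that the multi-contact condition defining $\mathcal{I}$ is a pointwise (hence local) jet condition, so it splits into one condition per connected component (points in different components being distinct), and then quotes Olver's divided-difference characterization once for each component; no manipulation of mixed divided differences is needed. Your route instead stays with the global nested divided differences and uses Lemma \ref{pratique} to pass through explicit jet conditions at the vertices, which is more computational but can in principle be made to work.

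Two concrete points need repair, though. First, with a single ordering listing the vertices of $\Gamma^1$, then $\Gamma^2$, and so on, the $k$ nested divided differences are $[s(x_1^1)],\dots,[s(x_1^1)\cdots s(x_{k_1}^1)]$ followed by differences whose lists always contain all of component $1$; the per-component quantities $[s(x_1^i)],\dots,[s(x_1^i)\cdots s(x_{k_i}^i)]$ for $i\geq 2$ do not occur among them, so your claimed splitting into "per-component" and "mixed" families is false as stated, and the forward implication is not immediate as written. It is easily restored using your own remark that the ordering is arbitrary: for each $i$ choose an ordering beginning with the vertices of $\Gamma^i$ (or argue via the pointwise jet description, as the paper does). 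Second, in the backward direction your induction "per-component divided differences vanish $\Rightarrow$ jet conditions at each vertex" requires that, at each step, the coefficient of the top-order derivative at the newly added point be nonzero; Lemma \ref{pratique} only records that the coefficients are rational functions of the differences with a degree bound, so it does not by itself provide this invertibility. The fact is classical (it is exactly Olver's theorem applied to the component pointed curve, or Hermite--Newton interpolation), so the gap is fillable, but as written that step is unjustified; invoking Olver's characterization component by component, as the paper does, removes the need for it entirely.
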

\begin{proof}  In local coordinates the equivalence relation "having multicontact" $\big(s(\R\Sigma);s(x^i_1),\dots,s(x^i_{k_i})\big)\sim \big(\R\Sigma;x^i_1,\dots,x^i_{k_i}\big)$ reads $[s(x^i_1)]=0,[s(x^i_1)s(x^i_2)]=0,\dots,[s(x^i_1)s(x^i_2)\cdots s(x^i_{k_i})]=0.$
Now, having $k$-th order multi-contact is a local property, and then  $\big(s(\R\Sigma);s(x_1),\dots,s(x_k)\big)\sim \big(\R\Sigma;x_1,\dots,x_k\big)$ if and only if $\big(s(\R\Sigma);s(x^i_1),\dots,s(x^i_{k_i})\big)\sim \big(\R\Sigma;x^i_1,\dots,x^i_{k_i}\big)$ for any $i=1,\dots,m$ .
\end{proof}
These equations using divided differences show that $\mathcal{I}$ is a smooth manifold:
\begin{prop} Let $\mathcal{L}$ be a positive real line bundle over  a real Riemann surface $\Sigma$. Then, for large $d\in\mathbb{N}$, the set $\mathcal{I}$ defined in Def. \ref{definc2} is a smooth manifold. 
Moreover the incidence manifold $\mathcal{I}$ defined in Section \ref{incsect} is an open dense subset of $\mathcal{I}$, which coincide with $\pi_{\Sigma}^{-1}(\R\Sigma\setminus\Delta)$, where $\pi_{\Sigma}:\mathcal{I}\rightarrow \R\Sigma^k$ is the natural projection.
\end{prop}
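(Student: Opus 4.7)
The plan is to derive smoothness of $\mathcal{I}$ from the regular value theorem applied to the evaluation map $ev$, using the divided-difference description of $\mathbf{Z}$ and transversality. First I would note that, by Olver's Theorem \ref{multov}, the target $(\R\mathcal{L}^d)^{(k-1)}$ is a smooth manifold of real dimension $2k$, and that $\mathbf{Z}$ is a smooth submanifold of codimension $k$ cut out locally by the vanishing of the $k$ fiber-coordinate divided differences of any chart. Thus it suffices to verify that $ev$ is transverse to $\mathbf{Z}$ at every point of $\mathcal{I}$; the preimage will then be a smooth manifold of dimension equal to $N_d+k-k=N_d$.

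To verify transversality at $(s_0,\underline{x}_0)\in\mathcal{I}$, I would pick any graph $\Gamma\in\Theta_k$ whose associated decomposition $\{x^i_1,\dots,x^i_{k_i}\}_{i=1}^m$ describes the clustering of $\underline{x}_0$, work in real normal coordinates around each origin $x^i_1$, and invoke Proposition \ref{prima}: locally, $\mathcal{I}$ is the zero set of the $k$ divided differences appearing in $ev_{\underline{x}}^\Gamma(s)$. Transversality at $(s_0,\underline{x}_0)$ thus reduces to surjectivity of the linear map $ev_{\underline{x}_0}^\Gamma:\R H^0(\Sigma;\mathcal{L}^d)\to\R^k$. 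This is where the main work lies, and it is the step I expect to be the principal obstacle, since the generalized evaluation map couples jets of potentially high order at several nearby points.

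To handle surjectivity I would use Lemma \ref{pratique}: the divided difference $[s(x^i_1)\cdots s(x^i_j)]$ is a linear combination of the jets $s^{(r)}(x^i_p)$ of order $r\le j-1$, and the coefficients form a triangular (with respect to the natural ordering by order of jet at $x^i_1$) invertible block. Hence $ev_{\underline{x}_0}^\Gamma$ is surjective provided the full jet map $s\mapsto \bigoplus_{i=1}^m j^{k_i-1}_{x^i_1}s$ is surjective, and the latter follows from Kodaira vanishing together with Riemann–Roch exactly as in Proposition \ref{pos}: for $d$ sufficiently large, $H^1\bigl(\Sigma;\mathcal{L}^d(-\sum_i k_i x^i_1)\bigr)=0$, and iterated Riemann–Roch then produces sections realizing every prescribed jet of order $\le k_i-1$ at $x^i_1$ while vanishing to order $k_j$ at $x^j_1$ for $j\ne i$. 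This establishes both the transversality and, as a byproduct, the surjectivity of the differential of $\pi_\Sigma:\mathcal{I}\to\R\Sigma^k$, so that $\pi_\Sigma$ is a submersion.

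For the second claim, I would observe that if $\underline{x}\in\R\Sigma^k\setminus\Delta$ then for $d$ large enough the associated graph has no edges, every connected component is a single vertex with $k_i=1$, and the generalized evaluation map reduces to the classical one $s\mapsto(s(x_1),\dots,s(x_k))$. Consequently $\pi_\Sigma^{-1}(\R\Sigma^k\setminus\Delta)$ coincides with the incidence manifold of Section \ref{incsect}. Openness is immediate from continuity of $\pi_\Sigma$ and openness of $\R\Sigma^k\setminus\Delta$; density follows because $\pi_\Sigma$ is a submersion (hence open) and $\R\Sigma^k\setminus\Delta$ is dense in $\R\Sigma^k$.
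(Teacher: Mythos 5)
Your overall architecture is the one the paper uses: by Proposition \ref{prima}, near a point $(s_0,\underline{x}_0)$ the set $\mathcal{I}$ is the zero locus of $(s,\underline{x})\mapsto ev^{\Gamma}_{\underline{x}}(s)$ in divided-difference coordinates, and smoothness follows once one shows that, for $d$ large and uniformly in the configuration, the linear map $s\mapsto ev^{\Gamma}_{\underline{x}_0}(s)$ is surjective (the paper produces sections $\dot{s}^i_p$ hitting the standard basis and takes $\dot{\underline{x}}=0$, which is exactly your ``surjectivity in the $s$-direction suffices for transversality''). Your treatment of the second claim is fine and in fact more explicit than the paper's: openness is clear, and density follows since $\pi_{\Sigma}$ restricted to $\mathcal{I}$ is a submersion, hence open, and $\R\Sigma^k\setminus\Delta$ is dense.

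The gap is in your justification of the surjectivity itself. You claim that, by Lemma \ref{pratique}, $ev^{\Gamma}_{\underline{x}_0}$ is surjective provided the jet map $s\mapsto\bigoplus_{i=1}^m j^{k_i-1}_{x^i_1}s$ at the \emph{origins} of the components is surjective. This inference fails: Lemma \ref{pratique} expresses $[s(x^i_1)\cdots s(x^i_j)]$ in terms of the values of $s$ at \emph{all} the points $x^i_1,\dots,x^i_j$ (derivatives enter only at repeated points), and in a graph subset $\Gamma_d$ the points of a component are merely at distance $\leq \frac{k}{\sqrt{d}}$ from each other, typically pairwise distinct. So $ev^{\Gamma}_{\underline{x}_0}$ does not factor through the jets at $x^i_1$; for instance, for one component with $x_1\neq x_2$ one has $ev^{\Gamma}(s)=\big(s(x_1),\frac{s(x_2)-s(x_1)}{x_2-x_1}\big)$, which depends on $s(x_2)$ and not on $s'(x_1)$, so surjectivity of $s\mapsto(s(x_1),s'(x_1))$ gives nothing. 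The correct reduction — the one implicit in the paper and spelled out for the local model in Proposition \ref{surlocal} — is that the divided differences are obtained from the values/jets of $s$ at the \emph{actual} distinct points of the configuration (with multiplicities equal to the number of repetitions) by an invertible lower-triangular matrix; hence surjectivity of $ev^{\Gamma}_{\underline{x}_0}$ is equivalent to surjectivity of the restriction map $H^0(\Sigma;\mathcal{L}^d)\rightarrow H^0(\Sigma;\mathcal{L}^d\otimes\mathcal{O}_D)$, where $D$ is the degree-$k$ effective divisor determined by $\underline{x}_0$. This does follow from $H^1(\Sigma;\mathcal{L}^d(-D))=0$, which by positivity holds for \emph{every} effective divisor of degree $k$ once $d$ is large (e.g.\ by Serre duality and a degree count), giving the uniformity in $\underline{x}_0$ you need; your vanishing hypothesis $H^1\big(\Sigma;\mathcal{L}^d(-\sum_i k_i x^i_1)\big)=0$ concerns the wrong divisor. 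With this substitution your argument closes and coincides with the paper's.
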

\begin{proof}
We first make the following remark which is a direct consequence of the positivity of $\mathcal{L}$ and of the Riemann-Roch theorem:
there exists an integer $d_k$ such that, for any $d\geq d_k$, any  $k$-admissible graph $\Gamma$ and  any $\underline{x}=(x_1^1,\dots,{x}^1_{k_1},\dots,{x}^m_1,\dots,{x}^m_{k_m})\in \Gamma_d$,   we can find a section $\dot{s}_p^i\in\R H^0(\Sigma;\mathcal{L}^d)$ such that  $ev_{\underline{x}}^{\Gamma}(\dot{s}_p^i)=(0,\dots,0, 1,0,\dots, 0)$, where $1$ is exactly in the place corresponding to $x^i_p$. 

We now prove that $\mathcal{I}$ is a smooth manifold.
We have seen in Proposition \ref{prima} that $(s,\underline{x})\in \pi^{-1}_{\Sigma}(\Gamma_d)\cap \mathcal{I}$ if and only if $ev_{\underline{x}}^{\Gamma}(s)=0$. Then, we have to prove that $0$ is a regular value of the map $(s,\underline{x})\mapsto  ev_{\underline{x}}^{\Gamma}(s).$ This means that, for any $\underline{\eta}=(\eta^1_1,\dots,\eta^1_{k_1},\dots,\eta^m_1,\dots,\eta^m_{k_m})\in\R^k$, we have to  find a vector $(\dot{s},\dot{\underline{x}})=(\dot{s},\dot{x}^1_1,\dots,\dot{x}^1_{k_1},\dots,\dot{x}^m_1,\dots,\dot{x}^m_{k_m})$  such that
$$ [\dot{s}(x^i_1)]+[s(x^i_1)s(x^i_1)]\dot{x}^i_1=\eta^i_1 $$  
  $$[\dot{s}(x_1^i)\dot{s}(x_2^i)]+[s(x^i_1)s(x^i_2)s(x^i_1)]\dot{x}^i_1+ [s(x^i_1)s(x^i_2)s(x^i_2)]\dot{x}^i_2=\eta^i_2 $$ $$ \vdots $$
   $$ [\dot{s}(x_1^i)\cdots \dot{s}(x_{k_i}^i)]+[s(x^i_1)\cdots s(x^i_{k_i})s(x^i_1)]\dot{x}^i_1+\dots + [s(x^i_1)\cdots s(x^i_{k_i})s(x^i_{k_i})]\dot{x}^i_{k_i}=\eta^i_{k_i} $$
 for any $i\in\{1,\dots,m\}$. \\
  To do this, we   choose $\dot{x}_p^i=0$ for any  $i\in\{1,\dots,m\}$ and any $p\in\{1,\dots,k_i\}$ and $\dot{s}=\sum_{i=1}^m\sum_{p=1}^{k_i}\eta^i_p\dot{s}_p^i.$

\end{proof} 
\subsubsection{New expressions for the density function $\mathcal{R}_d^k$}
\begin{conv} Let $\Gamma$ be a $k$-labelled graph. We denote  the restriction of the density function $\mathcal{R}_d^k$ to the graph subset $\Gamma_d$ by $\mathcal{R}^{\Gamma}_{d}$. 
\end{conv} 
We want to estimate $\mathcal{R}^{\Gamma}_{d}$ for any $k$-labelled graph $\Gamma\in\Theta_k$.
By the finiteness of the set of $k$-labelled graphs $\Gamma$ we obtain that Theorem \ref{boundabove} is equivalent to the following:
\begin{prop}\label{totalbound} Let $\Gamma$ be a $k$-labelled graph and $\mathcal{R}^{\Gamma}_d$ be the restriction of the density function $\mathcal{R}_d^k$ on the graph  subset $\Gamma_d$ defined in Def. \ref{graph}. Then, there exists a constant $C_{\Gamma}$ and a positive integer $d_{\Gamma}$ such that for any 
$d\geq d_{\Gamma}$ and for any $\underline{x}=(x_1,\dots,x_k)\in \Gamma_d$
\[\frac{1}{\sqrt{d}^k}\mathcal{R}^{\Gamma}_d(\underline{x})\leq C_{\Gamma}.\]
\end{prop}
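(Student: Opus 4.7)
Since $\Theta_k$ is a finite set, it suffices to establish, for each labelled graph $\Gamma \in \Theta_k$, a uniform bound on $\frac{1}{\sqrt{d}^k}\mathcal{R}^\Gamma_d$ over the graph subset $\Gamma_d$. Fix such a $\Gamma$ with $m$ connected components of sizes $k_1, \ldots, k_m$; inside each component the diameter is at most $k/\sqrt{d}$, so in scaled normal coordinates the positions lie in a bounded region of diameter at most $k$.

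The strategy is to redo the Kac--Rice calculation of Proposition \ref{df} using the generalized evaluation map $ev^\Gamma_{\underline{x}}$ of Definition \ref{eval} in place of the naive evaluation. By Proposition \ref{prima}, $\ker(ev^\Gamma_{\underline{x}}) = \R H^0_{\underline{x}}$, and by a Riemann--Roch argument in the spirit of Proposition \ref{pos} (applied to $\mathcal{L}^d$ twisted by the multi-jet divisor encoded by $\Gamma$), $ev^\Gamma_{\underline{x}}$ is surjective for $d$ large, so its normal Jacobian $\widetilde{\mathcal{D}}^\Gamma_d(\underline{x}) := |\Jac_N(ev^\Gamma_{\underline{x}})|$ is strictly positive and, crucially, extends smoothly across the diagonal of $\R\Sigma^k$ via the Olver multi-space structure. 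Applying the coarea formula through both $\pi_H$ and $ev^\Gamma_{\underline{x}}$ yields an expression
$$
\mathcal{R}^\Gamma_d(\underline{x}) = \frac{\widetilde{\mathcal{N}}^\Gamma_d(\underline{x})}{\widetilde{\mathcal{D}}^\Gamma_d(\underline{x})},
$$
where $\widetilde{\mathcal{N}}^\Gamma_d$ is a Gaussian integral over $\R H^0_{\underline{x}}$ of an integrand obtained by re-expressing the pointwise gradient norms $\norm{\nabla s(x_i)}$ in terms of the jet data at the cluster origins via Lemma \ref{pratique}.

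Next, I pass to scaled normal coordinates $T = \sqrt{d}\,z$ around the origin of each connected component. Theorem \ref{berglocal} then provides the uniform convergence $\frac{1}{d}\mathcal{K}_d \to K_\C$ up to scaled radius $R\log d$, which amply covers any configuration in $\Gamma_d$, while Theorem \ref{fardiag} controls the Bergman kernel between points in different coordinate charts. After the divided-difference change of basis, the entries of the matrix defining $\widetilde{\mathcal{D}}^\Gamma_d$ become entries of a matrix built from $K_\C$ and its derivatives, and the integrand of $\widetilde{\mathcal{N}}^\Gamma_d$ becomes a polynomial in the jets of a Gaussian field on the Bargmann--Fock space. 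The rescaling of each derivative by $\sqrt{d}$ and of each row of the generalized evaluation produces precisely the factor $d^{k/2}$ needed to cancel the $\sqrt{d}^k$ in the statement.

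Consequently, $\frac{1}{\sqrt{d}^k}\mathcal{R}^\Gamma_d$ converges uniformly on $\Gamma_d$ to the analogous ratio in the Bargmann--Fock model, a finite-dimensional Gaussian integral divided by a Gram determinant depending smoothly on the scaled positions $T^i_j$. The parameter space of scaled positions compatible with $\Gamma$ is compact, and the limit ratio is continuous and nowhere singular on it, because the surjectivity of the Bargmann--Fock divided-difference evaluation keeps the denominator positive. Its supremum yields the constant $C_\Gamma$, and finiteness of $\Theta_k$ completes the proof. The main obstacle, and where Olver multi-spaces really earn their keep, is verifying that the rational coefficients with poles in the inter-point distances produced by Lemma \ref{pratique} cancel exactly between the Vandermonde-like factors appearing in $\widetilde{\mathcal{N}}^\Gamma_d$ and in $\widetilde{\mathcal{D}}^\Gamma_d$, so that the limit ratio extends continuously across the collision locus within each cluster.
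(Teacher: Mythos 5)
Your overall architecture is the same as the paper's: write $\mathcal{R}^\Gamma_d$ as a ratio coming from the generalized (divided-difference) evaluation map, rescale by $T=\sqrt{d}\,z$, use the near-diagonal Bergman expansion to replace $\frac{1}{d}\mathcal{K}_d$ by the Bargmann--Fock kernel $K_{\C}$, and get positivity of the limiting Gram determinant from surjectivity of the local divided-difference evaluation. However, there is a genuine gap at the step where you take a supremum: you assert that ``the parameter space of scaled positions compatible with $\Gamma$ is compact'' and that Theorem \ref{berglocal} ``amply covers any configuration in $\Gamma_d$''. Neither is true when $\Gamma$ has $m\geq 2$ connected components. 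Within one cluster the scaled diameter is at most $k$, but the scaled distance between distinct clusters is only bounded below by $1$ and above by $\sqrt{d}\cdot\mathrm{diam}(\R\Sigma)$, which is unbounded; beyond scaled distance $R\log d$ the near-diagonal expansion does not apply at all, so there is no single Bargmann--Fock limit ratio defined on a compact set whose supremum can serve as $C_\Gamma$. A continuous positive function on this non-compact configuration space need not be bounded away from $0$ (denominator) nor bounded above (ratio) without an argument at infinity.

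The paper closes exactly this hole in two stages, and you need one of them. First, configurations in which some pair of points is at distance $\geq \frac{\log d}{C'\sqrt d}$ are handled by Proposition \ref{broke2} (exponential decay of $\mathcal{K}_d$ plus peak sections), which factorizes $\frac{1}{\sqrt d^{\,k}}\mathcal{R}^k_d$ into a product of lower-order densities up to $O(1/d)$; together with the $\mathcal{S}_k$-symmetry this reduces the problem, by induction on $k$, to configurations contained in a ball of radius $\frac{\log d}{C'\sqrt d}$ (Remark \ref{aed}). Second, even after this reduction the scaled separations between clusters range up to $\frac{\log d}{C'}$, so compactness still fails; for the denominator the paper runs an induction on the number of connected components (Proposition \ref{nuovo}): when the inter-cluster Bergman kernel entries are small the Gram determinant approximately splits into blocks bounded below by the inductive hypothesis, and otherwise all points lie in a fixed ball $B(0,R)$ where the compactness argument of Proposition \ref{boundmodel} applies. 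A smaller point: the paper never needs the ``exact cancellation'' of the singular rational coefficients of Lemma \ref{pratique} between numerator and denominator that you flag as the main obstacle; it bounds the scaled numerator above (Propositions \ref{onenum} and \ref{numbound}) and the scaled denominator below (Proposition \ref{denombound}) separately, the divided differences themselves being continuous across collisions because they are divided differences of smooth sections.
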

\begin{defn}\label{adm} Let $\Gamma$ be a $k$-labelled graph and let $\Gamma^1,\dots,\Gamma^m$ be its connected components. We denote by $i_1,\dots,i_{k_i}$ the vertices of $\Gamma^i$. We say that $\Gamma$ is an \emph{$k$-admissible graph} if and only if $i_p<j_q$ for any $i<j\in\{1,\dots,m\}$ and any $p\in\{1,\dots,k_i\}$, $q\in\{1,\dots,k_j\}$.
\end{defn}
\begin{oss}\label{aed} By Proposition \ref{symact}  we have $\mathcal{R}^k_d=\mathcal{R}^k_d\cdot \sigma$  for any $\sigma\in\mathcal{S}_k$, so that it suffices to prove Proposition \ref{totalbound} for one particular element of each orbit of the action of the symmetric group $\mathcal{S}_k$ on the $k$-labelled graphs.
As in any orbit of the action of $\mathcal{S}_k$ on the $k$-labelled graphs there is an admissible graph, it suffices to prove  Proposition \ref{totalbound} for $k$-admissible graphs. Moreover, by Proposition \ref{broke2}, it suffices to prove Proposition \ref{totalbound} for points $\underline{x}=(x_1,\dots,x_k)$ such that $\mathbf{d}_h(x_i,x_j)<\frac{\log d}{C'\sqrt{d}}$.
\end{oss}
\begin{conv}  Let $\Gamma$ be a $k$-admissible graph. In the sequel, we will always denote the  points $\underline{x}$ in $\Gamma_d$ by $$\underline{x}=(x_1^1,\dots,x^1_{k_1},\dots,x_1^m,\dots,x^m_{k_m})$$
 where $\big\{x_1^i,\dots,x^i_{k_i}\big\}_i$ are the connected components of $\underline{x}$. As we have seen,  this implies  that $\mathbf{d}_h(x^i_p,x^i_q)\leq \frac{k}{\sqrt{d}}$ and $\mathbf{d}_h(x^i_p,x^j_q)> \frac{1}{\sqrt{d}}$ for $i \neq j$.
\end{conv}
The following proposition gives an explicit expression of  $\mathcal{R}^k_{d\mid_{\Gamma_d}}\doteqdot\mathcal{R}^{\Gamma}_d(\underline{x})$, where $\Gamma$ is a $k$-admissible graph.
We recall that we denote  by $\R H^0_{\underline{x}}$ the kernel of the generalized evaluation map defined in Definition \ref{eval}. 
\begin{prop}\label{frac} Let $\Gamma$ be a $k$-admissible graph.
For any  $\underline{x}\in \Gamma_d$ we have
$\mathcal{R}^{\Gamma}_d(\underline{x})=\frac{\mathcal{N}_d^{\Gamma}(\underline{x})}{\mathcal{D}_d^{\Gamma}(\underline{x})}$,
  where \[\mathcal{N}_d^{\Gamma}(\underline{x})=\int_{\R H^0_{\underline{x}}}
 \prod_{i=1}^{m}|[s(x^i_1)s(x^i_1)]|\cdot|[s(x^i_1)s(x^i_2)s(x^i_2)]|\cdots|[s(x^i_1)\cdots s(x^i_{k_i})s(x^i_{k_i})]|d\mu_{\mid \R H^0_{\underline{x}}}\]
  and \[\mathcal{D}_d^{\Gamma}(\underline{x})=|\Jac_N(ev^{\Gamma}_{\underline{x}})|.\]
\end{prop}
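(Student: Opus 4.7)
The plan is to mimic the proof of Proposition~\ref{df}, replacing the classical evaluation map and the incidence manifold of Section~\ref{incsect} by the generalized evaluation map $ev^{\Gamma}_{\underline{x}}$ of Definition~\ref{eval} and the extended incidence manifold $\mathcal{I}$ of Definition~\ref{definc2}. First I would apply the coarea formula to the projection $\pi_{\Sigma}:\mathcal{I}\to\R\Sigma^k$, writing $\mathcal{R}^{\Gamma}_d(\underline{x})$ as an integral of $1/|\Jac_N\pi_{\Sigma}|$ over the fiber $\pi_{\Sigma}^{-1}(\underline{x})$, and then push this fiber integral forward through the other projection $\pi_H$, which is almost everywhere a local isometry with unit normal Jacobian, to obtain an integral over $\R H^0_{\underline{x}}$.

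To identify $\Jac_N\pi_{\Sigma}$ I linearize the incidence equations $[s(x_1^i)\cdots s(x_j^i)]=0$ at a point $(s,\underline{x})\in\mathcal{I}$. Using linearity of divided differences in the function variable, a tangent vector $(\dot s,\dot{\underline{x}})$ must satisfy
$$[\dot s(x_1^i)\cdots \dot s(x_j^i)] + \sum_{q=1}^{j}\frac{\partial [s(x_1^i)\cdots s(x_j^i)]}{\partial x_q^i}\,\dot x_q^i = 0$$
for every $i\in\{1,\dots,m\}$ and every $j\in\{1,\dots,k_i\}$. Defining $B^{\Gamma}:\prod_{i=1}^{m}\prod_{p=1}^{k_i}T_{x_p^i}\R\Sigma\to\R^{k_1+\cdots+k_m}$ by sending $(\dot x_p^i)$ to the vector with $(i,j)$-th component $\sum_{q\leq j}\partial_{x_q^i}[s(x_1^i)\cdots s(x_j^i)]\,\dot x_q^i$, this yields the factorization $d\pi_{\Sigma} = -(B^{\Gamma})^{-1}\circ ev^{\Gamma}_{\underline{x}}\circ d\pi_H$, hence $|\Jac_N\pi_{\Sigma}|=|\Jac_N ev^{\Gamma}_{\underline{x}}|/|\Jac B^{\Gamma}|$.

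The matrix of $B^{\Gamma}$ is block-diagonal across the connected components of $\Gamma$, since divided differences attached to different components involve disjoint sets of coordinates. Within each block, the matrix is lower triangular because $[s(x_1^i)\cdots s(x_j^i)]$ is independent of $x_q^i$ for $q>j$. By the standard formula for the derivative of a divided difference at one of its nodes,
$$\frac{\partial}{\partial x_j^i}[s(x_1^i)\cdots s(x_j^i)] = [s(x_1^i)\cdots s(x_j^i)s(x_j^i)],$$
so the diagonal entries are exactly the factors appearing in $\mathcal{N}^{\Gamma}_d$, giving $|\Jac B^{\Gamma}|=\prod_{i,j}|[s(x_1^i)\cdots s(x_j^i)s(x_j^i)]|$. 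Since $\mathcal{D}^{\Gamma}_d(\underline{x})=|\Jac_N ev^{\Gamma}_{\underline{x}}|$ depends only on $\underline{x}$, it factors out of the integral over $\R H^0_{\underline{x}}$, and substituting into the coarea formula yields the claimed identity $\mathcal{R}^{\Gamma}_d=\mathcal{N}^{\Gamma}_d/\mathcal{D}^{\Gamma}_d$.

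The main obstacle is the rigorous justification of the derivative identity $\partial_{x_j}[s(x_1)\cdots s(x_j)] = [s(x_1)\cdots s(x_j)s(x_j)]$ when some of the nodes $x_1,\dots,x_j$ coincide: in that regime the divided differences themselves are defined by limits as in Definition~\ref{divided}, so one must commute the partial derivative with these limits. I expect to handle this either via the Hermite--Genocchi integral representation of divided differences or, more in the spirit of Lemma~\ref{pratique}, by writing $[s(x_1)\cdots s(x_j)]$ as a linear combination of derivatives of $s$ with coefficients that are rational in the $x$-differences, and then differentiating term by term. Finally, as at the end of the proof of Proposition~\ref{df}, one needs to check that the locus in $\R H^0_{\underline{x}}$ where some diagonal divided difference vanishes (and hence $B^{\Gamma}$ fails to be invertible) has measure zero, so that the factorization applies almost everywhere on the fiber.
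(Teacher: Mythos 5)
Your proposal is correct and follows essentially the same route as the paper's proof: linearize the divided-difference equations defining $\mathcal{I}$, observe that the derivative in $\underline{x}$ is block-diagonal over components and triangular with diagonal entries $[s(x^i_1)\cdots s(x^i_p)s(x^i_p)]$, factor $d\pi_{\Sigma}$ through $ev^{\Gamma}_{\underline{x}}$, and conclude via the coarea formula as in Proposition~\ref{df}. The derivative identity you flag as the main obstacle is exactly what the paper uses implicitly (its matrix entries $[s(x^i_1)\cdots s(x^i_p)s(x^i_q)]$ are these partial derivatives), and your plan to justify it via Lemma~\ref{pratique}-type expansions or the Hermite--Genocchi representation is a standard and adequate way to fill in that step.
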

\begin{proof}
The proof goes along  the same lines as Proposition \ref{df}.
Let $\underline{x}\in \Gamma_d$ be any point in $\Gamma_d$. Then we have seen that $(s,\underline{x})\in \pi^{-1}_{\Sigma}(\Gamma_d)\cap \mathcal{I}$ if and only if $ev_{\underline{x}}^{\Gamma}(s)=0$. Differentiating this equation we see that a vector $(\dot{s},\dot{\underline{x}})=(\dot{s},\dot{x}^1_1,\dots,\dot{x}^1_{k_1},\dots,\dot{x}^m_1,\dots,\dot{x}^m_{k_m})$  is in the tangent space of $(s,\underline{x})$ if and only if

  $$ [\dot{s}(x^i_1)]+[s(x^i_1)s(x^i_1)]\dot{x}^i_1=0 $$  
  $$[\dot{s}(x_1^i)\dot{s}(x_2^i)]+[s(x^i_1)s(x^i_2)s(x^i_1)]\dot{x}^i_1+ [s(x^i_1)s(x^i_2)s(x^i_2)]\dot{x}^i_2=0 $$ $$ \vdots $$
   $$ [\dot{s}(x_1^i)\cdots \dot{s}(x_{k_i}^i)]+[s(x^i_1)\cdots s(x^i_{k_i})s(x^i_1)]\dot{x}^i_1+\dots + [s(x^i_1)\cdots s(x^i_{k_i})s(x^i_{k_i})]\dot{x}^i_{k_i}=0 $$
 for $i=1,\dots,m$. 
 This is equal to $ev_{\underline{x}}^{\Gamma}(\dot{s})+D_{\underline{x}}ev_{\underline{x}}^{\Gamma}(s)\dot{\underline{x}}=0$. Here, $D_{\underline{x}}ev^{\Gamma}_{\underline{x}}(s)$ is the derivative of $ev_{\underline{x}}(s)$ with respect to $\underline{x}$ and it is equal to
 the lower triangular matrix that has the following blocks on the diagonal
$$\begin{bmatrix}
 [s(x^i_1)s(x^i_1)] & 0 & \dots & 0 \\
[s(x^i_1)s(x^i_2)s(x^i_1)] & [s(x^i_1)s(x^i_2)s(x^i_2)]&\dots & 0 \\
 \vdots & \vdots & \ddots & \vdots \\
 [s(x^i_1)\cdots s(x^i_{k_i})s(x^i_1)] & [s(x^i_1)\cdots s(x^i_{k_i})s(x^i_2)] & \dots & [s(x^i_1)\cdots s(x^i_{k_i})s(x^i_{k_i})]
 \end{bmatrix}
 $$
  for $i=1,\dots,m$. 
 Writing in a compact notation, we have $$\dot{\underline{x}}=-(D_{\underline{x}}ev^{\Gamma}_{\underline{x}}(s))^{-1}ev_{\underline{x}}^{\Gamma}(\dot{s}).$$
 We then deduce that the normal Jacobian that we want to compute is $$|\Jac_N\pi_{\Sigma}|=\frac{|\Jac_N ev^{\Gamma}|}{|\Jac D_{\underline{x}}ev^{\Gamma}_{\underline{x}}(s)|}.$$
We conclude by observing that $D_{\underline{x}}ev^{\Gamma}_{\underline{x}}(s)$ is triangular, so that its Jacobian is $$\prod_{i=1}^{m}|[s(x^i_1)s(x^i_1)]|\cdot|[s(x^i_1)s(x^i_2)s(x^i_2)]|\cdots|[s(x^i_1)\cdots s(x^i_{k_i})s(x^i_{k_i})]|.$$  
\end{proof}

\subsubsection{Proof of Theorem \ref{boundabove}} 
Proposition \ref{totalbound} (and then Theorem \ref{boundabove}, see Remark \ref{aed}) is a consequence of the following two propositions in which we study separately the numerator $\mathcal{N}_d^{\Gamma}$ and the denominator $\mathcal{D}_d^{\Gamma}$. 
We denote by $C'$ the constant that appeared in Theorem \ref{fardiag}.
\begin{prop}\label{denombound} Let $\Gamma$ be a $k$-admissible graph and $\mathcal{D}^{\Gamma}_d$ be the function defined in Proposition \ref{frac}. Then, there exists a positive $\epsilon_{\Gamma}>0$ and a positive integer $d_{\Gamma}$ such that for any 
$d\geq d_{\Gamma}$ and for any $\underline{x}\in \Gamma_d$ with $\mathbf{d}_h(x^i_p,x^j_a)\leq \frac{\log d}{C'\sqrt{d}}$, we have 
$$\frac{1}{\sqrt{d}^k}\prod_{i=1}^m\frac{1}{\sqrt{d}^{\frac{k_i(k_i-1)}{2}}}\mathcal{D}^{\Gamma}_d(\underline{x}) >\epsilon_{\Gamma}.$$
\end{prop}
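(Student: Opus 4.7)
The plan is to exhibit $\mathcal{D}^{\Gamma}_d(\underline{x})$ as the square root of a Gram determinant, extract the correct powers of $\sqrt{d}$ by rescaling, and reduce the claim to uniform positivity of a limiting Bargmann--Fock Gram matrix. Since $ev^{\Gamma}_{\underline{x}}$ is linear, one has $\mathcal{D}^{\Gamma}_d(\underline{x}) = \sqrt{\det G(\underline{x})}$ where $G$ is the Gram matrix of the divided-difference functionals $L^i_p(s) = [s(x^i_1)\cdots s(x^i_p)]$; each entry $\langle L^i_p, L^j_q\rangle$ is an iterated divided difference of the Bergman kernel $\mathcal{K}_d$ in both variables. (Equivalently, factoring $ev^{\Gamma}_{\underline{x}} = A_{\underline{x}}\circ E_{\underline{x}}$ where $E_{\underline{x}}$ is the classical evaluation and $A_{\underline{x}}$ the block-lower-triangular matrix of divided-difference coefficients, one obtains $\mathcal{D}^{\Gamma}_d = |\det A_{\underline{x}}|\cdot\sqrt{\det(\|\mathcal{K}_d(x_i,x_j)\|)}$ via Proposition \ref{jet}, with $|\det A_{\underline{x}}| = \prod_i\prod_{l<p\leq k_i}|x^i_p-x^i_l|^{-1}$.)

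Passing to scaled normal coordinates $X=\sqrt{d}\,x$ around $x^1_1$---legitimate throughout the configuration thanks to the distance hypothesis---one gets $\mathcal{K}_d(x,y) = d\,K_d(X,Y)$, and a $p$-th order divided difference in $x$ becomes $\sqrt{d}^{\,p-1}$ times the corresponding divided difference in $X$. Consequently $G = D\tilde{G}D$ with $D$ diagonal, $D_{(i,p)}=\sqrt{d}^{\,p}$, and $\tilde{G}$ the analogous Gram matrix built from the scaled kernel $K_d$. Since $\sum_i k_i(k_i+1)/2 = k + \sum_i k_i(k_i-1)/2$, this yields
\[
\mathcal{D}^{\Gamma}_d(\underline{x}) = \sqrt{d}^{\,k+\sum_i k_i(k_i-1)/2}\cdot\sqrt{\det\tilde{G}(\underline{X})},
\]
reducing Proposition \ref{denombound} to the uniform lower bound $\det\tilde{G}(\underline{X})\geq \epsilon_\Gamma^{\,2}$ over admissible scaled configurations.

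By Theorem \ref{berglocal}, on the ball of scaled radius $\log d/C'$ one has $K_d = K_{\C}+O(d^{-\alpha})$ in $C^m$-norm for any fixed $m$; since the divided differences involved have bounded order ($\leq k-1$), the entries of $\tilde{G}$ are uniformly bounded and $\tilde{G} = \tilde{G}^{\infty}+O(d^{-\alpha})$, where $\tilde{G}^{\infty}$ is the analogous Gram matrix built from $K_{\C}(X,Y) = \frac{1}{\pi}e^{-(X-Y)^2/2}$. It therefore suffices to prove $\det\tilde{G}^{\infty}\geq 2\epsilon_\Gamma^{\,2}$ uniformly. Pointwise positivity is clear: $\tilde{G}^{\infty}$ is the Gram matrix, in the reproducing-kernel Hilbert space of $K_{\C}$, of the divided-difference functionals, which are linearly independent for every configuration in the closure (including degenerate ones where intra-component coalescence turns divided differences into successive jets at a single point). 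To upgrade to uniformity, I would quotient by overall translations (using translation invariance of $K_{\C}$); intra-component scaled coordinates then vary in a compact set of diameter $\leq k$, while inter-component scaled distances range over $[1,\log d/C']$. As any inter-component distance tends to infinity, the off-diagonal blocks of $\tilde{G}^{\infty}$ decay like $e^{-D^2/2}$ and $\det\tilde{G}^{\infty}$ converges to a product of smaller-block determinants, still positive by induction on the number of components. The main obstacle is precisely this last step: combining compactness within each component, linear independence of jets at collapsed configurations, and exponential decoupling between distant components into a single uniform constant $\epsilon_\Gamma>0$ valid for every admissible configuration simultaneously, via a continuity argument on the natural compactification of the quotient configuration space.
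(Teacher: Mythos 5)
Your proposal is correct and follows essentially the same route as the paper: rescale the divided-difference Gram matrix (your $G=D\tilde{G}D$ is the paper's Proposition \ref{5}), replace the scaled Bergman kernel by the Bargmann--Fock kernel via Theorem \ref{berglocal} (Propositions \ref{polsigma} and \ref{onc}), and reduce everything to a uniform lower bound for the local-model normal Jacobian. The step you flag as the main obstacle is exactly what the paper carries out in Propositions \ref{surlocal}, \ref{boundmodel} and \ref{nuovo}: linear independence of the jet/divided-difference functionals plus compactness handles bounded configurations, and an induction on the number of connected components, using the exponential decay of $K_{\C}$ and continuity of the determinant, produces the single uniform constant $\epsilon_{\Gamma}$.
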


\begin{prop}\label{numbound}
Let $\Gamma$ be a $k$-admissible graph and $\mathcal{N}^{\Gamma}_d$ be the function defined in Proposition \ref{frac}. Then, there exists  $M_{\Gamma}$  such that for any 
$d$ and for any $\underline{x}\in \Gamma_d$ with $\mathbf{d}_h(x^i_p,x^j_a)\leq \frac{\log d}{C'\sqrt{d}}$, we have
\[\frac{1}{d^k}\prod_{i=1}^m\frac{1}{\sqrt{d}^{\frac{k_i(k_i-1)}{2}}}\mathcal{N}^{\Gamma}_d(\underline{x})<M_{\Gamma}.\]
\end{prop}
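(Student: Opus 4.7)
My plan is to bound $\mathcal{N}_d^{\Gamma}(\underline{x})$ via Gaussian $L^{k}$-moments of its integrand. Each factor $D_p^i(s)\doteqdot [s(x_1^i),s(x_2^i),\ldots,s(x_p^i),s(x_p^i)]$ is a linear functional of $s$, hence a centered Gaussian random variable under the restricted measure $d\mu_{\mid\R H^0_{\underline{x}}}$. Since the product in the integrand has exactly $\sum_{i=1}^{m}k_i=k$ factors, the generalized H\"older inequality combined with the Gaussian moment identity $\mathbb{E}[|G|^N]=c_N(\mathrm{Var}\,G)^{N/2}$ yields
$$\mathcal{N}_d^{\Gamma}(\underline{x})\leq C_k\prod_{i=1}^{m}\prod_{p=1}^{k_i}\sqrt{\mathrm{Var}(D_p^i)},$$
with $C_k$ depending only on $k$. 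Moreover, since the Gaussian measure on $\R H^0_{\underline{x}}$ is obtained from the one on $\R H^0(\Sigma;\mathcal{L}^d)$ by conditioning on a linear constraint (equivalently, orthogonal projection onto a subspace), the variance of any linear functional can only decrease by restriction, so it suffices to bound $\mathrm{Var}_{d\mu}(D_p^i)$ under the unconditioned Gaussian.

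By the reproducing kernel identity $\mathbb{E}_{\mu}[s(z)s(w)]=\mathcal{K}_d(z,w)$ and bilinearity of covariance, $\mathrm{Var}_{d\mu}(D_p^i)$ equals the double divided difference of $\mathcal{K}_d(z,w)$ obtained by applying the order-$p$ divided difference operator in each variable at the points $x_1^i,\ldots,x_p^i,x_p^i$. Passing to scaled normal coordinates $T_q^i=\sqrt{d}(x_q^i-x_1^i)$ around the origin $x_1^i$ of each connected component, the scaling law $[f(\cdot)](z_1,\ldots,z_{p+1})=\sqrt{d}^p\,[f(\cdot/\sqrt{d})](T_1,\ldots,T_{p+1})$ combined with $\mathcal{K}_d(z,w)=d\cdot K_d(\sqrt{d}z,\sqrt{d}w)$ produces a factor $d^{p+1}$ and leaves a double divided difference of the scaled Bergman kernel $K_d$ at the scaled points $T_q^i$.

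By the hypothesis $\mathbf{d}_h(x_p^i,x_a^j)\leq \frac{\log d}{C'\sqrt{d}}$, all scaled points $T_q^i$ lie in a ball of radius $O(\log d)$, so Theorem \ref{berglocal} yields $C^m$-convergence $K_d\to K_{\C}(T,W)=\tfrac{1}{\pi}e^{-|T-W|^2/2}$ on such balls, with error $O(d^{-\alpha})$. Since $K_{\C}$ has uniformly bounded derivatives of every order on $\R^2$, and since divided differences at points inside a bounded set are controlled by the corresponding $C^p$-norms (with coalescing points handled via the Olver multispace framework of Section \ref{olver}, so that the spurious singularities arising from the negative-degree coefficients of Lemma \ref{pratique} cancel), the double divided difference of $K_d$ is uniformly bounded by a constant $C_p$ depending only on $p$. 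Hence $\sqrt{\mathrm{Var}(D_p^i)}\leq\sqrt{C_p}\cdot\sqrt{d}^{\,p+1}$. Taking the product over all factors and using $\sum_{p=1}^{k_i}(p+1)=\tfrac{k_i(k_i-1)}{2}+2k_i$ together with $\sum_i k_i=k$, we conclude
$$\mathcal{N}_d^{\Gamma}(\underline{x})\leq M_\Gamma\cdot d^{k}\cdot\prod_{i=1}^{m}\sqrt{d}^{\,k_i(k_i-1)/2},$$
which is the desired bound.

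The most delicate step will be the last one: controlling the double divided difference of $K_d$ uniformly in both $d$ and the configuration of scaled points, which may partially coalesce within a single connected component. A direct expansion via Lemma \ref{pratique} would introduce spurious singularities from the negative-degree rational factors; the Olver multispace framework of Section \ref{olver} is precisely what guarantees that these cancel, so that the double divided difference remains smooth and bounded across the diagonal of the scaled configuration space.
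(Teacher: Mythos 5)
Your argument is correct, and it reaches the bound by a genuinely different route than the paper. The paper reduces to one connected component through a product of suprema, passes to polar coordinates so that it suffices to bound the integrand on the unit sphere of $\R H^0_{\underline{x}}$, and then (Proposition \ref{onenum}) fixes an orthonormal basis of $(\ker j^1_{\underline{x}})^{\perp}$ and shows that the rescaled entries of $(j^1_{\underline{x}})(j^1_{\underline{x}})^*$ are expressions of degree one in the scaled Bergman kernel and its derivatives, concluding by the near-diagonal estimate together with a compactness argument in the base point, the configuration and the sphere $S^{k-1}$. You instead treat each factor $[s(x_1^i)\cdots s(x_p^i)s(x_p^i)]$ as a centered Gaussian linear functional, use the generalized H\"older inequality and the Gaussian moment identity to reduce everything to variances, note that passing to the restricted measure on $\R H^0_{\underline{x}}$ can only decrease the variance of a linear functional (the restricted measure is, up to the harmless constant total mass $\pi^{-k/2}$, the standard Gaussian of the subspace, and the Riesz representative gets orthogonally projected), and identify the unrestricted variance with a double divided difference of $\mathcal{K}_d$; the exponent bookkeeping $\sum_{p=1}^{k_i}(p+1)=\tfrac{k_i(k_i-1)}{2}+2k_i$, $\sum_i k_i=k$, then yields exactly the normalization of the statement. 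What this buys is a cleaner reduction: no sphere, no choice of basis, no compactness of $\R\Sigma^k$ needed --- only the uniform $C^m$ near-diagonal estimate of Theorem \ref{berglocal}; what the paper's route buys is the explicit matrix description that it reuses elsewhere (Sections \ref{sdenom}--\ref{snume}). Both proofs meet at the same final point, namely bounding divided differences of the scaled kernel uniformly as points within a component coalesce, and here your appeal to Olver's framework can be replaced by an elementary and fully sufficient fact, the mean value theorem for divided differences (Hermite--Genocchi representation): for a $C^p$ function one has $|[f(T_1)\cdots f(T_{p+1})]|\leq \frac{1}{p!}\sup |f^{(p)}|$ on the convex hull of the nodes, repeated nodes allowed; applying it in each variable bounds the double divided difference of $K_d$ by its mixed $C^{p,p}$-norm on a ball of fixed radius (the scaled points of one component lie within distance $k$ of its origin), which is uniformly bounded in $d$, in the configuration and in the base point by Theorem \ref{berglocal}, since all derivatives of $K_{\C}(T,W)=\frac{1}{\pi}e^{-|T-W|^2/2}$ are bounded on the reals. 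With that observation your ``delicate step'' is standard, and the proof is complete.
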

We prove Propositions \ref{denombound} and \ref{numbound} respectively in Sections \ref{sdenom}  and \ref{snume}. We now prove Proposition \ref{totalbound} and Theorem \ref{extdens}.
 \begin{proof}[\textbf{Proof  of Proposition \ref{totalbound}}]
As we have seen,  it suffices to prove this proposition for  $k$-admissible graphs and for points $\underline{x}=(x_1,\dots,x_k)$ such that $\mathbf{d}_h(x_i,x_j)<\frac{\log d}{C'\sqrt{d}}$, see Remark \ref{aed}. The proposition  follows directly from  the fact that  $$\mathcal{R}^{\Gamma}_d(\underline{x})=\frac{\mathcal{N}_d^{\Gamma}(\underline{x})}{\mathcal{D}_d^{\Gamma}(\underline{x})}$$ and from Propositions \ref{numbound} and \ref{denombound}. 
\end{proof}

\begin{proof}[\textbf{Proof of Theorem \ref{extdens}}]
It is a consequence of the surjectivity of $$d\pi_{\Sigma}:T_{(s,\underline{x})}\mathcal{I}\rightarrow T_{\underline{x}}\R\Sigma^k,$$ that is equivalent to the fact that $\mathcal{D}^{\Gamma}_d$ is everywhere non vanishing for any graph $\Gamma\in\Theta_k$. \\
Let us prove that the density function $\mathcal{R}^k_{d}$ vanishes on the diagonal.
Let $(s,\underline{x})\in\mathcal{I}$ be such that $\underline{x}\in\Delta$. Then, up to an action of the symmetric group $\mathcal{S}_k$, we have $\underline{x}=(x_1,x_1,\underline{\tilde{x}})$ for some $\underline{\tilde{x}}\in\R\Sigma^{k-2}$. Then $\mathcal{N}_d^{\Gamma}(\underline{x})=0$ because we integrate $|[s(x_1)s(x_1)]|$ over $\R H^0_{\underline{x}}$ (that is the kernel of the generalized evaluation map defined in Definition \ref{eval}) and in $\R H^0_{\underline{x}}$ we have that $[s(x_1)s(x_1)]=0$.
As $\mathcal{D}^{\Gamma}_d\neq 0$ for any $\underline{x}\in\R \Sigma^k$, we obtain $\mathcal{R}^k_d=0$ on $\Delta$.
\end{proof}

\subsection{Estimates of the denominator $\mathcal{D}_d^{\Gamma}$}\label{sdenom}
This subsection is devoted to the proof of Proposition \ref{denombound}. We use the notations of Sections \ref{olver}--\ref{secgraph}.\\
We introduce some evaluation maps $ev_{\underline{T}}^{\C,\Gamma}$ on the real Bargmann-Fock space $\R H_{L^2}^0(\C,\mathcal{O})$.  These maps turn out to be the local models of the generalized evaluation maps $ev^{\Gamma}_{\underline{x}}$ defined in  Definition \ref{eval}. For this reason, we call them \emph{local evaluation maps}.  The surjectivity of the local evaluation maps on the Bargmann-Fock space will then imply the surjectivity of the generalized evaluation maps $ev^{\Gamma}_{\underline{x}}$, proving Proposition \ref{denombound}.

\subsubsection{A partition of $\R^k$} We introduce a partition of $\R^k$ indexed by labelled graphs, as we did for $\R\Sigma^k$  in Section \ref{secgraph}. Denote by
 $\Theta_k$  the set of all labelled graphs with $k$ vertices, labelled by $\{1,\dots,k\}$. 
We associate a graph to every point $(T_1,\dots,T_k)\in  \R^k$. 
This  is constructed as follows:  we put an edge between the $i$-vertex and $j$-vertex if and only if the distance between $T_i$ and $T_j$ is smaller or equal than $1$.
We say that $(T_1,\dots,T_k)\in\Gamma$ if and only if its associated graph is $\Gamma$. The subset $\Gamma\subset \R^k$ is called a \emph{graph subset}.
\begin{defn} Let $(T_1,\dots,T_k)$ be a point in $\R^k$ and $\Gamma$ be its associated graph. 
\begin{itemize} 
\item We say that $T_i$ and $T_j$ are in the same connected component if and only if the vertices $i$ and $j$ of the associated graph are in the same connected component.
\item We call  \emph{origin} of a connected component the point of the connected component with the smallest label.
\end{itemize} 
\end{defn} 
\subsubsection{A local evaluation model}\label{local2}
\paragraph*{Real Bargmann-Fock space} Let $\R H^0_{L^2}(\C;\mathcal{O})$ be  the space of  functions of the form $f(z)e^{-\frac{|z|^2}{2}}$, where $f:\C\rightarrow\C$ is an holomorphic real function (that is  $f(\bar{z})=\bar{f(z)}$)  such that $\int_{\C}|f|^2e^{-|z|^2}dzd\bar{z}<\infty$. 
We call this space the Bargmann-Fock space. It is naturally equipped with the $L^2$-scalar product $$\langle fe^{-\frac{|z|^2}{2}},ge^{-\frac{|z|^2}{2}} \rangle=\int_{\C}f\bar{g}e^{-|z|^2}dzd\bar{z}.$$ 
The Bergman kernel of the  orthogonal projection $L^2(\C)\rightarrow \R H^0_{L^2}(\C;\mathcal{O})$ equals the local Bergman kernel $K_{\C}(x,y)=\frac{1}{\pi}e^{\frac{-\norm{x-y}^2}{2}}$ for any $x,y\in \C$ (see Definition \ref{scalbegmkern}). 
 An orthonormal basis of $\R H^0_{L^2}(\C;\mathcal{O})$ is $\{\frac{z^k}{\sqrt{\pi}\sqrt{k!}}e^{\frac{-|z|^2}{2}}\}$.
\paragraph*{Local evaluation maps}
In the sequel we will only consider $k$-admissible graphs, see Definition \ref{adm}.
For every $k$-admissible graph $\Gamma$ and   $\underline{T}=(T_1^1,\dots,T^1_{k_1},\dots,T_1^m,\dots,T^m_{k_m})\in \Gamma$ we consider the following evaluation map:
$$ev^{\C,\Gamma}_{\underline{T}}:\R H^0_{L^2}(\C;\mathcal{O})\rightarrow \R^k$$
defined by $$f\mapsto ([f(T_1^1)],\dots,[f(T_1^1)\cdots f(T_{k_1}^1)],\dots,[f(T_1^m)],\dots,[f(T_1^m)\cdots f(T_{k_1}^m)]).$$
The notation $[f(T_1^i)\dots f(T_{p}^i)]$ stands for the divided difference $[f(T_1^i)\dots f(T_{p}^i)]_{\textrm{graph}(f)}$ in the sense of Olver, see Definition \ref{divided}. 
Remark that we used the same notation in the previous section, see Notation \ref{con1}.
\begin{defn}\label{local}
We call the map $ev^{\C,\Gamma}_{\underline{T}}:\R H^0_{L^2}(\C;\mathcal{O})\rightarrow \R^k$
defined by \[f\mapsto ([f(T_1^1)],\dots,[f(T_1^1)\cdots f(T_{k_1}^1)],\dots,[f(T_1^m)],\dots,[f(T_1^m)\cdots f(T_{k_1}^m)])\] a \emph{local evaluation map}.
\end{defn}
\begin{prop}\label{surlocal} For any $k$-admissible graph $\Gamma\in\Theta_k$ and any point $${\underline{T}=(T_1^1,\dots,T^1_{k_1},\dots,T_1^m,\dots,T^m_{k_m})\in\Gamma},$$ the local evaluation map 
$ev^{\C,\Gamma}_{\underline{T}}$ defined in Definition \ref{local} is surjective.
\end{prop}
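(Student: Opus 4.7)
The plan is to reduce the surjectivity of $ev^{\C,\Gamma}_{\underline{T}}$ to classical Hermite interpolation on the real line. Note first that the hypothesis $\underline{T}\in\Gamma$ forces any two distinct connected components of $\Gamma$ to be at distance strictly greater than $1$ in $\R$, so the set of \emph{distinct} real points appearing in $\{T^i_s\}_{s,i}$ is partitioned between the components, and the total number of (distinct point, derivative-order) conditions across all components equals $\sum_i k_i=k$.

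For each component $i$, I would argue that the partial evaluation
$$\Phi_i\colon f\mapsto\bigl([f(T_1^i)],\dots,[f(T_1^i)\cdots f(T^i_{k_i})]\bigr)$$
factors through the $k_i$-dimensional jet space at the distinct points of the component, i.e.\ through the collection $\{f^{(r)}(\tau)\}$ with $\tau$ ranging over the distinct points of $\{T^i_1,\dots,T^i_{k_i}\}$ and $0\leq r<\#\tau$. Lemma \ref{pratique} already realizes $\Phi_i$ as a linear map from this jet space to $\R^{k_i}$, and that map is a linear isomorphism by the classical Newton--Hermite interpolation theorem (equivalently: if all $k_i$ divided differences of a polynomial of degree $\leq k_i-1$ vanish, then the polynomial is identically zero, hence so are its prescribed jets). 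Since the distinct points attached to different components are pairwise distinct as real numbers, the joint surjectivity of $ev^{\C,\Gamma}_{\underline{T}}=(\Phi_1,\dots,\Phi_m)$ reduces to the surjectivity of a single Hermite interpolation map at pairwise distinct real points with a total of $k$ conditions.

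To realize an arbitrary jet configuration, I would write $f(T)=p(T)e^{-T^2/2}$ with $p$ a real polynomial. Leibniz's rule shows that the map
$$\bigl(p(\tau),p'(\tau),\dots,p^{(n-1)}(\tau)\bigr)\longmapsto\bigl(f(\tau),f'(\tau),\dots,f^{(n-1)}(\tau)\bigr)$$
is upper triangular with all diagonal entries equal to $e^{-\tau^2/2}\neq 0$, hence invertible; prescribing the jets of $f$ therefore amounts to prescribing those of $p$. Classical Hermite interpolation at pairwise distinct real points produces a real polynomial $p$ of degree at most $k-1$ realizing any prescribed real jets, and the resulting $f=p\,e^{-T^2/2}$ lies in $\R H^0_{L^2}(\C;\mathcal{O})$ automatically, thanks to the Gaussian weight.

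The main point to verify carefully is the Newton--Hermite isomorphism invoked in the second paragraph: within a component the divided differences mix derivatives of several orders at points that may coincide, and one must check that the triangular structure arising from Definition \ref{divided} makes the induced linear map invertible. Everything else in the argument is either definitional unpacking or a direct invocation of standard polynomial interpolation over the reals.
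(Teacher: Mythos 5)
Your argument is correct, and while it rests on the same underlying idea as the paper's proof (divided differences are an invertible linear repackaging of jet data, and the Bargmann--Fock space surjects onto any finite jet configuration), it is carried out along a genuinely more complete route. The paper's proof of Proposition \ref{surlocal} only treats the two extreme configurations: away from the diagonal it observes that $ev^{\C,\Gamma}_{\underline{T}}$ differs from the classical evaluation $f\mapsto(f(T_1),\dots,f(T_k))$ by an invertible matrix, and when the points of each component fully coincide it identifies the map with the jet evaluation $f\mapsto\big(f(T_i),f'(T_i),\dots,f^{(k_i-1)}(T_i)\big)_{i}$; in both cases the surjectivity of the model map on $\R H^0_{L^2}(\C;\mathcal{O})$ is asserted rather than proved, and intermediate coincidence patterns inside a component are not discussed separately. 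You instead factor each block through the Hermite data at the distinct points of the component, which handles all configurations uniformly; the invertibility of the jets-to-divided-differences map that you flag as the main point to verify is indeed the only delicate step, and it is available either through the confluent Newton-form argument you sketch or simply by citing the result of Olver quoted before Lemma \ref{pratique} (two pointed curves have the same multi-jet if and only if they have the same divided differences) together with a dimension count. Your second step, realizing arbitrary Hermite data by $p(z)e^{-|z|^2/2}$ with $p$ a real Hermite interpolation polynomial of degree at most $k-1$ and using the Leibniz triangularity with diagonal entries $e^{-\tau^2/2}\neq 0$, supplies the explicit surjectivity witness that the paper leaves implicit. In short: the paper's proof is shorter but elliptic, while yours buys a uniform treatment of partial coincidences and a self-contained interpolation construction inside the Bargmann--Fock space.
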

\begin{proof}
We remark that for any $\underline{T}\in\R^k\setminus \Delta$, the map $ev^{\C,\Gamma}_{\underline{T}}$ is equivalent to the "classical" evaluation map  $ev_{\underline{T}}:f\mapsto (f(T_1),\dots,f(T_k))$. It means that there exists an invertible matrix $A_{\underline{T}}\in GL_k(\R)$ such that $ev^{\C}_{\underline{T}}=A_{\underline{T}}\cdot ev_{\underline{T}}$. The surjectivity of  $ev^{\C,\Gamma}_{\underline{T}}$ follows from the surjectivity of $ev_{\underline{T}}$.\\
On the diagonal $\underline{x}\in\Delta\subset\R^k$, this new evaluation map $ev^{\C,\Gamma}_{\underline{T}}$ gives us more information about the higher jets of a function.
If $\underline{T}=(T_1,\dots,T_1,\dots,T_m,\dots,T_m)$, where $\#T_i=k_i$, then
$ev^{\C,\Gamma}_{\underline{T}}$ is equivalent to $$f\mapsto (f(T_1),f'(T_1),\dots,f^{(k_1-1)}(T_1),\dots,f(T_m),f'(T_m),\dots,f^{(k_m-1)}(T_m)).$$
The surjectivity of $ev^{\C,\Gamma}_{\underline{T}}$ follows.
\end{proof}

\subsubsection{Local boundedness results} The previous proposition implies that the normal Jacobian of the local evaluation map is strictly positive. We will compute the normal Jacobian of  $ev^{\C,\Gamma}_{\underline{T}}$ with respect to the metrics that we have defined, that are the $L^2$-scalar product on $\R H^0_{L^2}(\C;\mathcal{O})$ and the standard  metric  on $\R^k$.

\begin{prop}\label{boundmodel} Let $R>1$ be fixed and let $\Gamma$ be a $k$-admissible graph.  Then  there exists a positive $\epsilon_{\Gamma}>0$ such that, for every $\underline{T}\in B(0,R)\cap \Gamma$,
$$|\Jac_Nev^{\C,\Gamma}_{\underline{T}}|> \epsilon_{\Gamma}.$$
\end{prop}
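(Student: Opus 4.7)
My plan is to establish Proposition \ref{boundmodel} as a direct continuity plus compactness argument, where the key point is identifying a compact set on which the normal Jacobian is continuous and everywhere positive.

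First I would observe that the map $\underline{T}\mapsto ev^{\C,\Gamma}_{\underline{T}}$, viewed as a family of bounded operators from $\R H^0_{L^2}(\C;\mathcal{O})$ to $\R^k$, depends continuously on $\underline{T}\in\R^k$, even across coincidences. Indeed, by Lemma \ref{pratique}, each component $[f(T_1^i)\cdots f(T_p^i)]$ of $ev^{\C,\Gamma}_{\underline{T}}(f)$ is a linear combination of derivatives $f^{(r)}(T_q^i)$ with coefficients that vary continuously (in fact analytically) in $\underline{T}$; when two arguments coalesce, the limiting definition of divided differences makes everything smooth. Since evaluation of derivatives against the Bargmann--Fock orthonormal basis $\{z^n/\sqrt{\pi n!}\,e^{-|z|^2/2}\}$ yields explicit real-analytic expressions in $\underline{T}$, the Gram matrix $ev^{\C,\Gamma}_{\underline{T}}\circ (ev^{\C,\Gamma}_{\underline{T}})^*$ is a continuous $k\times k$ matrix-valued function of $\underline{T}\in\R^k$. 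Hence $|\Jac_N ev^{\C,\Gamma}_{\underline{T}}|=\sqrt{\det(ev^{\C,\Gamma}_{\underline{T}}\circ (ev^{\C,\Gamma}_{\underline{T}})^*)}$ is continuous on $\R^k$.

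Next I would pass to the compact set $K_\Gamma:=\overline{B(0,R)\cap\Gamma}\subset\R^k$. A point $\underline{T}\in K_\Gamma$ still satisfies: within any $\Gamma$-component, the vertices lie within distance $\leq 1$ of one another (this passes to the limit), while vertices from distinct $\Gamma$-components remain at distance $\geq 1$ (and thus, in particular, distinct). The central step is to show that $ev^{\C,\Gamma}_{\underline{T}}$ is surjective for every such $\underline{T}\in K_\Gamma$, so that $|\Jac_N ev^{\C,\Gamma}_{\underline{T}}|>0$ everywhere on $K_\Gamma$. The argument is the one used in Proposition \ref{surlocal}: within a single component of size $k_i$, the map that sends $f$ to its vector of divided differences is, up to an invertible change of basis, either the evaluation of $f$ and a few of its derivatives at the coalescing points, or the ordinary evaluation of $f$ at distinct points; both are manifestly surjective onto $\R^{k_i}$ in the infinite-dimensional Bargmann--Fock space. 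Across components, the fact that inter-component distances are bounded below by $1>0$ allows us to combine per-component preimages using peak-like functions (e.g.\ translates of Gaussians, or monomials centered at each component), yielding a preimage of any vector in $\R^{k_1}\times\cdots\times\R^{k_m}=\R^k$.

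Finally I would conclude by applying continuity of $|\Jac_N ev^{\C,\Gamma}_{\underline{T}}|$ together with compactness of $K_\Gamma$: the continuous positive function attains a strictly positive minimum $\epsilon_\Gamma>0$ on $K_\Gamma$, and since $B(0,R)\cap\Gamma\subset K_\Gamma$, this yields the desired uniform bound. The only slightly delicate point is verifying surjectivity on the boundary of $K_\Gamma$, where two distinct $\Gamma$-components may sit at distance exactly $1$; but this is still positive separation, so the peak-section construction goes through unchanged and no genuine obstruction arises. This is the main content of the argument, but it is not technical—the serious uniform estimates are absorbed into the compactness step.
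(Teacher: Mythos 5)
Your proposal is correct and takes essentially the same route as the paper, whose proof is exactly pointwise surjectivity of $ev^{\C,\Gamma}_{\underline{T}}$ from Proposition \ref{surlocal} (hence a strictly positive normal Jacobian) combined with compactness of $\overline{B(0,R)}$ to extract a uniform $\epsilon_{\Gamma}$. You merely make explicit what the paper leaves implicit (continuity of $\underline{T}\mapsto|\Jac_N ev^{\C,\Gamma}_{\underline{T}}|$ across coincidences and surjectivity on the closure of $B(0,R)\cap\Gamma$); the only small caution is that the individual coefficients $c_{\underline{T},i,r}$ of Lemma \ref{pratique} blow up as points coalesce, so the continuity should be justified by the divided differences themselves (as your remark on their limiting definition already indicates) rather than by continuity of those coefficients.
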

\begin{proof}
 By Proposition \ref{surlocal}, for any $\underline{T}=(T_1^1,\dots,T^1_{k_1},\dots,T_1^m,\dots,T^m_{k_m})\in \Gamma$  the map $ev^{\C,\Gamma}_{\underline{T}}$ is  surjective, so its normal Jacobian is strictly positive. By compactness of $\overline{B(0,R)}$ we can find a positive $\epsilon_{\Gamma}>0$ such that $|\Jac_Nev^{\C,\Gamma}_{\underline{T}}|>\epsilon_{\Gamma}$.
\end{proof}
The following proposition shows that we can write $\Jac_Nev^{\C,\Gamma}_{\underline{T}}$ as a function of the distances $T^i_s-T^i_t$ between points lying in the same connected component and of the local  Bergman kernel $K_{\C}$ (and of its derivatives).
\begin{prop}\label{univpol} Let $\Gamma$ be a $k$-admissible graph. For any $\underline{T}\in\Gamma$ let $ev^{\C,\Gamma}_{\underline{T}}$ be the local evaluation map defined in Definition \ref{local}. 
Then the matrix associated with $(ev^{\C,\Gamma}_{\underline{T}}) (ev^{\C}_{\underline{T}})^*$ is  a symmetric $(k\times k)$ matrix composed by $m^2$ blocks, indexed by $(i,j)$ for $i,j=1,\dots,m$. The $(i,j)$-block is a $k_i\times k_j$ matrix, we denote the $(p,q)$-place of this block by $(ev^{\C,\Gamma}_{\underline{T}} ev^{\C,\Gamma *}_{\underline{T}})_{(i_p,j_q)}$. Then $(ev^{\C,\Gamma}_{\underline{T}} ev^{\C,\Gamma *}_{\underline{T}})_{(i_p,j_q)}$ is a homogenous polynomial $\mathcal{Q}_{(i_p,j_q)}$ of degree $1$ in the norm of
$K_{\C}(T^i_s,T^j_a)$ and of its derivatives, $s=1,\dots,p$, $a=1,\dots,q$. The coefficients of this polynomial are rational functions in $T^i_s-T^i_t$, $T^j_a-T^j_b$ for $1\leq t<s\leq p$, $1\leq a<b\leq q$. 
\end{prop}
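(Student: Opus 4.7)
The plan is to combine the explicit expansion of divided differences from Lemma \ref{pratique} with the reproducing property of the Bargmann--Fock kernel $K_{\C}$, together with bilinearity of the $L^2$-pairing. The content of the proposition is entirely structural: each entry of $ev\cdot ev^*$ is a Gram matrix coefficient of Riesz representatives, and each Riesz representative is a linear combination of translates (and derivatives) of $K_{\C}$.

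First, I would rewrite each component of $ev^{\C,\Gamma}_{\underline{T}}$ as a linear functional on $\R H^0_{L^2}(\C;\mathcal{O})$. By Lemma \ref{pratique}, the $(i_p)$-th component satisfies
\begin{equation*}
[f(T^i_1)\cdots f(T^i_p)] = \sum_{s=1}^{p}\sum_{r=0}^{\#T^i_s - 1} c^{(i)}_{s,r}(\underline{T})\, f^{(r)}(T^i_s),
\end{equation*}
where each coefficient $c^{(i)}_{s,r}(\underline{T})$ is a rational function of the differences $T^i_t - T^i_u$, $1\le u<t\le p$. Since $\R H^0_{L^2}(\C;\mathcal{O})$ is a reproducing kernel Hilbert space with kernel $K_{\C}(z,w)=\frac{1}{\pi}e^{-\norm{z-w}^2/2}$, the Riesz representative of $f\mapsto f^{(r)}(T^i_s)$ is $\partial_w^r K_{\C}(\cdot, T^i_s)$. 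Consequently the Riesz representative $v_{i_p}$ of the $(i_p)$-th component of $ev^{\C,\Gamma}_{\underline{T}}$ is a linear combination of these kernel derivatives with the same rational coefficients $c^{(i)}_{s,r}(\underline{T})$.

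Next, I would compute the $(i_p,j_q)$-entry of $ev^{\C,\Gamma}_{\underline{T}}\,(ev^{\C,\Gamma}_{\underline{T}})^*$, which by definition of adjoint equals $\langle v_{j_q}, v_{i_p}\rangle$. Applying the reproducing property a second time yields
\begin{equation*}
\langle \partial^{r'}_w K_{\C}(\cdot, T^j_a),\,\partial^{r}_w K_{\C}(\cdot, T^i_s)\rangle \;=\; \partial^{r}_{z}\partial^{r'}_{w} K_{\C}(z,w)\big|_{z=T^i_s,\,w=T^j_a}.
\end{equation*}
Expanding $\langle v_{j_q}, v_{i_p}\rangle$ by bilinearity therefore produces a sum, linear in $K_{\C}$ and its partial derivatives at pairs of points $(T^i_s, T^j_a)$ with $s\le p$ and $a\le q$, whose coefficients are products $c^{(i)}_{s,r}(\underline{T})\cdot c^{(j)}_{a,r'}(\underline{T})$. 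The first factor is rational in $T^i_t - T^i_u$ with $1\le u<t\le p$, the second in $T^j_b - T^j_c$ with $1\le c<b\le q$, exactly as asserted.

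Finally, the block structure $(k_i\times k_j)$ and the symmetry of the matrix follow directly from the ordering of components in Definition \ref{local} and from the symmetry of the Gram matrix. I do not expect a genuine obstacle: the only point requiring care is tracking homogeneity, i.e. checking that \emph{each} term in the expansion contains exactly one factor $\partial^r\partial^{r'}K_{\C}$, which is automatic because a pairing of two finite linear combinations of kernel derivatives, expanded by bilinearity, is linear in each kernel value separately.
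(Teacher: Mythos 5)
Your argument is correct and follows essentially the same route as the paper: both expand each component of $ev^{\C,\Gamma}_{\underline{T}}$ via Lemma \ref{pratique} into jet evaluations $f^{(r)}(T^i_s)$ with coefficients rational in the differences of points, and then convert the resulting pairings into derivatives of $K_{\C}$ at $(T^i_s,T^j_a)$ by the reproducing property of the Bargmann--Fock space. The only difference is presentational: you work directly with Riesz representatives and the adjoint, whereas the paper computes $\sum_{l}[f_l(T^i_1)\cdots f_l(T^i_p)][f_l(T^j_1)\cdots f_l(T^j_q)]$ over an orthonormal basis of $(\ker ev^{\C,\Gamma}_{\underline{T}})^{\perp}$ and identifies the summed jet products with the kernel derivatives --- your formulation actually makes that last identification more transparent.
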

\begin{proof}
We know that $\Jac_N( ev^{\C,\Gamma}_{\underline{T}})= \Jac_N (ev^{\C,\Gamma}_{\underline{x}} \mid_{(\ker ev^{\C, \Gamma}_{\underline{T}})^{\perp}})$. Let $\{f_1,\dots,f_k\}$ be an orthonormal basis of $(\ker ev^{\C, \Gamma}_{\underline{T}})^{\perp}$. 
We compute the normal Jacobian using this basis and the canonical orthonormal basis for $\R^{k_1}\times\cdots \times\R^{k_m}$.
Then the matrix of $ev^{\C,\Gamma}_{\underline{x}} \mid_{(\ker ev^{\C, \Gamma}_{\underline{T}})^{\perp}}$ associated with these orthonormal basis is a square matrix whose $i$-th column equals  the transpose of
$$
([f_i(T_1^1)],\dots,[f_i(T_1^1)\dots f_i(T_{k_1}^1)],
 \dots,[f_i(T_1^m)],\dots,[f_i(T_1^m)\dots f_i(T_{k_m}^m)]).
$$
A direct computation shows that $(ev^{\C,\Gamma}_{\underline{T}})( ev^{\C,\Gamma}_{\underline{T}})^*$ is a square  matrix with  $m^2$ blocks, indexed by $(i,j)$ for $i,j=1,\dots,m$. The $(i,j)$-block is a $k_i\times k_j$ matrix, we denote  by $(ev^{\C,\Gamma}_{\underline{T}}ev^{\C,\Gamma *}_{\underline{T}})_{(i_p,j_q)}$ the $(p,q)$-place of this block. We have that $(ev^{\C,\Gamma}_{\underline{T}}ev^{\C,\Gamma *}_{\underline{T}})_{(i_p,j_q)}$ is equal to
$$\sum_{l=1}^k[f_l(T_1^i)\dots f_l(T_p^i)][f_l(T_1^j)\dots f_l(T_q^j)].$$
By Lemma \ref{pratique}, each term $[f_l(T_1^i)\dots f_l(T_p^i)][f_l(T_1^j)\dots f_l(T_q^j)]$ is equal to 
\[\sum_{s=1}^p\sum_{r=0}^{\#T_s^i-1}c_{\underline{T}^i_p,s,r}f_l^{(r)}(T_s^i)\sum_{a=1}^q\sum_{h=0}^{\#T_a^j-1}c_{\underline{T}^j_q,a,h}f_l^{(h)}(T_a^j)\]
where $\underline{T}^i_p=(T^i_1,\dots,T^i_p)$ and $\underline{T}^j_q=(T^j_1,\dots,T^j_q)$.\\
Interchanging the sums we obtain
\[\sum_{s=1}^p\sum_{r=0}^{\#T_s^i-1}\sum_{a=1}^q\sum_{h=0}^{\#T_a^j-1}c_{\underline{T}^i_p,s,r}c_{\underline{T}^j_q,a,h}f_l^{(r)}(T_s^i)f_l^{(h)}(T_a^j).\]
Summing up to $l$ and interchanging the sums we obtain
\[\sum_{s=1}^p\sum_{r=0}^{\#T_s^i-1}\sum_{a=1}^q\sum_{h=0}^{\#T_a^j-1}c_{\underline{T}^i_p,s,r}c_{\underline{T}^j_q,a,h}{\frac{\partial^{r+h}}{\partial T^r \partial W^h}K_{\C}(T^i_s,T^j_a)}.\]
\end{proof}
\begin{oss} Remark that  the Bergman kernel $K_{\C}(Z,W)$ only depends on the distances between $Z$ and $W$. Then, Proposition \ref{univpol} implies that the matrix $(ev^{\C,\Gamma}_{\underline{T}}) (ev^{\C}_{\underline{T}})^*$ (and then $\Jac_N ev^{\C,\Gamma}_{\underline{T}}$)  only depends on the distances between the points $T^i_p$, and not on the particular position of each point.
\end{oss}
\begin{prop}\label{nuovo} Let $\Gamma$ be a $k$-admissible graph. Then, there exists a positive $\epsilon_{\Gamma}$ such that, for any $\underline{T}=(T_1^1,\dots,T^1_{k_1},\dots,T_1^m,\dots,T^m_{k_m})\in\Gamma$, we have
$$|\Jac_Nev^{\Gamma}_{\underline{T}}|>\epsilon_{\Gamma}.$$
\end{prop}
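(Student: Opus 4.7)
The plan is to lift the ball restriction from Proposition \ref{boundmodel}. Writing $J(\underline{T}) = |\Jac_N ev^{\C,\Gamma}_{\underline{T}}|$, Proposition \ref{surlocal} gives continuity and strict positivity of $J$ on the graph subset $\Gamma\subset \R^k$; the only obstacle to a uniform positive lower bound is the non-compactness of $\Gamma$, since the $m$ connected components of the graph $\Gamma$ may be pushed arbitrarily far apart in $\R^k$. I would first exploit the translation invariance of the Bargmann-Fock model: by Proposition \ref{univpol}, the entries of the Gram matrix $G(\underline{T}) = (ev^{\C,\Gamma}_{\underline{T}})(ev^{\C,\Gamma}_{\underline{T}})^*$ depend only on the pairwise distances $|T^i_s - T^j_a|$, so $J$ is invariant under the simultaneous shift $T^i_p \mapsto T^i_p + a$, and I may normalize $T^1_1 = 0$. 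Each component $\Gamma^i$ then has diameter at most $k-1$, so its points lie in a fixed-radius ball around $T^i_1$.

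The heart of the argument is an asymptotic factorization of $J$ as the inter-component distances grow. From Proposition \ref{univpol}, the $(i,j)$-block of $G(\underline{T})$ with $i\neq j$ is a degree-one polynomial in $K_\C(T^i_s, T^j_a)$ and its derivatives with coefficients rational in the bounded intra-component distances, so its operator norm is $O(e^{-\rho_{ij}^2/2})$ where $\rho_{ij} = \min_{s,a}|T^i_s - T^j_a| > 1$ (since points in distinct components of $\Gamma$ are at distance strictly greater than $1$ in $\R$). The diagonal $(i,i)$-block coincides with the Gram matrix of the single-component map $ev^{\C,\Gamma^i}_{\underline{T}^i}$, whose determinant is bounded below uniformly via Proposition \ref{boundmodel} (applied to the one-component graph, whose normalized configurations lie in a fixed ball $B(0,k)$). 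Expanding $\det G(\underline{T})$ then yields
\[
J(\underline{T})^2 = \prod_{i=1}^m J_i(\underline{T}^i)^2 + O\bigl(e^{-(\min_{i\neq j}\rho_{ij})^2/2}\bigr),
\]
where $J_i$ denotes the one-component analogue of $J$.

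I would then conclude by induction on $m$. The base case $m=1$ is Proposition \ref{boundmodel} after the normalization. For the inductive step, pick $R_0$ large enough that the exponential error above is smaller than half the product of the (uniform) lower bounds on the $J_i$ guaranteed by induction. If some inter-component distance exceeds $R_0$, partition the components into two non-empty groups whose mutual distance is still $\gtrsim R_0/m$, apply the factorization to that coarser partition, and invoke the inductive hypothesis on each sub-group; otherwise all inter-component distances are $\le R_0$, the normalized configuration lies in a fixed ball, and Proposition \ref{boundmodel} applies directly. The main obstacle I anticipate is controlling the rational coefficients of Proposition \ref{univpol}: they could a priori blow up when intra-component distances degenerate, but, being precisely the divided-difference coefficients of Lemma \ref{pratique}, their combination with the Bergman kernel derivatives assembles into the bounded multispace divided differences, so the Gram matrix and its blocks remain bounded and the factorization estimate is uniform.
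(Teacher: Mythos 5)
Your proposal is correct and follows essentially the same route as the paper: induction on the number of connected components, with Proposition \ref{boundmodel} as the base case and as the fallback when all components stay within a bounded region, and with the product factorization of $\det\big(ev^{\C,\Gamma}_{\underline{T}}(ev^{\C,\Gamma}_{\underline{T}})^*\big)$ coming from Proposition \ref{univpol} together with the Gaussian decay of $K_{\C}$ when groups of components separate. Your explicit two-group splitting at a large gap (and your remark that the rational coefficients of Lemma \ref{pratique} recombine into bounded divided differences of the kernel) is just a more carefully quantified rendering of the paper's step of ``sending $l$ components to infinity'' and invoking continuity of the determinant.
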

\begin{proof}
By induction on the number of connected components.\\ 
The case of one connected component is treated in Proposition \ref{boundmodel}, for $m=1$.\\
Consider now points $$\underline{T}=(T_1^1,\dots,T^1_{k_1},\dots,T_1^m,\dots,T^m_{k_m})\in\Gamma$$ where $\{T_1^i,\dots,T^i_{k_i}\}$ are the connected components of $\Gamma$, $i\in\{1,\dots,m\}$.
We consider some  connected components, say  the first $l$ ones, $\Gamma^1,\dots,\Gamma^l$ and we look at the polynomials $\mathcal{Q}_{i_p,j_a}$ defined in Proposition \ref{univpol}, $i,j\in\{1,\dots,m\}$  $p=1,\dots,k_i$, $a=1,\dots,k_j$.
For each polynomial  $\mathcal{Q}_{i_p,j_a}$, we replace all the  norms of $K_{\C}(T^i_p,T^j_a)$ and of its derivatives $i\in\{1,\dots,l\}, j\in\{ l+1,\dots,m\}$, $p\in\{1,\dots,k_i\}$, $a\in\{1,\dots,k_j\}$, by  $0$.
Geometrically, we are moving the first $l$ connected components $\Gamma^1,\dots,\Gamma^l$ to the infinity, far from the other $m-l$ components. 
After this operation, the determinant of the matrix $(ev_{\underline{T}}^{\Gamma}ev_{\underline{T}}^{\Gamma *})$ breaks into the product of the determinants of  two blocks. These  blocks represent local  evaluation maps with a smaller number of connected components, respectively $l$ and $m-l$. By induction, the determinant of  each of these two  local  evaluation maps is bounded from below by a constant which depends only on $\Gamma$.\\ 
We can apply this argument for any integer $l=1,\dots,m-1$ and any subsets of $l$ connected components of $\{\Gamma^1,\dots,\Gamma^m\}$.
By  the  continuity of the determinant,  we can then find two positive numbers  $\delta,\epsilon>0$   such that the following property holds: every time we take a point $\underline{T}\in\Gamma$ such that  the norm of  $K_{\C}(T^i_p,T^j_a)$ is  smaller than $\delta$, for some $i\neq j\in\{1,\dots,m\}$, $p=1,\dots,k_i$, $a=1,\dots,k_j$, we have $|\Jac_N ev^{\Gamma}_{\underline{T}}|>\epsilon$.
Then, we can  suppose that the norms of $K_d(T^i_p,T^j_a)$  are bigger than $\delta$, for any $i\neq j, i,j=1,\dots,m$, $p=1,\dots,k_i$, $a=1,\dots,k_j$.
In this case, there exists $R>0$ such that $\mathbf{d}(T^1_1,T^i_p)<R$ for any $i=1,\dots,m$, $p=1,\dots,k_i$ and then, by Proposition \ref{boundmodel}, we have the result.
 \end{proof}

\subsubsection{Generalized evaluation maps in normal coordinates} We now study the surjectivity of the generalized evaluation maps $ev_{\underline{x}}^{\Gamma}:\R H^0(\Sigma;\mathcal{L}^d)\rightarrow \R^{k_1}\times\cdots\times\R^{k_m}$ defined in Definition \ref{eval}.  We will use the graph notation of Section \ref{secgraph}.\\ 
We fix a $k$-admissible graph $\Gamma$, see Definition \ref{adm}. 
 Remember that this implies that  for every $$\underline{x}=(x_1^1,\dots,x^1_{k_1},\dots,x_1^m,\dots,x^m_{k_m})$$ we have $\mathbf{d}_h(x^i_p,x^i_q)\leq \frac{k}{\sqrt{d}}$ for every $i\in\{1,\dots,m\}$ and  $1\leq p,q \leq k_i$ and  that ${\mathbf{d}_h(x^i_p,x^j_q)> \frac{1}{\sqrt{d}}}$ for $i\neq j$.\\
Around the origin $x^1_1$ we consider a real normal chart $U$, see Section \ref{normal}. With a slight abuse of notation, we identify a point $x_p^i$ with its normal coordinate around $x^1_1$. We recall that  with  a real normal chart comes together with a real trivialization of $\R\mathcal{L}^d_{\mid U}$, see Section \ref{normal}. 
Under these trivializations, we consider the generalized evaluation map
\[ev^{\Gamma}_{\underline{x}}:\R H^0(\Sigma;\mathcal{L}^d)\rightarrow \R^{k_1}\times\dots\times \R^{k_m}\]
as in Definition \ref{eval}. On the space of real global sections $\R H^0(\Sigma;\mathcal{L}^d)$ we consider the $L^2$-scalar product induced by the real Hermitian metric $h$, see Section \ref{generality1}.
\subsubsection{Scaled evaluation maps} Passing to the scaled normal coordinates ${T=\sqrt{d}x}$ around $x^1_1$, we have new local coordinates $(T^i_q)_{i=1,\dots m}^{q=1,\dots k_i}$, where $T^i_q\in B(T^i_1,1)$, the ball of radius $1$ around $T^i_1$, see Section \ref{bochnerco}. 
 We then have
\[\big|[s(\frac{T_1^i}{\sqrt{d}})\dots s(\frac{T_{p}^i}{\sqrt{d}})]\big|
=\sqrt{d}^{p(p+1)/2}|[s_d(T_1^i)\dots s_d(T_{p}^i)]|\]
where $s_d(\cdot)=s(\frac{\cdot}{\sqrt{d}})$.\\
We define the map $ev^{\Gamma}_{\underline{T},d}:\R H^0(\Sigma;L^d)\rightarrow \R^{k_1}\times\dots\times \R^{k_m}$
by
\[s\mapsto \big([s_d(T_1^1)],\dots,[s_d(T_1^1)\dots s_d(T_{k_1}^1)],\dots,[s_d(T^m_1)],\dots,[s_d(T^m_1)\dots s_d(T^m_{k_m})]\big).\]
 \begin{defn}
The map $ev^{\Gamma}_{\underline{T},d}:\R H^0(\Sigma;L^d)\rightarrow \R^{k_1}\times\dots\times \R^{k_m}$ just defined is called a \emph{scaled evaluation map}. We denote its normal Jacobian by 
\[D^{\Gamma}_d=\Jac_N ev^{\Gamma}_{\underline{T},d}.\]
\end{defn}
\begin{oss}\begin{itemize}
\item The main point is that scaled evalutation maps look like local evaluation maps (see Definition \ref{local}) when $d$ tends to infinity. This fact will be proved in Propositions \ref{5}, \ref{polsigma} and \ref{onc}.
\end{itemize}
\end{oss}
\begin{prop}\label{5} Let $\Gamma$ be a $k$-admissible graph and 
 $\underline{x}$ be a point in $\Gamma_d$. We denote by  $\underline{T}\in B(x_1^1,1)^{k_1-1}\times\dots\times B(x_1^m,1)^{k_m-1}$ the scaled normal coordinates of $\underline{x}$ around its origin $x_1^1$.
Then, we have
\[\prod_{i=1}^m\frac{1}{\sqrt{d}^{\frac{k_i(k_i-1)}{2}}}\mathcal{D}^{\Gamma}_d(\underline{x})=D^{\Gamma}_d(\underline{T})\]
where  $D^{\Gamma}_d=\Jac_N ev^{\Gamma}_{\underline{T},d}.$
\end{prop}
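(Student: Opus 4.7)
The plan is to observe that the two evaluation maps $ev^{\Gamma}_{\underline{x}}$ and $ev^{\Gamma}_{\underline{T},d}$, viewed as linear maps from $\R H^0(\Sigma;\mathcal{L}^d)$ (with its $L^{2}$-product) to $\R^{k_1}\times\cdots\times\R^{k_m}$ (with its standard Euclidean metric), differ only by a diagonal rescaling on the target, and to read off the determinant of that rescaling.

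The first step is to establish a scaling law for divided differences at the rescaled points $x_j^i=T_j^i/\sqrt{d}$: for every $s\in\R H^0(\Sigma;\mathcal{L}^d)$, every connected component $i$ and every $p\in\{1,\dots,k_i\}$,
\[
[s(x_1^i)\cdots s(x_p^i)]=\sqrt{d}^{\,p-1}\,[s_d(T_1^i)\cdots s_d(T_p^i)],\qquad s_d(T)\doteqdot s(T/\sqrt{d}).
\]
For pairwise distinct points this follows by induction on $p$ from the Newton recursion in Definition \ref{divided}, because rescaling the denominator $x_{p+1}^i-x_1^i=(T_{p+1}^i-T_1^i)/\sqrt{d}$ introduces exactly one extra factor of $\sqrt{d}$ at each step. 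For configurations with coinciding points one can bypass the limiting argument by invoking Lemma \ref{pratique}: since a coefficient $c_{\underline x,i,r}$ there is rational of degree $r-p+1$ in the spacings $x_s-x_t$, while $s^{(r)}(x_i)=\sqrt{d}^{\,r}s_d^{(r)}(T_i)$, the two exponents combine to the universal factor $\sqrt{d}^{\,p-1}$ independent of $(i,r)$.

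The second step is to assemble these identities into a single factorization. Comparing the two evaluation maps component by component yields
\[
ev^{\Gamma}_{\underline{x}}\;=\;\Lambda_d\circ ev^{\Gamma}_{\underline{T},d},
\]
where $\Lambda_d\colon \R^{k_1}\times\cdots\times\R^{k_m}\to\R^{k_1}\times\cdots\times\R^{k_m}$ is the diagonal linear isomorphism acting on the $p$-th coordinate of the $i$-th block by multiplication by $\sqrt{d}^{\,p-1}$. Since the domain is a Hilbert space and $\Lambda_d$ is an invertible linear map on the target, the normal Jacobian transforms multiplicatively: from
\[
d\,ev^{\Gamma}_{\underline{x}}\cdot d\,ev^{\Gamma\,*}_{\underline{x}}=\Lambda_d\cdot d\,ev^{\Gamma}_{\underline{T},d}\cdot d\,ev^{\Gamma\,*}_{\underline{T},d}\cdot\Lambda_d^{*},
\]
one gets $\mathcal{D}^{\Gamma}_d(\underline{x})=|\det\Lambda_d|\cdot D^{\Gamma}_d(\underline{T})$.

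The final step is to compute $|\det\Lambda_d|$. Being diagonal with entries $\sqrt{d}^{\,p-1}$ as $(i,p)$ ranges over all target coordinates,
\[
|\det\Lambda_d|=\prod_{i=1}^{m}\prod_{p=1}^{k_i}\sqrt{d}^{\,p-1}=\prod_{i=1}^{m}\sqrt{d}^{\,k_i(k_i-1)/2},
\]
which after dividing both sides of the previous identity gives the desired relation. The only delicate point of the argument is the scaling law of Step 1 at coinciding points; once Lemma \ref{pratique} is invoked, everything becomes purely algebraic and there is no further obstacle.
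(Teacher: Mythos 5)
Your proposal is correct and follows essentially the same route as the paper, whose proof is just the remark that the identity is a direct consequence of the change of variables $T=\sqrt{d}\,x$; you simply make explicit the per-coordinate scaling $[s(x_1^i)\cdots s(x_p^i)]=\sqrt{d}^{\,p-1}[s_d(T_1^i)\cdots s_d(T_p^i)]$, the factorization $ev^{\Gamma}_{\underline{x}}=\Lambda_d\circ ev^{\Gamma}_{\underline{T},d}$, and the resulting multiplicativity of the normal Jacobian. Note that your exponent $p-1$ is the one consistent with the stated conclusion (it aggregates to $\sqrt{d}^{\,k_i(k_i-1)/2}$ per connected component), whereas the exponent $p(p+1)/2$ displayed in the paper just before the proposition is a misprint.
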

\begin{proof}
It is a direct consequence of the change of variables $T=\sqrt{d}x$.
\end{proof}
\subsubsection{Reduction to the local model} The following result is an analogue  for the scaled evaluation maps of the Proposition \ref{univpol}.
\begin{prop}\label{polsigma}
Let $\Gamma\in\Theta_k$ be a $k$-admissible graph and $\underline{x}$ be a point in $\Gamma_d$. Let $(T^i_1,\dots,T^i_{k_1})$ be the scaled normal coordinates (around the origin $x^1_1$) of the connected component $(x^i_1,\dots,x^i_{k_i})$ of $\underline{x}$, $i\in\{1,\dots, m\}$. Consider the scaled evaluation map $$ev_{\underline{T},d}^{\Gamma}:\R H^0(\Sigma,\mathcal{L}^d)\rightarrow \R^{k_1}\times \dots \times \R^{k_m}.$$
Then the matrix of $ev^{\Gamma}_{\underline{T},d} ev^{\Gamma *}_{\underline{T},d}$ is a  symmetric $(k\times k)$ matrix composed by $m^2$ blocks, indexed by $(i,j)$ for $i,j=1,\dots,m$. The $(i,j)$-block is a $k_i\times k_j$ matrix, we denote the $(p,q)$-place of this block by $(ev^{\Gamma}_{\underline{T},d} ev^{\Gamma *}_{\underline{T},d})_{(i_p,j_q)}$. Then $(ev^{\Gamma}_{\underline{T},d} ev^{\Gamma *}_{\underline{T},d})_{(i_p,j_q)}$ is a homogenous polynomial $\mathcal{Q}_{(i_p,j_q)}$ of degree $1$ in the Bergman kernel 
$\mathcal{K}_{d}(\frac{T^i_s}{\sqrt{d}},\frac{T^j_a}{\sqrt{d}})$ and in its derivatives, $s=1,\dots,p$, $a=1,\dots,q$. The coefficients of this polynomial are rational functions in $T^i_s-T^i_t$, $T^j_a-T^j_b$,  for $1\leq t<s\leq p$, $1\leq a<b\leq q$. 
\end{prop}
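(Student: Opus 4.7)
The strategy is to mirror the proof of Proposition \ref{univpol}, replacing the local Bergman kernel $K_{\C}$ by the global Bergman kernel $\mathcal{K}_d$ and carefully tracking the rescaling $T\mapsto T/\sqrt{d}$. First, I would pick an orthonormal basis $\{s_1,\dots,s_k\}$ of $(\ker ev^{\Gamma}_{\underline{T},d})^{\perp}\subset \R H^0(\Sigma;\mathcal{L}^d)$ and compute the matrix of the restriction $ev^{\Gamma}_{\underline{T},d}|_{(\ker)^{\perp}}$ with respect to this basis and the canonical orthonormal basis of $\R^{k_1}\times\cdots\times\R^{k_m}$. Writing $s_{l,d}(\cdot)=s_l(\cdot/\sqrt{d})$, the $l$-th column of this matrix consists of the divided differences $[s_{l,d}(T^i_1)\cdots s_{l,d}(T^i_p)]$ for $1\leq p\leq k_i$, $1\leq i\leq m$. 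Multiplying this matrix by its transpose, the $(i_p,j_q)$-entry becomes
$$\sum_{l=1}^k [s_{l,d}(T^i_1)\cdots s_{l,d}(T^i_p)]\,[s_{l,d}(T^j_1)\cdots s_{l,d}(T^j_q)].$$

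Next, I apply Lemma \ref{pratique} to expand each divided difference as a linear combination $\sum_{s,r} c_{\underline{T}^i_p,s,r}\, s_{l,d}^{(r)}(T^i_s)$, where the coefficients are rational in the differences $T^i_t-T^i_u$ and independent of $l$. Expanding the product of the two divided differences and interchanging the summation over $l$ with the finite sums, one reduces to identifying
$$\sum_{l=1}^k s_{l,d}^{(r)}(T^i_s)\,s_{l,d}^{(h)}(T^j_a).$$
The key observation is that completing $\{s_1,\dots,s_k\}$ to a full orthonormal basis $\{s_1,\dots,s_{N_d}\}$ of $\R H^0(\Sigma;\mathcal{L}^d)$ does not change this inner sum: the added basis vectors lie in $\ker ev^{\Gamma}_{\underline{T},d}$, and, within each connected component, the vanishing of the divided differences $[s(x^i_1)],\dots,[s(x^i_1)\cdots s(x^i_{k_i})]$ is equivalent, via the invertible triangular change of basis dictated by Lemma \ref{pratique}, to the vanishing of the derivatives $s_{l,d}^{(r)}(T^i_s)$ for $1\leq s\leq k_i$ and $0\leq r<\#T^i_s$. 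Once the sum is extended to the full basis, the reproducing property of the Bergman kernel yields
$$\sum_{l=1}^{N_d} s_{l,d}^{(r)}(T^i_s)\,s_{l,d}^{(h)}(T^j_a)=\frac{\partial^{r+h}}{\partial T^r\partial W^h}\mathcal{K}_d\bigl(\tfrac{T^i_s}{\sqrt{d}},\tfrac{T^j_a}{\sqrt{d}}\bigr),$$
the chain-rule factors $d^{-(r+h)/2}$ coming from the rescaling being absorbed by interpreting the derivatives as those of $\mathcal{K}_d(\cdot/\sqrt{d},\cdot/\sqrt{d})$ in the scaled variables $T,W$. Plugging this back, the matrix entry becomes the claimed degree-one polynomial in $\mathcal{K}_d$ and its derivatives, with coefficients rational in the prescribed differences.

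The main obstacle I foresee is verifying the triangular change-of-basis claim: one must show that the vanishing of the $k_i$ divided differences associated to a connected component is equivalent to the vanishing of the $k_i$ point-derivative values $s_{l,d}^{(r)}(T^i_s)$ with $r<\#T^i_s$. When all $T^i_s$ coincide, divided differences are literal Taylor coefficients and this is immediate; when they are distinct, the divided differences depend only on the point values and the equivalence is again clear; the intermediate case requires iterating Lemma \ref{pratique} to write the transition matrix explicitly and checking its invertibility through its triangular structure. Once this is settled, everything else reduces to routine manipulation of finite sums together with the reproducing property of $\mathcal{K}_d$.
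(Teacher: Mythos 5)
Your proposal is correct and follows essentially the same route as the paper, whose proof of this proposition simply repeats the argument of Proposition \ref{univpol} with the functions $f$ replaced by the scaled sections $s_d$: orthonormal basis of $(\ker ev)^{\perp}$, expansion of the divided differences via Lemma \ref{pratique}, interchange of sums, and identification with derivatives of $\mathcal{K}_d$. Your explicit justification that the sum over the $k$ basis vectors of $(\ker ev)^{\perp}$ may be completed to a full orthonormal basis of $\R H^0(\Sigma;\mathcal{L}^d)$ (because the Hermite data appearing in the expansion vanish on $\ker ev$) is a detail the paper leaves implicit, but it is the same argument.
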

\begin{proof}
It is the same proof of Proposition \ref{univpol}, replacing functions $f$ by scaled sections $s_d$.
\end{proof}
\begin{oss}\label{same} 
The homogenous polynomials $\mathcal{Q}_{(i_p,j_q)}$ of Propositions \ref{polsigma} and \ref{univpol} are the same.
\end{oss}
\begin{prop}\label{onc}
Fix $R>1$ and let $\Gamma\in\Theta_k$ be a $k$-admissible graph. 
Consider the subset of $\Gamma_d$ formed by points $\underline{x}=(x^1_1,\dots,x^1_{k_1},\dots,x^m_{1},\dots,x^m_{k_m})$ with $\mathbf{d}_h(x_p^i,x_a^j)\leq\frac{\log d}{\sqrt{d}}$ for any $i,j\in\{1,...,m\}$, $p=1,\dots,k_i$, $a=1,\dots,k_j$.\\ Let $\underline{T}=(T^1_1,\dots,T^m_{k_m})\in B(0,\log d)^{k}$ be the scaled normal coordinates of $\underline{x}$ around $x_1^1$. Then, there exists $\alpha\in (0,1)$, such that for any $d\in\mathbb{N}$ large enough and any $\underline{T}=(T^1_1,\dots,T^m_{k_m})\in B(x^1_1,\log d)^{k}$
\[\frac{1}{\sqrt{d}^k}D_d^{\Gamma}(\underline{T})=\Jac_N ev^{\C,\Gamma}_{\underline{T}}+O(\frac{1}{d^{\alpha}})\]
where $ev^{\C,\Gamma}_{\underline{T}}$ is the local evaluation map defined in Definition \ref{local}.
Moreover the error term is uniform in $T^i_p\in B(0,\log d)$ and in $x^1_1\in\R \Sigma.$
\end{prop}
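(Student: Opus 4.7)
The plan is to exploit the common polynomial structure of the two Gram matrices $ev^{\Gamma}_{\underline{T},d}\,ev^{\Gamma *}_{\underline{T},d}$ and $ev^{\mathbb{C},\Gamma}_{\underline{T}}\,ev^{\mathbb{C},\Gamma *}_{\underline{T}}$ that is provided by Propositions \ref{polsigma} and \ref{univpol} together with Remark \ref{same}, and then to compare the two matrices entrywise using the near-diagonal Bergman kernel estimate of Theorem \ref{berglocal}.

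The first step is a rescaling calculation. Starting from $s_{l,d}(T)=s_l(T/\sqrt d)$ and the definition $K_d(T,W)=\tfrac{1}{d}\mathcal{K}_d(T/\sqrt d,W/\sqrt d)$ from Definition \ref{scalbegmkern}, one differentiates to obtain
\[
\sum_{l} s_{l,d}^{(r)}(T)\,s_{l,d}^{(h)}(W) \;=\; d\,\partial_T^{r}\partial_W^{h}K_d(T,W).
\]
Substituting this into the expression for the entries of $ev^{\Gamma}_{\underline{T},d}\,ev^{\Gamma *}_{\underline{T},d}$ established in the proof of Proposition \ref{polsigma} yields
\[
\frac{1}{d}\bigl(ev^{\Gamma}_{\underline{T},d}\,ev^{\Gamma *}_{\underline{T},d}\bigr)_{(i_p,j_q)} \;=\; \mathcal{Q}_{(i_p,j_q)}\bigl(K_d(T^i_s,T^j_a),\,\partial^{r+h}K_d(T^i_s,T^j_a)\bigr),
\]
with $\mathcal{Q}_{(i_p,j_q)}$ being exactly the universal polynomials from Remark \ref{same}, now evaluated on the scaled kernel $K_d$. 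The local entry $(ev^{\mathbb{C},\Gamma}_{\underline{T}}\,ev^{\mathbb{C},\Gamma *}_{\underline{T}})_{(i_p,j_q)}$ is given by the same expression with $K_{\mathbb{C}}$ in place of $K_d$.

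Next, Theorem \ref{berglocal} gives $\|K_d-K_{\mathbb{C}}\|_{C^m}=O(d^{-\alpha})$ on $B(0,R\log d)$ for any $\alpha\in(0,1)$, $m\in\mathbb{N}$, and $R>1$. By linearity of $\mathcal{Q}_{(i_p,j_q)}$ in its kernel argument, the entrywise discrepancy equals $\mathcal{Q}_{(i_p,j_q)}$ applied to $K_d-K_{\mathbb{C}}$ and its derivatives. The delicate point is that, by Lemma \ref{pratique}, the coefficients of $\mathcal{Q}_{(i_p,j_q)}$ are rational functions in the distances $T^i_s-T^i_t$ of non-positive degree, and hence blow up when two points within the same connected component coincide. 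This is the main obstacle. It is neutralised by reinterpreting $\mathcal{Q}_{(i_p,j_q)}$ as an iterated (in the two variables separately) divided-difference operator: by the Hermite--Genocchi integral formula one has $[\varphi(T_1)\cdots\varphi(T_p)]=\int_{\Delta^{p-1}}\varphi^{(p-1)}$ over a simplex with vertices $T_1,\dots,T_p$, so $|[\varphi(T_1)\cdots\varphi(T_p)]|\leq \|\varphi\|_{C^{p-1}}/(p-1)!$ uniformly in the $T_i$. Applying this with $\varphi=K_d-K_{\mathbb{C}}$ in each of the two variables gives a uniform bound
\[
\frac{1}{d}\bigl(ev^{\Gamma}_{\underline{T},d}\,ev^{\Gamma *}_{\underline{T},d}\bigr)_{(i_p,j_q)} - \bigl(ev^{\mathbb{C},\Gamma}_{\underline{T}}\,ev^{\mathbb{C},\Gamma *}_{\underline{T}}\bigr)_{(i_p,j_q)} \;=\; O(d^{-\alpha}),
\]
with uniformity in $\underline{T}\in B(0,\log d)^k$ and in $x_1^1\in\R\Sigma$ inherited from Theorem \ref{berglocal}.

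Finally, one passes to determinants of the two $k\times k$ matrices. The entries of $ev^{\mathbb{C},\Gamma}_{\underline{T}}\,ev^{\mathbb{C},\Gamma *}_{\underline{T}}$ are uniformly bounded since $K_{\mathbb{C}}$ and all of its derivatives are bounded Schwartz functions on $\R^2$, and its determinant equals $(\Jac_N ev^{\mathbb{C},\Gamma}_{\underline{T}})^{2}\geq \varepsilon_{\Gamma}^{2}>0$ by Proposition \ref{nuovo}. Multilinearity of the determinant then upgrades the entrywise estimate to
\[
\det\!\Bigl(\tfrac{1}{d}\,ev^{\Gamma}_{\underline{T},d}\,ev^{\Gamma *}_{\underline{T},d}\Bigr) \;=\; \det\!\bigl(ev^{\mathbb{C},\Gamma}_{\underline{T}}\,ev^{\mathbb{C},\Gamma *}_{\underline{T}}\bigr) + O(d^{-\alpha}),
\]
and taking square roots (where the lower bound on the local determinant makes the square-root Lipschitz) produces the claimed asymptotic $\frac{1}{\sqrt d^{k}}D_d^{\Gamma}(\underline T)=\Jac_N ev^{\mathbb{C},\Gamma}_{\underline T}+O(d^{-\alpha})$.
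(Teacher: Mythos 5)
Your proposal is correct and follows essentially the same route as the paper: expressing the entries of the Gram matrices through the universal polynomials of Propositions \ref{polsigma} and \ref{univpol} (Remark \ref{same}), comparing them entrywise via the scaled Bergman kernel estimate of Theorem \ref{berglocal}, and concluding by multilinearity of the determinant. The two points you treat explicitly --- controlling the blow-up of the divided-difference coefficients near collisions via the Hermite--Genocchi representation, and the passage from determinants to their square roots using the lower bound of Proposition \ref{nuovo} --- are left implicit in the paper's proof, and your handling of them is sound.
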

\begin{proof}
This is a consequence of Proposition \ref{polsigma} and of the universality of the scaled Bergman kernel $\frac{1}{d}\mathcal{K}_d(\frac{T}{\sqrt{d}},\frac{W}{\sqrt{d}})=K_d(T,W).$
Indeed \[\frac{1}{\sqrt{d}^k}D^{\Gamma}_d(\underline{x})=\frac{1}{\sqrt{d}^k}\Jac_N ev_{\underline{T},d}^{\Gamma}=\sqrt{\frac{1}{d^k}\det (ev_{\underline{T},d}^{\Gamma} ev_{\underline{T},d}^{\Gamma *})}.\]
By multilinearity of the determinant, we can multiply each entry of the matrix $(ev_{\underline{T},d}^{\Gamma} ev_{\underline{T},d}^{\Gamma *})$ by   $\frac{1}{d}$.
We know by Proposition \ref{polsigma} that each term of this matrix $(ev_{\underline{T},d}^{\Gamma} ev_{\underline{T},d}^{\Gamma *})$ is a rational function of degree $1$ in $\mathcal{K}_d(x_p^i,x_a^j)$. This implies   that each term in the matrix $\frac{1}{d}(ev_{\underline{T},d}^{\Gamma} ev_{\underline{T},d}^{\Gamma *})$ is a rational function of degree $1$ in $\frac{1}{d}\mathcal{K}_d(x_p^i,x_a^j)=\frac{1}{\pi}K_d(T_p^i,T_a^j)$. By Corollary \ref{berglocal}, this term equals $K_{\C}(T_p^i,T_a^j)+O(\frac{1}{d^{\alpha}})$.
By  Propositions \ref{polsigma} and \ref{univpol} and  Remark \ref{same}, we have the result.
\end{proof}
\begin{proof}[\textbf{Proof of Proposition \ref{denombound}}]
It follows from Propositions \ref{onc} and \ref{nuovo}.
\end{proof} 
 
 \subsection{Estimates of the numerator $\mathcal{N}_d^{\Gamma}$}\label{snume}
This subsection is devoted to the proof  of the boundedness result for the numerator $\mathcal{N}_d^{\Gamma}$, Proposition \ref{numbound}.  We start by proving the case where the graph $\Gamma$ is connected,  see Proposition \ref{onenum}. The general case of $m$ connected components $\Gamma^1,...,\Gamma^m$ follows by an inequality of the type $\mathcal{N}_d^{\Gamma}\leq \displaystyle\prod_{i=1}^m\mathcal{N}_d^{\Gamma^i}.$\\
We use the notations of Sections \ref{olver}-\ref{sdenom}, in particular we work in normal coordinates, see Section \ref{normal}, and we use divided differences, see Definition \ref{divided}.

\begin{prop}\label{onenum} There exists $C>0$ such that for any $\underline{x}=(x_1,\dots,x_k)\in \R\Sigma^k$ with $\mathbf{d}_h(x_i,x_j)\leq \frac{k}{\sqrt{d}}$, for any $d\in\mathbb{N}$,  we have
$$\sup_{s\in S_{\underline{x}}}\frac{1}{d^k\sqrt{d}^{k(k-1)/2}}|[s(x_1)s(x_1)]|\cdot|[s(x_1)s(x_2)s(x_2)]|\cdots |[s(x_1)\dots s(x_k)s(x_k)]|<C.$$
Here, $S_{\underline{x}}$ is the unit sphere of $\R H^0_{\underline{x}}$, the kernel of the  generalized evaluation map $ev_{\underline{x}}$ defined in Definition \ref{eval}.
\end{prop}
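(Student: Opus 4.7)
My plan is to combine two standard ingredients: a uniform pointwise bound on derivatives of unit-norm sections, obtained from the reproducing property of the Bergman kernel, and the Hermite--Genocchi integral representation of divided differences, which converts derivative bounds into bounds on divided differences that remain uniform even when several arguments coincide.

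First I would establish that for any $s\in \R H^0(\Sigma;\mathcal{L}^d)$ with $\|s\|_{L^2}=1$ (in particular for $s\in S_{\underline{x}}$) and any $r\in \mathbb{N}$,
\[
|s^{(r)}(x)|\;=\;O\bigl(\sqrt{d}^{\,r+1}\bigr)
\]
uniformly in $x\in\R\Sigma$. This follows from the reproducing formula $\partial_x^{\,r}s(x)=\langle s,\partial_x^{\,r}\mathcal{K}_d(x,\cdot)\rangle$, which combined with Cauchy--Schwarz and a second use of the reproducing property gives $|s^{(r)}(x)|^2\leq \partial_x^{\,r}\partial_y^{\,r}\mathcal{K}_d(x,y)\big|_{y=x}$, together with the Bergman kernel estimate of Theorem~\ref{fardiag}, which bounds $\|\mathcal{K}_d\|_{C^{2r}}$ by $O(d^{1+r})$.

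Next, since $x_1,\dots,x_k$ lie in a ball of radius $k/\sqrt{d}$ around $x_1$, they all sit inside a fixed normal chart in which I can apply the Hermite--Genocchi formula
\[
[f(y_0)\cdots f(y_q)]\;=\;\int_{\{t_i\geq 0,\,\sum t_i=1\}} f^{(q)}\bigl(t_0y_0+\cdots+t_qy_q\bigr)\,d\mathbf{t},
\]
valid for any $f\in C^q$. This yields the uniform bound $|[f(y_0)\cdots f(y_q)]|\leq \|f^{(q)}\|_{L^\infty}/q!$ on the convex hull of the $y_i$, and crucially does not degenerate when some of the $y_i$ coincide.

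Applying this with $f=s$, the factor $|[s(x_1)\cdots s(x_p)s(x_p)]|$ is a divided difference with $p+1$ arguments, hence is bounded by $\|s^{(p)}\|_{L^\infty}/p! = O(\sqrt{d}^{\,p+1})$ by the first step. Taking the product over $p=1,\dots,k$ gives a bound of order $\sqrt{d}^{\sum_{p=1}^k(p+1)}=\sqrt{d}^{\,k(k+3)/2}=d^k\,\sqrt{d}^{\,k(k-1)/2}$, which exactly compensates the normalization in the statement and produces the desired constant $C$. The delicate point throughout is the uniformity of the divided-difference bound when several of the $x_i$ collapse; the Hermite--Genocchi representation is precisely what provides this, bypassing the apparent singularities of the rational coefficients in Lemma~\ref{pratique}.
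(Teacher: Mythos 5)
Your argument is correct, and it takes a genuinely different route from the paper. The paper rescales to the coordinates $T=\sqrt{d}\,x$, restricts the supremum to $S_{\underline{x}}\cap(\ker j^1_{\underline{x}})^{\perp}$, fixes an orthonormal basis of that orthogonal complement, and shows (its ``Claim 1'', in the spirit of Proposition~\ref{univpol}) that each rescaled factor is the square root of a universal rational expression in the scaled Bergman kernel $K_d$ and its derivatives, with coefficients rational in the mutual distances; uniform boundedness then follows from the $C^{2k}$ convergence of $K_d$ to the Bargmann--Fock kernel together with compactness of $S^{k-1}$, $\R\Sigma$ and $\overline{B(x_1,k)}^{k-1}$. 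You instead bound the supremum over the whole unit sphere of $\R H^0(\Sigma;\mathcal{L}^d)$ (which suffices, since $S_{\underline{x}}$ is contained in it) by combining the on-diagonal estimate $\norm{\mathcal{K}_d}_{C^{2r}}=O(d^{1+r})$ from Theorem~\ref{fardiag} with Cauchy--Schwarz to get $|s^{(r)}|=O(\sqrt{d}^{\,r+1})$ for unit sections, and then the Hermite--Genocchi representation to bound each divided difference by $\sup|s^{(p)}|/p!$; the exponent count $\sum_{p=1}^k(p+1)=k(k+3)/2$ indeed matches the normalization $d^k\sqrt{d}^{\,k(k-1)/2}$. Your route is more elementary and makes the uniformity in $\underline{x}$, $x_1$ and $d$ transparent, sidestepping both the orthonormal-basis decomposition and the apparent singularities of the coefficients in Lemma~\ref{pratique}; the paper's route, on the other hand, sets up the universal local-model machinery that it reuses for the denominator estimates and for identifying the constant $M$. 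Two points you should make explicit to close the argument: (i) the divided differences are taken for the function representing $s$ in the normal chart with the parallel-transport trivialization, while the kernel estimate naturally controls covariant derivatives in the $h$-induced norms, so you must convert one into the other --- this is harmless because on the ball of radius $k/\sqrt{d}$ the connection one-form of $\mathcal{L}^d$ in this gauge is $O(\sqrt{d})$ and the frame has norm comparable to $1$, so the bound $O(\sqrt{d}^{\,r+1})$ survives; and (ii) Hermite--Genocchi applies because the recursive divided differences of Definition~\ref{divided} coincide with the classical confluent (Newton) divided differences, and the convex hull of the $x_i$ stays inside the chart; finitely many small $d$ are absorbed into $C$ by compactness.
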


\begin{proof}
We remember that when we use divided differences it means that we 
 consider real normal coordinates around the origin, in this case $x_1$, see Section \ref{normal}. We  identify every point $x_i$ with its coordinate, in particular we still write $x_1$ instead of $0$.\\
We pass to the scaled normal coordinates that is we consider $x_i=\frac{T_i}{\sqrt{d}}$, for $T_i\in B(x_1,k)$. We then have  
\[|[s(x_1)s(x_1)]|\cdot|[s(x_1)s(x_2)s(x_2)]|\cdots |[s(x_1)\dots s(x_k)s(x_k)]|\]
\[=\sqrt{d}^k\sqrt{d}^{k(k-1)/2}|[s_d(T_1)s_d(T_1)]|\cdot|[s_d(T_1)s_d(T_2)s_d(T_2)]|\cdots |[s_d(T_1)\dots s_d(T_k)s_d(T_k)]|\]
for any section $s$, where $s_d(\cdot)=s(\frac{\cdot}{\sqrt{d}})$. 
We still write $T_1$ instead of $0$  to  emphasize the fact that this rescaled local chart has $T_1$ as center. 
Consider the generalized $1$-jet map  $j^1_{\underline{x}}:\R H^0_{\underline{x}}\rightarrow \R^k$ 
defined by \[s \mapsto \big([s(x_1)s(x_1)],[s(x_1)s(x_2)s(x_2)],\dots,[s(x_1)\dots s(x_k)s(x_k)]\big).\]
Remark that  \[\sup_{s\in S_{\underline{x}}}\frac{1}{d^k\sqrt{d}^{k(k-1)/2}}|[s(x_1)s(x_1)]|\cdot|[s(x_1)s(x_2)s(x_2)]|\cdots |[s(x_1)\dots s(x_k)s(x_k)]|\] \[=\sup_{s\in S_{\underline{x}}\cap (\ker j^1_{\underline{x}})^{\perp}}\frac{1}{d^k\sqrt{d}^{k(k-1)/2}}|[s(x_1)s(x_1)]|\cdot|[s(x_1)s(x_2)s(x_2)]|\cdots |[s(x_1)\dots s(x_k)s(x_k)]|\]
\[=\sup_{s\in S_{\underline{x}}\cap (\ker j^1_{\underline{x}})^{\perp}}\frac{1}{\sqrt{d}^{k}}|[s_d(T_1)s_d(T_1)]|\cdot|[s_d(T_1)s_d(T_2)s_d(T_2)]|\cdots |[s_d(T_1)\dots s_d(T_k)s_d(T_k)]|.\]
 Fix an orthonormal basis
$\{\sigma^1,\dots,\sigma^k\}$ of the orthogonal of $(\ker j^1_{\underline{x}})$.\\ For any $\underline{a}=(a_1,\dots,a_k)\in S^{k-1}\subset\R^k$, write $s^{\underline{a}}=\sum_ia_i\sigma^i.$\\
\textbf{Claim 1}: \emph{for any $i,j\in\{1,\dots,k\}$, the quantity $$\frac{1}{\sqrt{d}}|[\sigma^j_d(T_1)\dots \sigma_d^j(T_i)\sigma_d^j(T_i)]|$$ is the square root of a rational function $\mathcal{Q}_j$ of degree $1$ in the norm of $K_d(T_p,T_q)$ and of its derivatives, for $p,q\in\{1,\dots,k\}$. The coefficients of $\mathcal{Q}_j$ are rational functions in the distances $T_a-T_s$ between the points, for $1\leq a\neq s \leq k$.}\\ 
Remember that $\sigma_d^j(\cdot)=\sigma^j(\frac{\cdot}{\sqrt{d}})$ and that we denoted $\frac{1}{d}\mathcal{K}_d(\frac{T_i}{\sqrt{d}},\frac{T_j}{\sqrt{d}})=K_d(T_i,T_j)$, see Section \ref{bochnerco}.
We will conclude the proof of the proposition before proving the Claim 1.
By Corollary \ref{berglocal}, $K_d(T_p,T_q)$  converges to a local universal limit. The convergence is in $C^{2k}$-topology  and the error term is an uniform $O(\frac{1}{d})$ that neither depend on the center $x_1\in \R \Sigma$ nor on $(T_2,\dots,T_k)\in B(x_1,k)^{k-1}$. In particular, by Claim 1, the function  $(T_1,\dots,T_i)\mapsto\frac{1}{\sqrt{d}}[\sigma^j_d(T_1)\dots \sigma_d^j(T_i)\sigma_d^j(T_i)]$ does not depend on $d$, up to an uniform error term $O(\frac{1}{d})$.\\
For any integer $d\in\mathbb{N}$, all points $x_1\in\R\Sigma$ and all $(T_2,\dots,T_k)\in B(x_1,k)^{k-1}$ the function $S^{k-1}\rightarrow \R$ defined by $$\underline{a}\mapsto \frac{1}{\sqrt{d}^k}|[s^{\underline{a}}_d(T_1)s^{\underline{a}}_d(T_1)]|\cdot|[s^{\underline{a}}_d(T_1)s^{\underline{a}}_d(T_2)s^{\underline{a}}_d(T_2)]|\cdots |[s^{\underline{a}}_d(T_1)\dots s^{\underline{a}}_d(T_k)s^{\underline{a}}_d(T_k)]|$$ is  continuous. Thanks to  the Claim 1, this function  does not depend on $d\in\mathbb{N}$, up to an error $O(\frac{1}{d})$ that is uniform in $x_1\in\R\Sigma$ and in $(T_2,\dots,T_k)\in B(x_1,k)^{k-1}$. 
By compactness of $\R \Sigma^k$ and of $\overline{B(x_1,k)}^{k-1}$ this function is bounded, then we have the result.\\
We will now prove the Claim 1.
The proof follows the lines of Proposition \ref{univpol}. We compute the matrix of $(j_{\underline{x}}^1)(j_{\underline{x}}^1)^*$ with respect to the basis and in scaled normal coordinates. We have 

$$j_{\underline{x}}^1=
\begin{bmatrix}
[\sigma^1_d(T_1)\sigma^1_d(T_1)] & \dots & [\sigma^k_d(T_1)\sigma^k_d(T_1)]\\
\vdots & \ddots & \vdots \\
[\sigma^1_d(T_1)\dots \sigma^1_d(T_k)\sigma^1_d(T_k)] & \dots & [\sigma^k_d(T_1)\dots \sigma^k_d(T_k)\sigma^k_d(T_k)]
\end{bmatrix}
$$
so that \[(j_{\underline{x}}^1)(j_{\underline{x}}^1)^*_{i,j}=\sum_{l=1}^k[\sigma^l_d(T_1)\dots \sigma^l_d(T_i)\sigma_l(T_i)][\sigma^l_d(T_1)\dots \sigma^l_d(T_j)\sigma^l_d(T_j)].\]
By Proposition \ref{pratique} we have that  $$[\sigma_d^1(T_1)\dots \sigma^l_d(T_j)\sigma^l_d(T_j)]=\sum_{h=1}^j\sum_{r=0}^{\#T_h-1}c_{\underline{T}_j,h,r}(\sigma^l_d)^{(r)}(T_h)$$
where $c_{\underline{x},i,r}$ is a rational function in the distances $T_s-T_t$ ,$1\leq s < t \leq k$. Here, $(\sigma^l_d)^{(r)}$ is the $r$-th  derivative with respect to the rescaled variable $T=\sqrt{d}x$.  
With the same type of computation of the Proposition \ref{univpol}, we obtain that 
\[\frac{1}{d}\sum_{l=1}^k[\sigma^l_d(T_1)\dots \sigma_d^l(T_i)\sigma^l_d(T_i)][\sigma^l_d(T_1)\dots \sigma_l(T_j)\sigma^l_d(T_j)]\]
 is a homogenous polynomial of degree $1$ in the norm of
$K_{d}(T_p,T_q)$ and of its derivatives,  $p,q\in\{1,\dots,k\}$. The coefficients of this polynomial are rational functions in $T_s-T_a$ and $e^{T_a}$ for $1\leq a\neq s \leq k$.
This implies that  $\frac{1}{\sqrt{d}}[\sigma^l_d(T_1)\dots \sigma^l_d(T_i)\sigma^l_d(T_i)]$ is  the square root of a rational function of degree $1$ in $K_d(T_l,T_m)$ and its derivatives, with coefficients that are rational functions in $T_s-T_a$  for $1\leq a\neq s \leq k$, as we have claimed.
\end{proof}
\begin{proof}[\textbf{Proof of Proposition \ref{numbound}}]
Denote by $S_{\underline{x}}$ the unit sphere in $\R H^0_{\underline{x}}$.  
Passing to polar coordinates, we have 
\[\mathcal{N}^{\Gamma}_d=C_r\int_{S_{\underline{x}}}\prod_{i=1}^{m}|[s(x^i_1)s(x^i_1)]|\cdot|[s(x^i_1)s(x^i_2)s(x^i_2)]|\cdots|[s(x^i_1)\dots s(x^i_{k_i})s(x^i_{k_i})]|ds\]
for some constant $C_r$. The measure $ds$ is induced by the $L^2$-scalar product restricted to $S_{\underline{x}}$. 
The sphere $S_{\underline{x}}$ being compact, it suffices to bound from above the integrand function.
Now, \[\sup_{s\in S_{\underline{x}}}\prod_{i=1}^{m}|[s(x^i_1)s(x^i_1)]|\cdot|[s(x^i_1)s(x^i_2)s(x^i_2)]|\cdots|[s(x^i_1)\dots s(x^i_{k_i})s(x^i_{k_i})]|\] \[\leq \prod_{i=1}^{m}\sup_{s\in S_{\underline{x}^i}}|[s(x^i_1)s(x^i_1)]|\cdot|[s(x^i_1)s(x^i_2)s(x^i_2)]\cdots|[s(x^i_1)\dots s(x^i_{k_i})s(x^i_{k_i})]| \]
where $\underline{x}^i=(x^i_1,\dots,x^i_{k_i})\in \R^{k_i}$.
We have then reduced the problem to the case of one connected component, that means the case where $\underline{x}=(x_1,\dots,x_k)$ with $\mathbf{d}_h(x_i,x_j)\leq\frac{k}{\sqrt{d}}$ and this is exactly what we have done in Proposition \ref{onenum}.
\end{proof}

\subsection{Off-diagonal estimates of the density function and proof of Proposition \ref{simple}}\label{soffest}

This subsection is devoted to the proof of Proposition \ref{simple}. We work in the same setting of  Sections \ref{ransec}, \ref{pfoo} and \ref{olvtec}.
Remember that $\mathcal{R}_d^k$ is the density function defined in Proposition \ref{denf}.
\subsubsection{About the mean and the variance} We recall two results which are the cases $k=1$ and $k=2$ of Theorem \ref{equimom}. 
 \begin{thm}(\cite[Theorem 1.1]{gw2} or \cite[Theorem 1.3]{let})\label{esper}
 For any $x\in\R\Sigma$, we have
 $$\frac{1}{\sqrt{d}}\mathcal{R}_d^1(x)=\frac{1}{\sqrt{\pi}}+O\big(\frac{1}{d}\big)$$
 in the sense of currents. Moreover the error term $O\big(\frac{1}{d}\big)$ is uniform in $x\in\R\Sigma$.
 \end{thm}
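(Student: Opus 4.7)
The plan is to apply the Kac--Rice formula of Proposition \ref{df} in the one-point case and to reduce the computation to a model on the Bargmann--Fock space via the Bergman kernel asymptotics of Theorem \ref{berglocal}. By Proposition \ref{df}, $\mathcal{R}_d^1(x)=\mathcal{N}_d^1(x)/\mathcal{D}_d^1(x)$, with denominator $\mathcal{D}_d^1(x)=\sqrt{\norm{\mathcal{K}_d(x,x)}}$ (Proposition \ref{jet}) and numerator $\mathcal{N}_d^1(x)=\int_{\R H^0_x}\norm{\nabla s(x)}\,d\mu$. One then analyses the two factors separately.

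For the denominator, Theorem \ref{neardiagreal} gives $\mathcal{K}_d(x,x)=d/\pi+O(d^{1-\alpha})$ uniformly in $x$, so $\mathcal{D}_d^1(x)/\sqrt{d}=1/\sqrt{\pi}+O(d^{-\alpha})$. For the numerator I would work in scaled normal coordinates $T=\sqrt{d}\,z$ around $x$ and split $\R H^0_x=\ker j^1_x\oplus(\ker j^1_x)^{\perp}$, where $j^1_x(s)=\nabla s(x)$. Since $\R\Sigma$ is one-dimensional the orthogonal summand is a line; writing $s_x$ for a unit generator, the Gaussian on $\ker j^1_x$ integrates out to give
\[
\mathcal{N}_d^1(x)=\norm{\nabla s_x(x)}\int_{\R}|a|\frac{e^{-a^2}}{\sqrt{\pi}}\,da=\frac{\norm{\nabla s_x(x)}}{\sqrt{\pi}}.
\]
The pointwise quantity $\norm{\nabla s_x(x)}^{2}$ is a $2$-jet invariant which, by the same bookkeeping used in the proof of Proposition \ref{broke2}, is expressible in terms of $\mathcal{K}_d$ and its second mixed derivative on the diagonal. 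After rescaling to $T,W$-coordinates, Theorem \ref{berglocal} identifies it, up to $O(d^{-\alpha})$, with the corresponding quantity for the Bargmann--Fock kernel $K_{\C}(T,W)=\frac{1}{\pi}e^{-|T-W|^{2}/2}$, yielding $\norm{\nabla s_x(x)}^{2}/d=1/\pi+O(d^{-\alpha})$ uniformly in $x$.

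Combining the two estimates gives $\mathcal{R}_d^1(x)/\sqrt{d}=1/\sqrt{\pi}+O(d^{-\alpha})$ pointwise and uniformly in $x\in\R\Sigma$. The main, and only, obstacle is to sharpen the error from $O(d^{-\alpha})$ to $O(1/d)$ in the sense of currents: this requires the full Bergman kernel expansion of Dai--Liu--Ma/Ma--Marinescu together with the observation that, once paired with a smooth test form, the leading subleading correction integrates to zero thanks to a parity argument in the scaled variable. This is precisely the strategy carried out in \cite[Theorem 1.1]{gw2} and \cite[Theorem 1.3]{let}, so in practice I would invoke their result rather than reproducing the technical cancellation here.
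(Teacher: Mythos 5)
The paper does not actually prove this statement: Theorem \ref{esper} is imported as is from \cite[Theorem 1.1]{gw2} and \cite[Theorem 1.3]{let}, so your final move of simply invoking those references is exactly what the paper does, and your preliminary Kac--Rice/Bergman-kernel reduction is consistent with the machinery the paper uses elsewhere (Propositions \ref{df} and \ref{jet}, and the computation in the proof of Proposition \ref{broke2}). Two things in your sketch should nevertheless be corrected. First, a normalization slip: with the paper's conventions the unit generator $s_x$ of $(\ker j^1_x)^{\perp}$ satisfies $\norm{\nabla s_x(x)}=\frac{d}{\sqrt{\pi}}\big(1+O(d^{-\alpha})\big)$, i.e. $\norm{\nabla s_x(x)}^{2}/d^{2}=1/\pi+O(d^{-\alpha})$, not $\norm{\nabla s_x(x)}^{2}/d=1/\pi+O(d^{-\alpha})$ as you wrote (compare the factors $\frac{1}{d}\nabla s_x$ appearing in the proof of Proposition \ref{broke2}); with your normalization the numerator $\mathcal{N}^1_d(x)=\norm{\nabla s_x(x)}/\sqrt{\pi}$ would be of order $\sqrt{d}$ and the density $\mathcal{R}^1_d=\mathcal{N}^1_d/\mathcal{D}^1_d$ of order $1$, contradicting the conclusion $\mathcal{R}^1_d(x)\sim\sqrt{d}/\sqrt{\pi}$ that you then assert. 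Second, your argument as sketched only yields the error $O(d^{-\alpha})$ coming from Theorem \ref{berglocal}; the sharpening to $O(1/d)$ is not a routine cancellation remark but is precisely the content of the cited theorems, so the honest formulation is that the statement with the $O(1/d)$ error is taken from \cite{gw2} and \cite{let} --- which is legitimate here, since that is also the paper's stance.
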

 The following is the one dimensional case of the main result of \cite[Theorem 1.6]{Puchol}.
 \begin{thm}\label{var} Under the hypothesis of Theorem \ref{equimom},
  there exists a universal  positive constant $M>0$ 
 $$\frac{1}{d}\mathbb{E}[\nu_s^2](f)=\frac{1}{\pi}\int_{(x,y)\in\R \Sigma^2}f(x,y)|\dV_h|^2+\frac{M}{\sqrt{d}}\int_{x\in \R\Sigma}f(x,x)|\dV_h|+o(\frac{1}{\sqrt{d}}).).$$
 Moreover the error term $o(\frac{1}{\sqrt{d}})$ is bounded from above by
 \[\norm{f}_{\infty}\big(O\big(\frac{1}{\sqrt{d}^{1+\alpha}}\big)+\omega_f\big(\frac{1}{\sqrt{d}^{\alpha}}\big)O(\frac{1}{\sqrt{d}})\big)\]
for any $\alpha\in(0,1)$, where  $O\big(\frac{1}{\sqrt{d}^{1+\alpha}}\big)$ and $O(\frac{1}{\sqrt{d}})$ do not depend on $f\in C^0(\R \Sigma^k)$. Here, $\omega_f(\cdot)$ is the modulus of continuity of $f$.
 \end{thm}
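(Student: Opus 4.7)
The strategy is to specialize the general machinery of the paper to $k=2$. Since $\widetilde{\mathcal{P}}_2$ of Section \ref{modmoment} consists only of $\{\{1,2\}\}$, Proposition \ref{difference} gives $\nu_s^2(f) = \tilde{\nu}_s^2(f) + \nu_s(j_{12}^*f)$, so
$$
\tfrac{1}{d}\mathbb{E}[\nu_s^2](f) = \tfrac{1}{d}\mathbb{E}[\tilde{\nu}_s^2](f) + \tfrac{1}{d}\mathbb{E}[\nu_s](j_{12}^*f).
$$
The second summand is handled by Theorem \ref{esper} applied to the continuous function $j_{12}^*f$: it equals $\tfrac{1}{\sqrt{\pi}\sqrt{d}}\int_{\R\Sigma}f(x,x)|\dV_h| + O(1/d^{3/2})$, contributing $\tfrac{1}{\sqrt{\pi}}$ to the final constant $M$. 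Everything then reduces to analyzing $\mathbb{E}[\tilde{\nu}_s^2](f)$.

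By Proposition \ref{denf}, $\mathbb{E}[\tilde{\nu}_s^2](f) = \int_{\R\Sigma^2\setminus\Delta} f\,\mathcal{R}_d^2\,|\dV_h|^2$. I would split the integration domain against the near-diagonal neighborhood $\Delta_d = \{(x,y): \mathbf{d}_h(x,y) < \log d/(C'\sqrt{d})\}$, whose volume is $O(\log d/\sqrt{d})$. Outside $\Delta_d$, Proposition \ref{broke2} with $l=s=1$ combined with Theorem \ref{esper} yields
$$
\tfrac{1}{d}\mathcal{R}_d^2(x,y) = \tfrac{1}{\sqrt{d}}\mathcal{R}_d^1(x)\cdot\tfrac{1}{\sqrt{d}}\mathcal{R}_d^1(y) + O(1/d) = \tfrac{1}{\pi} + O(1/d)
$$
uniformly, so the exterior integral equals $\tfrac{1}{\pi}\int_{\R\Sigma^2\setminus\Delta_d}f\,|\dV_h|^2 + \norm{f}_\infty O(1/d)$. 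Inside $\Delta_d$, Theorem \ref{boundabove} alone yields only $\norm{f}_\infty O(\log d/\sqrt{d})$, which is consistent with but weaker than the desired $1/\sqrt{d}$ expansion.

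To extract the $1/\sqrt{d}$ correction, I would pass to scaled normal coordinates on the interior piece: write $y=\exp_x(z/\sqrt{d})$ with $|z|\leq \log d$, so $|\dV_h|(y) = \sqrt{d}^{-1}(1+O(1/d))\,dz$. Using the divided-differences expression for $\mathcal{R}_d^2$ from Proposition \ref{frac} applied to the connected two-vertex graph, combined with Theorem \ref{berglocal}, one shows that $\tfrac{1}{d}\mathcal{R}_d^2(x,\exp_x(z/\sqrt{d}))$ converges in $C^0$ (uniformly in $x$) to a universal density $\rho_{\C}(z)$ depending only on $|z|$ and built from the Bargmann-Fock kernel $K_{\C}$. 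Moreover $\rho_{\C}(z) \to 1/\pi$ as $|z|\to\infty$ with exponential speed, so $M' \doteqdot \int_{\R}(\rho_{\C}(z)-1/\pi)\,dz$ is a well-defined finite universal constant. Replacing $f(x,y)$ by $f(x,x)$ inside the scaled ball (at a cost controlled by $\omega_f(\log d/\sqrt{d})$) and integrating in $z$, one obtains
$$
\tfrac{1}{d}\int_{\Delta_d}f\,\mathcal{R}_d^2\,|\dV_h|^2 = \tfrac{1}{\pi}\int_{\Delta_d}f\,|\dV_h|^2 + \tfrac{M'}{\sqrt{d}}\int_{\R\Sigma}f(x,x)|\dV_h|(x) + o(1/\sqrt{d}).
$$
Reassembling with the exterior integral, the two $\tfrac{1}{\pi}\int f|\dV_h|^2$ pieces combine into the leading $\tfrac{1}{\pi}\int_{\R\Sigma^2}f\,|\dV_h|^2$, and together with the diagonal contribution computed above this gives the claim with $M = M' + 1/\sqrt{\pi}$.

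The main obstacle is the controlled passage to the universal limit $\rho_{\C}$, uniformly on scaled balls $|z|\leq\log d$ including the scaled diagonal $z=0$, where $\mathcal{R}_d^2$ vanishes but the scaled density does not. Here the smooth extension of $\mathcal{R}_d^2$ via Olver multispaces (Theorem \ref{extdens}) and the uniform $L^\infty$-bound (Theorem \ref{boundabove}) are essential: they justify dominated convergence on the scaled ball and yield the error in the stated form $\norm{f}_\infty\bigl(O(1/\sqrt{d}^{1+\alpha}) + \omega_f(1/\sqrt{d}^\alpha)O(1/\sqrt{d})\bigr)$ independently of $f$. Positivity of $M$ ultimately follows from a direct computation on the Bargmann-Fock model.
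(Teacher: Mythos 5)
Your bookkeeping is correct and your route is reasonable in spirit, but it diverges from the paper precisely at the point where the real difficulty sits, and that step is asserted rather than proved. For the record: the paper does not prove the near-diagonal expansion at all. Its proof of this theorem is a short deduction: write $\mathrm{Var}(\nu_s)=\mathbb{E}[\nu_s^2]-\mathbb{E}[\nu_s]^2$, compute $\mathbb{E}[\nu_s]^2$ with Theorem \ref{esper}, and quote \cite[Theorem 1.6]{Puchol} (case $k=n=1$) for the $\frac{M}{\sqrt{d}}$ term and \cite[Theorem 1.8]{Puchol} for the positivity and universality of $M$ (with a remark that Puchol's hypothesis $f=\phi_1\phi_2$ is not essential). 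You instead attempt to reprove that external input with the tools of this paper.

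The gap is your sentence that "one shows" that $\frac{1}{d}\mathcal{R}_d^2(x,\exp_x(z/\sqrt{d}))$ converges in $C^0$, uniformly in $x$ and $|z|\leq\log d$, to a universal density $\rho_{\C}(z)$ with exponential decay of $\rho_{\C}-1/\pi$. That statement is the hard content of the theorem, and nothing in the paper supplies it: Theorems \ref{extdens} and \ref{boundabove} give only a smooth extension and a uniform \emph{upper bound} for $\mathcal{R}_d^2$; Propositions \ref{polsigma} and \ref{onc} give the Bargmann-Fock limit of the denominator $\mathcal{D}_d^{\Gamma}$ only; Proposition \ref{onenum} is an upper bound, not an asymptotic, for the numerator $\mathcal{N}_d^{\Gamma}$. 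So "dominated convergence justified by Theorems \ref{extdens} and \ref{boundabove}" produces neither the limit $\rho_{\C}$ nor, more importantly, a rate; and without a uniform polynomial rate $O(d^{-\alpha})$ on the growing ball $|z|\leq\log d$, integrating in $z$ over a region of length of order $\log d$ yields only an error $o(\log d/\sqrt{d})$, not the stated $\norm{f}_{\infty}\big(O(1/\sqrt{d}^{1+\alpha})+\omega_f(1/\sqrt{d}^{\alpha})O(1/\sqrt{d})\big)$. Two further ingredients of the statement are also left unproved: the integrability of $\rho_{\C}-1/\pi$ on $\R$ (needed for $M'$ to be finite) and the positivity of $M=M'+1/\sqrt{\pi}$, which you defer to "a direct computation on the Bargmann-Fock model"; since $\rho_{\C}(0)=0<1/\pi$, the sign of $M'$ is genuinely in question and positivity is a nontrivial theorem (the paper imports it from \cite[Theorem 1.8]{Puchol}). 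Closing your gap amounts to redoing the scaled Kac-Rice/conditional Gaussian analysis of the numerator near the diagonal, essentially \cite[Proposition 5.29]{Puchol} (which the paper's remark after this theorem explicitly leaves to Puchol, merely noting Olver coordinates could simplify it) --- or simply quoting Puchol, as the paper does.
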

 The statement in \cite{Puchol} is a bit different, we explain why.
 \begin{proof}  By definition $Var(\nu_s)=\mathbb{E}[\nu_s^2]-\mathbb{E}[\nu_s]^2.$
 By Theorems \ref{esper} we have $$\frac{1}{d}Var(\nu_s)(f)=\frac{1}{d}\mathbb{E}[\nu_s^2](f)-\frac{1}{\pi}\int_{(x,y)\in\R \Sigma^2}f(x,y)|\dV_h|^2+O\big(\frac{1}{d}\big).$$
 Now, \cite[Theorem 1.6]{Puchol}, for $k=n=1$ says that $$\frac{1}{d}Var(\nu_s)(f)=\frac{M}{\sqrt{d}}\int_{x\in \R\Sigma}f(x,x)|\dV_h|+\norm{f}_{\infty}\big(O\big(\frac{1}{\sqrt{d}^{1+\alpha}}\big)+\omega_f\big(\frac{1}{\sqrt{d}^{\alpha}}\big)O(\frac{1}{\sqrt{d}})\big)$$ for a constant $M$ and for any $\alpha\in(0,1).$ 
In \cite[Theorem 1.6]{Puchol} the test function $f$ is of the form $\phi_1\phi_2$, where $\phi_i$ is a continuous function on $\R\Sigma$.    Their proof actually  works for any continuous function $f$ over $\R\Sigma^2$.
 The constant $M$ is positive (\cite[Theorem 1.8]{Puchol}) and universal, that is it does not depend on $\Sigma.$ 

 \end{proof}
\begin{oss} \begin{itemize} \item Using Olver multispace and divided differences coordinates as in Sections \ref{secgraph}-\ref{snume}, we can make the proof of \cite[Proposition 5.29]{Puchol} easier, at least for the case $k=n=1$. This proposition is fundamental for the proof of  \cite[Theorem 1.6]{Puchol}.\\
\item The universal constant in Theorem \ref{var} is not exactly the   constant appearing in \cite[Theorem 1.6]{Puchol}. This is  due to a different renormalization of the curvature form $\omega$ as well as the choice of a different Gaussian measure on $\R H^0(\Sigma;\mathcal{L}^d)$. However, they differ by a positive multiple.
\end{itemize}
 \end{oss}
As we work with the modified empirical measures $\tilde{\nu}_s^k$ (see Def. \ref{modmom}), we have to state an equivalent version of Theorem \ref{var} for the modified second moment $\mathbb{E}[\tilde{\nu}_s^2]$.
 \begin{lem}\label{var2} We follow the hypothesis of Theorem \ref{asymom} and the notations of Def. \ref{open1}. For every $f\in C^0(\R\Sigma^k)$, the following asymptotics hold:
$$\frac{1}{d}\int_{\R\Sigma^2}f\mathcal{R}^2_d|\dV_h|^2=\frac{1}{\pi}\int_{(x,y)\in\R \Sigma^2}f(x,y)|\dV_h|^2+\frac{1}{\sqrt{d}}M'\int_{x\in \R\Sigma}f(x,x)|\dV_h|+o(\frac{1}{\sqrt{d}})$$
where $M'=M-\frac{1}{\sqrt{\pi}}$ for $M$ as in Theorem \ref{var}.
Moreover the error term $o(\frac{1}{\sqrt{d}})$ is bounded from above as in Theorem \ref{var}.
 \end{lem}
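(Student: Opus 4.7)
The plan is to deduce the lemma directly from Theorem \ref{var} and Theorem \ref{esper} by exploiting the algebraic relation between the empirical measure $\nu_s^2$ and the modified one $\tilde{\nu}_s^2$ furnished by Proposition \ref{difference}, together with the representation of the modified moment as an integral of $\mathcal{R}_d^2$ given by Proposition \ref{denf}.

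More precisely, the two partitions of $\{1,2\}$ are $\{\{1\},\{2\}\}$ and $\{\{1,2\}\}$, so Proposition \ref{difference} collapses to the single identity
$$\nu_s^2(f)=\tilde{\nu}_s^2(f)+\tilde{\nu}_s^1(j_{12}^*f)=\tilde{\nu}_s^2(f)+\nu_s^1(g),$$
where $g(x)\doteqdot f(x,x)$ is the restriction of $f$ to the diagonal and the last equality uses that $\tilde{\nu}_s^1=\nu_s^1$. Taking expectations gives
$$\mathbb{E}[\tilde{\nu}_s^2](f)=\mathbb{E}[\nu_s^2](f)-\mathbb{E}[\nu_s^1](g).$$
On the other hand, Proposition \ref{denf} (applied with $k=2$) identifies
$$\int_{\R\Sigma^2}f\mathcal{R}_d^2|\dV_h|^2=\int_{\R\Sigma^2\setminus\Delta}f\mathcal{R}_d^2|\dV_h|^2=\mathbb{E}[\tilde{\nu}_s^2](f),$$
where the first equality uses that the diagonal $\Delta$ has zero Riemannian measure in $\R\Sigma^2$. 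Hence it suffices to compute $\frac1d\mathbb{E}[\nu_s^2](f)-\frac1d\mathbb{E}[\nu_s^1](g)$.

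Applying Theorem \ref{var} to $f$ yields
$$\frac1d\mathbb{E}[\nu_s^2](f)=\frac1\pi\int_{\R\Sigma^2}f|\dV_h|^2+\frac{M}{\sqrt d}\int_{\R\Sigma}f(x,x)|\dV_h|+o\bigl(\tfrac{1}{\sqrt d}\bigr),$$
with the error term dominated by $\|f\|_\infty\bigl(O(d^{-(1+\alpha)/2})+\omega_f(d^{-\alpha/2})O(d^{-1/2})\bigr)$. Meanwhile, integrating the pointwise estimate of Theorem \ref{esper} against $g$ gives
$$\frac{1}{\sqrt d}\mathbb{E}[\nu_s^1](g)=\frac{1}{\sqrt\pi}\int_{\R\Sigma}f(x,x)|\dV_h|+\|g\|_\infty O\bigl(\tfrac1d\bigr),$$
so that $\frac1d\mathbb{E}[\nu_s^1](g)=\frac{1}{\sqrt\pi\sqrt d}\int_{\R\Sigma}f(x,x)|\dV_h|+\|f\|_\infty O(d^{-3/2})$. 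Subtracting and recalling $M'=M-\tfrac{1}{\sqrt\pi}$ yields the desired formula, with an error term of the type announced.

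There is essentially no obstacle here: the statement is a purely formal rearrangement of two already-proven asymptotic expansions. The only minor point to check is that the modulus-of-continuity bound on the error in Theorem \ref{var} survives the subtraction; this is immediate since the correction $\frac{1}{\sqrt\pi\sqrt d}\int_{\R\Sigma}f(x,x)|\dV_h|$ contributed by $\mathbb{E}[\nu_s^1](g)$ is itself bounded by $\|f\|_\infty O(d^{-1/2})$ and the remainder $O(d^{-3/2})$ is absorbed into $o(d^{-1/2})$.
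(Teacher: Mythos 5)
Your proposal is correct and follows essentially the same route as the paper's own proof: it identifies $\int_{\R\Sigma^2}f\mathcal{R}^2_d|\dV_h|^2$ with $\mathbb{E}[\tilde{\nu}_s^2](f)$ via Proposition \ref{denf}, writes $\mathbb{E}[\tilde{\nu}_s^2](f)=\mathbb{E}[\nu_s^2](f)-\mathbb{E}[\nu_s](f\mid_\Delta)$, and then substitutes Theorems \ref{var} and \ref{esper}, obtaining $M'=M-\frac{1}{\sqrt{\pi}}$ exactly as stated (your coefficient $\frac{1}{\sqrt{\pi}\sqrt{d}}$ for the diagonal correction is the right one, and your check that the extra $O(d^{-3/2})$ is absorbed into the announced error bound is the only point that needed verifying).
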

 \begin{proof}
 Remember that by Proposition $\ref{denf}$, we have that  $\mathbb{E}[\tilde{\nu}_s^2](f)=\int_{\R\Sigma^2}f\mathcal{R}^2_d|\dV_h|^2.$
 By Theorem \ref{esper} and by Definition \ref{modmom},  we obtain that 
$$\frac{1}{d}\mathbb{E}[\tilde{\nu}_s^2]=\frac{1}{d}\mathbb{E}[\nu_s^2]-\frac{1}{\pi\sqrt{d}}\int_{x\in \R\Sigma}f(x,x)|\dV_h|$$
so that by Theorem \ref{var}, $$\frac{1}{d}\mathbb{E}[\tilde{\nu}_s^2](f)=\frac{1}{\pi}\int_{(x,y)\in \R \Sigma^2}f(x,y)|\dV_h|^2+\frac{1}{\sqrt{d}}M'\int_{x\in \R\Sigma}f(x,x)|\dV_h|+o(\frac{1}{\sqrt{d}}),$$
with $M'=M-\frac{1}{\sqrt{\pi}}$ and with $o(\frac{1}{\sqrt{d}})$ being as in Theorem \ref{var}.
 \end{proof}

\subsubsection{Estimates of the density function} 
 \begin{lem}\label{sim1}  Under the hypothesis of Theorem \ref{asymom}, for every $f\in C^0(\R\Sigma^k)$, the following asymptotics hold:
  $$\frac{1}{\sqrt{d}^k}\int_{U^{a,b}_d}f\mathcal{R}^k_d|\dV_h|^k=\frac{1}{\sqrt{\pi}^k}\int_{U_d^{a,b}}f|\dV_h|^k+\frac{1}{\sqrt{\pi}^{k-2}}\frac{M'}{\sqrt{d}}\int_{\R\Sigma^{k-1}}j_{ab}^*f|\dV_h|^{k-1}+o(\frac{1}{\sqrt{d}})$$
  where $M'$ is as in Lemma \ref{var2}  and the error term $o(\frac{1}{\sqrt{d}})$ is bounded from above by $$\norm{f}_{\infty}\big(O\big(\frac{1}{\sqrt{d}^{1+\alpha}}\big)+\omega_f\big(\frac{1}{\sqrt{d}^{\alpha}}\big)O(\frac{1}{\sqrt{d}})\big)$$
for any $\alpha\in(0,1)$, with $\omega_f(\cdot)$ being the modulus of continuity of $f$. 
  Moreover the error terms $O(\frac{1}{\sqrt{d}^{1+\alpha}})$ and $O(\frac{1}{\sqrt{d}})$ do not depend on $f$.
 \end{lem}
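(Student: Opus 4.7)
The plan is to combine the off-diagonal factorization of Proposition~\ref{broke2} with the mean asymptotic of Theorem~\ref{esper} and the two-point asymptotic of Lemma~\ref{var2}. By definition of $U_d^{a,b}$, for every $\underline{x}=(x_1,\dots,x_k)\in U_d^{a,b}$ the only pair of coordinates that may be at distance $\leq \log d/(C'\sqrt{d})$ is $(x_a,x_b)$, all other pairs being farther apart. Applying Proposition~\ref{broke2} iteratively to first separate the block $\{x_a,x_b\}$ from each of the remaining $k-2$ singletons and then to separate those singletons from each other, and using Theorem~\ref{esper} together with the uniform bound on $\tfrac{1}{d}\mathcal{R}^2_d$ provided by Theorem~\ref{boundabove} with $k=2$, one obtains the uniform estimate
$$\frac{1}{\sqrt{d}^k}\mathcal{R}^k_d(\underline{x}) \,=\, \frac{1}{\sqrt{\pi}^{k-2}}\cdot \frac{1}{d}\mathcal{R}^2_d(x_a,x_b) + O\Bigl(\frac{1}{d}\Bigr), \qquad \underline{x}\in U_d^{a,b},$$
with the error uniform in $\underline{x}$.

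Multiplying by $f$ and integrating over $U_d^{a,b}$, the pointwise error contributes $\|f\|_\infty O(1/d)$. The uniform boundedness of $\tfrac{1}{d}\mathcal{R}^2_d$ combined with Lemma~\ref{volume} then allows one to replace the integration domain $U_d^{a,b}$ by $\R\Sigma^k$ at the cost of $\|f\|_\infty O((\log d)^2/d)$. Writing $\underline{x}'=(x_i)_{i\neq a,b}$ and $g_{\underline{x}'}(x_a,x_b):=f(\underline{x})$, Fubini yields
$$\int_{\R\Sigma^k} f\cdot \frac{1}{d}\mathcal{R}^2_d(x_a,x_b)\,|\dV_h|^k \,=\, \int_{\R\Sigma^{k-2}}\biggl(\int_{\R\Sigma^2} g_{\underline{x}'}\cdot \frac{1}{d}\mathcal{R}^2_d\,|\dV_h|^2\biggr)|\dV_h|^{k-2},$$
and Lemma~\ref{var2} is applied to the inner integral. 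Since $\|g_{\underline{x}'}\|_\infty\leq \|f\|_\infty$ and $\omega_{g_{\underline{x}'}}(\epsilon)\leq \omega_f(\epsilon)$ for every $\underline{x}'$, and since $\int_{\R\Sigma^{k-2}}\int_{\R\Sigma} g_{\underline{x}'}(x,x)\,|\dV_h|(x)\,|\dV_h|(\underline{x}') = \int_{\R\Sigma^{k-1}} j_{ab}^*f\,|\dV_h|^{k-1}$, integrating over $\underline{x}'$ and collecting terms gives
$$\frac{1}{\sqrt{\pi}^{k-2}}\int_{\R\Sigma^k} f\cdot \frac{1}{d}\mathcal{R}^2_d\,|\dV_h|^k \,=\, \frac{1}{\sqrt{\pi}^k}\int_{\R\Sigma^k} f\,|\dV_h|^k + \frac{M'}{\sqrt{\pi}^{k-2}\sqrt{d}}\int_{\R\Sigma^{k-1}} j_{ab}^*f\,|\dV_h|^{k-1} + E_d(f),$$
where $E_d(f) = \|f\|_\infty\bigl(O(1/\sqrt{d}^{1+\alpha}) + \omega_f(1/\sqrt{d}^\alpha)O(1/\sqrt{d})\bigr)$ with $O$-constants independent of $f$.

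Finally, the leading integral is reverted from $\R\Sigma^k$ back to $U_d^{a,b}$ at a further cost of $\|f\|_\infty O((\log d)^2/d)$; since both $(\log d)^2/d$ and $1/d$ decay faster than $1/\sqrt{d}^{1+\alpha}$ for any $\alpha\in (0,1)$, all these secondary errors are absorbed into $\|f\|_\infty O(1/\sqrt{d}^{1+\alpha})$, yielding the asymptotic claimed in the lemma. The main obstacle is purely one of bookkeeping: propagating the errors through the iterative use of Proposition~\ref{broke2}, through the replacement of the domain of integration, and through Fubini, which is only legitimate thanks to the uniform boundedness statements of Theorem~\ref{boundabove} together with the elementary inequalities $\|g_{\underline{x}'}\|_\infty\leq\|f\|_\infty$ and $\omega_{g_{\underline{x}'}}(\epsilon)\leq\omega_f(\epsilon)$ for slices of $f$.
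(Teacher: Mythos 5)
Your overall route is the same as the paper's (factorize $\mathcal{R}^k_d$ on $U_d^{a,b}$ with Proposition \ref{broke2} and Theorem \ref{esper}, then feed the remaining two-point factor into Lemma \ref{var2}; the slice-by-slice use of Lemma \ref{var2} via Fubini with $\norm{g_{\underline{x}'}}_\infty\le\norm{f}_\infty$ and $\omega_{g_{\underline{x}'}}\le\omega_f$ is legitimate), but there is a genuine gap in the domain-replacement step. You claim that passing from $\int_{U_d^{a,b}}$ to $\int_{\R\Sigma^k}$, and later back again for the leading term, costs $\norm{f}_\infty O\big(\tfrac{(\log d)^2}{d}\big)$ ``by uniform boundedness and Lemma \ref{volume}''. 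Lemma \ref{volume} bounds the volume of $\R\Sigma^k\setminus U_d$ with $U_d=\cup_{a<b}U_d^{a,b}$; since $U_d^{a,b}\subset U_d$, the complement of a single $U_d^{a,b}$ is \emph{larger}, not smaller. Indeed $\R\Sigma^k\setminus U_d^{a,b}$ contains all points for which one fixed pair $\{i,j\}\neq\{a,b\}$ satisfies $\mathbf{d}_h(x_i,x_j)\le \frac{\log d}{C'\sqrt d}$, a set of volume of order $\frac{\log d}{\sqrt d}$ (only one closeness condition is imposed). So each of your two replacements, estimated by brute force as you propose, costs $\norm{f}_\infty O\big(\tfrac{\log d}{\sqrt d}\big)$, which is not $o\big(\tfrac{1}{\sqrt d}\big)$ and would swamp the very term $\tfrac{M'}{\sqrt d}\int j_{ab}^*f$ you are extracting. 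As written, the step fails at the precision the lemma requires.

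The argument can be repaired, but it needs an idea you did not state: the two replacement errors, $\tfrac{1}{\sqrt{\pi}^{k-2}}\int_{\R\Sigma^k\setminus U_d^{a,b}} f\,\tfrac1d\mathcal{R}^2_d(x_a,x_b)\,|\dV_h|^k$ and $\tfrac{1}{\sqrt{\pi}^{k}}\int_{\R\Sigma^k\setminus U_d^{a,b}} f\,|\dV_h|^k$, nearly cancel. On the part of $\R\Sigma^k\setminus U_d^{a,b}$ where $\mathbf{d}_h(x_a,x_b)>\frac{\log d}{C'\sqrt d}$, Proposition \ref{broke2} and Theorem \ref{esper} give $\tfrac1d\mathcal{R}^2_d(x_a,x_b)=\tfrac1\pi+O(\tfrac1d)$ uniformly, so the two integrands differ by $\norm{f}_\infty O(\tfrac1d)$ on a set of volume $O(\tfrac{\log d}{\sqrt d})$; on the remaining part both $(x_a,x_b)$ and some other pair are within $\frac{\log d}{C'\sqrt d}$, and that set does have volume $O(\tfrac{(\log d)^2}{d})$ by the argument of Lemma \ref{volume}, where boundedness suffices. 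Only after this cancellation is the total cost $\norm{f}_\infty O(\tfrac{(\log d)^2}{d})$ as you asserted. The paper sidesteps the issue by never leaving $U_d^{a,b}$ for the leading term: it keeps $\tfrac{1}{\sqrt{\pi}^{k}}\int_{U_d^{a,b}}f\,|\dV_h|^k$ exactly as it appears in the statement and only invokes Lemma \ref{var2} for the $\tfrac{1}{\sqrt d}$-order contribution, so no error of size $\tfrac{\log d}{\sqrt d}$ is ever introduced.
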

  \begin{proof} 
By Proposition \ref{broke2}, for any $\underline{x}=(x_1,\dots,x_k)\in U^{ab}_d$, we have $$\frac{1}{\sqrt{d}^k}\mathcal{R}_d^k(\underline{x})=\frac{1}{d}\mathcal{R}_d^2(x_a,x_b)\prod_{i\neq a, b}\frac{1}{\sqrt{d}}\mathcal{R}_d^1(x_i)+O(\frac{1}{d})$$
 that implies $$\int_{U^{ab}_d}f(\underline{x})\frac{1}{\sqrt{d}^k}\mathcal{R}^k_d(\underline{x})|\dV_h|^k=$$
 $$=\int_{U_d^{ab}}f(\underline{x})\frac{1}{d}\mathcal{R}_d^2(x_a,x_b)\prod_{i=1,i\neq a, b}^k\frac{1}{\sqrt{d}}\mathcal{R}_d^1(x_i)|\dV_h|^k+O(\frac{1}{d}).$$

By Theorem \ref{esper} we have that $\frac{1}{\sqrt{d}}\mathcal{R}_d^1(x_i)=\frac{1}{\pi}+O(\frac{1}{d})$ uniformly, so that
$$\int_{U_d^{ab}}f(\underline{x})\frac{1}{d}\mathcal{R}_d^2(x_a,x_b)\prod_{i=1,i\neq a, b}^k\frac{1}{\sqrt{d}}\mathcal{R}_d^1(x_i)|\dV_h|^k+O(\frac{1}{d})=$$
$$=\frac{1}{\sqrt{\pi}^{k-2}}\int_{U^{a,b}_d}f(\underline{x})\frac{1}{d}\mathcal{R}_d^2(x_a,x_b)|\dV_h|^k+O(\frac{1}{d})$$
and, by Proposition \ref{denf} and Lemma \ref{var2} this is equal to  
$$\frac{1}{\sqrt{\pi}^{k}}\int_{U_d^{a,b}}f(\underline{x})|\dV_h|^k+\frac{1}{\sqrt{\pi}^{k-2}}\frac{M'}{\sqrt{d}}\int_{\R\Sigma^{k-1}}j^*_{ab}f|\dV_h|^{k-1}+o(\frac{1}{\sqrt{d}})$$
where the error term is as in the statement of the present lemma.
\end{proof}
We are now able to prove Proposition \ref{simple}.

\begin{proof}[\textbf{Proof of Proposition \ref{simple}}]
It suffices to remark that $U_d$ is the disjoint union of the following sets (see Definition \ref{open1})  $$U_d=(\R\Sigma^k\setminus\Delta_d)\cup \bigcup_{a<b}\big(U_d^{a,b}\setminus(\R\Sigma^k\setminus \Delta_d)\big).$$
By Lemma \ref{sim2} and \ref{sim1}  we have the result. 
\end{proof}

\section{Complex case}\label{ccase}
Let $(\mathcal{L},h)$ be an ample Hermitian line bundle over a Riemann surface $\Sigma$.
The number of  zeros of a section $s\in H^0(\Sigma;\mathcal{L}^d)$ is determined by the degree of $\mathcal{L}$. However, the distribution of such zeros (in the sense of currents) depends on the chosen section.\\
In this setting Shiffman and Zelditch proved that the zero locus of a random  section $s$ of  $\mathcal{L}^d$ becomes uniformly distributed over $\Sigma$ as $d$ grows to infinity (Theorem 1.1 of \cite{sz}). They used some  estimates of the Sz\"ego kernels and the Poincar\'e-Lelong formula.
In this section we will compute  the higher moments of the zeros locus of a random section $s\in H^0(\Sigma;\mathcal{L}^d)$ in the case of $\dim\Sigma=1$.
Indeed, the methods used in this paper in the real case (Olver multispaces and Bergman kernel estimates) can be used in the same way for the complex case.

\subsection{The complex random setting} We work in the framework introduced by Shiffman and Zelditch in \cite{sz}. We restrict our definition for the dimension $1$.\\
 Let $\Sigma$ be a smooth compact Riemann surface.
Let $\mathcal{L}\rightarrow \Sigma$ be a holomorphic line bundle equipped with a Hermitian metric $h$ of positive curvature $\frac{i}{2\pi}\partial\bar{\partial}\phi=\omega\in\Omega^{(1,1)}(\Sigma,\R)$. Here, $\phi$ is the local potential of $h$. The curvature form induces a K\"ahler metric on $\Sigma$. Let $dx=\frac{\omega}{\int_{\Sigma}\omega}$ be the normalized volume form. 
\subsubsection{The Gaussian measure on $H^0(\Sigma;\mathcal{L}^d)$} The Hermitian metric $h$ induces a Hermitian metric $h^d$ on $\mathcal{L}^d$ for every integer $d>0$ and also a $L^2$-Hermitian product on the space $H^0(\Sigma;\mathcal{L}^d)$ of global holomorphic sections of $\mathcal{L}^d$ denoted by $\langle\cdot,\cdot \rangle$ and  defined by 
$$\langle\alpha,\beta\rangle=\int_{\Sigma}h^d(\alpha,\beta)dx$$
for any $\alpha,\beta$ in $H^0(\Sigma;\mathcal{L}^d)$.\\
This Hermitian product induces a Gaussian measure  on $H^0(\Sigma;\mathcal{L}^d)$
defined by
$$\mu(A)=\frac{1}{\pi^{N_d}}\int_Ae^{-\parallel s\parallel^2}ds$$
for any open subset $A\subset H^0(\Sigma;\mathcal{L}^d)$ where $ds$ is the Lebesgue measure associated with $\langle\cdot,\cdot\rangle$ and $N_d=\dim_{\C}H^0(\Sigma;\mathcal{L}^d)$.
 \subsection{Olver holomorphic multispace}
Let $X$ be a complex manifold of dimension $n$. A holomorphic curve $C$ in $X$ is an analytic one-dimensional submanifold of $X$. Such a holomorphic curve is not necessarly connected, neither closed. For example a disjoint union of open disks is an holomorphic curve.
A $(k+1)$-pointed holomorphic curve $(C;z_0,\dots,z_k)$ in $X$ is a holomorphic curve $C$ in $X$ together with $(k+1)$ not necessarly distinct points over $C$. 
Let $\mathcal{C}_h^{(k)}(X)$ be the set of all the $(k+1)$-pointed holomorphic curves in $X$ .
 The following definition is the holomorphic analogue of the Olver multispace. We call it the Olver \emph{holomorphic} multispace of a complex manifold $X$.
 \begin{defn} Two $(k+1)$-pointed holomorphic curves 
$$\textbf{C}=(C;z_0,\dots,z_k)\qquad \tilde{\textbf{C}}=(\tilde{C};\tilde{z}_0,\dots,\tilde{z}_k)$$
have $k$-th order multi-contact  if and only if 
$$z_i=\tilde{z}_i \quad\textrm{and}\quad j_{\#i-1}C\mid_{z_i}=j_{\#i-1}\tilde{C}\mid_{z_i}$$
for each $i=0,\dots,k$. The $k$-th order multi-space, denoted $X^{(k)}$ is the set of equivalence classes of $(k+1)$-pointed holomorphic curves in $X$ under the equivalence relation of $k$-th order multi-contact. The equivalence class of an $(k+1)$-pointed holomorphic curve \textbf{C} is called its $k$-th order multi-jet, and denoted $j_k\textbf{C}\in X^{(k)}$.
\end{defn}
As for the real case, local holomorphic coordinates of $X$ induces  the holomorphic divided differences coordinates for the multispace $X^{(k)}$.
\subsection{Higher moments for random holomorphic sections}
The goal of this subsection is to prove Theorem \ref{complex}.\\
We recall our setting. Let $(\mathcal{L},h)$ be a positive Hermitian line bundle over a Riemann surface $\Sigma$ and let $\omega$ be the Kahler form induced by $h$. We denote by $\dV_{\omega}^k$ the volume form on $\Sigma^k$ induced by $\omega$. For any $s\in H^0(\Sigma;\mathcal{L}^d)$, let $C_s$ be the current of integration over $\{s=0\}$, that is $C_s(f)=\sum_{x\in \{s=0\}} f(x)$. It is known that $\frac{1}{d}\mathbb{E}[C_s]= \omega+O(\frac{1}{d})$ in the sense of currents and actually that $\frac{1}{d}C_s\rightarrow \omega$ almost surely (see \cite{sz}). 
 \begin{proof}[\textbf{Proof of Theorem \ref{complex}}]
The proof follows the lines of the proof of Theorem \ref{equimom}, so we only sketch the proof.
By coarea formula, we can write $$\mathbb{E}[C_s^k](f)=\int_{\Sigma^k}f \mathcal{R}_d^k \dV_{\omega}^k$$
Define $U_d=\cup_{a< b} U_d^{a,b}$ where $U_d^{a,b}$ is the  set
$$\displaystyle U_d^{a,b}\doteqdot \{(x_1,\dots\dots,x_k)\in \Sigma^k \mid  \mathbf{d}_h(x_i,x_j)> \frac{\log d}{C'\sqrt{d}}\hspace{0.5mm} \Rightarrow \hspace{0.5mm} (i,j)=(a,b)\}$$
where $C'$ is the constant appearing in Theorem \ref{fardiag}.
On $U_d$ we have
$$\frac{1}{d^k}\int_{U_d}f \mathcal{R}_d^k \omega_k=\int_{U_d}f\dV_{\omega}^k+\norm{f}_{\infty}O(\frac{1}{d})$$
 where $O(\frac{1}{d})$ does not depend on $f$.
This is as in Proposition \ref{simple}.  The role of Theorems \ref{esper} and \ref{var} here is played respectively by \cite[Proposition 3.2]{sz} and \cite[Theorem 1.1]{sz2}.
By using Olver multispaces and divided differences coordinates as in Sections \ref{42}-\ref{snume}, we  prove that $\frac{1}{d^k}\mathcal{R}_d^k $ is uniformly bounded, so that $$\frac{1}{d^k}\int_{\Sigma^k\setminus  U_d}f \mathcal{R}_d^k\dV_{\omega}^k=\norm{f}_{\infty}O(\frac{(\log d)^4}{d^{2}}).$$
Putting together the integral over $U_d$ and over $\Sigma^k\setminus  U_d$ we obtain the result.
\end{proof}
\section*{Acknowledgements}{I would like to thank my advisor Jean-Yves Welschinger for all the fruitful discussions we had. I would also like to thank Thomas Letendre for his remarks and for his careful reading of the text.   This work was performed within the framework of the LABEX MILYON (ANR-10-LABX-0070)
of Universit\'e de Lyon, within the program "Investissements d'Avenir"
(ANR-11-IDEX-0007) operated by the French National Research Agency (ANR). }

\bibliographystyle{plain}
\bibliography{biblio}

\end{document}